\newtheoremstyle{mystyle}{}{}{\slshape}{2pt}{\scshape}{.}{ }{} 
\newtheorem{thm}{Theorem}[section]
\newtheorem{corollary}[thm]{Corollary}
\newtheorem{cor}[thm]{Corollary}
\newtheorem{prop}[thm]{Proposition}
\newtheorem{lemme}[thm]{Lemma}
\newtheorem{lemma}[thm]{Lemma}
\newtheorem{fact}[thm]{Fact}
\newtheorem{conjecture}[thm]{Conjecture}
\newtheorem{question}[thm]{Question}
\theoremstyle{definition}
\newtheorem{defi}[thm]{Definition}
\newtheorem{definition}[thm]{Definition}
\theoremstyle{mystyle}
\newtheorem{ex}[thm]{Example}
\theoremstyle{remark}
\newtheorem{rem}[thm]{Remark}
\newcommand{\wmc}{\subseteq_{\text{wmc}}}
\newcommand{\rel}[1]{\mathrel{#1}}
\newcommand{\bin}[1]{\mathbin{#1}}
\newcommand{\ignore}[1]{}
\DeclareMathOperator{\tp}{tp}
\DeclareMathOperator{\dlr}{opD}
\DeclareMathOperator{\dcl}{dcl}
\DeclareMathOperator{\acl}{acl}
\DeclareMathOperator{\aut}{Aut}
\DeclareMathOperator{\rk}{rk}
\def\indsym#1#2{%
 \setbox0=\hbox{$\m@th#1x$}%
 \kern\wd0%
 \hbox to 0pt{\hss$\m@th#1\mid$\hbox to 0pt{$\m@th#1^{#2}$\hss}\hss}%
 \lower.9\ht0\hbox to 0pt{\hss$\m@th#1\smile$\hss}%
 \kern\wd0}
\newcommand{\ind}[1][]{\mathop{\mathpalette\indsym{#1}}}
\def\nindsym#1#2{%
 \setbox0=\hbox{$\m@th#1x$}%
 \kern\wd0%
 \hbox to 0pt{\hss$\m@th#1\not$\kern1.4\wd0\hss}
 \hbox to 0pt{\hss$\m@th#1\mid$\hbox to 0pt{$\m@th#1^{#2}$\hss}\hss}%
 \lower.9\ht0\hbox to 0pt{\hss$\m@th#1\smile$\hss}%
 \kern\wd0}
\title{NIP $\omega$-categorical structures: the rank 1 case}
\author{Pierre Simon\footnote{Partially supported by NSF (grants no. 1665491 and 1848562) and a Sloan fellowship.}}
\date{}
\begin{document}

\maketitle

\begin{abstract}
We classify primitive, rank 1, $\omega$-categorical structures having polynomially many types over finite sets. We show that there are only finitely many such structures with a fixed number of 4 types and that they are built out of finitely many linear or circular orders interacting in a restricted number of ways. As an example of application, we deduce the classification of primitive structures homogeneous in a language consisting of $n$ linear orders as well as all reducts of such structures.
\end{abstract}

\section{Introduction}

Since the work of Lachlan on finite homogeneous structures,   interactions between homogeneous structures and model theory have been very fruitful in both directions. Lachlan \cite{Lachlan:stable_homogeneous} realized that the property of stability and the toolbox that comes with it were relevant in the finite case. Geometric stability theory had its birth in Zilber's work on totally categorical structures \cite{Zilber_book} and this in turn lead to a fairly detailed understanding of the $\omega$-stable $\omega$-categorical structures (\cite{CHL}, \cite{Hr_totallycategorical}). Following a suggestion of Lachlan, this analysis was then generalized to smoothly approximable structures. This was done first by Kantor, Liebeck, Macpherson \cite{KLM} in the primitive case using classification of finite simple groups, and then by Cherlin and Hrushovski \cite{CherHru} in the general case by model-theoretic methods. In that latter work, many features of simple theories first appeared. The present paper fits in this line of research and begins the study of yet another class of $\omega$-categorical structures defined by a model theoretic condition.


To define this class, let us restrict first to the case of structures homogeneous in a finite relational language (which we also call \emph{finitely homogeneous}). If $M$ is such a structure, then given any finite $A\subseteq M$, the number of 1-types over $A$ (that is, the number of orbits under the stabilizer of $A$) is finite. For a given $n$, we let $f_M(n)$ be the maximal number of 1-types over a set $A\subseteq M$ of size $n$. For instance, if $M=(\mathbb Q,\leq)$, then $f_M(n)=2n+1$. If $M=(G,R)$ is a model of the random graph, then $f_M(n)=2^n+n$. A well-known theorem of Sauer and Shelah implies that this function has either polynomial or exponential growth. 
Following the model-theoretic terminology, we call a finitely homogeneous structure $M$ \emph{NIP} if the function $f_M$ has polynomial growth. (\emph{NIP} stands for the negation of the independence property. We like to think of those structures as being \emph{geometric} in some sense.) For instance, dense linear orders are NIP, whereas the random graph is not. Intuitively, NIP structures have no random-like behavior. Another important example of NIP structure is the Fra\" iss\'e limit of finite trees (where a tree $(T,\leq,\wedge)$ is a partial order such that the predecessors of a point form a chain, and $a\wedge b$ is the infimum of $\{a,b\}$).

Within structures homogeneous in a finite relational language, there is another characterization of NIP obtained by counting orbits on unordered $k$-tuples, or equivalently finite substructures of size $k$ up to isomorphism. If $M$ is homogeneous in a finite relational language (or more generally an $\omega$-categorical relational structure), define $\pi_M(k)$ as being the number of substructures of $M$ of size $k$. Cameron showed in \cite{Cameron:orbits2} that this function is always non-decreasing and in \cite{Cameron:linear} he classified the case where $\pi_M$ is constant equal to 1. Macpherson \cite{Mac:orbits} showed that if $M$ is primitive, then $\pi_M$ is either constant equal to 1 or grows at least exponentially. A number of structures for which the growth is no faster than exponential are given by Cameron in \cite{Cameron:trees}: they are all order-like or tree-like structures. Cameron also remarks there that those seem to be essentially the only examples of such structures known at the time. In \cite{Mac:rapid_growth}, Macpherson shows that for structures homogeneous in a finite relational language, there is a gap in the possible growth rates of the function $\pi_M$. Using the aforementioned Sauer--Shelah theorem, we can state a stronger version of his result: if $M$ is NIP, then $\pi_M(k)=O(2^{ck \ln k})$ for some $c>0$ (see the remark after Fact \ref{fait:sauer-shelah}). If $M$ has IP (is not NIP), then $\pi_M(k)\geq 2^{p(k)}$ for some polynomial $p(X)$ of degree at least 2. Hence finitely homogeneous structures with $\pi_M$ of exponential growth are a subclass of NIP homogeneous structures. See e.g. \cite[Section 6.3]{Mac_survey} for many more results on this function.

We conjecture that NIP finitely homogeneous structures can be reasonably well classified, and in particular that there are only countably many up to bi-interpretability. We will give some precise conjectures at the end of this paper. What we have in mind is that those structures are all built out of linear orders, possibly branching into trees. However, we are for now not capable of saying much in the general case, and introduce another condition, which should be thought of as forbidding trees in the structure: we ask that there is a rank function on definable sets satisfying certain axioms. This limits the size of a nested sequence of definable equivalence relations. In model theory, this condition is called \emph{rosiness}. It is always satisfied by binary structures, so one may want to think of this work as studying binary NIP homogeneous structures, though our actual hypothesis are \emph{a priori} more general.
We will actually relax the homogeneity assumption to $\omega$-categoricity. Similarly, NIP, which we defined by counting types, becomes a condition on formulas. Under those hypotheses, we conjecture that the results on $\omega$-categorical $\omega$-stable and quasi-finite structures essentially go through \emph{mutatis mutandis}. In particular, we should have coordinatization by rank 1 sets and quasi-finite axiomatization. We deal here only with the rank 1 primitive case, for which we give a complete classification, up to bi-definability. The general finite rank case is studied in a subsequent work \cite{RosFinHom} with Alf Onshuus.

As a rather straightforward application, we classify primitive homogenous multi-orders (also called finite-dimensional permutation structures): that is primitive structures homogeneous in a language consisting of $n$ linear orders. For $n=2$, this was solved by Cameron \cite{cameron:permutations} and for $n=3$ by Braunfeld \cite{Braunfeld:3d}, where the general case is conjectured. We show that for any $n$, there is only one primitive homogeneous multi-order, where no two orders are equal or reverse of each other: the Fra\"iss\' e limit of finite sets with $n$ orders. We also classify all reducts of such structures, generalizing the work of Linman and Pinsker \cite{Linman-Pinsker} on the case $n=2$.

\medskip
Looking at it from the point of view of model theory, one can see this work as a development of the study of (rosy) NIP structures along the lines of stable theories. We hope that it will eventually lead to new insights into general NIP structures. At any rate, the results demonstrate that there is a richer theory of NIP than one suspected only a few years ago and that this world is much more structured and closer to stability than was expected. It does not seem completely unreasonable to hope for classification results for some subclasses of NIP in the spirit of Shelah's classification for superstable theories, where cardinal dimensions will be complemented by isomorphism types of linear orders (which are shown to exist in \cite{linear_orders}). But we are not quite there yet.

\subsection{Summary of results}
We are concerned with structures $M$ such that:

\smallskip
$(\star)$ $M$ is an $\omega$-categorical, rank 1, primitive, unstable NIP structure,
\smallskip
\\where ``rank 1" means that, in $M^{eq}$, there is no infinite definable set $E$ and uniformly definable family $(X_t)_{t\in E}$ of infinite subsets of $M$ which is $k$-inconsistent for some $k$: that is for any $k$ values $t_1,\ldots,t_k \in E$, we have $X_{t_1}\cap \cdots \cap X_{t_k} = \emptyset$.
Those hypotheses will be fully enforced only in Section \ref{sec:classification}. In sections before that, we study $\omega$-categorical linear and circular orders under a weakening of the rank 1 assumption, but make no use of NIP. Results there might be of some use in the classification of other classes of ordered homogeneous structures. We then give a fairly explicit description of structures satisfying $(\star)$ up to bi-definability. They all admit an interpretable finite cover composed of a disjoint union of linear and circular order, independent of each other.

Here are some examples of structures that satisfy the hypotheses.

\begin{ex}\label{ex:basic}
	\begin{itemize}
		\item A dense linear order or any of its 3 non-trivial reducts: a betweenness relation, circular order or separation relation.

		\item The Fra\" iss\'e limit of finite sets equipped with $n$ orders.
				\item The class of structures equipped with two linear orders $\leq_1,\leq_2$ and a binary relation $R$ that satisfies $a'\leq_1 a \rel R b \leq_2 b' \Rightarrow a' \rel{R} b'$ and $\neg a\rel{R} a$ is a Fra\" iss\'e class. Its Fra\" iss\'e limit satisfies $(\star)$. This kind of structure will be studied in Section \ref{sec:intertwinings}.
		\item The class of finite sets equipped with a circular order $C$ and an equivalence relation $E$ all of whose classes have exactly two elements is a Fra\" iss\'e class. The quotient by $E$ of the Fra\" iss\'e limit of this class satisfies $(\star)$. It does not admit a circular order definable over $\acl^{eq}(\emptyset)$ but does have one definable over any one parameter.
	\end{itemize}	
\end{ex}

As a consequence of the classification we obtain the following theorems (the terminology will be explained later).

\begin{thm}\label{th:main1}
Given an integer $n$, there are, up to bi-definability, finitely many $\omega$-categorical primitive NIP structures $M$ of rank 1 having at most $n$ 4-types.
\end{thm}


\begin{thm}\label{th:list of properties}
	If $M$ is an $\omega$-categorical, primitive, rank 1, NIP, unstable structure, then:
	\begin{enumerate}
		\item over $\emptyset$, there is an interpretable set $W$, which is a finite union of circular orders and admits a finite-to-one map to $M$;
		\item up to inter-definability, $M$ is homogeneous in a finite relational language and finitely axiomatizable;
		\item after naming a finite set of points, $M$ admits elimination of quantifiers in a binary language and has a definable linear order;
		\item $M$ is distal of finite op-dimension;
		\item $M$ has trivial geometry: $\acl(A)=A$ for every $A\subseteq M$, equivalently the stabilizer of any finite $A\subseteq M$ in the automorphism group of $M$ has no finite orbit on $M\setminus A$.
	\end{enumerate}
\end{thm}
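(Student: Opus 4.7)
The plan is to derive every clause from the central structural classification of this paper, which provides, for any $M$ satisfying $(\star)$, an interpretable finite-to-one cover $\pi\colon W\to M$ over $\emptyset$, where $W$ is a finite disjoint union of independent dense linear and/or circular orders. Once this description of $W$ is established, each of the five clauses becomes a matter of reading off the cover or transferring a property from the order summands to $M$ through $\pi$.

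For clause (1), I convert each linear order summand of $W$ into a circular order: a countable dense linear order without endpoints is the reduct of a countable dense circular order after removing a single point, so one can canonically enlarge the interpretation of $W$ to replace each linear component with a circular one, absorbing the choice of cut point into the interpretation of $W$. This yields a finite union of circular orders over $\emptyset$ with a finite-to-one map to $M$. For clause (2), I use that a dense linear order and a dense circular order are each homogeneous in a binary relational language and finitely axiomatizable, so that any finite disjoint union of independent such orders is finitely homogeneous and finitely axiomatizable in a finite relational language; I then descend these properties to $M$ through the finite-to-one cover $\pi$, which may cost a controlled increase in the arity of the language but preserves both properties. For clause (3), after naming one point in each circular order summand of $W$ that summand carries a definable linear order; the lexicographic concatenation, in any fixed enumeration of the components, gives a definable linear order on $W$ which descends to $M$ after further naming representatives for the fibers of $\pi$, and binary quantifier elimination after naming parameters then reduces, via the cover, to binary quantifier elimination for dense linear orders. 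For clause (4), dense linear and circular orders are distal with op-dimension $1$, distality is preserved under interpretations in distal structures, and op-dimension adds over orthogonal components, so $W$ has op-dimension equal to the number of order summands, and the same holds for $M$ through the finite-to-one map. For clause (5), each order summand has trivial algebraic closure, the orthogonality of the summands transfers triviality to $W$, and the finite-to-one property of $\pi$ ensures that $\acl$ remains trivial on $M$.

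The real obstacle is not in any of these transfers but in the classification itself, establishing the cover $W\to M$, which occupies the bulk of the paper and relies on rank $1$, primitivity, NIP, and instability in an essential way. Within the transfer arguments above, the most delicate clause is (2): controlling the arity of the homogeneous language and verifying finite axiomatizability for $M$ (rather than for $W$) requires an explicit description of the fibers of $\pi$ and of how the automorphism group of $M$ lifts to $W$, which in turn feeds back into how the interpretation of $W$ over $\emptyset$ is chosen.
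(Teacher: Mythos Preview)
Your high-level strategy matches the paper's: all five clauses are indeed read off from the construction of the finite cover $\pi\colon W\to M$ carried out in Section~\ref{sec:classification}. The paper explicitly says as much right after the statement of the theorem, pointing to Section~\ref{sec:classification} for (1), Section~\ref{sec:homogeneous} for (2), (3), and distality, and Proposition~\ref{prop:bounding op-dimension} for finite op-dimension.

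That said, several of your transfer arguments are too casual, and one is genuinely incomplete. For clause (2), ``descend these properties to $M$ through the finite-to-one cover $\pi$'' is where the real work hides, and you correctly flag it as the most delicate step --- but the paper does not descend through $\pi$ directly. Instead it proves a general lemma (Lemma~\ref{lem:finite homogeneity of reducts}): if for every $r$-element set $A$ the expansion of $M$ by all $\acl^{eq}(A)$-definable sets is finitely homogenizable, then so is $M$. The point is that naming three points makes $W$ rigid over $M$ fiberwise and cuts all circular classes, after which an explicit binary language $L_A$ (orders on each 1-type plus intertwining relations between them) gives quantifier elimination by back-and-forth; the lemma then removes the parameters. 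Your phrase ``controlled increase in the arity'' is not a proof and does not isolate this mechanism. Finite axiomatizability then follows either by quantifying out the named constants from the $L_A$-axiomatization, or, more cleanly, from Theorem~\ref{th:distal is fin axiom} once distality is established.

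Two smaller points. In clause (3), ``lexicographic concatenation'' is not the right picture: the orders on distinct $E$-classes are independent, not nested, and the linear order on $M$ comes from pulling back a single $E$-class order along a section of $\pi$, not from combining them. The binary QE also requires the intertwining relations, not just the orders, so it does not literally reduce to QE for DLO. In clause (4), your argument for finite op-dimension (bound it by the number of independent order summands of $W$) is a legitimate alternative to the paper's Proposition~\ref{prop:bounding op-dimension}, which instead bounds op-dimension by the number of $4$-types via separation relations; your route is arguably cleaner once $W$ is in hand, though the paper's bound is what feeds into the finiteness theorem.
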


Statement 1 follows from the construction in Section \ref{sec:classification}.
Statements 2 and 3 are proved in Section \ref{sec:homogeneous}, along with distality. Statement 5 also follows from the discussion there. Finiteness of op-dimension is Proposition \ref{prop:bounding op-dimension}.

As regards homogeneous multi-orders, we prove the following.

\begin{thm}
	Let $(M;\leq_1,\ldots,\leq_n)$ be homogeneous, primitive and such that each $\leq_i$ defines a linear order on $M$. Assume that no two of those orders are equal or reverse of each other. Then $M$ is the Fra\"iss\'e limit of finite sets with $n$ orders.
	
\end{thm}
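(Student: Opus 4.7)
The plan is to verify that $(M;\leq_1,\ldots,\leq_n)$ satisfies hypothesis $(\star)$ and then invoke Theorem~\ref{th:list of properties} together with the structural classification of Section~\ref{sec:classification} to identify $M$ as the Fra\"iss\'e limit explicitly.

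First I verify $(\star)$. The structure is $\omega$-categorical (being homogeneous in a finite binary relational language), primitive by assumption, and unstable since $\leq_1$ is a linear order. NIP holds because the language is binary and each $\leq_i$ is NIP, so every formula is a Boolean combination of atomic formulas in the $\leq_i$'s; in particular $f_M(k)$ grows polynomially. For rank $1$, I would check that a uniformly definable family $(X_t)_{t\in D}$ of infinite subsets of $M$ is never $k$-inconsistent: each infinite definable set contains an ``open box'' in each of the $n$ coordinate orders, and by $\omega$-categoricity and compactness one can always produce arbitrarily many such boxes sharing a common point.

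With $(\star)$ established, Theorem~\ref{th:list of properties} yields that $M$ has trivial algebraic closure (item 5), is distal of finite op-dimension (item 4), and admits an interpretable finite cover by a disjoint union of circular orders (item 1). Trivial algebraic closure forces this cover to be a bijection on orbits; combined with primitivity and the presence of the definable linear order $\leq_1$, this pins $M$ down as a single sort carrying a finite family of mutually orthogonal orders (the classification excludes interactions beyond what the finite cover records). The hypothesis that no two of $\leq_1,\ldots,\leq_n$ coincide or reverse each other then forces these $n$ orders to represent $n$ distinct orthogonal directions in the classified structure.

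To conclude that $M$ is the Fra\"iss\'e limit, by homogeneity it suffices to show that every finite set with $n$ linear orders embeds into $M$; equivalently, the age of $M$ is closed under free amalgamation. Orthogonality of the $n$ directions combined with distality supplies exactly this: any consistent prescription of the relative $\leq_i$-positions between two tuples is realized in $M$, and a standard back-and-forth argument identifies $M$ with the Fra\"iss\'e limit. The main obstacle is the bridge between the classification's abstract output---a finite cover by orthogonal linear/circular orders---and the concrete statement that the specific given orders $\leq_1,\ldots,\leq_n$ realize these orthogonal directions without any further identification, refinement, or imaginary quotient; the assumption that no two orders coincide or are reverses, together with primitivity, is precisely what rules out such collapses.
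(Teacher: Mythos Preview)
Your approach diverges substantially from the paper's, and has a real gap.

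The paper does \emph{not} verify $(\star)$ for $M$. In particular it never proves that $M$ has rank~1 in the sense of Definition~\ref{def:rank}; indeed, Question~7.2 leaves open whether every primitive binary structure has rank~1. Instead the paper proceeds much more lightly, using only Sections~\ref{sec:prelim}, \ref{sec:linear orders} and \ref{sec:permutations}: since $M$ is binary it has \emph{finite} rank (Lemma~7.1); since $M$ is primitive each $(M,\leq_i)$ is topologically primitive, hence by Lemma~7.3 each $(M,\leq_i)$ has \emph{topological} rank~1. Transitivity then makes each order minimal, so Proposition~\ref{prop:iso types of n orders} applies directly and classifies the possibilities. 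Homogeneity in the pure order language rules out any intertwining, because the intertwining relation $R_{ij}$ is not quantifier-free definable from the orders. That is the whole argument.

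Your rank~1 verification is the concrete gap. The claim that ``each infinite definable set contains an open box in each of the $n$ coordinate orders'' presupposes exactly the density statement (Proposition~\ref{lem6} or Theorem~\ref{th:all order types}) that the classification is meant to establish; at this point of the argument you have no description of definable sets, so the reasoning is circular. Even granting $(\star)$, the passage from Theorem~\ref{th:list of properties} and the finite cover $W$ back to the statement that the \emph{given} orders $\leq_1,\ldots,\leq_n$ are independent and generate everything is left as an intuition (``pins $M$ down'', ``orthogonality\ldots supplies exactly this''); the paper's route via Proposition~\ref{prop:iso types of n orders} makes this step explicit and avoids the entire machinery of Section~\ref{sec:classification}.
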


The proof of this last theorem requires only a small part of the paper, namely Sections \ref{sec:prelim}, \ref{sec:linear orders} and \ref{sec:permutations}. The imprimitive case is classified in \cite{perm_imprimitive}, joint with Samuel Braunfeld.

\subsection{Overview of the proof}

Let $M$ be an $L$-structure that satisfies $(\star)$.
The starting point for this work is the result proved in \cite{linear_orders} that any NIP $\omega$-categorical unstable structure interprets a linear order. In fact more is true: Guingona and Hill introduce in \cite{GuinHill} the notion of op-dimension, which tells us the maximal number of independent orders that a structure (or type) can have. The main theorem of \cite{linear_orders} says---in the $\omega$-categorical case---that if $M$ is NIP of op-dimension at least $n$, then we can find some infinite definable set $X$ on which we can interpret $n$ linear orders. By transitivity of $M$, the family of conjugates of $X$ covers $M$.

In Section \ref{sec:linear orders}, we show that any extra structure on a rank 1 linear order must be dense with respect to the order and that different definable orders can interact only in a few prescribed ways. This is extended to circular orders in Section \ref{sec:circular orders}. (Those sections make no use of NIP.) This allows us to glue the orders coming from various conjugates of $X$ together. Each order might then wrap around itself, yielding a circular order. We construct in this way a $0$-definable finite family $W$ of linear and circular orders. We also show that this $W$ is a finite cover of $M$, that is it admits a finite-to-one map onto $M$. This is all done in Section \ref{sec:classification}.

We then have to analyze the additional structure on $W$. Using op-dimension, we show that any additional structure must come from stable formulas. By rank 1, those formulas cannot fork. Using finiteness of the number of non-forking extensions, those formulas can be defined from \emph{local} equivalence relations with finitely many classes. Here \emph{local} means that the equivalence relation is only defined locally, on bounded intervals of the orders, and may not glue as an equivalence relation on the whole structure. Such relations are studied in Section \ref{sec:local relations}, in which a purely topological discussion shows that they must come from connected finite covers of circular orders.

%

\subsection*{Acknowledgments}

Thanks to  Alf Onshuus, Dugald Macpherson, Udi Hrushovski, Gregory Cherlin and Sam Braunfeld for helpful discussions and for looking over parts of those results. Thanks also to David Bradley-Williams for pointing out the application to multi-orders. Finally, many thanks are due to the referees for reading through several versions of this paper and producing remarkably detailed reports that helped a lot in improving correctness and readability.

\section{Preliminaries}\label{sec:prelim}

\subsection{Model theoretic terminology}

We will use standard model-theoretic notation and terminology. Lowercase letters such as $a,b,c,x,y,z$ will usually denote finite tuples of elements or variables: $a\in M$ means $a\in M^{|a|}$. We will sometimes write say $\bar a,\bar x$ if we want to emphasize this.

For the sake of completeness, we recall some basic definitions. More details can be found in any introductory book on model theory, for instance \cite{Marker} or \cite{TentZieg}. We first give the general definitions that make sense in arbitrary structures, and then give equivalent formulations in terms of automorphism groups, that are only valid in the $\omega$-categorical case, over finite set of parameters.

We work in a structure $M$, in a countable language $L$. Let $B\subseteq M$ be any set. A subset $X\subseteq M^k$ is \emph{definable over $B$}, or \emph{$B$-definable} if it is the solution set of a first-order formula $\phi(x;b)$, where $b$ is a tuple of parameters from $B$. A set is \emph{definable} if it is definable over some $B$. We write \emph{$0$-definable} to mean $\emptyset$-definable. The notation $M\models \phi(a;b)$ and $a\models \phi(x;b)$ both mean that $\phi(a;b)$ is true in $M$. Since we will work throughout in a fixed structure $M$, we will usually not indicate it and simply write $\models \phi(a;b)$ instead of $M\models \phi(a;b)$. If $\pi(x)$ is a set of formulas all with variable $x$, we write $a\models \pi(x)$ to mean $a\models \phi(x)$ for all $\phi(x)\in \pi(x)$.

The \emph{type} (or \emph{complete type}) of a tuple $a \in M^k$ over  $B$, denoted $\tp(a/B)$, is the set of formulas $\phi(x;b)$, $|x|=|a|$, with parameters $b$ in $B$ that hold of $a$. If $B=\emptyset$, we may omit it. If $p=\tp(a/B)$, we usually write $p\vdash \phi(x;b)$ to mean $\phi(x;b)\in p$. The set of types in $k$ variables over $B$ is denoted $S_k(B)$. We sometimes omit $k$ if it is clear from the context, or irrelevant. We write $a\equiv_B a'$ to mean that $a$ and $a'$ have the same type over $B$. We will often abuse terminology by saying that a formula $\phi(x)$ is a complete type if it implies a complete type, or in other words can be uniquely extended to a complete type. Similarly, we will say that a definable set $X$ is a complete type if it is the set of realization of a complete type, or equivalently is defined by a complete type.

The \emph{definable closure} of a set $A$, denoted $\dcl(A)$, is the set of elements $c\in M$ for which there exists a formula $\phi(x;a)$, $a$ a tuple from $A$, of which $c$ is the only solution in $M$. Similarly the \emph{algebraic closure} of a set $A$, denoted $\acl(A)$, is the set of elements $c\in M$ for which there exists a formula $\phi(x;a)$, $a$ a tuple from $A$, satisfied by $c$ and which has only finitely many solutions in $M$.

It is often important to consider not only definable subsets of $M$ (or $M^k$), but also quotients of definable subsets by definable equivalence relations. A convenient way to do this is to introduce a multisorted structure $M^{eq}$ in which such quotients are represented by definable sets. More precisely, $M^{eq}$ has a sort $M_E$ for every $0$-definable equivalence relation $E$ on some $M^n$. The sort $M_E$ is interpreted as the quotient of $M$ by $E$. The sort $M_{=}$ is identified with $M$ and equipped with the same structure as $M$. Furthermore, for each $E$ as above, we equip $M^{eq}$ with the canonical projection map $\pi_E$ from $M^n$ to $M_E$. One can then show that, for any $A\subseteq M$, a subset of $M_E$ is $A$-definable in $M^{eq}$ if and only if its pre-image under $\pi_E$ is $A$-definable in the original $M$. In particular, the original $M$ and the copy of $M$ inside $M^{eq}$ have the same definable sets.

\smallskip
We recall that a countable structure $M$ is \emph{$\omega$-categorical} if any of the following equivalent conditions is satisfied:

-- For any $n<\omega$, there are finitely many types of the form $\tp(a/\emptyset)$, with $a\in M^n$.

-- For any finite $A\subseteq M$, there are finitely many 1-types $\tp(a/A)$, with $a\in M$ a singleton.

-- For any $n<\omega$, the action of $\aut(M)$ on $M^n$ has finitely many orbits.

-- Any countable $N$ elementarily equivalent to $M$ is isomorphic to it.

\smallskip
Assume from now on that $M$ is $\omega$-categorical. Then one can define most model-theoretic notions using the automorphism group alone (at least over finite parameter sets). Let $A\subseteq M$ be finite. A subset $X\subseteq M^n$ is $A$-definable if and only if it is (setwise) invariant under the group $\aut(M)_A$ of automorphisms fixing $A$ pointwise. In particular, $X$ is $0$-definable if and only if it is $\aut(M)$-invariant. 

Still assuming that $A$ is finite, two tuples $a$ and $a'$ have the same type over $A$, denoted $a\equiv_A a'$, if and only if there is an automorphism of $M$ fixing $A$ pointwise and sending $a$ to $a'$. Thus types over $A$ are in natural bijection with orbits of $\aut(M)_A$.

An element $c$ of $M$ is in the definable closure of $A$ if and only if it is fixed by $\aut(M)_A$. Similarly, $c$ is in the algebraic closure of $A$ if and only if its orbit under $\aut(M)_A$ is finite.


We will often consider the algebraic closure evaluated in $M^{eq}$: $\acl^{eq}(a)$. This can be though of as containing a name for each equivalence class of $a$ under a 0-definable equivalence relation with finitely many classes. In particular, a subset $X\subseteq M^n$ is definable over $\acl^{eq}(\emptyset)$ if and only if it has finitely many conjugates under the automorphism group of $M$. If $a$ and $b$ are in $M$, then $a\in \acl(b)$ if and only if $a \in \acl^{eq}(b)$, since the definable subsets of $M^k$ are the same seen in $M$ or in $M^{eq}$. The \emph{strong type} of $a$ over $A$ is the type of $a$ over $\acl^{eq}(A)$: two elements have the same strong type over $A$ if they are equivalent for every $A$-definable equivalence relation with finitely many classes.

Let $A\subseteq M$ be any set of parameters and let $X\subseteq M^k$ be an $A$-definable set. We say that $X$ is \emph{transitive} over $A$ if any two elements of $X$ have the same type over $A$. Note that since $X$ is $A$-definable, any element of $M^k$ having the same type as a member of $X$ is itself in $X$. Thus an $A$-definable set $X$ is transitive over $A$ if and only if $\aut(M)_A$ acts transitively on it. This is just another way of saying that $X$ is a complete type over $A$. Similarly, we say that the $A$-definable set $X$ is \emph{primitive} over $A$ if the action of $\aut(M)_A$ on $X$ is primitive, or equivalently $X$ does not admit any non-trivial $A$-definable equivalence relation. If $A=\emptyset$, then we will usually omit ``over $A$".

\smallskip
We say that two structures $M$ and $N$ are \emph{inter-definable} if they have the same universe and the same $0$-definable sets (in all cartesian powers). Hence $M$ and $N$ are essentially the same structure, but in possibly different languages. We say that $M$ and $N$ are \emph{bi-definable} if $M$ is inter-definable with a structure isomorphic to $N$ (or equivalently $N$ is inter-definable with a structure isomorphic to $M$).

\smallskip
\textbf{Assumption}: Throughout this paper, we work in an $\omega$-categorical structure $M$ in a language $L$. That assumption will in general not be recalled, and is implicitly assumed in all statements.

\subsection{Homogeneous structures}

We will call a countable structure $M$ in a relational language $L$ \emph{homogeneous} if for any finite $A\subseteq M$ and $\sigma\colon A\to M$ a partial isomorphism (that is, $\sigma\colon  A \to \sigma(A)$ is an isomorphism, where $A$ and $\sigma(A)$ are equipped with the induced structure from $M$), there is an automorphism $\tilde \sigma\colon  M \to M$ that extends $\sigma$. This is also sometimes called \emph{ultrahomogeneous}.

We call a structure $M$ \emph{finitely homogeneous} if it is homogeneous and its language is finite and relational. A structure $M$ is \emph{finitely homogenizable} if it is inter-definable with a finitely homogeneous structure. Note that any finitely homogenenizable structure is $\omega$-categorical: since the language is finite relational, the number of isomorphism types of substructures of a fixed size $n<\omega$ is finite, hence by homogeneity, the action of $\aut(M)$ on $M^n$ has finitely many orbits.

It is easy to see that a structure $M$ is finitely homogenizable if and only if there is $k<\omega$ such that the following two conditions hold:

\begin{itemize}
	\item There are finitely many types of $k$-tuples of elements of $M$.
	\item For any $n<\omega$, any two $n$-types $p(\bar x)$ and $q(\bar x)$ of tuples of elements of $M$ are equal if and only if they have the same restriction to any set of $k$-variables.
\end{itemize}

\subsection{Linear orders and their reducts}

There is only one countable homogeneous linear order: $(\mathbb Q,\leq)$. It is also the only $\omega$-categorical linear order with transitive automorphism group. Its reducts follow from Cameron's result on highly homogeneous permutations groups \cite{Cameron:linear}: there are five of them. Apart from the trivial reduct to pure equality, there are three unstable proper reducts:

\begin{itemize}
	\item the generic betweenness relation $(\mathbb Q;B(x,y,z))$, where \[B(x,y,z) \leftrightarrow (x\leq y \leq z)\vee (z\leq y \leq x);\]
	\item the generic circular order $(\mathbb Q;C(x,y,z))$, where \[C(x,y,z) \leftrightarrow (x\leq y\leq z)\vee (z\leq x\leq y) \vee (y\leq z\leq x);\]
	\item the generic separation relation $(\mathbb Q;S(x,y,z,t))$, where \begin{eqnarray*}S(x,y,z,t) \leftrightarrow (C(x,y,z)\wedge C(y,z,t)\wedge C(z,t,x)\wedge C(t,x,y)) \vee \\ (C(t,z,y)\wedge C(z,y,x)\wedge C(y,x,t) \wedge C(x,t,z)).\end{eqnarray*}
\end{itemize}

The automorphism group of the betweenness relation is generated by the automorphisms of the linear order along with a bijection that reverses the order, for instance $x\mapsto -x$. Similarly, the automorphism group of the separation relation is generated from that of the circular order along with an order-reversing bijection.

Depending on the context, order will mean either linear order or circular order; by default linear.
Linear and circular orders will play an essential role in this paper, but the betweenness and separation relations will not explicitly appear. They will be accounted for in the analysis by having every order come with a dual in order-reversing bijection with it. Thus the betweenness relation for example will be present in our classification as the quotient of two linear orders in order-reversing bijection.

\subsection{Rank}\label{sec:rank}

We define rank as in \cite[Section 2.2.1]{CherHru}, restricting to the $\omega$-categorical context. This notion of rank also coincides with what is now called thorn-rank, which is defined for any structure: see \cite[Definition 4.1, Remark 4.1.9]{On1}. We start by giving the definition as it appears in \cite{CherHru}, then discuss it briefly before giving an equivalent definition. Hopefully, this will help the reader gain some intuition of this notion.

\begin{definition}\label{def:rank0}
  Given a definable set $D\subseteq M^{eq}$ and ordinal $\alpha$, we define inductively $\rk(D)\geq \alpha$:
  \begin{itemize}
  \item $\rk(D)\geq 1$ if and only if $D$ is infinite;
  \item $\rk(D)\geq \alpha+1$ if there is in $M^{eq}$ a definable set $D_1$, a definable finite-to-one map $\pi\colon D_1 \to D$ and a map $f\colon D_1 \to D_2$ such that $D_2$ is infinite and $\rk(f^{-1}(d))\geq \alpha$ for all $d\in D_2$.
  \item for limit $\lambda$, $\rk(D)\geq \lambda$ if $\rk(D)\geq \alpha$ for all $\alpha<\lambda$.
  \end{itemize}  
\end{definition}

The rank of a definable set $D$ is either an ordinal or $\infty$ in the case where $\rk(D)\geq \alpha$ for all $\alpha$. We say that a structure $M$ is \emph{ranked} if $rk(M)<\infty$. The rank of a type $\tp(a/b)$, denoted $\rk(a/b)$, is the minimal rank of a $b$-definable set containing $a$ (in $M^{eq}$).

\medskip
To get some intuition on this definition, start by considering the case where $D_1 = D$ and $\pi$ is the identity. Then the definition implies that if there is $f\colon D \to D_2$ with $D_2$ infinite (hence of rank $\geq 1$) and all fibers of rank $n$, then the rank of $D$ is at least $n+1$. In fact, we will see that in structures where the rank is finite, we have the stronger property that given any definable $f\colon D\to D_2$, if $\rk(D_2) = n$ and all fibers of $f$ have rank $m$, then $D$ has rank $n+m$. This is a familiar property shared by many notions of dimension. It would be tempting to define the rank as the minimal notion of dimension satisfying this property (along with the first condition above on infinite sets). In fact we see that the definition we gave is slightly more complicated. We add the possibility to replace $D$ by a finite cover $D_1$ of it. This does not change the rank ($\rk(D_1) = \rk(D)$, as we will see below), but might be required to find the map $f$. Here is an example illustrating  why this can be necessary.

Let $M$ be an infinite set with no structure and let $D$ be the set of subsets of $M$ of size 2. This is naturally a definable set in $M^{eq}$: it is a quotient of $D_1:=\{(a,b)\in M^2 : a\neq b\} \subseteq M^2$. The projection map $\pi \colon D_1 \to D$ has all fibers of size 2. It is not too hard to see that there is no definable map $f:D\to D_2$ to some infinite definable set $D_2$ in $M$ with infinite fibers (intuitively, we cannot choose an element from each subset in a definable way). However, there is such a map $f\colon D_1 \to M$, for instance $f \colon (a,b) \mapsto a$ and hence $D_1$ has rank 2 as it should be.

Definition \ref{def:rank0} is rather impractical given the quantification on the finite cover $D_1$ and the fact the it takes place in $M^{eq}$. We now give a second definition which might be more palatable, and prove right after that the two definitions are equivalent.

By a \emph{uniformly definable} family $(X_t : t\in E)$, we mean that $E$ is a definable set and there is a formula $\phi(x;y)$ such that for $t\in E$, $X_t$ is the set defined by $\phi(x;t)$. For the purposes of the following definition, we will say that a family $(X_t : t\in E)$ is \emph{weakly $k$-inconsistent} ($k$ a natural number) if any $k$ \emph{pairwise distinct} members of the family have trivial intersection.

\begin{definition}\label{def:rank}
  This definition takes place in $M$ (not $M^{eq}$). Given a definable set $D\subseteq M^{k}$ and ordinal $\alpha$, we define inductively $\widetilde {\rk}(D)\geq \alpha$:
  \begin{itemize}
  \item $\widetilde \rk(D)\geq 1$ if and only if $D$ is infinite;
  \item $\widetilde \rk(D)\geq \alpha+1$ if there is a uniformly definable family $(X_t:t\in E)$ of subsets of $D$ which is weakly $k$-inconsistent for some $k$, contains infinitely many pairwise distinct sets, and such that $\widetilde \rk(X_t)\geq \alpha$ for each $t\in E$;
  \item for limit $\lambda$, $\widetilde \rk(D)\geq \lambda$ if $\widetilde \rk(D)\geq \alpha$ for all $\alpha<\lambda$.
  \end{itemize}
\end{definition}

Note that since $M^{eq}$ does not add new definable subsets of $M^k$, this definition in fact does not change if we allow $E$ and the family $(X_t:t\in E)$ to be definable in $M^{eq}$.

\begin{prop}
For any structure $M$ and any definable set $D\subseteq M^k$, we have $\rk(D) = \widetilde \rk(D)$.
\end{prop}
\begin{proof}

We first show that for any $\alpha$, if the definable map $\pi\colon D' \to D$ is finite-to-one and onto, then $\widetilde\rk(D')\geq \alpha \iff \widetilde\rk(D)\geq \alpha$. This can be shown by an easy induction on $\alpha$: In one direction, given a weakly $k$-inconsistent family of subsets of $D$, we can pull it back by $\pi$ to obtain a weakly $k$-inconsistent family of subsets of $D'$. In the other direction, let $l$ be the maximal size of a fiber of $\pi$ (which exists by $\omega$-categoricity). Then if $(X_t:t\in E)$ is a weakly $k$-inconsistent family of subsets of $D'$, it is not hard to see that the family of images $(\pi(X_t):t\in E)$ is a weakly $kl$-inconsistent family of subsets of $D$.

We now show by induction on $\alpha$ that $\rk(D) \geq \alpha \iff \widetilde \rk(D)\geq \alpha$. For $\alpha = 1$, this follows from the first point in both definitions. Assume we know it for some $\alpha$ and that $\rk(D)\geq \alpha+1$ as witnessed by $D_1, D_2, \pi$ and $f$. Here, $D_1$ and $D_2$ are definable subsets of $M^{eq}$. For $d\in D_2$, let $X_d = \pi(f^{-1}(t)) \subseteq D$. Then the family $(X_d : d\in D_2)$ is uniformly definable in $M^{eq}$. By construction, there is a natural number $k$ such that this family is $k$-inconsistent. By definition of $M^{eq}$, there is some $E\subseteq M^l$ and a definable surjection $g\colon E \to D_2$ in $M^{eq}$. For $t \in E$, let $X'_t = X_{g(t)}$. Then the family $(X'_t:t\in E)$ is uniformly definable in $M^{eq}$, hence also in $M$ since $M^{eq}$ does not add any definable subsets in powers of $M$. It is also weakly $k$-inconsistent. By induction hypothesis $\widetilde \rk(f^{-1}(d)) \geq \alpha$ for each $d\in D_2$ hence also $\widetilde \rk(X'_t)\geq \alpha$ for each $t\in E$ by the first paragraph of this proof. Therefore $\widetilde \rk (D)\geq \alpha+1$.

Assume now that $\widetilde \rk(D) \geq \alpha+1$ as witnessed by the weakly $k$-inconsistent family $(X_t:t\in E)$. Let $\sim$ be the equivalence relation on $E$ defined by $t\sim t'$ if $X_t = X_{t'}$. Let $D_2 = E/\sim$ seen as a subset of $M^{eq}$. For $t\in D_2$, we define $X_t$ in the natural way. Let $D_1 = \{(a,t)\in D\times D_2 : a\in X_t\}$. Then the canonical projection $\pi\colon D_1 \to D$ is $k$-to-one. Let $f:D_1 \to D_2$ be the other canonical projection. For any $t\in D_2$, $\widetilde \rk(f^{-1}(t))\geq \alpha$ as it admits a one-to-one projection to $X_t$. By induction, $\rk(f^{-1}(t))\geq \alpha$. Hence by definition, $\rk(D)\geq \alpha+1$.
\end{proof}

If $M_=$ is an infinite set with no structure, then it has rank 1 since every infinite definable subset of $M_=$ is cofinite, hence there are no infinite weakly $k$-inconsistent families of definable subsets of $M_=$. Another example of a rank 1 structure is a dense linear order $(M,\leq)$. Here is a sketch of a proof of this: If $(X_t:t\in E)$ is a uniformly definable weakly $k$-inconsistent family of intervals of $M$, then the set of left endpoints of those intervals cannot contain an interval, hence it has to be finite. Similarly for the set of right end points. It follows that the family contains only finitely many  sets. A same argument can be made for a family of unions of $n$ intervals. By quantifier elimination, every definable set is a union of finitely many intervals, hence we are done.

\medskip
We state some basic properties of the rank. See \cite[Section 2.2.1]{CherHru} for proofs of (1)-(5); (6) and (7) are simple consequences of (5).

\begin{prop}\label{prop:basic rank}
	\begin{enumerate}
		\item $\rk(a/b)=0$ if and only if $a\in \acl^{eq}(b)$.
		\item $\rk(D_1 \cup D_2)=\max (\rk(D_1),\rk(D_2))$.
		\item If $B_1\subseteq B_2$, then $\rk(a/B_1)\geq \rk(a/B_2)$
		\item If $D$ is definable over $B$, then there is $a\in D$ such that $\rk(a/B)=\rk(D)$.
		\item If $\rk(a/bc)$ and $\rk(b/c)$ are finite, then so is $\rk(ab/c)$ and we have \[\rk(ab/c)=\rk(a/bc)+\rk(b/c).\]
		  In particular, if $a'\in \acl^{eq}(ab)$, then by point 1 above, $\rk(a'/ab)=0$, hence $\rk(a'/b)\leq  \rk(aa'/b)= \rk(a/b)$.
                \item In a finite rank structure we have \[\rk(ab) = \rk(a/b) + \rk(b) = \rk(b/a)+\rk(a).\]
                \item For tuples $a, b, a'$ in a finite rank structure, we have $a' \in \acl(ab)$ if and only if $\rk(aa'/b) = \rk(a/b)$. If this holds, we have in particular $\rk(a'/b) \leq \rk(a/b)$. 
	\end{enumerate}
\end{prop}

From point 5, we deduce that if $M$ has finite rank, then any finite tuple of elements of $M$, or indeed $M^{eq}$, has finite rank.

The operator $\acl$ always defines a closure relation in the sense that $\acl(\acl(A))=\acl(A)$ for all $A$ and $\acl(A)\subseteq \acl(B)$ whenever $A\subseteq B$. Assuming that $\rk(M)=1$, then it furthermore satisfies the exchange property: for all $A\subseteq M$ and two singletons $a,b\in M$, we have:
\[b\in \acl(Aa)\setminus \acl(A) \iff a\in \acl(Ab)\setminus \acl(A).\]

A set $M$ equipped with a closure operator is a \emph{pregeometry} if the closure operator satisfies the exchange property. Thus if $M$ has rank one, then algebraic closure defines a pregeometry on $M$.

We can then define independent sets and bases as one does for vector spaces, with $\rk$ playing the role of dimension. We will only make mild use of this fact.


\begin{lemma}\label{lem:exist maximal rank}
  Let $p(x)\in S(A)$ be a type of finite rank $n$ and let $B\supseteq A$, then there is a tuple $d$ realizing $p$ such that $\rk(d/B) = n$.
\end{lemma}
\begin{proof}
  Let $\pi(x)$ be the conjunction of all formulas of the form $\neg \psi(x,b)$, with $b\in B$ such that for some formula $\phi(x,a)$ in $p(x)$, $\rk(\phi(x,a)\wedge \psi(x,b))<n$. Then $\pi(x)$ contains $p(x)$ and by point 2 above, it is finitely consistent. Now take $d$ realizing $\pi(x)$.
\end{proof}

\begin{lemma}\label{lem:equivalence relation rank}
  Let $A$ be a set of parameters; let $D$ be an $A$-definable set and $E$ an $A$-definable equivalence relation on $D$. Let $D_1 \subseteq D$ be an $E$-class which is not algebraic over $A$ (in other words, the corresponding element of $M^{eq}$ is not in $\acl^{eq}(A)$). Then $\rk(D_1)<\rk(D)$.
\end{lemma}
\begin{proof}
Seeing $D/E$ as a subset of $M^{eq}$, let $d_1 \in D/E$ be the element of $M^{eq}$ corresponding to $D_1$. Let $F \subseteq D/E$ be the definable set of elements having the same type as $d_1$ over $A$. Consider the definable family $(D_t : t\in F)$, where $D_t\subseteq D$ is the $E$-class coded by $t$. Then for each $t\in F$, $\rk(D_t) = \rk(D_1)$ since $\tp(t/A) = \tp(d_1/A)$. As $d_1$ is not algebraic over $A$, $F$ is infinite. Furthermore the $D_t$'s are pairwise disjoint. It follows form the definition of the rank that $\rk(D_1)=\rk(D_t) < \rk(D)$.
\end{proof}

Still following \cite{CherHru}, we define rank independence.

\begin{definition}
	($M$ has finite rank.) Say that two tuples $a$ and $b$ are \emph{independent over $E$} and write $a\ind_E b$ if \[ \rk(ab/E)=\rk(a/E)+\rk(b/E).\]
\end{definition}

This is a symmetric notion in $a$ and $b$ and it satisfies transitivity: $a$ and $bc$ are independent over $E$ if and only if $a$ and $b$ are independent over $Ec$ and $a$ and $c$ are independent over $E$.

Finally, we show the following useful property.

\begin{lemme}\label{lem:independence_acl}
($M$ has finite rank.) Two tuples $a$ and $b$ are independent over $e$ if and only if $\rk(a/be) = \rk(a/e)$. Furthermore if this holds then $\acl^{eq}(ae)\cap \acl^{eq}(be)=\acl^{eq}(e)$.
\end{lemme}
\begin{proof}
We have in general $\rk(ab/e) = \rk(a/be) + \rk(b/e)$. The first statement then follows at once form the definition of independence.
  
Let now $d\in \acl^{eq}(ae) \cap \acl^{eq}(be)$ be a tuple. Assume that $d\notin \acl^{eq}(e)$, so that $\rk(d/e)\geq 1$.
We have:
\begin{align*}
\rk(a/bde) &\leq \rk(a/de) \\
&= \rk(ad/e) - \rk(d/e) \quad &\text{ using Proposition \ref{prop:basic rank}(6)}\\
&=\rk(a/e) - \rk(d/e) \quad &\text{ by Proposition \ref{prop:basic rank}(1) and }d\in \acl^{eq}(ae)\\
&\leq \rk(a/e) - 1 \quad &\text{ as }d\notin \acl^{eq}(e)\\
&<\rk(a/e).
\end{align*}

On the other hand, since $d\in \acl^{eq}(be)$, $\rk(abde) = \rk(abe)$ and $\rk(bde) = \rk(be)$. Applying Proposition \ref{prop:basic rank}(6) on both sides of

\[ \rk(abde) = \rk(abe)\]
we get \[ \rk(a/bde) +\rk(bde) = \rk(a/be) + \rk(be)\]
and since $\rk(bde) = \rk(be)$, \[ \rk(a/bde) = \rk(a/be).\]

Finally, $\rk(a/be)= \rk(a/e)$ since $a\ind_e b$.

From the series of inequalities above we have $\rk(a/bde) < \rk(a/e)$, but we have just shown $\rk(a/bde) = \rk(a/e)$; contradiction.
\end{proof}

\subsection{Stability}

Recall that a formula $\phi(x;y)$ is \emph{stable} (in some structure $M$) if for some integer $k$, we cannot find tuples $(a_i:i<k)$ and $(b_j:j<k)$ such that \[\phi(a_i;b_j) \iff i\leq j.\]

We say that the structure $M$ is \emph{stable} if all formulas are stable. Stability is preserved under elementary equivalence and we say that a complete theory $T$ is stable if some/any model of $T$ is stable.

We are concerned in this paper with unstable structures, but stable formulas will appear briefly at the end of the analysis in Section \ref{sec:stable structure}. There, we will need the following fact, which the reader not well acquainted with stability theory can take as a black box.





\begin{fact}\label{fact:finitely many non-forking extensions}
	Let $M$ be a ranked $\omega$-categorical structure and let $\phi(x;y)$ be a stable formula. Let $p\in S(A)$ be a type over some set $A\subseteq M$ and let $B\subseteq M$ be any set, then the set \[\left\{\tp_{\phi}(a/B) : a\models p, a\ind_A B\right\}\] is finite.
\end{fact}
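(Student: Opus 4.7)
The plan is to combine local stability theory for the stable formula $\phi$ with the $\omega$-categoricity of $M$. The two key ingredients are: (a) rank-independence implies non-$\phi$-forking, which is a standard consequence of the rank definition in the Cherlin--Hrushovski framework; and (b) for a stable formula $\phi$, each $\phi$-type admits only boundedly many non-$\phi$-forking extensions, and under $\omega$-categoricity this bound is in fact finite.

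First, I would reduce to a finite base set. Since $M$ has finite rank, $\rk(a/A) = \rk(a/A_0)$ for some finite $A_0 \subseteq A$, and monotonicity of rank then gives $a \ind_A B \Rightarrow a \ind_{A_0} B$. The $\phi$-type $\tp_\phi(a/A_0)$ is the restriction of $p$ to $\phi$-instances over $A_0$, hence is the same $\phi$-type $q_0$ for every $a \models p$. The set in the statement therefore embeds into
\[ \bigl\{\tp_\phi(a/B) : a \text{ realizes } q_0 \text{ and } a \ind_{A_0} B\bigr\},\]
and it suffices to bound this set.

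Next, I would use (a) to conclude that each $\tp_\phi(a/A_0 B)$ appearing here is a non-$\phi$-forking extension of $q_0$. The proof of (a) runs as follows: if some $\phi(x;b)$ holding of $a$ $\phi$-divided over $A_0$, an indiscernibility-and-compactness argument, using that in $\omega$-categorical $M$ the $\aut(M/A_0)$-orbit of $b$ is $A_0$-definable, produces a uniformly $A_0$-definable, infinite, $k$-inconsistent family of subsets of $\phi(x;b)$, each containing an $A_0$-conjugate of $a$; Definition \ref{def:rank} then yields a rank drop contradicting $\rk(a/A_0 B) = \rk(a/A_0)$. Stability of $\phi$ ensures $\phi$-dividing and $\phi$-forking coincide.

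Finally, I would invoke local stability theory: for stable $\phi$, Shelah's local $\phi$-rank $R(\cdot,\phi,2)$ is finite on every type, and the non-$\phi$-forking extensions of $q_0$ correspond to extensions to $\acl^{eq}(A_0)$ of the same $R(\cdot,\phi,2)$-rank. In the $\omega$-categorical case with $A_0$ finite, only finitely many such extensions occur: each corresponds to an $\aut(M/A_0)$-orbit of strong $\phi$-types extending $q_0$, and $\omega$-categoricity bounds the number of such orbits. Restricting these $\phi$-types back to $B$ gives the claimed finite bound. The main obstacle is the careful passage in step (a) from the $\phi$-dividing condition (phrased via an indiscernible sequence) to a $k$-inconsistent \emph{definable} family in the sense of Definition \ref{def:rank}; this requires $\omega$-categoricity to convert the indiscernible sequence into a definable orbit, and a Ramsey-style argument to transfer the $k$-inconsistency.
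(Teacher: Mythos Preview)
Your overall strategy is correct and matches the paper's: show that rank-independence implies non-$\phi$-forking, then invoke the local stability fact that a complete type has only finitely many non-$\phi$-forking $\phi$-extensions. The paper's proof is a two-line black-box argument: it cites \cite[Theorem 5.1.1]{On1} for step (a) (forking and thorn-forking coincide for stable formulas) and standard stability theory for step (b).

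Your sketch of a direct argument for (a) has a real gap, and it is precisely the one you flag. From $\phi(x;b)$ $\phi$-dividing over $A_0$ you get an $A_0$-indiscernible sequence $(b_i)$ with $\{\phi(x;b_i)\}$ $k$-inconsistent, and $\omega$-categoricity does make the orbit of $b$ over $A_0$ definable. But the family $\{\phi(x;b'):b'\equiv_{A_0} b\}$ need not be $k$-inconsistent, and a Ramsey argument only extracts a \emph{combinatorial} $k$-inconsistent subfamily, not a definable one in the sense of Definition~\ref{def:rank}. Passing from dividing to strong/thorn-dividing for stable $\phi$ is exactly the content of Onshuus's theorem, and his proof goes through the $\phi$-definition and canonical bases rather than Ramsey on the orbit. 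So unless you have a genuinely new trick here, you should cite Onshuus as the paper does.

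On (b): you do not need $\omega$-categoricity. For any stable $\phi$ in any theory, a complete type $p\in S(A)$ has only finitely many non-$\phi$-forking $\phi$-extensions to any $B$: each such extension is $\phi$-definable over $\acl^{eq}(A)$, the canonical parameter of the definition lies in $\acl^{eq}(A)$ and has fixed type over $A$ (since $p$ is complete), hence has finite orbit. Your reduction to finite $A_0$ and the detour through orbits of strong $\phi$-types are therefore unnecessary.
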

\begin{proof}
	(Assuming knowledge of stability theory: see for instance \cite[Chapter 1]{PillayBook}.) First, note that by \cite[Theorem 5.1.1]{On1}, forking and thorn-forking are the same for stable formulas. Hence if $a\models p$, $a\ind_A B$, then the partial type $p\cup \tp_{\phi}(a/B)$ does not fork over $A$. Since $\phi$ is stable, there are only finitely many non-forking extensions of $p$ to a $\phi$-type over $B$.
\end{proof}



\subsubsection{Strongly minimal sets}
We check that Theorem \ref{th:main1} holds in the stable case and for that assume familiarity with stability theory. None of this will be used later.

A structure $M$ is \emph{strongly minimal} if for any $N$ elementarily equivalent to $M$, any definable (over parameters) subset of $N$ is either finite or cofinite. If $M$ is $\omega$-categorical, then it is enough to check the condition for $N=M$. The classification of strongly minimal primitive $\omega$-categorical structures was established by Zilber \cite{Zilber_book} using model-theoretic methods. The paper \cite{CHL} gives an exposition of this result, as well as a shorter proof attributed to Cherlin and Mills, using the classification of finite simple groups. The results are expressed in terms of the {geometry} coming from algebraic closure. We have explained in Section \ref{sec:rank} how any rank 1 structure is equipped with a pregeometry whose closure operation is given by algebraic closure. If $M$ is primitive, this pregeometry is in fact a \emph{geometry}, meaning that $\acl(a)=\{a\}$ for any element $a\in M$. By the $\acl$-geometry of $M$, we mean the set $M$ equipped with the closure operator $\acl$.

\begin{fact}\label{fact:Zilber}
	If $M$ is strongly minimal, primitive and $\omega$-categorical, then either:
	\begin{enumerate}
		\item $M$ is a pure set;
		\item the $\acl$-geometry on $M$ is that of an infinite-dimensional affine space over a finite field;
		\item the $\acl$-geometry on $M$ is that of an infinite-dimensional projective space over a finite field.
	\end{enumerate}
\end{fact}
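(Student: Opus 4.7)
The plan is to follow the classical Zilber--Cherlin--Mills analysis in two phases: first reduce to the locally modular case, then classify the locally modular strongly minimal $\omega$-categorical geometries into the three types listed.

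For the first phase, I would show that the $\acl$-pregeometry on $M$ must be locally modular. The standard contrapositive argument is Zilber's theorem, which asserts that from a strongly minimal set whose geometry is not locally modular one can interpret an infinite field $K$ (via generic configurations of lines and points, using the non-modularity to construct the additive and multiplicative operations). Since $M$ is $\omega$-categorical, any structure interpretable in $M$ is $\omega$-categorical as well; but no infinite field is $\omega$-categorical, so one obtains a contradiction. The bulk of the work in this phase is Zilber's field reconstruction from a non-locally-modular pseudoplane.

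For the second phase, split according to whether the geometry is trivial, modular, or merely locally modular. If $\acl$ is trivial, meaning $\acl(A) = A$ for every finite $A \subseteq M$, then primitivity forces that no nontrivial $0$-definable relation on any $M^n$ exists (any such relation would yield an algebraic dependency or a $0$-definable equivalence relation contradicting primitivity), so $M$ reduces to a pure set, giving case~1. Otherwise, the Zilber--Hrushovski coordinatization argument produces a $0$-interpretable abelian group $G$ acting regularly on the rank-one set. By $\omega$-categoricity every element of $G$ has bounded order, so $G$ is an elementary abelian $p$-group, hence an $\bb F_p$-vector space. Analyzing the $0$-definable endomorphisms of $G$ (which form a locally finite integral domain by strong minimality, hence a finite field) produces $\bb F_q \supseteq \bb F_p$ such that $G$ is an $\bb F_q$-vector space, and $M$ is either the set of lines of $G$, i.e.\ the projective space $\bb P(G)$, in the modular case, or $G$ itself (or an affine coset) in the merely locally modular case, giving cases 3 and 2 respectively.

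The main obstacle is the first phase: Zilber's reconstruction of an infinite field from a non-locally-modular strongly minimal geometry is genuinely delicate, requiring careful manipulation of generic points and the geometric group configuration. The alternative approach, due to Cherlin and Mills and written up in \cite{CHL}, bypasses the field-interpretation step entirely by invoking the classification of finite simple groups to classify the primitive permutation group $\aut(M)$ acting on $M$, reading off the three geometric cases from that classification; this moves the difficulty to justifying and applying CFSG, but makes the model-theoretic content considerably lighter.
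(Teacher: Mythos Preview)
The paper does not prove this statement: it is stated as a \emph{Fact} and attributed to Zilber \cite{Zilber_book}, with the Cherlin--Mills exposition in \cite{CHL} via the classification of finite simple groups mentioned as an alternative. So there is no paper-side proof to compare against; your proposal is effectively a sketch of the cited arguments.

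Your two-phase outline (reduce to locally modular, then classify) and your Phase~2 are essentially the classical line and are fine in outline. Phase~1, however, contains a genuine error. You invoke ``Zilber's theorem, which asserts that from a strongly minimal set whose geometry is not locally modular one can interpret an infinite field.'' That is the Zilber \emph{trichotomy conjecture}, not a theorem, and it is false in general: Hrushovski's new strongly minimal sets are non-locally-modular but do not interpret an infinite field. What Zilber actually proved in the $\omega$-categorical setting is different: from a non-locally-modular $\omega$-categorical strongly minimal set one interprets a \emph{pseudoplane}, and then one shows by a direct combinatorial argument that no $\omega$-categorical strongly minimal pseudoplane exists. The contradiction is with the pseudoplane, not with a field; the field-reconstruction step you describe requires additional hypotheses (Zariski-type geometries, for instance) that are simply not available here. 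Your mention of the Cherlin--Mills CFSG route at the end is accurate and matches what the paper cites as the alternative.
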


Cases 2 and 3 do not completely determine $M$ up to bi-definability, but they do determine it up to finitely many possibilities corresponding to automorphism groups $G$ with $AGL_{\omega}(F_q)\subseteq G \subseteq A\Gamma L_{\omega}(F_q)$ in the affine case and $PGL_{\omega}(F_q)\subseteq G \subseteq P\Gamma L_{\omega}(F_q)$ in the projective case.

\begin{prop}\label{prop:stable main}
	A rank 1, primitive, stable, $\omega$-categorical structure $M$ is strongly minimal. For a given $n<\omega$, there are, up to bi-definability, finitely many such structures having at most $n$ 4-types.
\end{prop}
\begin{proof}
If $M$ is stable of finite rank, then rank-independence is the same thing as forking-indepedence: see \cite[Theorem 5.1.1]{On1}. Thus if $M$ is stable of rank 1, it is superstable of $U$-rank 1.
If $M$ is furthermore primitive, then $x=x$ is a complete strong type over $\emptyset$ and therefore for any definable set $D\subseteq M$, either $D$ or its complement forks over $\emptyset$. Hence by $U$-rank 1, either $D$ or its complement is finite. Therefore a stable, rank 1, primitive, $\omega$-categorical structure is strongly minimal.

	Fact \ref{fact:Zilber} and the remark following it describe the possibilities. We can assume that $M$ is not a pure set. Assume first that $M$ is affine over a field $F_q$, $q=p^n$. Then if we fix a point $a$ as the origin, making $M$ linear, and take $b,c$ colinear, we have $c=\lambda\cdot b$ for some $\lambda \in F_q$, defined in the worst case up to an element of $Gal(F_q/F_p)$. That Galois group has size $n$ and therefore the number of orbits goes to infinity with $q$. Hence so does the number of 3-types.
	The projective case is similar, except that we need to name two points to serve as 0 and $\infty$ and obtain that the number of 4-types goes to infinity with $q$.
\end{proof}

\subsection{NIP and op-dimension}


We recall some basic facts about NIP theories and refer the reader to \cite{NIPbook} for more details.

\begin{definition}\label{def_nip}
	A formula $\phi(x;y)$ is \emph{NIP} in $M$ if for some integer $k$, we cannot find tuples $(a_i:i<k)$ and $(b_J: J\in \mathfrak P(k))$ in $M$ with: \[M\models \phi(a_i;b_J)\iff i\in J.\]
\end{definition}

If a formula $\phi(x;y)$ is NIP, then it stays so in any structure $N$ elementarily equivalent to $M$. We say that the theory $T$ is NIP if for some/any model of $T$, all formulas are NIP.

By a result of Shelah, if all formulas $\phi(x;y)$ with $|x|=1$ are NIP, then the theory is NIP. Stable theories are NIP and so is for example the theory of dense linear orders.

The NIP condition can be characterized by counting $\phi$-types over finite sets. See \cite[Chapter 6]{NIPbook}. In the finitely homogeneous case, this becomes a particularly natural condition.

\begin{fact}\label{fait:sauer-shelah}
	A structure $M$ homogeneous in a finite relational language is NIP if and only if there is a polynomial $P(X)$ such that the number of types over any finite set $A$ is bounded by $P(|A|)$.
\end{fact}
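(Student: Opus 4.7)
The plan is to invoke the classical Sauer--Shelah lemma, which gives the quantitative translation between the VC-dimension of a formula and the count of its $\phi$-types, and combine it with quantifier elimination in finite relational languages to go from $\phi$-type counts to the count of full 1-types.

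For the forward direction, assume $M$ is NIP. Fix a finite relational language $L$ of maximal arity $r$. By homogeneity and the relational nature of $L$, there is quantifier elimination, so a 1-type $\tp(a/A)$ is determined by the quantifier-free diagram of $a$ over $A$, which in turn is determined by the values of atomic formulas $\phi(x;\bar y)$ with $|x|=1$ and $|\bar y|<r$ on $a$ and tuples from $A$. There are only finitely many such atomic $\phi$. Each such $\phi$ is NIP, hence by Sauer--Shelah has some finite VC-dimension $d_\phi$, so the number of $\phi$-types over $A$ is bounded by $\binom{|A|^{|\bar y|}}{0}+\cdots+\binom{|A|^{|\bar y|}}{d_\phi}=O(|A|^{|\bar y|\cdot d_\phi})$. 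Taking the product over the finitely many relevant $\phi$ bounds $|S_1(A)|$ by a polynomial in $|A|$.

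For the converse, assume $|S_1(A)|\le P(|A|)$ for a fixed polynomial $P$ and all finite $A\subseteq M$. Suppose for contradiction some formula $\psi(x;\bar y)$ with $|x|=1$ has IP. Then for every $n$ there exist $\bar b_1,\ldots,\bar b_n\in M^{|\bar y|}$ and $(a_J)_{J\subseteq[n]}$ with $\models\psi(a_J;\bar b_i)\iff i\in J$. Let $A=\bigcup_i \bar b_i$, so $|A|\le n|\bar y|$, while the $a_J$ realize at least $2^n$ distinct $\psi$-types, hence at least $2^n\ge 2^{|A|/|\bar y|}$ distinct 1-types over $A$. This contradicts the polynomial bound for $n$ large. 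So every $\psi(x;\bar y)$ with $|x|=1$ is NIP, and by Shelah's theorem that NIP in $|x|=1$ implies NIP in all variables, $M$ is NIP.

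There is no real obstacle: the content is the classical Sauer--Shelah dichotomy (polynomial vs.\ exponential growth of $\phi$-type counts) together with Shelah's reduction to $|x|=1$. The only specifically ``homogeneous'' input is quantifier elimination in a finite relational language, which lets one replace the a priori infinite collection of formulas by a bounded list of atomic formulas when counting 1-types.
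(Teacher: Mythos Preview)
Your proof is correct. The paper does not supply its own proof of this statement: it is stated as a Fact, with a pointer to \cite[Chapter 6]{NIPbook} for the general characterization of NIP via counting $\phi$-types, and the sentence preceding the Fact explains that in the finitely homogeneous case this characterization takes the particularly clean form stated. Your argument spells out precisely the expected details---quantifier elimination in a finite relational language reduces a 1-type over $A$ to a finite tuple of atomic $\phi$-types, each of which is polynomially bounded by Sauer--Shelah; conversely, IP for a formula in one object variable yields exponentially many 1-types over some finite parameter set, and the result of Shelah (mentioned just above the Fact in the paper) that NIP for $|x|=1$ suffices finishes the converse.
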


Note in particular, that if $M$ is NIP and homogeneous in a finite relational language, then the size of $S_n(\emptyset)$ is bounded by $P(1)\cdot P(2) \cdots P(n-1)$, where $P(X)$ is the polynomial given by the previous fact. Hence $|S_n(\emptyset)|=O(2^{cn\ln(n)})$ for some $c>0$.

We now give a short account of \cite{linear_orders} which establishes that NIP unstable theories interpret linear orders. First, we define op-dimension as in \cite{GuinHill}, which will allow us to determine how many independent orders we can hope to find.

\begin{definition}
	An \emph{ird-pattern} of length $\kappa$ for the partial type $\pi(x)$ is given by:
	
	\begin{itemize}
		\item a family $(\phi_\alpha(x;y_\alpha):\alpha<\kappa)$ of formulas;
		\item an array $(b_{\alpha,k}:\alpha<\kappa,k<\omega)$ of tuples, with $|b_{\alpha,k}|=|y_\alpha|$;
	\end{itemize}
	such that for any $\eta\colon  \kappa \to \omega$, there is $a_\eta\models \pi(x)$ such that for any $\alpha<\kappa$ and $k<\omega$, we have \[ \models \phi_\alpha(a_\eta;b_{\alpha,k}) \iff \eta(\alpha)<k.\]
\end{definition}

\begin{rem}
	This definition is from \cite[III.7.1]{Sh:c}. The letters \emph{ird} stand for \emph{independent orders}.
\end{rem}

\begin{definition}
	We say that $T$ has \emph{op-dimension} less than $\kappa$, and write $\dlr(T)<\kappa$ if, in a saturated model of $T$, there is no ird-pattern of length $\kappa$ for the partial type $x=x$.
\end{definition}

If a structure is NIP, then it has op-dimension less than $|T|^+$ (otherwise, we can assume $\phi_\alpha=\phi$ is constant and then $\phi$ has IP: we can take $\{b_{\alpha,0}:\alpha<\omega\}$ as the $a_i$'s in Definition \ref{def_nip}). Conversely, if for some cardinal $\kappa$, we have $\dlr(T)<\kappa$, then $T$ is NIP. (If $\phi(x;y)$ has IP, we can find by compactness an ird-pattern of any length with $\phi_\alpha =\phi$.)

We also define the op-dimension of a partial type $p(\bar x)$: $\dlr(p)<\kappa$ if, in a saturated model of $T$, there is no ird-pattern of length $\kappa$ for $p(\bar x)$. We let $\dlr(\bar a) = \dlr(\tp(\bar a/\emptyset))$ and $\dlr(\bar a/A) = \dlr(\tp(\bar a/A))$.

We say that $\dlr(p)=n$ if we have $\dlr(p)<n+1$, but not $\dlr(p)<n$ (and the same for $T$ instead of $p$).

\begin{fact}\label{fact:opdim}
	Op-dimension is sub-additive: $\dlr(\bar a\bar b/A) \leq \dlr(\bar b/A\bar a) + \dlr(\bar a/A)$. In particular, if $\bar b\subseteq \acl^{eq}(A\bar a)$, then $\dlr(\bar a\bar b/A) = \dlr(\bar a/A)$.
\end{fact}
\begin{proof}
	The first statement is \cite[Theorem 2.2]{GuinHill}. See also \cite[Section 4]{linear_orders}. The ``in particular" part follows from the fact that if $\bar b$ is algebraic over $\bar a$, then $\dlr(\bar b/\bar a)=0$ which is clear from the definition since the points $a_\eta$ there have to be pairwise distinct.
\end{proof}

By a linear quasi-order, we mean a reflexive, transitive relation $\leq$ for which any two elements are related. If $\leq$ is a linear quasi-order, then the associated strict order $<$ is defined by \[a<b \iff a\leq b \wedge \neg(b\leq a).\]

Furthermore, the relation $aEb \iff (a\leq b)\wedge (b\leq a)$ is an equivalence relation and $\leq$ induces a linear order on the quotient.

The main result of \cite{linear_orders} in the $\omega$-categorical case is the following.


\begin{fact}[\cite{linear_orders}, Theorem 6.14]\label{fact:linear orders from op-dimension}
If the theory $T$ is $\omega$-categorical, NIP, $\dlr(T)\geq n>0$, then there is a finite set $A$, a set $D$ definable and transitive over $A$ and $n$ $A$-definable linear quasi-orders $\leq_1,\ldots,\leq_n$ on $D$, such that the structure $(D;\leq_1,\ldots,\leq_n)$ contains an isomorphic copy of every finite structure $(X_0;\leq_1,\ldots,\leq_n)$ equip\-ped with $n$ linear orders.
\end{fact}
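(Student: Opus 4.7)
The starting point is the hypothesis $\dlr(x=x)\geq n$, which by definition gives an ird-pattern of length $n$ for $x=x$: formulas $(\phi_\alpha(x;y_\alpha):\alpha<n)$ and an array $(b_{\alpha,k}:\alpha<n,\,k<\omega)$ such that for every $\eta:n\to\omega$ some $a_\eta$ satisfies $\phi_\alpha(a_\eta;b_{\alpha,k})\iff\eta(\alpha)<k$. The plan is to convert each ``$\phi_\alpha$-row'' into a genuine $0$-definable (after naming parameters) linear quasi-order, and to use the full strength of the ird-pattern to ensure independence and universality.

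First I would clean up the pattern. By a standard Ramsey-compactness argument combined with $\omega$-categoricity (only finitely many types in each arity), one can refine the array so that each row $(b_{\alpha,k})_{k<\omega}$ is an indiscernible sequence, the rows are mutually indiscernible over a suitable base $A$, and the column types of the $a_\eta$'s are controlled. Next, for each $\alpha$ define
\[
x\leq_\alpha y\iff \forall k\,\bigl(\phi_\alpha(x;b_{\alpha,k})\to\phi_\alpha(y;b_{\alpha,k})\bigr).
\]
On the refined pattern this is reflexive and transitive, and the ird-pattern property produces witnesses whose $\leq_\alpha$-position is prescribed independently for each $\alpha$, so the quasi-orders are independent. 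The key point is definability: a priori $\leq_\alpha$ uses infinitely many parameters, but NIP bounds the alternation of $\phi_\alpha(x;b_{\alpha,k})$ along the indiscernible row, so the universal quantifier over $k$ collapses to a finite conjunction; then $\omega$-categoricity lets us replace this type-definable relation by a genuinely $A$-definable one over a finite $A\supseteq$ the parameters needed.

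Having obtained $A$-definable quasi-orders $\leq_1,\ldots,\leq_n$, I would take $D$ to be the $A$-definable, $A$-transitive set cut out by the complete type over $A$ of one (or all) of the witnesses $a_\eta$. On $D$ any two elements are $\leq_\alpha$-comparable (by transitivity of $A$ and reflexivity/transitivity of $\leq_\alpha$), so each $\leq_\alpha$ is a linear quasi-order on $D$. For universality, given a finite structure $(X_0;\leq_1,\dots,\leq_n)$ of size $m$, list $X_0=\{x_1,\dots,x_m\}$ and let $\pi_\alpha(i)$ be the $\leq_\alpha$-rank of $x_i$. Define $\eta_i:n\to\omega$ by $\eta_i(\alpha)=\pi_\alpha(i)$ and apply the ird-pattern (after possibly stretching indices) to obtain witnesses $a_{\eta_1},\dots,a_{\eta_m}\in D$; then $a_{\eta_i}\leq_\alpha a_{\eta_j}\iff\eta_i(\alpha)\le\eta_j(\alpha)\iff x_i\leq_\alpha x_j$, giving the desired embedding.

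The main obstacle is the definability step in the second paragraph: turning the infinite ``$\forall k$'' conjunction defining $\leq_\alpha$ into an actual first-order formula. This is exactly where both NIP (to bound the alternation rank of $\phi_\alpha$ on an indiscernible sequence, so only finitely many $k$ matter) and $\omega$-categoricity (to upgrade a type-definable set over finite parameters to a definable one) must cooperate; all other steps are either combinatorial bookkeeping (Ramsey extraction, witnessing) or unwinding the ird-pattern definition.
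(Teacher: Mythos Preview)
This statement is not proved in the paper at all: it is quoted as a black-box result from \cite{linear_orders}, Theorem~6.14, with no argument given. There is therefore no proof in the present paper against which to compare your proposal.

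Regarding your sketch on its own merits: you have correctly identified the definability step as the crux, but what you write there is a genuine gap rather than a routine technicality. Your relation
\[
x\leq_\alpha y \iff \forall k\,\bigl(\phi_\alpha(x;b_{\alpha,k})\to\phi_\alpha(y;b_{\alpha,k})\bigr)
\]
is defined using the \emph{entire} infinite row $(b_{\alpha,k})_{k<\omega}$ as parameters. NIP does bound the number of alternations of $k\mapsto\phi_\alpha(x;b_{\alpha,k})$ along an indiscernible sequence, but \emph{which} finitely many indices $k$ witness comparability of a given pair $(x,y)$ depends on that pair; this does not collapse the universal quantifier to a formula over a fixed finite parameter set $A$. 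Invoking $\omega$-categoricity does not help unless you first show $\leq_\alpha$ is invariant under $\aut(M)_A$ for some finite $A$, and an automorphism fixing finitely many $b_{\alpha,k}$ need not preserve the whole sequence. Relatedly, your argument that any two elements of $D$ are $\leq_\alpha$-comparable ``by transitivity of $A$ and reflexivity/transitivity of $\leq_\alpha$'' is circular: transitivity of the action only yields that conclusion once you already know $\leq_\alpha$ is $A$-definable.

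The actual proof in \cite{linear_orders} does not proceed by directly cleaning up the ird-pattern in this way; it passes through a more substantial analysis (honest definitions, invariant types and the structure of the space of types in an $\omega$-categorical NIP theory) to produce the interpretable orders. Your outline captures the right intuition but not a workable mechanism for the key step.
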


Note that by transitivity, for each $i$, the quotient of $D$ by the equivalence relation associated with $\leq_i$ is infinite and, using $\omega$-categoricity, $\leq_i$ induces on it a dense linear order without endpoints.

\subsubsection{Distality}

Distality was introduced in \cite{distal}. It is meant to capture the notion of a purely unstable NIP structure. We give here the equivalent definition from \cite{ExtDef2}.

\begin{definition}\label{def:distal}
	A structure $M$ is called \emph{distal} if for every formula $\phi(x;y)$, there is a formula $\psi(x;z)$ such that for any finite set $A\subseteq M$ and tuple $a\in M^{|x|}$, there is $d\in A^{|z|}$ such that $\psi(a;d)$ holds and for any instance $\phi(x;b)\in \tp(a/A)$, we have the implication \[M\models (\forall x) \psi(x;d)\to \phi(x;b).\]
\end{definition}

Assume that $M$ is homogeneous in a finite relational language. Then if $M$ is distal, there is an integer $k$ such that for any finite set $A$ and singleton $a\in M$, there is $A_0\subseteq A$ of size $\leq k$ such that $\tp(a/A_0)\vdash \tp(a/A)$. (That is, if $\tp(a'/A_0)=\tp(a/A_0)$, then $\tp(a'/A)=\tp(a/A)$.) In fact, the converse is also true, as can be seen by induction on $|x|$ in the definition above, but we will not need this.

For instance, DLO is distal, and we can take $k=2$. We will see in Theorem \ref{th:distal is fin axiom} that a distal finitely homogeneous structure is always finitely axiomatizable.

\section{Linear orders}\label{sec:linear orders}

We will consider definable linear orders $(V,\leq)$, meaning that the underlying set $V$ is parameter-definable and so is the order relation $\leq$. We will often abuse notation by denoting the pair $(V,\leq)$ by $V$, sometimes by $\leq$. If we have two definable orders $(V_0,\leq_0)$, $(V_1,\leq_1)$, it may happen that the underlying sets $V_0$, $V_1$ are equal. This will, however, be irrelevant for most of what we say and it might be more convenient to think of $V_0$ and $V_1$ as two disjoint copies of the same set. In any case, $V_0$ will mean the set equipped with the order $\leq_0$ and $V_1$ the set equipped with the order $\leq_1$. The reverse of the order $(V,\leq)$ is $(V,\geq)$.

Orders are always equipped with the order topology, and product of orders with the product topology. Hence, in the situation above, $V_0\times V_1$ is equipped with the product topology coming from $\leq_0$ on the first coordinate and $\leq_1$ on the second, regardless of whether the underlying sets $V_0$ and $V_1$ are equal or not.


\begin{lemme}\label{lem0} Let $(V,\leq)$ be a 0-definable infinite linear order, which is a complete type over $\emptyset$. Then the order $\leq$ is dense and for any $a\in V$, $\acl(a)\cap V = \{a\}$.
\end{lemme}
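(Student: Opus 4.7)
The key observation is that in the $\omega$-categorical $M$, the algebraic closure $\acl(a)$ of any singleton $a$ is finite. Hence any 0-definable function $f : V \to V$ with $f(a) \in \dcl(a)$ and $f(a) > a$ for all $a \in V$ is impossible: iterating would produce an infinite strictly increasing chain $a < f(a) < f^2(a) < \cdots$ lying in $\dcl(a) \subseteq \acl(a)$. I will prove both conclusions of the lemma by constructing exactly such an $f$ under a negation of the conclusion, using crucially that since $V$ is a complete 1-type over $\emptyset$, every 0-definable subset of $V$ is either empty or all of $V$.

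For density, suppose some $a < b$ in $V$ has no element of $V$ strictly between them. The formula $\theta(x) := \exists y\,\bigl(x<y \wedge \forall z\,\neg(x<z<y)\bigr)$ is 0-definable and satisfied by $a$, hence by completeness of $\tp(V/\emptyset)$ it holds on all of $V$. The immediate-successor map $s$ is then a 0-definable total function on $V$ with $s(a) > a$ and $s(a)\in\dcl(a)$, which is ruled out by the iteration argument.

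For $\acl(a) \cap V = \{a\}$, suppose on the contrary that some $b \in \acl(a) \cap V$ satisfies $b > a$ (the case $b < a$ is symmetric, using a largest predecessor). In an $\omega$-categorical structure the relation ``$y \in \acl(x)$'' on singletons is 0-definable, because the algebraic 2-types over $\emptyset$ form a finite subset of $S_2(\emptyset)$, each isolated by a formula. Hence the formula
\[
\sigma(x,y) \;:=\; x,y\in V \;\wedge\; x<y \;\wedge\; y \in \acl(x) \;\wedge\; \forall z\,\bigl(x<z<y \to z \notin \acl(x)\bigr)
\]
is 0-definable and, for each $a$, picks out at most one $y$ (the least element of $\acl(a)\cap V$ strictly above $a$). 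By hypothesis $\exists y\,\sigma(a,y)$ holds for our chosen $a$, so by completeness of $\tp(V/\emptyset)$ it holds for every $a\in V$; thus $\sigma$ is the graph of a 0-definable total function $s : V \to V$ with $s(a)\in\dcl(a)$ and $s(a)>a$, and the same iteration yields the contradiction.

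The one technical point needing care is the 0-definability of ``$y \in \acl(x)$'': this is what makes $s$ globally rather than merely $a$-locally definable and so permits the transitivity of $V$ to be applied uniformly. Once this is granted, both parts reduce to the single observation about iterating a strictly increasing, $\dcl$-valued, 0-definable map on $V$.
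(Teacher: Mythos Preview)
Your proof is correct and follows essentially the same approach as the paper's: in both parts you use transitivity of $V$ to propagate a ``there is a strictly larger element algebraic over me'' property, then iterate to produce infinitely many elements in $\acl(a)$, contradicting $\omega$-categoricity. The paper is terser (for density it just says a discrete order must be finite by $\omega$-categoricity, and for the second part it iterates in $\acl$ without bothering to pick the least witness), while you are more explicit in packaging the witnesses into a $0$-definable function and noting that the iterates land in $\dcl(a)$; but the underlying argument is the same.
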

\begin{proof}
If $\leq$ is not dense, then some point $a\in V$ has an immediate successor. Since $V$ is a complete type over $\emptyset$, all points have a successor and hence the order is discrete. By $\omega$-categoricity, $V$ is finite.

If $b\in \acl(a)\cap V$, say $b>a$, then again as $V$ is a complete type, there is $b_1>b$, $b_1\in \acl(b)$ and iteratively $b_{k+1}>b_k$, $b_{k+1}\in \acl(b_k)$. This gives infinitely many elements in $\acl(a)$, contradicting $\omega$-categoricity.
\end{proof}

A \emph{convex equivalence relation} on an order $(V,\leq)$ is an equivalence relation $E$ with convex classes: that is: \[ a\leq b\leq c \wedge a\rel{E}c \Longrightarrow a\rel{E}b.\] Such a relation is non-trivial if it has more than one class and is not equality.

\begin{defi}
	Let $(V,\leq)$ be an $A$-definable linear order.
\begin{itemize}
	\item We say that $(V,\leq)$ has \emph{topological rank 1} if it does not admit any definable (over parameters) convex equivalence relation $E$ with infinitely many infinite classes.
	\item We say that $(V,\leq)$ is \emph{weakly transitive} over $A$ if it is a dense order and any $A$-definable subset of $V$ is either empty or dense in $V$.
	\item We say that $(V,\leq)$ is \emph{minimal} over $A$ if it is weakly transitive over $A$, and has topological rank 1. If $A=\emptyset$, then we omit it.
\end{itemize}
\end{defi}

The name \emph{topological rank 1} comes from the fact that a rank 1 structure, in the sense of Section \ref{sec:rank}, cannot have a definable equivalence relation with infinitely many infinite classes. Here, we forbid such equivalence relations that have convex classes. We will not define topological rank in general.

Note that if $(V,\leq)$ is transitive over $A$, in the sense that it is a complete type over $A$, then it is weakly transitive over $A$. As an example, consider the structure $(\mathbb Q;\leq ,P)$, where $\leq$ is the usual order on $\mathbb Q$ and $P(x)$ is a unary predicate that is dense co-dense in $\mathbb Q$. Then the order $(\mathbb Q;\leq)$ is weakly transitive (over $\emptyset$), but is not a complete type.

\begin{lemma}\label{lem:preservation under dense subsets}
	\begin{enumerate}
		\item If $(V,\leq)$ is weakly transitive over $A$, then it has neither first nor last element.
		\item A definable subset of a topological rank 1 linear order has itself topological rank 1.
		\item If $(V,\leq)$ is an $A$-definable dense order without first or last element, and $W\subseteq V$ is a dense $A$-definable subset of $V$, then $V$ has topological rank 1 (resp. is weakly transitive over $A$, resp. is minimal over $A$) if and only if $W$ has the same property
	\end{enumerate}
\end{lemma}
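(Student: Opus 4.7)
The plan is to prove Part~1 first, by extending a given convex equivalence relation from $W$ to all of $V$, and then to deduce Part~2 by applications of Part~1 together with density arguments exploiting that $W$ is dense in $V$; Part~1 is the technical heart.

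For Part~1, let $W \subseteq V$ be definable and suppose for contradiction that $W$ carries a parameter-definable convex equivalence relation $E$ with infinitely many infinite classes. I construct a parameter-definable convex equivalence relation $\tilde E$ on $V$ with infinitely many infinite classes, contradicting topological rank~$1$ of $V$. For $a,b \in V$, declare $a \tilde E b$ iff, for every $E$-class $C \subseteq W$, the elements $a$ and $b$ have the same three-valued position with respect to $C$: both strictly above every element of $C$, both strictly below, or both lying in the convex hull of $C$ in $V$. This condition is first-order definable from $E$ with the same parameters, and is plainly an equivalence relation. Its classes are convex by a Dedekind-cut argument: if $a<b<c$ and $a \tilde E c$, then for every $E$-class $C$ the position of $b$ relative to $C$ is sandwiched between the (coinciding) positions of $a$ and $c$. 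By convexity of $E$-classes, each $E$-class $C$ is contained in a single $\tilde E$-class (its ``signature'' $C' \mapsto$~position relative to $C'$ is ``inside'' at $C$ itself, ``strictly above'' at every $C'<C$, ``strictly below'' at every $C'>C$), and distinct $E$-classes yield distinct $\tilde E$-classes, so $\tilde E$ has infinitely many infinite convex classes.

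For Part~2, the three properties are handled in parallel. For topological rank~$1$, the $(\Rightarrow)$ direction is Part~1 applied to $W\subseteq V$; for $(\Leftarrow)$, an infinite convex class of any convex equivalence relation on $V$ contains a nontrivial open interval of $V$ (by density without endpoints), which by density of $W$ meets $W$ in an infinite set, so the restriction of the equivalence to $W$ still has infinitely many infinite classes, contradicting topological rank~$1$ of $W$. For weak transitivity, $(\Rightarrow)$ is immediate, since any $A$-definable nonempty $X \subseteq W$ is $A$-definable in $V$, dense in $V$ by hypothesis, hence dense in $W$. For $(\Leftarrow)$, given an $A$-definable nonempty $X \subseteq V$, I split on $X \cap W$: if $X \cap W \neq \emptyset$, weak transitivity of $W$ makes it dense in $W$, hence dense in $V$; if $X \cap W = \emptyset$, consider the $A$-definable set $\overline{X} \cap W$, which by weak transitivity of $W$ is empty or dense in $W$, density forcing $\overline{X}$ dense in $V$, so $\overline{X}=V$ and $X$ dense. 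The residual subcase $\overline{X} \cap W = \emptyset$, combined with cofinality and coinitiality of $X$ in $W$ (consequences of weak transitivity applied to the downward-closed $A$-definable set $\{w\in W:\exists x\in X,\,w<x\}$ and its upward-closed dual, each of which must equal $W$), is ruled out by an accumulation argument using $\omega$-categoricity of the ambient structure. Minimality follows by combining the two previous cases.

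The main obstacle is the construction in Part~1 and the careful verification of convexity of $\tilde E$'s classes via the cut argument. In Part~2, the only delicate point is the residual subcase $\overline{X} \cap W = \emptyset$ of $(\Leftarrow)$ of weak transitivity, where $\omega$-categoricity of the ambient structure is used to rule out the coexistence of a nonempty $X \subseteq V \setminus W$ whose closure avoids $W$ with the cofinality and coinitiality of $X$ in $W$.
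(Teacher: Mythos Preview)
Your argument for Part~1 and most of Part~2 is correct and follows the paper's approach closely (your $\tilde E$ differs in detail from the paper's $\overline E$, but both extend a convex equivalence relation on $W$ to one on $V$ with infinitely many infinite classes).

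There is, however, a genuine gap in the $(\Leftarrow)$ direction for weak transitivity. In the residual subcase $\overline X \cap W = \emptyset$ you correctly deduce that $X$ is cofinal and coinitial, and then declare the configuration ``ruled out by an accumulation argument using $\omega$-categoricity of the ambient structure''. No such argument is given, and it is not clear one exists: cofinality, coinitiality, and $\omega$-categoricity by themselves do not preclude a closed nowhere-dense $A$-definable $X\subseteq V\setminus W$. For instance, take $V=\mathbb Q\times\mathbb Q$ with the lexicographic order, $E$ the convex equivalence relation of equal first coordinates, and $X=\{(q,0):q\in\mathbb Q\}$; then $\overline X=X$ is cofinal, coinitial, closed, and disjoint from $W:=V\setminus X$. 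In that example $W$ fails to be weakly transitive for an independent reason (the $\emptyset$-definable set ``above the $X$-point of my $E$-class'' is not dense in $W$), so it is not a counterexample to the lemma; but it shows that your listed ingredients do not by themselves force a contradiction. The paper's own proof glosses over exactly this point: it simply asserts that $W\cap\overline X$ is nonempty.

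The gap is easily closed for the ``minimal'' equivalence, which is what the paper actually uses downstream. In your residual subcase, define $w\sim w'$ on $W$ by: no element of $X$ lies strictly between $w$ and $w'$. This is an $A$-definable convex equivalence relation; since $\overline X\cap W=\emptyset$, every $w$ has an open $V$-neighbourhood disjoint from $X$, so every $\sim$-class is infinite, and since $X$ is cofinal and coinitial there are infinitely many classes. This contradicts topological rank~1 of $W$. Thus ``$W$ minimal $\Rightarrow V$ minimal'' goes through; whether the isolated weak-transitivity equivalence holds is not settled by either your sketch or the paper's proof.
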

\begin{proof}
	1. If $V$ has a first (resp. last) element $a$, then $\{a\}$ is an $A$-definable subset of $V$ that is neither empty nor dense.
	
	2. Let $(V,\leq)$ have topological rank 1 and $W\subseteq V$ be definable (over some parameters). Let $E$ be a definable convex equivalence relation on $W$ with infinitely many infinite classes. Define a relation $\overline E$ on $V$ by: $\overline E(a,b)$ holds if all the points of $W$ in the interval $a\leq x\leq b$ are in one $E$-equivalence class. Then $\overline E$ is a definable convex equivalence relation on $V$ with infinitely many infinite classes. This contradicts  $V$ having topological rank 1.
	
	3. If $E$ is a definable convex equivalence relation on $V$, then its restriction $E|_W$ to $W$ is also a definable convex equivalence relation. Furthermore if $E$ has infinitely many infinite classes on $V$, each of those classes has infinite intersection with $W$ by density, hence $E|_W$ shows that $W$ does not have topological rank 1. Along with the first point, this shows that $V$ has topological rank 1 if and only if $W$ has topological rank 1.
	
	Assume that $V$ is not weakly transitive. Let $X\subseteq V$ be $A$-definable, non-empty and not dense in $V$. If $X$ is finite, let $a$ be its first element. Then the set $\{x\in W: x<a\}$ is $A$-definable, non-empty (as $V$ has no first element and $W$ is dense in $V$) and not dense in $W$ (as $V$ has no last element). This shows that $W$ is not weakly transitive over $A$. If now $X$ is infinite, let $\overline X$ be its topological closure in $V$. Then $W\cap \overline X$ is $A$-definable, non-empty and not dense in $W$. Thus $W$ is not weakly transitive over $A$. Conversely, it is immediate that if $W$ is not weakly transitive over $A$ then neither is $V$.
\end{proof}

\begin{lemme}\label{lem1} Let $(V,\leq)$ be a definable dense order of topological rank 1. Then any definable closed (or open) subset of $V$ is a finite union of convex sets.
\end{lemme}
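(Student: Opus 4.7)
The plan is to argue the contrapositive: if a definable $D\subseteq V$, closed or open, has infinitely many convex components, I will produce a definable convex equivalence relation on $V$ with infinitely many infinite classes, directly contradicting topological rank 1. First I would reduce to the case where $D$ is open by complementation: if $D$ is closed with infinitely many convex components, then $V\setminus D$ is open and its convex components interleave with those of $D$ (between two adjacent components of $D$ sits exactly one component of $V\setminus D$ and vice versa), so $V\setminus D$ has infinitely many convex components as well. So without loss of generality assume $D$ is open.

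Since $V$ is dense, any nonempty open convex subset of $V$ contains a non-degenerate interval and is therefore infinite; in particular every convex component of $D$ is infinite. I would then define
\[
x\sim y \iff x=y \ \text{ or }\ \bigl(x,y\in D \text{ and } [\min(x,y),\max(x,y)]\subseteq D\bigr),
\]
which is clearly definable from the formula defining $D$ together with the order. The relation is visibly reflexive and symmetric, and transitivity follows because two closed intervals contained in $D$ that share a common endpoint in $D$ have union again a closed interval contained in $D$. The class of $x\in D$ under $\sim$ is exactly the convex component of $D$ containing $x$, while the class of $x\notin D$ is the singleton $\{x\}$; in particular all classes are convex subsets of $V$. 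As $D$ has infinitely many convex components and each of them is infinite, $\sim$ is a definable convex equivalence relation with infinitely many infinite classes, contradicting the topological rank 1 hypothesis.

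The argument is essentially bookkeeping around the construction of $\sim$; the only subtle design choice is to make points outside $D$ equivalent only to themselves, which simultaneously preserves convexity of every class on all of $V$ and ensures that the infinite classes — exactly the components of $D$ — remain infinitely many. No further obstacle is anticipated.
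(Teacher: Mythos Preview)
Your argument is correct and is essentially the paper's proof: both build a definable convex equivalence relation whose infinite classes are exactly the convex components of an open definable set (the paper does this directly for the complement of a closed $X$, while you first pass to the open complement and then do the same construction). One small quibble: your ``interleaving/adjacent components'' justification for the reduction is informal (with infinitely many components there need not be adjacent ones), but the conclusion is immediate anyway since the complement of finitely many convex sets is a finite union of convex sets.
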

\begin{proof}
Let $X\subseteq V$ be a definable closed subset. Consider the equivalence relation $E_X$ which holds of a pair $(a,b)$ in $V^2$ if either $a=b$ or there is no element of $X$ in the interval $a\leq x\leq b$. This is a convex equivalence relation. Moreover, any $E_X$-class is either of the form $\{a\}$, $a\in X$, or of the form $a<x<b$, with $a,b\in X\cup \{\pm \infty\}$. Since $(V,\leq)$ is dense, classes of the second type are infinite. By topological rank 1, there can be only finitely many such classes. This implies that the complement of $X$ is a finite union of convex sets. Then so is $X$.
\end{proof}

Note that with $(V, \leq)$ as above, if $A\subseteq V$ is the closure of a complete type $X$ over some $\bar c$, then $A$ is a convex set. Indeed, if say $A = A_0\cup \cdots\cup A_{n-1}$ where the $A_i$'s are disjoint convex sets and $n$ is minimal, then $A$ and each $A_i$ is definable over $\bar c$. Thus for each $i<n$, $X\cap A_i$ is definable over $\bar c$ and non-empty. By completeness of $X$, $n=1$.

\subsection{Intertwinings}\label{sec:intertwinings}

Let $(V,\leq)$ be an $A$-definable dense order with no first or last element. By a \emph{cut} in $V$ we mean an initial segment of it which is neither empty nor the whole of $V$ and has no last element. We let $\overline V$ be the set of definable (over any parameters) cuts of $V$. Let $\phi(x;y)$ be a formula without parameters. The set $C_{\phi}:=\{b : \phi(V;b)$ is a cut of $V\}$ is definable over $A$. The set of cuts of $V$ definable by a formula of the form $\phi(x;b)$ can be identified with the quotient of $C_{\phi}$ by the equivalence relation $b\sim b' \iff (\forall x\in V)(\phi(x;b)\leftrightarrow \phi(x;b'))$. Hence the set of cuts in $V$ that can be defined by an instance of $\phi(x;y)$ is naturally an $A$-definable set in $M^{eq}$. If $\Phi$ is a finite set of formulas as above, write $C_{\Phi}= \bigcup_{\phi\in \Phi} C_{\phi}$. This is also an $A$-definable set in $M^{eq}$. Now $\overline V=\bigcup_{\Phi} C_{\Phi}$, where $\Phi$ runs over all finite set of formulas of the form $\phi(x;y)$, is naturally a directed union of $A$-definable sets. (It would more rigorous to describe it as a direct limit of $A$-definable sets, but we will do without introducing such formalism.) In all arguments using $\overline V$, one can replace $\overline V$ with a big enough definable subset of it of the form $C_{\Phi}$.

If there is a finite set $\Phi$ of formulas such that every definable cut of $V$ can be defined by some $\phi(x;b)$ for $\phi(x;y) \in \Phi$, then $\overline V$ can be indentified with $C_{\Phi}$. In this case, $\overline V$ is a bona fide definable set in $M^{eq}$. In that case, we will say that \emph{$\overline V$ is definable}. We will see in Corollary \ref{cor:trivial geometry} that this is true in a rank 1 structure. Hence the reader will not loose much by assuming that this is the case when reading this paper.

A function $f\colon X\to \overline V$ is said to be definable over some $B\supseteq A$ if there is a $B$-definable binary relation $F\subseteq X\times V$ such that for all $a\in X$, the fiber $F_a:=\{x\in V : (a,x)\in F\} \subseteq V$ is equal to $f(a)$. This is consistent with the view of $\overline V$ as a union of definable sets: a function $f\colon X\to \overline V$ is $B$-definable if and only if it takes values inside a fixed definable subset $C_{\Phi}$ of $\overline V$ and is $B$-definable in the usual sense.

We identify $V$ with a (definable) subset of $\overline V$ by $a\mapsto \{x\in V: x< a\}$. The order $\leq$ naturally extends to $\overline V$, where it coincides with inclusion. Note that $V$ is dense in $\overline V$. Note also that if $a\in V$ and $c\in \overline V$, then $a\in c$ is equivalent to $a<c$, where $<$ is meant in $\overline V$ with the identification just discussed. We will use both notations.

\begin{lemme}\label{lem:minimal set of cuts}
Let $(V,\leq)$ be definable and minimal over some $A$. Any $A$-definable non-empty subset of $\overline V$ is dense in $\overline V$.	
\end{lemme}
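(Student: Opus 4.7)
The plan is to argue by contradiction. Suppose $X\subseteq\overline V$ is $A$-definable, non-empty, and not dense in $\overline V$. The heart of the argument is to build from $X$ an $A$-definable convex equivalence relation $E$ on $V$ whose class structure is forced to be incompatible with $V$ being minimal over $A$.

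Define $aEb$ to hold iff $a=b$, or, letting $a'=\min(a,b)$ and $b'=\max(a,b)$, the closed interval $[a',b']$ in $\overline V$ is disjoint from $X$. This is $A$-definable (since $X$ and the order on $\overline V$ are), symmetric, transitive (by linearity of the order on $\overline V$), and convex. Moreover, any non-trivial $E$-class is automatically infinite: if $a<b$ in $V$ with $aEb$, then for every $c\in (a,b)\cap V$ we have $[a,c]\cap X\subseteq [a,b]\cap X=\emptyset$, so $aEc$; and there are infinitely many such $c$ by density of $V$.

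Non-density of $X$ in $\overline V$ gives $p<q$ in $\overline V$ with $[p,q]\cap X=\emptyset$, and density of $V$ in $\overline V$ then yields $a<b$ in $V$ with $p<a<b<q$, producing at least one infinite $E$-class. By topological rank $1$, there are only finitely many infinite $E$-classes $C_1<\cdots<C_n$. Let $Y$ be their union, which is $A$-definable (it is the set of $a\in V$ belonging to a non-trivial $E$-class, and ``non-trivial'' needs no $\omega$-categoricity argument thanks to the dichotomy above). Since each $C_i$ is an infinite convex subset of $V$, it contains interior points of $V$, so $V\setminus Y$ omits an open interval of $V$ and cannot be dense; by weak transitivity applied to the $A$-definable set $V\setminus Y$, we get $V\setminus Y=\emptyset$, i.e. $V=C_1\sqcup\cdots\sqcup C_n$.

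It remains to rule out $n\ge 2$ and $n=1$. If $n\ge 2$, the set $S=\{\sup C_i : 1\le i<n\}$ is a non-empty finite $A$-definable subset of $\overline V$, so its minimum $c$ is an $A$-definable cut of $V$; then $\{x\in V : x<c\}=C_1$ is an $A$-definable, non-empty, non-dense subset of $V$, contradicting weak transitivity. If $n=1$, then all of $V$ is a single $E$-class, meaning that no element of $X$ lies between any two points of $V$; but any cut $c\in\overline V$ has points of $V$ strictly on each side of it (since cuts are neither empty nor all of $V$), so $X=\emptyset$, contradicting the non-emptiness assumption. The main obstacle I anticipate is the $Y=V$ step, which has to simultaneously use topological rank $1$ (to finitize the number of infinite classes) and weak transitivity (to rule out any leftover singleton classes), together with the observation that infinite convex $E$-classes automatically prevent their complement from being dense.
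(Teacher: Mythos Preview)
Your proof is correct and follows essentially the same approach as the paper: both define the convex equivalence relation $E$ on $V$ whose non-trivial classes are the maximal intervals of $V$ whose $\overline V$-closure avoids $X$, then combine topological rank 1 (finitely many infinite classes) with weak transitivity to reach a contradiction. The paper is simply terser, collapsing your case split by noting that any infinite $E$-class is itself $A$-definable (there are finitely many, distinguished by the $A$-definable order) and hence by weak transitivity must be all of $V$, forcing $X=\emptyset$.
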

\begin{proof}
	Let $X\subseteq \overline V$ be $A$-definable. We define a relation $E_X$ on $V$ by: \[a\rel{E_X}b \iff (a=b) \vee (\forall x\in \overline V)(a\leq x \leq  b \to x\notin X).\]
	Then $E_X$ is a convex $A$-definable equivalence relation on $V$ and by topological rank 1, it has only finitely many infinite classes. Assume it has an infinite class, then that class is $A$-definable and by weak transitivity, it is the whole of $V$. This implies that $X$ is empty. If there is no infinite class, then by density of $V$, all classes have one element, which implies that $X$ is dense in $\overline V$.
\end{proof}

\begin{lemma}\label{lem:slice of completion}
	Let $(V,\leq)$ be definable and minimal over some $A$ and let $W\subseteq \overline V$ be an $A$-definable subset of $\overline V$ containing $V$. Then $W$ is minimal over $A$.
\end{lemma}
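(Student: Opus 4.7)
The plan is to reduce the claim directly to Lemma \ref{lem:preservation under dense subsets}(2), applied with $W$ as the ambient dense order and $V$ as its dense definable subset. What I need to check are the topological hypotheses: that $W$ is an $A$-definable dense linear order with no first or last element, and that $V$ is a dense $A$-definable subset of $W$. Once these are in place, the minimality of $V$ transfers to $W$ immediately.

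First I would verify that $\overline V$ itself has no first or last element. Given any cut $c\in\overline V$, choose $a\in c$; since $V$ is dense without endpoints, $\{x\in V : x<a\}$ is a well-defined cut strictly below $c$. Symmetrically, for any $a\in V\setminus c$ together with some $a'\in V$ with $a'>a$, the set $\{x\in V : x<a'\}$ is a cut strictly above $c$. So $V$ is cofinal and coinitial in $\overline V$. Since $V\subseteq W\subseteq \overline V$, the same holds for $W$: for any $w\in W$, pick $v_-,v_+\in V$ with $v_-<w<v_+$ in $\overline V$, and both $v_-,v_+$ lie in $W$.

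Next, the excerpt already records that $V$ is dense in $\overline V$. This yields at once both that $W$ is densely ordered, since between any $w_1<w_2$ in $W\subseteq\overline V$ there is some $v\in V\subseteq W$, and that $V$ is a dense $A$-definable subset of $W$ in the sense required by Lemma \ref{lem:preservation under dense subsets}(2). Combining with the previous paragraph, $W$ is an $A$-definable dense linear order without endpoints that contains $V$ as a dense $A$-definable subset, and $V$ is minimal over $A$ by hypothesis, so the lemma delivers that $W$ is minimal over $A$ as well.

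There is no real obstacle here; the content is already packaged in the preservation-under-dense-subsets lemma, and the only thing to watch is to confirm the ``dense linear order with no endpoints'' hypothesis for $W$, which is a purely order-theoretic computation in $\overline V$. If one worried about the fact that $\overline V$ is formally a directed union of $A$-definable sets rather than a single $A$-definable set, the hypothesis of the statement fixes $W$ itself as an honest $A$-definable set, so Lemma \ref{lem:preservation under dense subsets}(2) applies without any adjustment.
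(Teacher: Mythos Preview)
Your proposal is correct and takes essentially the same approach as the paper: the paper's proof simply observes that $V$ is dense in $\overline V$, hence in $W$, and then invokes Lemma~\ref{lem:preservation under dense subsets}. You have done the same, with the added care of explicitly verifying that $W$ is a dense order without endpoints before applying that lemma---a verification the paper leaves implicit.
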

\begin{proof}
	We know that $V$ is dense in $\overline V$, hence also in $W$. The result then follows from Lemma \ref{lem:preservation under dense subsets}.
\end{proof}

\begin{lemme}\label{lem:finite dcl}
Given a finite tuple $\bar a$ and an $\bar a$-definable dense order $V$, $\dcl^{eq}(\bar a)\cap \overline V$ is finite.
\end{lemme}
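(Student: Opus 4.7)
The plan is to use $\omega$-categoricity to partition $V$ into finitely many orbits under the stabilizer of $\bar a$, and then to observe that any $\bar a$-definable cut must ``cut along" these orbits.

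By $\omega$-categoricity there are only finitely many 1-types over $\bar a$, so $V$ decomposes as a finite disjoint union $V = V_1 \cup \cdots \cup V_n$, where each $V_i$ is the realization set of one such 1-type; equivalently, each $V_i$ is a single $\aut(M)_{\bar a}$-orbit, and each is $\bar a$-definable. Now take any $c \in \dcl^{eq}(\bar a)\cap \overline V$. Unfolding the description of $\overline V$ as a directed union of $\bar a$-definable sets $C_\Phi$ given just above the lemma, saying that $c$ is $\bar a$-definable as an element of $\overline V$ is exactly the same as saying that the associated initial segment $X \subseteq V$ is itself $\bar a$-definable. Then for every $i$ the set $X \cap V_i$ is an $\bar a$-definable subset of $V_i$, hence $\aut(M)_{\bar a}$-invariant; but $V_i$ is a single $\aut(M)_{\bar a}$-orbit, so $X \cap V_i$ is either $\emptyset$ or all of $V_i$. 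Consequently $X = \bigcup_{i \in S} V_i$ for some $S \subseteq \{1,\ldots,n\}$, and $c$ is determined by $S$. Since there are at most $2^n$ choices of $S$, we get $|\dcl^{eq}(\bar a)\cap \overline V|\leq 2^n$.

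I do not anticipate a serious obstacle. The only step that requires a moment of care is the translation between ``$c\in \overline V$ is $\bar a$-definable" and ``the corresponding initial segment of $V$ is $\bar a$-definable", but this is immediate from the definition of a definable function into $\overline V$ recalled just before the statement, together with the $\omega$-categorical characterization of definability as $\aut(M)_{\bar a}$-invariance over a finite parameter set. Note in passing that density of $V$ and the absence of endpoints are not actually used in this argument.
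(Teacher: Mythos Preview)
Your proof is correct. The paper's argument is different in execution: it fixes $m_1<\cdots<m_n$ in $\dcl^{eq}(\bar a)\cap \overline V$, uses density of $V$ in $\overline V$ to choose witnesses $b_i\in V$ with $m_i<b_i<m_{i+1}$, and observes that the $b_i$ have pairwise distinct types over $\bar a$, so $n-1$ is bounded by the number of $1$-types over $\bar a$. Your approach instead goes straight to the orbit decomposition and reads off that any $\bar a$-definable cut is a union of orbits. The paper's route gives the sharper bound $n+1$ (number of $1$-types plus one) rather than your $2^n$, though in fact your argument can be tightened to the same bound by noting that $\bar a$-definable initial segments are linearly ordered by inclusion, hence form a chain of length at most $n+1$ in the subset lattice. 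Your observation that density is not actually needed is correct for your argument; the paper's proof does use density to place the interpolating points $b_i$.
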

\begin{proof}
Formally, the conclusion says that there is some number $k<\omega$ such that $\dcl^{eq}(\bar a)\cap V_0$ has size at most $k$ for all $\bar a$-definable $V_0\subseteq \overline V$. Let $V_0$ be such a set and let $m_1<\cdots<m_n$ be in $\dcl^{eq}(\bar a)\cap V_0$. By density of $V$ in $\overline V$, we can find $b_0,b_1,\ldots,b_{n-1}\in V$ with $m_1< b_1<m_2< \cdots< b_{n-1}< m_n$. Each $b_i$ has a different type over $\bar a$, and hence there are at least $n-1$ different types of elements of $V$ over $\bar a$. Hence $\dcl^{eq}(\bar a)\cap V_0$ has size bounded by the number of 1-types over $\bar a$ of elements of $V$, which is finite by $\omega$-categoricity.
\end{proof}

\begin{defi}
Let $(V,\leq_V)$ and $(W,\leq_W)$ be orders, definable and weakly transitive over $A$. We say that they are \emph{intertwined} over $A$  if there is an $A$-definable non-decreasing map $f\colon V\to \overline W$. If $A$ is clear from the context, we omit it.
\end{defi}

Note that this is the same thing as saying that there is an $A$-definable binary relation $R\subseteq V\times W$ such that \[(a \bin{R} b)\wedge (a'\leq_V a) \wedge (b \leq_W b') \Longrightarrow a' \bin{R} b'.\] Indeed, the relation $R$ is defined from $f$ by \[x\bin{R}y \iff f(x)\leq_{\overline W} y \iff \neg F(x,y),\] where $F$ is associated to $f$ as above. Observe also that by weak transitivity, no element of $\overline W$ is definable over $A$, hence the image of $f$ has to be cofinal and coinitial in $\overline W$.

\begin{lemme}\label{lem:intertwining}
For any fixed $A$, intertwining is an equivalence relation on orders that are definable and weakly transitive over $A$.
\end{lemme}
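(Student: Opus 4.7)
The plan is to verify the three properties of an equivalence relation in turn.

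\emph{Reflexivity} is immediate from the inclusion $V \hookrightarrow \overline V$, $a \mapsto \{x \in V : x < a\}$, which is $A$-definable and order-preserving, so serves as an intertwining of $V$ with itself.

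For \emph{symmetry}, suppose $V$ and $W$ are intertwined via the $A$-definable non-decreasing map $f \colon V \to \overline W$. I will construct a reverse witness $g \colon W \to \overline V$ as a kind of pseudo-inverse. The naive candidate $g(b) = \{v \in V : f(v) < b\}$ is problematic: if $f$ jumps over $b$, this set can have a maximum, hence fails to be a cut. To fix this, set
\[g(b) = \{v \in V : \exists v' \in V,\ v' > v \text{ and } f(v') < b\}.\]
This is patently $A$-definable and monotone in $b$, and the task is to check that $g(b) \in \overline V$ for every $b \in W$. The initial segment property is built in. Nonemptiness uses coinitiality of the image of $f$ in $\overline W$ together with the fact that $V$ has no first element; properness uses cofinality together with monotonicity of $f$; and absence of a last element is forced by the existential witness combined with density of $V$: any $v \in g(b)$ admits some $v' > v$ with $f(v') < b$, so any $v''$ strictly between $v$ and $v'$ also lies in $g(b)$. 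Recall that cofinality and coinitiality of the image of $f$ in $\overline W$ come for free from weak transitivity, as already noted after the definition.

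For \emph{transitivity}, given $f_1 \colon V \to \overline W$ and $f_2 \colon W \to \overline U$, the natural move is first to extend $f_2$ to a non-decreasing $A$-definable map $\tilde f_2 \colon \overline W \to \overline U$ via
\[\tilde f_2(c) = \bigcup_{b \in c} f_2(b),\]
then compose: $h := \tilde f_2 \circ f_1$. Definability is automatic ($u \in h(v)$ iff $\exists b \in W$ with $b \in f_1(v)$ and $u \in f_2(b)$), and monotonicity is preserved under union and composition. The verification that $\tilde f_2(c) \in \overline U$ uses that any $b_0 \notin c$ bounds $c$ strictly from above, so $\tilde f_2(c) \subseteq f_2(b_0)$ is a proper initial segment, while the no-last-element condition survives unions of cuts.

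The main obstacle is the symmetric case: finding a definition of $g$ that automatically yields a cut. The existential quantifier $\exists v' > v$ is the key device, using density of $V$ to absorb any jumps of $f$ into the cut and ensure that $g(b)$ has no maximum element.
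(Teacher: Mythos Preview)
Your proof is correct. The paper takes a different, much shorter route: it observes (just before the lemma) that an intertwining map $f\colon V\to\overline W$ is the same data as an $A$-definable relation $R\subseteq V\times W$ satisfying the monotonicity condition $(a\mathbin{R}b)\wedge(a'\leq_V a)\wedge(b\leq_W b')\Rightarrow a'\mathbin{R}b'$, and then proves symmetry and transitivity in one line each at the level of relations ($R'(x,y):=\neg R(y,x)$ for symmetry, $T(x,y):=\exists z\,(R(x,z)\wedge S(z,y))$ for transitivity).

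What your approach buys is rigour on exactly the point you flag: the passage from a monotone relation back to a map into $\overline W$ requires the fibres to be genuine cuts (in particular, to have no last element), and the paper does not spell this out. Your existential trick $g(b)=\{v:\exists v'>v,\ f(v')<b\}$ is precisely the fix needed to make that direction work, using density of $V$. So your argument is longer but self-contained, while the paper's is slicker but leans on an equivalence it asserts without full justification. Either approach is fine; yours could be shortened by noting that once one has \emph{any} monotone $A$-definable relation between $V$ and $W$ that is neither empty nor $V\times W$, weak transitivity plus density lets one massage it into an intertwining map by exactly the device you use.
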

\begin{proof}
Any order is intertwined with itself via the identity function. If $R$ as above is an intertwining relation from $V$ to $W$, then $R'$ defined by $x\bin{R'}y \iff \neg y\bin{R}x$ is an intertwining relation from $W$ to $V$. Finally if $R$ is an intertwining relation from $V$ to $W$ and $S$ an intertwining relation from $W$ to $Z$, then $T$ defined by $x\rel Ty \iff (\exists z)(x\rel Rz \wedge z\rel Sy)$ intertwines $V$ and $Z$.
\end{proof}

Working over some base $A$, let $V$ and $W$ be two weakly transitive orders and $f\colon V\to \overline W$ an intertwining map. If $W$ has topological rank 1, then the image of $f$ must be dense in $\overline W$ (otherwise we can define an equivalence relation as in the proof of Lemma \ref{lem1}; it cannot have finitely many classes as $W$ is weakly transitive). If $V$ has topological rank 1, then $f$ is injective: $f(x)=f(y)$ is a convex equivalence relation on $V$; it cannot have finitely many infinite classes by weak transitivity and cannot have infinitely many by topological rank 1. Hence all classes are singletons and $f$ is injective. We conclude that if both $V$ and $W$ have topological rank 1, an intertwining gives an increasing injection of $V$ into a dense subset of $\overline W$. Furthermore, the map $f$ extends to an increasing bijection $\tilde f\colon  \overline V \to \overline W$ defined as follows: if $c\in \overline V$ is a cut in $V$, seen as a subset of $V$, we let $\tilde f(c)=\{y\in W : y<f(x)$ for some $x\in c\}$. Since $f$ is increasing and $c$ has no last element, $\tilde f(c)$ also has no last element and is a definable cut in $W$. One sees at once that $\tilde f$ extends $f$ and is increasing. Also if $d\in \overline W$ is a definable cut in $W$, then $c:=\{x\in V:f(x)<d\}$ is a definable cut in $V$ and $\tilde f(c)=d$. Hence $\tilde f\colon \overline V\to \overline W$ is a bijection. It follows that we can---and will---think of $V$ and $W$ as having a common definable completion, or equivalently as being dense in each other's completion.


\begin{lemme}\label{lem:unique intertwining}
Working over $A$, if $V$ and $W$ are minimal linear orders which are intertwined, then there is a unique $A$-definable intertwining map $f\colon V\to \overline W$.
\end{lemme}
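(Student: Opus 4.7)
The plan is a two-step strategy based on the set $E := \{x \in V : f(x) = g(x)\}$. Given two $A$-definable intertwining maps $f, g \colon V \to \overline W$, this set is $A$-definable (it is cut out by an equality of cuts in $\overline W$), so by weak transitivity of $V$ over $A$ it is either empty or dense in $V$. I will show (I) if $E$ is non-empty then $E = V$, and (II) $E$ is always non-empty. Together these give $f = g$.

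For step~(I), a density-meets-openness argument, assume $E$ is dense and suppose for contradiction that some $x_0 \in V$ satisfies $f(x_0) > g(x_0)$ in $\overline W$ (the opposite inequality is symmetric). Let $b := \tilde g^{-1}(f(x_0)) \in \overline V$ be the cut $\{y \in V : g(y) < f(x_0)\}$. From $g(x_0) < f(x_0)$ one reads off $x_0 < b$, and for every $y \in (x_0, b) \cap V$ both $g(y) < f(x_0)$ (since $y \in b$) and $f(y) > f(x_0)$ (since $y > x_0$ and $f$ is strictly increasing), hence $f(y) > g(y)$ and $y \notin E$. This exhibits a non-empty open interval of $V$ disjoint from $E$, contradicting density.

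Step~(II) is the \emph{main obstacle} and is where topological rank~$1$ enters, through Lemma~\ref{lem:finite dcl}. Assume $E = \emptyset$ and set $\sigma := \tilde g^{-1} \circ f \colon V \to \overline V$, which is $A$-definable and extends to an $A$-definable order automorphism $\tilde\sigma = \tilde g^{-1} \circ \tilde f$ of $\overline V$. For $x \in V$, $\sigma(x) = x$ is equivalent to $f(x) = g(x)$, so our hypothesis forces $\tilde\sigma(x) \neq x$ for every $x \in V$. Fix any $x_0 \in V$. Because $\tilde\sigma$ is an order-preserving bijection, its orbit $\{\tilde\sigma^n(x_0) : n \in \mathbb Z\}$ must be strictly monotone in $n$ (an order-preserving bijection cannot have non-trivial finite periodic orbits), hence consists of infinitely many distinct elements of $\overline V$. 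On the other hand each $\tilde\sigma^n$ is $A$-definable, so the whole orbit lies inside $\dcl^{eq}(A \cup \{x_0\}) \cap \overline V$, which is \emph{finite} by Lemma~\ref{lem:finite dcl}. This contradiction yields $E \neq \emptyset$, and combined with step~(I) completes the proof.
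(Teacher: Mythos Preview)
Your proof is correct and uses the same key idea as the paper: from a point where $f$ and $g$ disagree, the orbit under the order-automorphism $\tilde g^{-1}\circ\tilde f$ of $\overline V$ is strictly monotone, hence infinite, contradicting Lemma~\ref{lem:finite dcl}. Your Step~(I) is in fact superfluous, since the orbit argument in Step~(II) requires only a \emph{single} point $x_0$ with $f(x_0)\neq g(x_0)$ rather than $E=\emptyset$; the paper proceeds directly this way and avoids the case split.
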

\begin{proof}
Assume that we are given two increasing maps $f,g\colon V\to \overline W$, both definable over $A$. Keeping only the parameters needed to define $V,W,f$ and $g$, we may assume that $A$ is finite. The two maps $f$ and $g$ extend uniquely to increasing bijections from $\overline V$ to $\overline W$, still denoted by $f$ and $g$. If for some $a\in V$, $f(a)<g(a)$, then we have $a<f^{-1}(g(a))$ and hence $g(a)<g(f^{-1}(g(a)))$. Continuing in this way we find \[a<f^{-1}(g(a))<f^{-1}(g(f^{-1}(g(a))))<\cdots,\] which gives infinitely many elements in $\dcl(Aa)\cap \overline V$, contradicting Lemma \ref{lem:finite dcl}.
\end{proof}

It will follow from Lemma \ref{lem:no self-intertwining} that even over a larger set of parameters, there cannot be another intertwining map from an interval of $V$ to an interval of $W$.

The following lemma will be useful later and can also be seen as a warm-up for the next proposition as the proof will have a similar flavor.

\begin{lemme}\label{lem:minimal suborder}
Let $(V,\leq)$ be definable and minimal over some $A$. There is a subset $B \supseteq A$ and a $B$-definable bounded convex subset $W\subseteq V$ such that $W$ is minimal over $B$.
\end{lemme}
\begin{proof}
Let $a\in V$ be any point. Assume that there is an element in $\dcl(Aa)\cap \overline V$ which is larger than $a$. Take $m\in \dcl(Aa)\cap \overline V$ to be minimal such. Then the convex subset $(a,m):=\{x\in V : a<x<m\}$ is $Aa$-definable. It has topological rank 1 since $V$ does and it is weakly transitive over $Aa$ by minimality of $m$ (no cut of it is definable over $Aa$). Hence it is minimal over $Aa$ and we have want we want by setting $B= Aa$ and $W=(a,m)$.

Assume now that $\dcl(Aa) \cap \overline V$ contains no element greater than $a$. Let $b\in V$, $b>a$. If $\dcl(Aab)\cap \overline V$ contains no element strictly between $a$ and $b$, then the interval $(a,b)\subseteq V$ is definable and minimal over $Aab$ and we can take $B=Aab$, $W=(a,b)$. If $\dcl(Aab)\cap \overline V$ contains an element strictly between $a$ and $b$, let $m$ be minimal such. Then the convex subset $(a,m)$ is definable and minimal over $Aab$.
\end{proof}

We now study definable subsets of cartesian powers of a minimal order.

\begin{prop} \label{lem2}
 Working over some $A$, let $(V,\leq)$ be a minimal definable linear order. Let $p(x_0,\ldots,x_{n-1})\in S(A)$ be a type in $V^n$ such that $p\vdash x_0<x_1<\ldots<x_{n-1}$. Then given open intervals $I_0<\cdots <I_{n-1}$ of $V$, we can find $a_i \in I_i$ such that $(a_0,\ldots,a_{n-1})\models p$.
\end{prop}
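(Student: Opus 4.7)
I would argue by induction on $n$. For the base case $n=1$, each formula $\psi(x)\in p$ (with parameters in a finite $A_0\subseteq A$) cuts out a non-empty $A$-definable subset $\psi(V)\subseteq V$; by weak transitivity of $V$ over $A$ each such subset is dense in $V$ and hence meets $I_0$. Combined with $\omega$-saturation of $M$ (from $\omega$-categoricity together with $A_0$ being finite), this realizes $p$ inside $I_0$.

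For the inductive step, I fix $\bar b=(b_0,\ldots,b_{n-1})\models p$ and set $p^-=\tp(\bar b^-/A)$ where $\bar b^-=(b_0,\ldots,b_{n-2})$. The inductive hypothesis applied to $p^-$ with intervals $I_0<\cdots<I_{n-2}$ yields $\bar a^-=(a_0,\ldots,a_{n-2})\in I_0\times\cdots\times I_{n-2}$ realizing $p^-$; transporting $\bar b$ by a $\sigma\in\aut(M)_A$ with $\sigma(\bar b^-)=\bar a^-$ and setting $b=\sigma(b_{n-1})$ gives $(\bar a^-,b)\models p$ with $a_{n-2}<b$. The remaining task is to locate $a_{n-1}\in I_{n-1}$ in the $\aut(M)_{A\bar a^-}$-orbit $O$ of $b$, which is automatically contained in $(a_{n-2},\infty)\cap V$.

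The heart of the argument is showing that $O$ is dense in $(a_{n-2},\infty)\cap V$; since $I_{n-1}\subseteq(a_{n-2},\infty)$, density supplies the desired $a_{n-1}\in O\cap I_{n-1}$. The set $O$ is $A\bar a^-$-definable, so the convex equivalence relation $E(x,y)\iff[x,y]\cap O=\emptyset$ is $A\bar a^-$-definable too; by topological rank 1 it has only finitely many infinite classes, which are the ``holes'' of $O$. A hole contained in $(a_{n-2},\infty)$ would be an $A\bar a^-$-definable open interval with endpoints in $\dcl^{eq}(A\bar a^-)\cap\overline V$. Letting $\bar a^-$ range over realizations of $p^-$ by $A$-automorphisms turns the assignment ``$\bar a^-\mapsto$ left endpoint of the leftmost such hole'' into an $A$-definable function from $p^-(V^{n-1})$ to $\overline V$. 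Using weak transitivity of $V$ over $A$ to propagate the hole to a dense $A$-definable family of configurations, and the finiteness of $\dcl^{eq}(A\bar c)\cap\overline V$ for any finite tuple $\bar c$ (Lemma~\ref{lem:finite dcl}), one extracts a monotonic $A$-definable map of $V$ into $\overline V$ that lies strictly above the identity---contradicting the uniqueness of intertwinings provided by Lemma~\ref{lem:unique intertwining}.

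The main obstacle is this density step. The structural ingredients are already in place: topological rank 1 bounds the number of holes and Lemma~\ref{lem:finite dcl} constrains where their endpoints can sit. The work lies in converting a hypothetical hole into a forbidden non-trivial self-intertwining, which requires some care in keeping track of which parameters are needed to define which cut and in ensuring monotonicity of the extracted map.
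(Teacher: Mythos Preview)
Your inductive scheme breaks at the density claim for $O$. The assertion that the $\aut(M)_{A\bar a^-}$-orbit of $b$ is dense in \emph{all} of $(a_{n-2},\infty)$ is false in general. Consider the two-intertwined-order structure $V^{(2)}$ (equivalently, a model of $T_2$ restricted to $P_0$). There $\dcl^{eq}(Aa_0)\cap\overline V$ contains a cut $m(a_0)$ strictly above $a_0$, and there is a $2$-type $p$ with $p\vdash x_0<x_1<m(x_0)$. For such $p$, the orbit $O$ is contained in $(a_0,m(a_0))$; if your inductive step happened to place $a_0\in I_0$ with $m(a_0)<\inf I_1$, then $O\cap I_1=\emptyset$. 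So $O$ need not be dense in $(a_{n-2},\infty)$, and the problem is that the inductive hypothesis gives you no control over where the cuts in $\dcl^{eq}(A\bar a^-)$ land relative to $I_{n-1}$.

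Your proposed route to a contradiction does not repair this. The map $\bar a^-\mapsto(\text{left endpoint of the leftmost hole})$ lands in $\dcl^{eq}(A\bar a^-)\cap\overline V$, but there is no reason it should be monotone in any coordinate: in $V^{(2)}$ the map $a_0\mapsto m(a_0)$ has graph dense in $\{x<y\}$ and is certainly not increasing, so no self-intertwining can be extracted. The appeal to Lemma~\ref{lem:unique intertwining} therefore has no traction. The paper's proof avoids this obstacle by abandoning the induction on $n$: it first builds auxiliary types $r_l$ in which each new coordinate is chosen above $m(\text{previous coordinates})$, proves density for these via a convex-equivalence-relation argument using only topological rank~1 and the definition of $m$, and then transports an arbitrary $p$ by sandwiching a realization of $p$ between the $m$-values of a realization of $r_{2n}$.
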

\begin{proof}
	For simplicity of notation, assume $A=\emptyset$. The strategy of the proof is as follows: we first ignore the type $p$ and produce by induction on $l<\omega$, types $r_l\in S_l(\emptyset)$ which satisfy the conclusion of the proposition. We then show how the existence of $r_{2n}$ implies that $p$ itself has the required density property by sandwiching elements of a realization of $p$ between elements of a realization of $r_{2n}$.
	
	For any finite tuple $\bar d$, let $m(\bar d)$ denote the maximal element of $\dcl^{eq}(\bar d)\cap \overline V$. Note that for a fixed tuple of variables $\bar y$, the relation $\phi(x;\bar y) : = x>m(\bar y)$ is invariant under $\aut(M)$, and therefore definable.
	
	We construct an increasing sequence of types $r_l(x_0,\ldots,x_{l-1})\in S(\emptyset)$, $l>0$, of elements of $V^l$. For $l=1$, let $a_0\in V$ by any element and set $r_1=\tp(a_0)$ and $m_0=m(a_0)\in \overline V$. Pick any point $a_1>m_0$ and let $r_2=\tp(a_0,a_1)$. We continue in this way: having constructed $r_l=\tp(a_0,\ldots,a_{l-1})$, let $m_{l-1}=m(a_{\leq l})$\footnote{Where $a_{\leq l}:= a_0,\ldots,a_l$}. Pick any $a_{l}>m_{l-1}$ and set $r_{l+1}=\tp(a_0,\ldots,a_{l})$. We note that \[r_{l+1}(x_0,\ldots,x_l) \vdash x_l > m(x_0,\ldots,x_{l-1}).\]
	
	This being done, let $I_0<\cdots<I_{l-1}$ be open intervals of $V$. We claim that we can find $(b_0,\ldots,b_{l-1})\models r_l$ such that $m(b_{\leq k})$ lies in $I_k$ for each $k$. We do this by induction. Assume that $b_{<k}$ have been selected and set $m=m(b_{<k})$ (if $k=0$, take $m=-\infty$). Define the relation $E_k$ on $V_{>m}$ by $v \rel{E_k} w$ if either $v=w$, or for no $s$ with $\tp(b_{<k},s)=r_{k+1}$ do we have $v\leq m(b_{<k}s)\leq w$. This is an equivalence relation with convex classes. By the topological rank 1 assumption, it must have finitely many infinite classes. The infima and suprema of those classes are elements of $\overline V$ definable over $b_{<k}$. However, by definition, no cut above $m(b_{<k})$ is definable over $b_{<k}$. Hence all classes of $E_k$ are finite and by density of the order, all classes have one element. It follows that we can find $b_k$ with $\tp(b_{\leq k})=r_{k+1}$ and $m(b_{\leq k})$ lying in $I_k$.
	
	Let now $p(x_0,\ldots,x_{n-1})$ be as in the statement of the lemma and $\bar a \models p$. Let $r=r_{2n}$. Then by the previous paragraph, we can find $\bar b\models r$ such that for each $k$, $m(b_{\leq 2k})<a_k<m(b_{\leq 2k+1})$. Pick open intervals $I_0<\cdots <I_{n-1}$ of $V$. For each $k$, let $J_{2k}<J_{2k+1}$ be two subintervals of $I_k$. Applying the previous paragraph again, we can find $\bar b'\models r$ such that for each $i$, $m(b'_{\leq i})\in J_i$. Since $\bar b$ and $\bar b'$ have the same type, there is $\sigma\in \aut(M)$ with $\sigma(\bar b)=\bar b'$. Let $\bar a'=\sigma(\bar a)$. We then have $m(b'_{\leq 2k})<a'_k<m(b'_{\leq 2k+1})$ for each $k$. By the choice of $\bar b'$, this implies $a'_k\in I_k$ as required.
\end{proof}

\begin{rem}\label{rem:lem2 in completion}
	Let $(V,\leq)$ be definable and minimal over $A$. Let $p(x_0,\ldots,x_{n-1})\in S(A)$ be a type in $\overline V^n$ such that $p\vdash x_0<\cdots <x_{n-1}$. Then there is some $A$-definable $W\subseteq \overline V$ containing $V$ such that $p$ lies in $W^n$. By Lemma \ref{lem:slice of completion}, $W$ is also minimal over $A$ and we can apply the previous proposition with $W$ instead of $V$. This shows that Proposition \ref{lem2} can be applied to types in $\overline V^n$ instead of $V^n$.
\end{rem}

\begin{corollary}\label{lem2_cor}
	Let $(V,\leq)$ be a minimal definable linear order over some $A$. Let $X\subseteq V^n$ be an $A$-definable subset, then the topological closure of $X$ is a boolean combination of sets of the form $x_i \leq x_j$.
\end{corollary}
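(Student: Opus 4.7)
\medskip
\noindent\textbf{Proof proposal.} By $\omega$-categoricity, the $A$-definable set $X$ is a finite union of complete types over $A$, so since topological closure commutes with finite unions, it suffices to prove the corollary for $X=p(V^n)$ where $p\in S_n(A)$ is realized in $V^n$. Such a type completely determines, for each pair $i,j$, whether $x_i<x_j$, $x_i=x_j$, or $x_i>x_j$, since $\leq$ is $A$-definable. Let $\sim_p$ be the equivalence relation on $\{0,\dots,n-1\}$ given by $i\sim_p j\iff p\vdash x_i=x_j$, and let $\leq_p$ be the induced linear order on the quotient. Define
\[ F_p \;=\; \bigcap_{[i]\leq_p [j]} \{\bar a\in V^n : a_i\leq a_j\}, \]
a finite intersection of sets of the form $x_i\leq x_j$. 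The plan is to show $\overline{p(V^n)}=F_p$, so that $\overline{X}=\bigcup_p F_p$ is a boolean combination of the required form.

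The inclusion $\overline{p(V^n)}\subseteq F_p$ is immediate: if $\bar a\models p$ and $[i]\leq_p[j]$, then $a_i\leq a_j$, and the relation $a_i\leq a_j$ is closed in the order topology on $V^n$. The content is the reverse inclusion. Fix $\bar a\in F_p$. Group the coordinates by $\sim_p$-classes $C_1<_p C_2<_p\cdots <_p C_k$; by definition of $F_p$, within each class $C_\ell$ all coordinates of $\bar a$ take a common value $a^*_\ell$, and $a^*_1\leq a^*_2\leq\cdots\leq a^*_k$. Let $\bar p\in S_k(A)$ be the type over $A$ of the $k$-tuple of distinct values of a realization of $p$ (picking one index per class); then $\bar p\vdash y_1<\cdots<y_k$. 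Given any open neighbourhood $U$ of $\bar a$ in $V^n$, choose open intervals $U_\ell\ni a^*_\ell$ so that $U_1\times\cdots\times U_k$ sits (suitably repeated) inside $U$. Using that $V$ is dense (in particular dense between any two points where $a^*_\ell=a^*_{\ell+1}$), shrink the $U_\ell$ to open intervals $I_\ell\subseteq U_\ell$ with $I_1<I_2<\cdots<I_k$ in $V$. Apply Proposition~\ref{lem2} to $\bar p$ and the intervals $I_1<\cdots<I_k$ to obtain $(b^*_1,\dots,b^*_k)\models\bar p$ with $b^*_\ell\in I_\ell$. Repeating $b^*_\ell$ across the indices in $C_\ell$ yields $\bar b\models p$ lying in $U$. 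Hence $\bar a\in\overline{p(V^n)}$.

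This gives $\overline{p(V^n)}=F_p$, and summing over the finitely many types $p$ appearing in $X$ finishes the proof. The only delicate point is the step where distinct classes of $\sim_p$ may map to the same value in $\bar a$ (so that several $a^*_\ell$'s collapse): here one must separate them into strictly ordered disjoint open subintervals before applying Proposition~\ref{lem2}, which is possible precisely because weak transitivity and topological rank $1$ force $V$ to be dense in itself.
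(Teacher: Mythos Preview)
Your proof is correct and follows essentially the same route as the paper: reduce to a single complete type $p$, identify its closure with the closed set cut out by the order and equality relations that $p$ imposes, and obtain density by passing to the tuple of distinct values and invoking Proposition~\ref{lem2}. You spell out the ``collapsed coordinates'' step (separating equal $a^*_\ell$'s into strictly ordered subintervals) more carefully than the paper, which simply asserts the general case in one sentence; note that the density of $V$ you need there is already part of the definition of weak transitivity, so the appeal to topological rank~1 in your last paragraph is not needed.
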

\begin{proof}
	We can write $X=\bigcup_{i<n} Y_i$, where the $Y_i$'s are pairwise disjoint and each $Y_i$ is $A$-definable and defines a complete type over $A$. Since the closure of $X$ is the union of the closures of the $Y_i$'s, it is enough to prove the statement for each $Y_i$. We may therefore assume that $X$ defines a complete type over $A$. Let $(a_0,\ldots,a_{n-1})\in X$. For some permutation $\sigma$ of $\{0,\ldots,n-1\}$, we have $a_{\sigma(0)}\leq \ldots \leq a_{\sigma({n-1})}$. If the coordinates of $\bar a$ are pairwise distinct, then the previous proposition implies that $X$ is dense in the set defined by $x_{\sigma(0)} \leq \ldots \leq x_{\sigma({n-1})}$. In general, $X$ is dense in the intersection of that set with the set defined by the conjunction of the equations $x_{\sigma(i)} = x_{\sigma(i+1)}$ that hold in $\bar a$.
\end{proof}

In the end of this section, we give a more concrete description of intertwined orders and show that there is only one transitive structure composed of $n$ intertwined orders, up to isomorphism and permutation of the orders. (See Proposition \ref{prop:classification of intertwined orders} for a precise statement.)

\begin{prop}\label{prop:unique n intertwined orders}
	Consider the language $L_n = \{\leq, P_0,\ldots,P_{n-1},f_1,\ldots,f_{n-1}\}$, where the $P_i$'s are unary predicates and the $f_i$'s unary functions. Let the theory $T_n$ say that: 
	\begin{itemize}
		\item $\leq$ defines a dense linear order without endpoints;
		\item the $P_i$'s partition the universe and are dense (with respect to $\leq$);
		\item the function $f_i$ is the identity outside of $P_0$; its restriction to $P_0$ is a bijection between $P_0$ and $P_i$;
		\item for all $x\in P_0$, we have $x<f_1(x)<f_2(x)<\cdots <f_{n-1}(x)$;
		\item given any open intervals $I_0<I_1<\cdots <I_n$, there is $x\in I_0$ such that $f_i(x)\in I_i$ for each $1\leq i<n$.
	\end{itemize}
Then the theory $T_n$ is complete, $\omega$-categorical and has elimination of quantifiers.	
\end{prop}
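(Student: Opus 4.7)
\emph{Proof proposal.} The plan is to establish the three properties simultaneously by a back-and-forth argument on ``closures'' of finite tuples. First, $T_n$ is consistent: a model is obtained as the Fra\"iss\'e limit of the class of finite $L_n$-structures satisfying the first four axioms (this class has amalgamation---two structures can be glued along a common substructure by freely interleaving their fibers in the linear order). For a finite $A$ in a model $M \models T_n$, define the \emph{closure} $\bar A \subseteq M$ by adding, for each $a \in A \cap P_j$ with $j \geq 1$, the unique $b \in P_0$ with $f_j(b) = a$, and then for each such $b \in P_0$ (or already in $A$), the elements $f_1(b), \ldots, f_{n-1}(b)$. Then $\bar A$ has at most $n|A|$ elements and partitions into fibers of $n$ points, one in each $P_i$; the $L_n$-quantifier-free type of $A$ determines the $L_n$-quantifier-free type of $\bar A$, via the axiom $x < f_1(x) < \cdots < f_{n-1}(x)$.

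Let $M, M' \models T_n$ be countable; I will show the collection $\mathcal{I}$ of $L_n$-isomorphisms $\sigma: \bar A \to \bar A'$ between closures is a back-and-forth system. Given $\sigma \in \mathcal{I}$ and $b \in M \setminus \bar A$, let $b_0 < b_1 < \cdots < b_{n-1}$ be the fiber of $b$ in $M$, where $b_i \in P_i$ and $b_i = f_i(b_0)$ for $i \geq 1$; the $b_i$'s all lie outside $\bar A$ (since $\bar A$ is closed). Each $b_i$ lies in a unique open interval $J_i$ of $M \setminus \bar A$ with endpoints in $\bar A \cup \{\pm \infty\}$, and $J_i' \subseteq M' \setminus \bar A'$ denotes the corresponding interval under $\sigma$. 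Whenever adjacent $J_i, J_{i+1}$ coincide (i.e.\ $b_i, b_{i+1}$ fall in the same $\bar A$-interval), refine $J_i'$ and $J_{i+1}'$ to disjoint open sub-intervals in the same order, using density of the order on $M'$. Applying the density axiom of $T_n$ to $J_0' < J_1' < \cdots < J_{n-1}'$ gives $b_0' \in J_0'$ with $f_i(b_0') \in J_i'$ for $1 \leq i < n$; necessarily $b_0' \in P_0$ (otherwise $f_i(b_0') = b_0'$ contradicts $J_0' < J_i'$). Setting $b_i' := f_i(b_0')$, the map $\sigma \cup \{(b_i, b_i') : 0 \leq i < n\}$ is an $L_n$-isomorphism between the extended closures.

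The back-and-forth yields: any two countable models of $T_n$ are isomorphic, hence $T_n$ is $\omega$-categorical; every model is infinite, so by Vaught's test $T_n$ is complete; and any $L_n$-quantifier-free isomorphism between finite tuples extends, after closure, to an automorphism of the ambient model, so quantifier-free types determine complete types, giving QE. The main subtlety is the interval-refinement step: when several elements of a new fiber fall in the same $\bar A$-interval, one must choose disjoint sub-intervals in $M'$ in matching positions before invoking the density axiom; a secondary check is that the closure operation really is canonical, i.e.\ that all order relations involving the added ``pseudo-inverses'' $f_j^{-1}(a)$ are in fact determined by $L_n$-quantifier-free formulas in $A$.
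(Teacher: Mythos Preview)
Your back-and-forth on fiber-closed sets is exactly the argument the paper has in mind (the paper just writes ``a straightforward back-and-forth argument''), and your forth step via the density axiom is carried out correctly. This gives $\omega$-categoricity and hence completeness.

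The gap is in the quantifier-elimination step, precisely at the ``secondary check'' you flag at the end. It is \emph{not} true that the quantifier-free $L_n$-type of $A$ determines the isomorphism type of $\bar A$. Take $n=2$ and $A=\{a_1,a_2\}\subset P_1$ with $a_1<a_2$. Since $f_1$ is the identity off $P_0$, the only $L_2$-terms in $a_1,a_2$ are $a_1$ and $a_2$ themselves, so the quantifier-free type of $A$ records only $P_1(a_1)\wedge P_1(a_2)\wedge a_1<a_2$. The closure, however, adds $b_i:=f_1^{-1}(a_i)\in P_0$, and the three orderings
\[
b_1<b_2<a_1<a_2,\qquad b_2<b_1<a_1<a_2,\qquad b_1<a_1<b_2<a_2
\]
are all compatible with the axioms and (using the density axiom) all realised in the countable model. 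Your own back-and-forth then shows these correspond to three distinct complete $2$-types, all with the same quantifier-free type. So $T_n$ does not eliminate quantifiers in $L_n$ as the language is written; one must add the partial inverses $f_i^{-1}\colon P_i\to P_0$ (equivalently, take the Fra\"iss\'e class to consist only of fiber-closed finite structures, so that substructures coincide with your closures). The paper's terse proof passes over this same point, so the defect lies in the stated language rather than in your overall strategy.
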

\begin{proof}
	This can be shown by a straightforward back-and-forth argument. Alternatively, one can see that $T_n$ is the Fra\"iss\'e limit of the class of finite $L_n$ structures satisfying:
	
	\begin{itemize}
		\item $\leq$ defines a linear order;
		\item the $P_i$'s partition the universe;
		\item the function $f_i$ is the identity outside of $P_0$ and for $x\in P_0$, we have $P_i(f_i(x))$ and $x<f_1(x)<f_2(x)<\cdots <f_{n-1}(x)$.
	\end{itemize}
	It follows that $T_n$ has elimination of quantifiers. Hence it is complete and $\omega$-categorical (because the structure generated by a set of size $m$ has size at most $nm$).
\end{proof}

Let now $(V;\leq_0,\ldots,\leq_{n-1})$ be a structure equipped with $n$ distinct linear orders. Assume that each order $V_i:=(V,\leq_i)$ has topological rank 1 and that any two $V_i$, $V_j$ are intertwined. Further assume that the structure $V$ is transitive (that is, there is a unique 1-type over $\emptyset$). For each $i<n$, there is by Lemma \ref{lem:unique intertwining} a unique increasing 0-definable map $f_i\colon V_i \to \overline {V_0}$. Inside $V$, we interpret an $L_n$-structure $V_\ast$ as follows: the universe of $V_\ast$ is the union of $n$ disjoint copies of $V$, which we think of as representing the orders $V_0$ to $V_{n-1}$. The unary predicate $P_i$ names the $i$-th copy of $V$, which we identify with the image $f_i(V_i)$ inside $\overline{V_0}$. The order $\leq$ on $V_\ast$ is then given by the order on $\overline{V_0}$ using those identifications. Finally, the function $f_i$ sends a point $x\in P_0(V_\ast)$ to the corresponding point in $P_i(V_\ast)$: remember, that both are just copies of $V$, so $f_i$ is just the canonical identification of one copy of $V$ with the other. Define also $f_0$ as being the identity function on $V_\ast$.

Since we assumed that $V$ has a unique 1-type over $\emptyset$, then for some permutation $\sigma$ of $\{0,\ldots,n-1\}$, we have that for all $x\in P_0(V_\ast)$, \[f_{\sigma(0)}(x)< f_{\sigma(1)}(x) < \cdots < f_{\sigma(n)}(x).\]

If $\sigma$ is the identity, then $V_\ast$ is a model of $T_n$ as defined above. Otherwise, we obtain a model of $T_n$ by applying the same construction to the structure $(V;\leq_{\sigma^{-1}(0)},\ldots,\leq_{\sigma^{-1}(n-1)})$. Note that there is a unique $\sigma$ with this property.

Conversely, given a (countable) model $(V_\ast;\leq,P_0,\ldots,P_{n-1},f_1,\ldots,f_{n-1})$ of $T_n$, we can construct a structure $(V^{(n)};\leq_0,\ldots,\leq_{n-1})$ by taking as universe $V^{(n)}=P_0(V_\ast)$, interpreting $\leq_0$ as $\leq$ and $\leq_i$, $i>0$ by: \[x\leq_i y \iff f_i(x)\leq f_i(y).\]

Note that by $\omega$-categoricity of $T_n$, the structure $V^{(n)}$ is uniquely defined up to isomorphism. For each $i\leq n$, let $V^{(n)}_i$ be the definable linear order $(V^{(n)};\leq_i)$. It might seem that by going to $V^{(n)}$, we have lost the intertwining between the orders, but in fact this is not the case. Indeed, the orders $V^{(n)}_0$ and $V^{(n)}_i$, $i<n$ are intertwined in $V^{(n)}$: let $x\in V^{(n)}$ and consider the set \[g_i(x) := \{y\in V^{(n)}: (\forall z<_i y) z<_0 x\}.\] Then $g_i(x)$ is a cut of $V^{(n)}_i$ and we leave it to the reader to check that $g_i$ does define an intertwining from $V^{(n)}_0$ to $V^{(n)}_i$.

If we apply the first construction above to $V^{(n)}$, then we recover the $V_\ast$ we started with. The following proposition now follows from this discussion.

\begin{prop}\label{prop:classification of intertwined orders}
	Let $(V;\leq_0,\ldots,\leq_{n-1})$ be a transitive (countable) structure equip\-ped with $n$ distinct linear orders. Assume that each order $V_i:=(V,\leq_i)$ has topological rank 1, any two $V_i$, $V_j$ are intertwined. Then for some unique permutation $\sigma$ of $\{0,\ldots,n-1\}$, $(V;\leq_{\sigma(0)},\ldots,\leq_{\sigma(n-1)})$ is isomorphic to the structure $(V^{(n)};\leq_0,\ldots,\leq_{n-1})$ defined above. In particular, there are exactly $n!$ such structures up to isomorphism. 
\end{prop}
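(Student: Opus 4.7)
The plan is to formalize the back-and-forth between $(V;\leq_0,\ldots,\leq_{n-1})$ and models of $T_n$ that is sketched in the discussion preceding the proposition. First I would use Lemma~\ref{lem:unique intertwining} to obtain, for each $i<n$, the unique $\emptyset$-definable increasing intertwining map $f_i\colon V_i\to\overline{V_0}$, taking $f_0=\mathrm{id}$. Since the $V_i$ are pairwise distinct linear orders on $V$, the values $f_0(x),\ldots,f_{n-1}(x)$ are pairwise distinct elements of $\overline{V_0}$; by transitivity of $V$ over $\emptyset$, the permutation $\tau$ listing them in $\leq$-increasing order---i.e.\ $f_{\tau(0)}(x)<\cdots<f_{\tau(n-1)}(x)$---does not depend on $x$. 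This $\tau$ (equivalently $\sigma=\tau^{-1}$) is the only candidate permutation: any isomorphism to the canonical structure $V^{(n)}$ preserves the ordering of the $f_i(x)$'s in $\overline{V_0}$, which forces $\sigma$.

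Next I would check that the $L_n$-structure $V_\ast$ built from the relabeled structure $(V;\leq_{\sigma(0)},\ldots,\leq_{\sigma(n-1)})$, exactly as in the paragraph preceding the proposition, satisfies the axioms of $T_n$. The first four bullets of Proposition~\ref{prop:unique n intertwined orders} are immediate from the construction, together with the fact that each image $f_i(V_i)$ is dense in $\overline{V_0}$ (a consequence of intertwining combined with weak transitivity and topological rank~$1$, as noted just before Lemma~\ref{lem:unique intertwining}). The only \emph{substantive} axiom to verify is the density bullet: for any prescribed $\leq$-increasing open intervals $I_0<\cdots<I_{n-1}$ of $V_\ast\subseteq\overline{V_0}$, one needs $x$ with $f_i(x)\in I_i$ for all $i$. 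This is exactly what Proposition~\ref{lem2} delivers when applied to the complete type
\[
p(y_0,\ldots,y_{n-1})\ =\ \tp\bigl(f_{\sigma(0)}(x),\ldots,f_{\sigma(n-1)}(x)/\emptyset\bigr),
\]
whose coordinates lie in $\overline{V_0}$ in strict $\leq$-increasing order; Remark~\ref{rem:lem2 in completion} is precisely what lets us apply Proposition~\ref{lem2} to tuples in $\overline{V_0}^n$ rather than $V_0^n$.

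With the $T_n$-axioms verified, $\omega$-categoricity of $T_n$ (Proposition~\ref{prop:unique n intertwined orders}) gives $V_\ast$ isomorphic to the canonical countable model of $T_n$; inverting the construction---which produces $V^{(n)}$ from a model of $T_n$ and is inverse to the first construction, as observed just before the proposition---then yields the required isomorphism $(V;\leq_{\sigma(0)},\ldots,\leq_{\sigma(n-1)})\cong(V^{(n)};\leq_0,\ldots,\leq_{n-1})$. For the cardinality count, the permutation $\tau$ defined above is an isomorphism invariant of the labeled structure $(V;\leq_0,\ldots,\leq_{n-1})$, so distinct permutations yield non-isomorphic labeled structures, while relabeling $V^{(n)}$ by an arbitrary permutation produces a structure realizing any prescribed $\tau$; this gives exactly $n!$ isomorphism classes. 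I do not foresee a serious obstacle; the one step requiring genuine care is routing the density axiom through Proposition~\ref{lem2} via Remark~\ref{rem:lem2 in completion}, since the coordinates of the relevant type live in the completion $\overline{V_0}$ rather than in $V_0$ itself.
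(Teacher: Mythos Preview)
Your proposal is correct and follows essentially the same route as the paper, whose proof is the discussion preceding the proposition: obtain the unique intertwinings $f_i$ via Lemma~\ref{lem:unique intertwining}, extract the permutation from transitivity, build $V_\ast$, verify it is a model of $T_n$, and conclude by $\omega$-categoricity of $T_n$ together with the inverse construction. The one place where you add value over the paper's discussion is in explicitly identifying Proposition~\ref{lem2} (through Remark~\ref{rem:lem2 in completion}) as the reason the density axiom of $T_n$ holds---the paper asserts ``$V_\ast$ is a model of $T_n$'' without citing this, but your identification is exactly right.
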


%

\subsection{Independent orders}

\begin{defi}
Let $V$ and $W$ be two orders, definable over some $A$. We say that $V$ and $W$ are \emph{independent} if there does not exist:

$\bullet$ a set of parameters $B\supseteq A$,

$\bullet$ $B$-definable infinite subsets $X\subseteq V$ and $Y\subseteq W$, both weakly transitive over $B$, which we equip with the induced orders from $V$ and $W$ respectively,

$\bullet$ a $B$-definable intertwining from $X$ to either $Y$ or the reverse of $Y$.
\end{defi}

Note that independence is a symmetric relation.

\begin{lemme}\label{lem:no self-intertwining} Let $(V,\leq)$ be definable and minimal over some $A$. Let $B\supseteq A$ and $I,J\subseteq V$ be two infinite $B$-definable disjoint convex subsets, weakly transitive over $B$, then $(I,\leq)$ and $(J,\leq)$ are independent.
\end{lemme}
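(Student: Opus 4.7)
The plan is to argue by contradiction. Suppose $I$ and $J$ are not independent; then there exist a parameter set $B'\supseteq A$ (WLOG finite with $B'\supseteq B$), $B'$-definable subsets $X\subseteq I$ and $Y\subseteq J$ weakly transitive over $B'$, and a $B'$-definable intertwining $f\colon X\to\overline{Y}$ (the case of an intertwining to $\overline{Y^{\mathrm{op}}}$ is handled symmetrically by reversing the order on $Y$). WLOG $I<J$, so $f$ is strictly increasing; by Lemma~\ref{lem:preservation under dense subsets}, $X$ and $Y$ inherit topological rank $1$ from $V$, so $f$ is injective with dense image in $\overline{Y}$.

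I would first dispatch the parameter-free case $B'=A$: pick $(a,b)$ with $a\in X$, $b\in Y$, $b<f(a)$, and let $p=\tp(a,b/A)$. Since $X,Y$ are $A$-definable, $p$ forces $a\in X\subseteq I$, $b\in Y\subseteq J$, and $a<b$. By Proposition~\ref{lem2} one can realize $p$ with $a',b'$ in any pair of increasing open intervals of $V$; choosing both intervals strictly inside $I$ yields a realization with $b'\in I$, contradicting $b'\in J$ since $I\cap J=\emptyset$.

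For general $B'$, absorb the parameters into an $A$-definable relation: let $\bar c\in V^k$ enumerate $B'$ (WLOG $B'\subseteq V$, otherwise work in $V^{\mathrm{eq}}$) and pick $\phi(a,b,\bar z)$ over $A$ with $\phi(a,b,\bar c)\iff a\in X\wedge b\in Y\wedge b<f(a)$. Applying Corollary~\ref{lem2_cor} to $\Phi=\{(a,b,\bar z)\in V^{2+k}:\phi(a,b,\bar z)\}$, the topological closure $\overline{\Phi}$ in $V^{2+k}$ is a boolean combination $\Psi$ of $\leq$-relations among the coordinates. The slice $\Psi_{\bar c}\subseteq V^2$ is then a finite union of closed cells bounded by lines $a=c_i$, $b=c_j$, and $a=b$, and it contains the closure of $R=\Phi_{\bar c}$. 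Since elements of $Y$ approximate $f(a)$ from below, the graph curve $\{(a,f(a)):a\in X\}$ sits inside $\overline R\subseteq\Psi_{\bar c}$. By pigeonhole on the finitely many cells and weak transitivity of $X$ over $B'$, an open $a$-subinterval of the curve lies in a single cell $C$; combined with injectivity and strict monotonicity of $f$, the curve must agree locally with a boundary line of $C$: $b=c_j$ (ruled out by injectivity), $a=c_i$ (ruled out since $a$ varies), or $a=b$ (ruled out since $f(a)\in J$ while $a\in I$). Contradiction.

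The main obstacle I expect is the final pinning step. The inclusion $\overline R\subseteq\Psi_{\bar c}$ is only a closed over-approximation: a priori the curve could sit in the interior of a $2$-dimensional cell, where no constraint on $f$ arises. To rule this out I would supplement the argument with a fresh application of Proposition~\ref{lem2}, using types $\tp(a_1,a_2,b,\bar c/A)$ of tuples with $a_1<a_2$ in $X$ and $b\in Y$ bracketing $f(a_1)<b<f(a_2)$. Varying the position of $b$ via the density lemma, and comparing with the available cells of $\Psi_{\bar c}$, pins $f(a)$ to a boundary line of $C$, completing the contradiction.
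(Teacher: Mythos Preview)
Your argument has a real gap that the proposed fix does not close.

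First, the ``parameter-free case $B'=A$'' is vacuous. Since you take $B'\supseteq B\supseteq A$, the equality $B'=A$ forces $B=A$; but then $I$ would be an $A$-definable proper convex subset of $V$, contradicting weak transitivity of $V$ over $A$ (every $A$-definable subset is dense). Even reading this charitably as $B'=B$, the argument fails: $p=\tp(a,b/A)$ is a type over $A$, while $X,Y,I,J$ are only $B$-definable, so nothing in $p$ forces a realization $(a',b')$ to land in $X$ or $Y$. Proposition~\ref{lem2} must be applied over $A$ (the order $(V,\leq)$ is \emph{not} minimal over $B$: $I$ itself is a non-dense $B$-definable subset), and over $A$ the only information in $p$ is the order type of $(a,b)$.

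The same issue is fatal in the general case. Corollary~\ref{lem2_cor} tells you that $\overline\Phi\subseteq V^{2+k}$ is cut out by order relations among all $2+k$ coordinates. But the only order relations holding on $\Phi$ are those placing $a$ between the endpoints of $I$, $b$ between the endpoints of $J$, and whatever holds among the coordinates of $\bar c$. Slicing $\overline\Phi$ at the actual $\bar c$ therefore returns essentially the full rectangle $\overline I\times\overline J$; the graph of $f$ sits inside it, but no constraint on $f$ emerges. Your refinement via $\tp(a_1,a_2,b,\bar c/A)$ has the same defect: Proposition~\ref{lem2} is blind to the $B'$-definable condition $f(a_1)<b<f(a_2)$ and only records the $A$-definable order relations among the coordinates, which again do not separate the curve from the ambient rectangle. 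In short, the tools you invoke see only the pure order over $A$, and $f$ is invisible to them.

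The paper takes a different route. It applies Proposition~\ref{lem2} not to points of $I$ and $J$ but to the tuple of \emph{endpoints} $(c_1,c_2,d_1,d_2)$ of $I$ and $J$, producing an $A$-conjugate pair $(I',J')$ with $I'\subsetneq I$ and $J\subsetneq J'$. This yields a second intertwining $g\colon \overline{I'}\to\overline{J'}$ conjugate to $f$. For $a\in I$ with $a<\inf I'$ one has $f(a)\in J\subseteq J'$, hence $g^{-1}(f(a))\in\overline{I'}$ lies strictly above $a$; iterating $g^{-1}\circ f$ manufactures infinitely many elements of $\overline V$ in $\dcl^{eq}(a,B,\sigma(B))$, contradicting Lemma~\ref{lem:finite dcl}. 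The contradiction thus comes from finiteness of definable closure, not from any density statement.
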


\begin{proof}
Without loss of generality, $B$ is finite. Assume first that there is an intertwining map $f$ from $I$ to $J$, definable over $B$. Then $f$ extends to an increasing bijection from $\overline I \to \overline {J}$, which we still denote by $f$ (see the paragraph after Lemma \ref{lem:intertwining}). Assume for definiteness that $I<J$. Let $c_1,c_2$ be the infimum and supremum of $I$ respectively, seen as elements of $\overline V$. Define similarly $d_1,d_2$ for $J$. Hence we have $c_1<c_2<d_1<d_2$. By Proposition \ref{lem2} (and Remark \ref{rem:lem2 in completion}), we can find $(c'_1,c'_2,d'_1,d'_2)\equiv_A (c_1,c_2,d_1,d_2)$ such that \[c_1<c'_1<c'_2<c_2<d'_1<d_1<d_2<d'_2.\] 
Let $\sigma\in \aut(M)_A$ send $(c_1,c_2,d_1,d_2)$ to $(c'_1,c'_2,d'_1,d'_2)$. Let $I',J'$ be the images of $I, J$ respectively under $\sigma$ and set $g=\sigma\circ  f \circ \sigma^{-1}$, so that $g$ is an increasing map from $\overline{I'}$ to $\overline {J'}$.

Let $a\in I\setminus I'$, say $c_1<a<c'_1$. Then $f$ sends $a$ to a point in $J\subset J'$. So $g^{-1}$ is defined on $f(a)$ and sends it to a point in $\overline{I'}$, hence $a<g^{-1}(f(a))$. Applying $f$, we obtain $f(a)<f(g^{-1}(f(a))$, thus $g^{-1}(f(a))<g^{-1}(f(g^{-1}(f(a))))$. Iterating, we find infinitely many elements in $\dcl^{eq}(aB\sigma(B))\cap \overline V$:
\[a<g^{-1}\circ f(a)<g^{-1}\circ f \circ g^{-1}\circ f(a)<g^{-1}\circ f\circ g^{-1}\circ f\circ g^{-1}\circ f(a)<\cdots.\] This contradicts Lemma \ref{lem:finite dcl}.  The same argument shows that $I$ is not intertwined over $B$ with the reverse of $J$.

Now take $B'\supseteq B$ and $X\subseteq I$, $Y\subseteq J$ two infinite subsets, definable and weakly transitive over $B'$. Since $V$ is minimal, the closure of $X$ is a finite union of convex subsets. By weak transitivity, it is just one convex subset $I'$. Similarly the closure of $Y$ is a convex subset $J'$. An intertwining between $X$ and $Y$ induces naturally an intertwining between $I'$ and $J'$. Using the previous paragraph we see that there is no such intertwining. We conclude that $I$ and $J$ are independent.
\end{proof}

\begin{cor}\label{cor:no self-intertwining} Let $(V,\leq)$ be definable and minimal over some $A$. Let $B\supseteq A$ and $I,J\subseteq V$ be two infinite $B$-definable convex subsets. Assume that $I$ and $J$ are intertwined, then $I=J$ and the intertwining map is the identity.
\end{cor}
\begin{proof}
Let $h$ be the intertwining map from $I$ to $\overline J$. If $h$ is not the identity, then we can find some convex subset $I' \subseteq I$ such that the convex hull of $h(I')$ is disjoint from $I$ (since $h$ is increasing). Taking $I'$ smaller if necessary, we can assume that it is weakly transtive over parameters $C$ defining it. Then $h(I')$ is also weakly transitive over $C$, and $I'$ and $h(I')$ contradict the previous lemma. 
\end{proof}

\begin{cor}\label{cor:no inverse intertwining}
Let $(V,\leq)$ be definable and minimal over some $A$. Then $(V,\leq)$ is not intertwined with its reverse $(V,\geq)$.
\end{cor}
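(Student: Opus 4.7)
The plan is to assume for contradiction an $A$-definable intertwining $g\colon V\to \overline{V^{op}}$, that is, an $A$-definable non-increasing map $g\colon V\to \overline V$. The discussion preceding Lemma \ref{lem:unique intertwining}, applied symmetrically (the key ingredients---topological rank 1 and weak transitivity---are invariant under reversal of the order), shows that $g$ is strictly decreasing and extends canonically to an $A$-definable strictly decreasing bijection $\tilde g\colon \overline V\to \overline V$.

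Next I would carry out a fixed-point analysis of $\tilde g$. As a strictly decreasing self-map of a linear order, $\tilde g$ admits at most one fixed point, and the fixed-point set is $A$-definable. If it is a singleton $\{p\}$, Lemma \ref{lem:minimal set of cuts} forces $\{p\}$ to be dense in the infinite order $\overline V$, which is absurd. Hence $\tilde g$ has no fixed points, and the two $A$-definable sets $X=\{c\in\overline V: c<\tilde g(c)\}$ and $Y=\{c\in\overline V: c>\tilde g(c)\}$ partition $\overline V$. Since $\tilde g$ strictly reverses order, $X$ is an initial segment and $Y$ is a final segment: if $c\in X$ and $c'<c$, then $\tilde g(c')>\tilde g(c)>c>c'$, so $c'\in X$. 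Each of $X$ and $Y$ is nonempty: were $X$ empty, we would have $\tilde g(c)<c$ for every $c\in\overline V$, but then setting $c'=\tilde g(c)$ gives $c'<c$ and hence $\tilde g(c')>\tilde g(c)=c'$, contradicting the assumption at $c'$; symmetrically $Y\neq \emptyset$.

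By Lemma \ref{lem:minimal set of cuts} again, both $X$ and $Y$ are dense in $\overline V$. Since $Y$ is dense in the infinite set $\overline V$, it contains two points $y_1<y_2$; density of $X$ then yields some $x\in X$ with $y_1<x<y_2$. But $Y$ is a final segment and $x>y_1\in Y$, so $x\in Y$, contradicting $x\in X$. Therefore no such $g$ can exist.

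The main obstacle, modest in scope, is verifying that the canonical extension of the intertwining map to a bijection of completions (as carried out for increasing maps in the preamble of Lemma \ref{lem:unique intertwining}) dualises correctly in the order-reversing case. Once $\tilde g$ is in hand, the remainder is an elementary, purely order-theoretic analysis of a strictly decreasing $A$-definable bijection of $\overline V$, combined with two applications of Lemma \ref{lem:minimal set of cuts}.
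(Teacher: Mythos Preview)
Your proof is correct and takes a genuinely different route from the paper's. The paper argues in three lines by invoking the immediately preceding Lemma~\ref{lem:no self-intertwining}: given a decreasing $A$-definable $f\colon V\to\overline V$, choose an open interval $I\subset V$ with the convex hull of $f(I)$ disjoint from $I$ (easy since $f$ is strictly decreasing), let $J$ be that convex hull intersected with $V$, and observe that $f|_I$ intertwines $I$ with $J$---contradicting Lemma~\ref{lem:no self-intertwining}, which says disjoint convex pieces of a minimal order are independent.

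Your argument bypasses Lemma~\ref{lem:no self-intertwining} (and hence its prerequisite Proposition~\ref{lem2}) entirely, relying only on Lemma~\ref{lem:minimal set of cuts} together with an elementary fixed-point analysis of the extended bijection $\tilde g$. This makes your proof more self-contained, at the cost of being longer. One small remark: in your no-fixed-point case you could alternatively note that $X\cap V$ is a nonempty proper $A$-definable initial segment of $V$ and invoke weak transitivity directly; this sidesteps the mild formal issue of applying Lemma~\ref{lem:minimal set of cuts} to subsets of $\overline V$ that are not contained in any single definable piece $C_\Phi$.
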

\begin{proof}
If there is an $A$-definable decreasing map $f\colon V\to \overline V$, then we can find an open interval $I\subseteq V$ such that the convex hull of the image $f(I)$ is disjoint from $I$. Let $J$ be the intersection of the convex hull of $f(I)$ with $V$. Then $I$ and $J$ contradict the previous lemma.
\end{proof}

\begin{lemme}\label{lem_ind} Let $V_0, V_1$ be two linear orders definable and minimal over some $A$. Assume that they are not independent. Then there is either an $A$-definable intertwining from $V_0$ to $V_1$ or an $A$-definable intertwining from $V_0$ to the reverse of $V_1$.
\end{lemme}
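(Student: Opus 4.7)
The plan is as follows. By $\omega$-categoricity one may assume $A$ and $B$ are finite; moreover, by replacing $V_1$ with its reverse if needed, one may assume the given $B$-definable intertwining goes from $X\subseteq V_0$ to $Y\subseteq V_1$ (both increasing), and keep track of both orientations in the conclusion. Following the second paragraph of the proof of Lemma~\ref{lem:no self-intertwining}, one may replace $X,Y$ by their convex closures $I\subseteq V_0$ and $J\subseteq V_1$: these are single convex sets, since any decomposition of $\bar X$ into several convex components would be permuted by $\aut(M)_B$, contradicting weak transitivity of $X$ over $B$. The intertwining then extends to a $B$-definable $\bar f\colon I\to\overline J$ between orders that are minimal over $B$ (Lemma~\ref{lem:preservation under dense subsets}).

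Since $B$ is finite, the $\aut(M)_A$-orbit of the triple $(I,J,\bar f)$ is finite, say $(I_1,J_1,\bar f_1),\dots,(I_k,J_k,\bar f_k)$. The unions $I^\ast=\bigcup_i I_i$ and $J^\ast=\bigcup_i J_i$ are $A$-definable, and by weak transitivity of $V_0$, $V_1$ over $A$ they are dense in $V_0$ and $V_1$ respectively. The crux is to patch the $\bar f_i$ into a single $A$-definable monotone map $\hat f\colon I^\ast\to\overline{V_1}$. On any overlap $I_i\cap I_j$, which is still minimal over the combined parameters by Lemma~\ref{lem:preservation under dense subsets}, both restrictions are intertwinings into $\overline{V_1}$: if the target intervals $J_i$, $J_j$ share an infinite convex part, uniqueness (Lemma~\ref{lem:unique intertwining}) applied over that combined parameter set forces the two restrictions to coincide; while if $J_i\cap J_j$ is finite, the composition $\bar f_j\circ\bar f_i^{-1}$ would be a nontrivial intertwining between two disjoint convex subsets of $V_1$, contradicting Lemma~\ref{lem:no self-intertwining}. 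A parallel analysis of how the pairs $(I_i,J_i)$ are relatively ordered shows that either all conjugates are oriented consistently (so $\hat f$ is globally non-decreasing, yielding an intertwining with $V_1$) or all are consistently reversed (so $\hat f$ is globally non-increasing, yielding an intertwining with the reverse of $V_1$); any genuinely mixed arrangement would, by composing two conjugate pieces of opposite orientation, produce a nontrivial intertwining between an interval of $V_1$ and (a conjugate of) its reverse, contradicting Corollary~\ref{cor:no inverse intertwining}.

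Finally, $\hat f$ extends by density of $I^\ast$ in $V_0$ to a monotone $A$-definable map $\tilde f\colon V_0\to\overline{V_1}$, taking suprema of cuts in the consistent direction. Its image is cofinal and coinitial in $\overline{V_1}$, for otherwise a bound would provide an $A$-definable element of $\overline{V_1}$, contradicting weak transitivity of $V_1$ over $A$. Thus $\tilde f$ is the required $A$-definable intertwining, from $V_0$ to $V_1$ in the non-decreasing case and to the reverse of $V_1$ in the non-increasing case. The main obstacle is the patching step in the middle paragraph: the combinatorial argument simultaneously establishing overlap consistency and a single global orientation, which relies on a careful case analysis of how the conjugate pairs $(I_i,J_i)$ can sit inside $V_0\times V_1$ and uses Lemma~\ref{lem:no self-intertwining} together with Corollary~\ref{cor:no inverse intertwining} to rule out all inconsistent configurations.
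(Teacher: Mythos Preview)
Your proposal contains a genuine error in the second paragraph: you claim that since $B$ is finite, the $\aut(M)_A$-orbit of $(I, J, \bar f)$ is finite. This is false. The interval $I$ is determined by its endpoints in $\overline{V_0}$, and since $V_0$ is weakly transitive over $A$, no cut of $V_0$ lies in $\acl^{eq}(A)$; hence $I$ has infinitely many $\aut(M)_A$-conjugates whenever it is a proper subinterval. Finiteness of $B$ only gives finitely many \emph{types} of conjugates. Your orientation paragraph is also confused: once you have replaced $V_1$ by its reverse so that $\bar f$ is increasing, every $\aut(M)_A$-conjugate of $\bar f$ is automatically increasing as well, so there is no mixed case to rule out and no need for Corollary~\ref{cor:no inverse intertwining}.

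The core idea behind your overlap argument is correct and is exactly what the paper exploits, but more directly: rather than patching conjugates, the paper picks $a \in X_0$ and shows $f(a) \in \dcl(Aa)$. Indeed, if not, some $A$-conjugate $f'$ of $f$ is defined at $a$ with $f'(a) \neq f(a)$; on a small enough interval around $a$ the images of $f$ and $f'$ are disjoint, and $f' \circ f^{-1}$ yields an intertwining between disjoint convex pieces of $V_1$, contradicting Lemma~\ref{lem:no self-intertwining}. This immediately gives an $A$-definable map $g$ with $g(a)=f(a)$, which agrees with $f$ on $X_0$ and is then shown to be globally increasing via uniqueness of intertwinings on overlapping conjugates of $X_0$ together with a topological rank~1 argument. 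The point is that your ``overlap consistency'' is equivalent to $f(a)\in\dcl(Aa)$, and the latter formulation hands you the global $A$-definable candidate map without any patching.
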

\begin{proof}
Without loss, assume that $A$ is finite and let $B\supseteq A$ be also finite. Assume that we have some $B$-definable $X_0\subseteq V_0$ and $X_1\subseteq V_1$ both weakly transitive over $B$ and a $B$-definable increasing map $f\colon X_0 \to \overline{X_1}$ (if there is a decreasing map from $X_0$ to $\overline{X_1}$, replace $V_1$ by its reverse). Restricting $X_0$, we may assume that it is transitive over $B$. Let $a\in X_0$. By topological rank 1, both $X_0$ and $X_1$ are dense in their convex hulls and  $f$ extends to an increasing map $\overline{X_0}\to \overline{X_1}$. Assume that $f(a)\notin \dcl(Aa)$. Then, there is $\sigma \in \aut(M)_{Aa}$ such that $\sigma(f(a))\neq f(a)$. Let $f' = \sigma(f)$ so that \[f'(a) = \sigma(f)(\sigma(a)) =  \sigma(f(a))\neq f(a).\]

Then there is an open interval $I$ of $V_0$ containing $a$ such that $f'$ is defined on $I$ and induces an increasing map $f'|_I\colon I\to \overline{V_1}$ (indeed, we can take $I= \sigma(\overline{X_0})$). Reducing $I$, we can assume that $f(I)$ and $f'(I)$ are disjoint. But then $f'\circ f^{-1}$ gives an intertwining map from $f(I)$ to $f'(I)$, which contradicts Lemma \ref{lem:no self-intertwining}.
	
	It follows that $f(a)\in \dcl(Aa)$. Let $g$ be the $A$-definable map sending $a$ to $f(a)$ and for simplicity assume $V_0$ is transitive over $A$ (otherwise, replace it by the locus of $\tp(a/A)$). As $X_0$ is transitive over $B$, $g$ coincides with $f$ on $X_0$ and therefore is increasing on it. Let $\tau \in \aut(M)_A$ and let $X_0'=\tau(X_0)$. Then $g$ is also increasing on $X_0'$. Assume that the convex hulls of $X_0$ and $X_0'$ in $V_0$ have an open interval $Z$ in their intersection. We can construct two increasing maps from $Z$ to $\overline V_1$: one induced by $g|_{X_0}$ and one induced by $g|_{X'_0}$. By Lemma \ref{lem:unique intertwining}, those two maps coincide. As $V_0$ is transitive over $A$, the sets $\tau(X_0)$, for $\tau$ ranging in $\aut(M)_A$ cover $V_0$. In particular, they cover the convex hull of $X_0$ in $V_0$. It follows that $g$ is increasing on the convex hull of $X_0$. Therefore as $V_0$ is transitive over $A$, $g$ is locally increasing on $V_0$: for each $a\in V_0$, there is an open convex subset of $V_0$ containing $a$ on which $g$ is increasing. Let $C_a$ denote the maximal such set. The the sets $C_a$ form an $A$-definable partition of $V_0$ into infinite convex sets. As $V_0$ has topological rank 1, $C_a=V_0$ for all $a$ and $g$ is increasing on $V_0$. It follows that $g$ intertwines $V_0$ and $V_1$.
\end{proof}

\begin{lemme}\label{lem4} 	Working over some $A$, let $(V_0,\leq_0)$, $(V_1,\leq_1)$ be two minimal independent definable orders. Let $f_0\colon  V_0\to \overline {V_0}$ and $f_1\colon V_0 \to \overline {V_1}$ be two $A$-definable functions. Then the set \[\{(f_0(x),f_1(x)):x\in V_0\}\] is dense in $\overline{V_0}\times \overline{V_1}$.
\end{lemme}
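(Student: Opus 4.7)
The plan is to proceed by contradiction: assume that $\{(f_0(x), f_1(x)) : x \in V_0\}$ is not dense in $\overline{V_0} \times \overline{V_1}$, and derive an $A$-definable intertwining between dense $A$-definable subsets of $V_1$ and $V_0$, contradicting their independence. First I record the easy preliminaries: the images $f_0(V_0) \subseteq \overline{V_0}$ and $f_1(V_0) \subseteq \overline{V_1}$ are non-empty $A$-definable subsets, and hence are dense by Lemma~\ref{lem:minimal set of cuts}. Fix then a non-empty open box $I_0 \times I_1 \subseteq \overline{V_0} \times \overline{V_1}$ missed by the joint image, definable over some finite $B \supseteq A$.

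The heart of the argument is to extract an $A$-definable non-decreasing map from a dense $A$-definable subset of $V_1$ to $\overline{V_0}$ (or a symmetric variant). The natural candidates are the $A$-definable envelope cuts
\[ g(z) = \sup\{f_0(x) : x \in V_0,\ f_1(x) < z\}, \qquad h(z) = \inf\{f_0(x) : x \in V_0,\ f_1(x) > z\}, \]
together with their counterparts $g', h'$ obtained by swapping the roles of $f_0$ and $f_1$. Each is $A$-definable, non-decreasing, and takes values in $\overline{V_0} \cup \{\pm\infty\}$ (respectively $\overline{V_1} \cup \{\pm\infty\}$). Suppose for concreteness that $g$ takes a value in $\overline{V_0}$ on some $z_0 \in V_1$. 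Then $D := \{z \in V_1 : g(z) \in \overline{V_0}\}$ is a non-empty $A$-definable subset of $V_1$; by Lemma~\ref{lem:minimal set of cuts} it is dense in $V_1$, and by Lemma~\ref{lem:preservation under dense subsets} it is weakly transitive over $A$. Applying Lemma~\ref{lem:minimal set of cuts} once more, the $A$-definable non-empty set $g(D) \subseteq \overline{V_0}$ is dense, so $g|_D$ is cofinal and coinitial in $\overline{V_0}$. Thus $g|_D : D \to \overline{V_0}$ is an $A$-definable intertwining of the weakly transitive set $D \subseteq V_1$ with $V_0$, contradicting the independence of $V_0$ and $V_1$; the symmetric cases using $h$, $g'$, $h'$ are handled analogously, appealing to Corollary~\ref{cor:no inverse intertwining} when an envelope flips orientation.

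The main obstacle is to show that the failure of joint density forces at least one of these four envelope functions to take a proper cut value somewhere. This is straightforward when the forbidden region sits at the \emph{edge} of the image (e.g.\ when the image lies above a non-trivial curve $z = \phi(y)$), since then the "opposite" envelope is forced to track $\phi$ and thus lands in $\overline{V_0}$ or $\overline{V_1}$. The delicate case is when $I_0 \times I_1$ is an isolated interior box surrounded by dense image, where all four global envelopes may trivially equal $\pm\infty$. The remedy is to localize: replace $f_0, f_1$ by their restrictions to $x$'s whose images are confined to a $B$-definable bounded rectangle containing $I_0 \times I_1$, and use the topological rank~1 of $V_0$ together with Lemma~\ref{lem1} to control the closure of the resulting preimage. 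This produces a $B$-definable envelope with proper cut values on a $B$-definable dense subset, giving a $B$-definable intertwining which (via Lemma~\ref{lem_ind} or directly from the definition of independence, which permits parameters in $B$) still contradicts the independence hypothesis.
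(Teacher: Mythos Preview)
Your envelope approach is the right opening and indeed matches the paper's first move: the paper considers $X_a=\{f_0(x):f_1(x)<f_1(a)\}$, shows its closure $Y_a$ is a finite union of convex sets, and argues that if $Y_a$ is a proper initial (or end) segment then $a\mapsto \sup Y_a$ is an intertwining, contradicting independence. So far so good, and your treatment of this case is clean.

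The gap is exactly where you flag it, and your proposed remedy does not close it. In the ``delicate case'' all four global envelopes are $\pm\infty$; equivalently, for every $z$ the sets $\{f_0(x):f_1(x)<z\}$ and $\{f_0(x):f_1(x)>z\}$ are both cofinal and coinitial. Your fix is to restrict to $x$ with $(f_0(x),f_1(x))$ in a bounded rectangle $R_0\times R_1\supseteq I_0\times I_1$ and recompute the envelopes over $B$. But this does not force the localized envelope to take a proper cut value: for $z\in I_1$, the set $\{f_0(x):x\in X_B,\ f_1(x)<z\}$ includes all $x$ with $f_1(x)$ in the part of $R_1$ below $I_1$, and those $x$ can have $f_0(x)$ arbitrarily close to the right endpoint of $R_0$. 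So $g_B(z)$ can be constantly the right endpoint of $R_0$, which is a $B$-definable point, not a moving intertwining. The joint image can perfectly well be dense in $R_0\times R_1$ minus the box $I_0\times I_1$ while wrapping around it on all four sides; Lemma~\ref{lem1} gives you nothing here beyond the finiteness you already used. No choice of rectangle avoids this, and nothing in your sketch explains how ``topological rank~1 of $V_0$'' would.

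The paper handles this case by a genuinely different second argument. Having established (via the envelope step) that $\{f_0(x):f_1(x)\lessgtr f_1(a)\}$ are both dense in $\overline{V_0}$ for every $a$, it turns the problem around: for a bounded interval $I\subset V_0$, set $H(I)=\{f_1(x):f_0(x)\in I\}$ and $C(I)=V_1\setminus\overline{H(I)}$. The first step guarantees $H(I)$ is coinitial and cofinal, so $C(I)$ is a finite union of bounded intervals with convex hull $\tilde C(I)$. Since $I\mapsto \tilde C(I)$ is antitone and the family of intervals is upward directed, any two $\tilde C(I)$, $\tilde C(J)$ meet. But using Proposition~\ref{lem2} one can conjugate $I$ by an automorphism moving a point left of $\tilde C(I)$ to a point right of it, forcing two disjoint $\tilde C$'s---contradiction. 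You should either reproduce this directedness/automorphism step or find a genuine replacement; the localization idea as written is not one.
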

\begin{proof} First, the images of $f_0$ and $f_1$ are definable subsets of $\overline{V_0}$ and $\overline{V_1}$ respectively. By Lemma \ref{lem:slice of completion}, we can replace $V_0$ by $V_0\cup f_0(V_0)$ and $V_1$ by $V_1\cup f_1(V_0)$ and assume that $f_0$ and $f_1$ take values in $V_0$ and $V_1$ respectively.

Let $V\subseteq V_0$ be $A$-definable and transitive over $A$. Then by minimality, $V$ is dense in $V_0$ and it is enough to prove that $\{(f_0(x),f_1(x)):x\in V\}$ is dense in $\overline{V_0}\times \overline{V_1}$. Next, notice that since $V_0$ and $V_1$ are minimal over $A$ and $f_0,f_1,V$ are $A$-definable, $f_0(V)$ is dense in $\overline{V_0}$ and $f_1(V)$ is dense in $\overline{V_1}$. Fix $a\in V$ and consider the set \[X_a = \{f_0(x):x\in V, f_1(x)<_1f_1(a)\}.\] This set is non-empty by the previous sentence. Let also $Y_a$ be the closure of $X_a$. Then by Lemma \ref{lem1}, $Y_a$ is a finite union of convex sets.

The infima and suprema of those convex sets are either $\pm \infty$ or elements of $\overline{V_0}$. Let $W\subseteq \overline{V_0}$ be an $A$-definable subset containing $V_0$ along with all those elements. By Lemma \ref{lem:slice of completion}, $W$ is minimal over $A$. Assume that $Y_a$ contains a bounded interval \[c\leq_0 x \leq_0 d, \quad c,d\in W,\] and this interval is maximal in $Y_a$. By Proposition \ref{lem2} applied to $W$, there is an automorphism $\sigma$ such that $c<_0 \sigma(c)<_0 d<_0 \sigma(d)$. But then, we have neither $Y_{\sigma(a)}\subseteq Y_a$, nor $Y_a\subseteq Y_{\sigma(a)}$ and this is impossible by the definition of $X_a$. We can do the same thing if $Y_a$ contains two disjoint unbounded intervals. We conclude that $Y_a$ is either an initial segment, an end segment, or the whole of $\overline{V_0}$.

Assume that $Y_a$ is an initial segment and define $g(a)\in W$ to be its supremum. Then as $V$ is a complete type, $Y_{a'}$ is an initial segment for each $a'\in V$. Let $h\colon f_1(V)\to V_0$ send a point $b=f_1(a')$ to $g(a')$. This is well defined as $X_{a'}$ and hence $g(a')$ depends only on $f_1(a')$. Note that $h$ is non-decreasing and therefore intertwines $f_1(V)$ and $V_0$. This contradicts independence. Similarly, if $Y_a$ is an end segment, we obtain an intertwining from $f_1(V)$ to the reverse of $V_0$. We therefore conclude that $Y_a$ is equal to $\overline{V_0}$. We also have symmetrically that $\{f_0(x): x\in V, f_1(x)>_1 f_1(a)\}$ is dense in $\overline{V_0}$ for all $a\in V$.

Assume now towards a contradiction that for some bounded open interval $I\subset V_0$, the set \[H(I):=\{f_1(x): x\in V_0, f_0(x)\in I\}\] is not dense in $\overline{V_1}$ (where we have identified $I$ with its convex closure in $\overline{V_0}$). Let $J\subset V_0$ be any bounded interval. By Proposition \ref{lem2}, there is an automorphism $\sigma$ over $A$ such that $\sigma(J)\subseteq I$. Then $H(\sigma(J))\subseteq H(I)$ is not dense in $\overline{V_1}$. Therefore, also $H(J)$ is not dense in $\overline{V_1}$.

By what we know so far, $H(I)$ is cofinal and coinitial in $\overline{V_1}$ (since for any $d\in f_1(V)$, the sets $\{f_0(x): x\in V, f_1(x)<_1 d\}$ and $\{f_0(x):x\in V, f_1(x)>_1 d\}$ are dense in $\overline{V_0}$ and $f_1(V)$ is dense in $V_1$). Let $C(I) = V_1 \setminus \overline{H(I)}$. Then $C(I)$ is a non-empty finite union of bounded open intervals. Let $\tilde C(I)$ be its convex hull. If $I\subseteq J$, then $H(I)\subseteq H(J)$, so $C(I)\supseteq C(J)$ and $\tilde C(I)\supseteq \tilde C(J)$. As any two intervals are contained in a third one, any two intervals of the form $\tilde C(J)$ intersect, where $J$ is any open interval of $V_0$. Let $a\in V_1$ to the left of $\tilde C(I)$ and $b\in V_1$ to the right of it of same type as $a$. Then there is an automorphism $\sigma$ over $A$ sending $a$ to $b$. Then $\sigma(\tilde C(I))=\tilde C(\sigma(I))$ is disjoint from $\tilde C(I)$. This is a contradiction.
\end{proof}

%
%

Having described in Corollary \ref{lem2_cor} the closed definable subsets of minimal orders, and hence of products of intertwined orders, we now complete the picture with the case of pairwise independent orders.

\begin{prop} \label{lem6} Working over some set $A$, let $V_0,\ldots,V_{n-1}$ be pairwise independent minimal definable orders. Then any $A$-definable closed set $X\subseteq V_0^{k_0}\times \cdots \times V_{n-1}^{k_{n-1}}$ is a finite union of products of the form $D_0\times \ldots \times D_{n-1}$, where each $D_i$ is an $A$-definable closed subset of $V_i^{k_i}$.
\end{prop}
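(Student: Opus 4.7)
By $\omega$-categoricity, $X$ decomposes into finitely many complete-type loci over $A$, and the closure of a union is the union of closures, so we may assume $X$ is the locus of a single complete type $p$ over $A$. Let $Y_i := \pi_i(X) \subseteq V_i^{k_i}$ be the projection (also a complete-type locus over $A$) and $D_i := \overline{Y_i}$; by Corollary \ref{lem2_cor} each $D_i$ is a boolean combination of coordinate comparisons $x_r\le x_s$, i.e.\ a ``box'' in $V_i^{k_i}$. The inclusion $\overline{X}\subseteq D_0\times\cdots\times D_{n-1}$ is immediate, so the task is to show $X$ is dense in that product.

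I would argue by induction on $n$. The base $n=1$ is exactly Corollary \ref{lem2_cor}. For the inductive step, set $X' := \pi_{<n-1}(X)$; by the inductive hypothesis applied to the $(n-1)$ pairwise independent orders $V_0,\ldots,V_{n-2}$, $X'$ is dense in $D_0\times\cdots\times D_{n-2}$. It is then enough to establish the \emph{fiber claim}: for every $\bar c\in X'$, the fiber $F_{\bar c}:=\{\bar d\in V_{n-1}^{k_{n-1}}:(\bar c,\bar d)\in X\}$ is dense in $D_{n-1}$. Indeed, any non-empty open box $U\times W$ inside the target product (with $U$ open in $D_0\times\cdots\times D_{n-2}$ meeting $X'$ and $W$ open in $D_{n-1}$) then meets $X$: choose $\bar c\in X'\cap U$ and observe $F_{\bar c}\cap W\neq\emptyset$.

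To prove the fiber claim, the first step is to show that $V_{n-1}$ remains minimal over $A\bar c$. Topological rank $1$ is stable under enlarging the parameter set. For weak transitivity over $A\bar c$, I would argue as follows: if some $A\bar c$-definable $S\subseteq V_{n-1}$ were non-empty yet not dense, Lemma \ref{lem1} provides finitely many boundary points of $\overline{S}$ in $\overline{V_{n-1}}$, all lying in $\dcl^{eq}(A\bar c)$ (which is finite in $\overline{V_{n-1}}$ by Lemma \ref{lem:finite dcl}). Varying $\bar c$ over its $A$-orbit inside $X'$ and projecting to a single coordinate of some $V_i$ with $i<n-1$, one obtains an $A$-definable relation between a subset of $V_i$ and $\overline{V_{n-1}}$; extracting monotonicity via the mechanism of Lemma \ref{lem4} (using Proposition \ref{lem2} to move intervals around and Lemma \ref{lem:no self-intertwining} to rule out collisions) and then appealing to Lemma \ref{lem_ind} yields an actual intertwining from (a weakly transitive subset of) $V_i$ to $V_{n-1}$ or its reverse, contradicting the pairwise independence assumption. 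Once minimality of $V_{n-1}$ over $A\bar c$ is established, decomposing $F_{\bar c}$ into its complete-type components over $A\bar c$ and applying Corollary \ref{lem2_cor} over $A\bar c$ shows that $\overline{F_{\bar c}}$ is a finite union of boxes inside $D_{n-1}$; the same independence-extraction argument, applied to a hypothetical proper gap, forces $\overline{F_{\bar c}}=D_{n-1}$.

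The \emph{main obstacle} is this ``independence extraction'' step: converting the mere existence of a restricted $A\bar c$-definable subset of $V_{n-1}$ into an actual $A$-definable intertwining between two of the $V_i$'s. One must replay the argument of Lemma \ref{lem4} in the presence of the auxiliary parameters from the other factors, checking carefully that those parameters (lying in the other $V_j$'s, which are independent from both $V_i$ and $V_{n-1}$) do not obstruct the monotonicity extraction needed to feed into Lemma \ref{lem_ind}. Modulo this technical work, the induction closes and yields the desired product decomposition of $\overline{X}$.
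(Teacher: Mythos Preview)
Your approach diverges from the paper's, and the gap you flag as ``the main obstacle'' is real, not merely technical. The paper does not argue via fibers. It runs a simultaneous induction on two statements: $(A_n)$ asserts that for any $A$-definable functions $f_i\colon V_0\to\overline{V_i}$, $i<n$, the set $\{(f_0(x),\ldots,f_{n-1}(x)):x\in V_0\}$ is dense in $\prod_{i<n}\overline{V_i}$; and $(B_n)$ is the box property for complete types that you want. The scheme is $(A_n)+(B_n)\Rightarrow(A_{n+1})$ (a direct generalization of the proof of Lemma~\ref{lem4}) and then $(A_{n+1})\Rightarrow(B_{n+1})$. For the latter implication the paper does not analyze an arbitrary type: it imitates Proposition~\ref{lem2} by \emph{constructing} one type with property $\boxtimes$, using the functions $m_i(\bar b)=\max\bigl(\dcl^{eq}(\bar b)\cap\overline{V_i}\bigr)$, whose joint density is precisely $(A_{n+1})$. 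An arbitrary type is then handled by sandwiching it between realizations of this constructed type and moving by an automorphism.

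The circularity in your plan is this. To run your extraction you freeze all but one coordinate of $\bar c$ (say keep $c\in V_0$ free, call the rest $\bar c'$) and want to apply the mechanism of Lemma~\ref{lem4} over the base $A\bar c'$. That lemma needs both orders minimal over the base. Minimality of the relevant interval of $V_0$ over $A\bar c'$ can indeed be read off from the inductive hypothesis for $n-1$ orders, but weak transitivity of $V_{n-1}$ over $A\bar c'$ is exactly the statement you are trying to prove, just with one fewer coordinate in the parameter tuple. You are therefore forced into an inner induction on $\sum_i k_i$, and even its base step---a single parameter from some $V_j$, $j<n-1$---already requires knowing that an $A$-definable function from an interval of $V_j$ into $\overline{V_{n-1}}$ either has dense graph or yields an intertwining, which is Lemma~\ref{lem4} for the pair $(V_j,V_{n-1})$, i.e.\ an instance of $(A_2)$. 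In other words, $(A_n)$ is not optional repackaging: it is precisely the missing lemma your ``independence extraction'' invokes, and it must be proved in tandem with the box property rather than derived from it.
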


\begin{proof}
We assume for simplicity that $A=\emptyset$.  Let $p\in S(\emptyset)$ be a type on $\overline{V_0}^{k_0}\times \cdots \times \overline{V_{n-1}}^{k_{n-1}}$. Let $D$ be the closure in $\overline{V_0}^{k_0}\times \cdots \times \overline{V_{n-1}}^{k_{n-1}}$ of its set of realizations. Say that $p$ has property $\boxtimes$ if there are closed $0$-definable sets $D_i\subseteq V_i^{k_i}$ such that \[ D \cap (V_0^{k_0}\times \cdots \times V_{n-1}^{k_{n-1}}) = D_0 \times \cdots \times D_{n-1}.\]
We prove the following two statements by induction on $n$:

\begin{description}
\item[($A_n$)] Let $V_0,\ldots, V_{n-1}$ be pairwise independent minimal orders. If $f_i\colon V_0\to \overline V_i$, $i<n$, are $0$-definable functions, then $\{(f_0(x),f_1(x),\ldots,f_{n-1}(x)):x\in V_0\}$ is dense in $\overline{V_0}\times \cdots \times \overline{V_{n-1}}$.

\item[($B_n$)]  Let $V_0,\ldots,V_{n-1}$ be pairwise independent minimal orders. Then any  type $p\in S(\emptyset)$  on $\overline{V_0}^{k_0}\times \cdots \times \overline{V_{n-1}}^{k_{n-1}}$ has property $\boxtimes$.
\end{description}

The statement of the theorem then follows from $(B_n)$ since by $\omega$-categoricity, any definable set is a finite union of types.

\smallskip
Property $(A_1)$ follows from minimality and $(B_1)$ holds trivially. We will show that $(A_n)$ and $(B_n)$ together imply $(A_{n+1})$ and then that $(A_{n+1})$ implies $(B_{n+1})$.

\smallskip

{$(A_n)+(B_n) \Rightarrow (A_{n+1})$}: The property $(A_2)$ is Lemma \ref{lem4}, so we can assume $n>1$. We follow closely the proof of Lemma \ref{lem4}. Fix $a\in V_0$ and define
 \[X_a=\{(f_0(x),f_1(x),\ldots,f_{n-1}(x)) : x\in V_0, f_n(x)<f_n(a)\}\subseteq V_0\times \cdots \times V_{n-1}.\]
 For each $i<n$, let $Y_i\subseteq V_i$ be a complete type over $a$. Note that $\overline Y_i$ is convex in $\overline V_i$ (it is a finite union of convex sets by minimality and then is convex since it defines a complete type over $a$). Set \[\hat Y= \prod_{i<n} \overline{Y_i}\subseteq \prod_{i<n} \overline{V_i}.\] Working over the parameter $a$, the $Y_i$'s are pairwise independent minimal orders. The property $(A_{n})$ then implies that $X_a \cap \hat Y$ is either dense in $\hat Y$ or empty. It now follows that the closure $\overline {X_a}$ of $X_a$ in $\prod_{i<n} \overline{V_i}$ is a union of finitely many rectangles of the form $\prod_{i<n} I_i$, where each $I_i\subseteq \overline{V_i}$ is a convex set. The tuple of endpoints (or endcuts) of those convex sets is an element of some $\overline {V_1}^{k_1}\times \cdots \times \overline{V_{n-1}}^{k_{n-1}}$. Let $p$ be the type of that tuple over $\emptyset$. Applying $(B_n)$, we see that $p$ has property $\boxtimes$. In addition, it follows from the definition of $X_a$ that for any $a'$ having the same type as $a$, one of $\overline{X_a}$ and $\overline{X_{a'}}$ is included in the other. Since property $\boxtimes$ allows us to move the endpoints of the convex sets defining $X_a$ freely, this is only possible if $\overline{X_a}$ is either the full product $\prod_{i<n} \overline{V_i}$, or is a rectangle, unbounded on all but at most one coordinate. However, by $(B_2)$, we know that $\overline {X_a}$ must have full projection on each coordinate. Hence the only possibility is that $\overline {X_a}=\prod_{i<n} \overline{V_i}$.

We end as in Lemma \ref{lem4}. Density of $X_a$ in the product implies that for any product $\hat I=\prod_{i<n} I_i$ of open intervals, the set \[s(\hat I):=\{f_n(x) : (f_0(x),f_1(x),\ldots,f_{n-1}(x))\in \hat I\}\] is coinitial in $V_n$. By applying the same argument to the reverse order, we get that it is also cofinal. Furthermore, by minimality, the closure of $s(\hat I)$ is a finite union of convex sets. Hence, given any $\hat I$, there is a unique minimal bounded convex set $c(\hat I)\subseteq V_n$ such that $s(\hat I)$ is dense in  $V_n \setminus c(\hat I)$. If $\hat I\subseteq \hat I'$, then $c(\hat I)\supseteq c(\hat I')$. As $V_n$ is weakly transitive, the intersection of all $c(\hat I)$ is empty. Since the family of $\hat I$'s is upward-directed under inclusion, $c(\hat I_*)$ must be empty for some $\hat I_*$. But then by $(B_{n})$, for any $\hat I'$, one can find $\hat I'_* \subseteq  I_*$ which is a conjugate of $\hat I'$. Hence $c(\hat I')$ is also empty and $s(\hat I')$ is dense in $V_n$. Since this holds for any $\hat I'$, $(A_{n+1})$ follows.

\smallskip

$(A_{n+1}) \Rightarrow (B_{n+1})$: As in the proof of Proposition \ref{lem2}, to show that all types have property $\boxtimes$, it is enough to find, for all $k<\omega$, one type in $\overline V_0^k\times \cdots \times \overline V_n^k$ having property $\boxtimes$ and for which no two coordinates are equal. To this end, take $b_{0}\in V_0$. For each $i\leq n$, let $m_i(b_{0})$ denote the largest element of $\overline{V_i}$ definable from $b_{0}$. Set $a_{0,i}=m_i(b_{0})$. Then by $(A_{n+1})$ applied to the functions $m_i$, we see that $p_1:=\tp(a_{0,i}:i\leq n)$ has property $\boxtimes$: its set of realizations is dense in $\overline{V_0}\times \cdots \times \overline{V_n}$.

Assume that $b_{l}$, $a_{l,i}$ have been constructed for $l<k$, $i\leq n$, with $a_{l,i}=m_i(b_{\leq l})$. For $i\leq n$, let $X_i$ be a complete type over $b_{<k}$ of elements in $V_i$, greater than $a_{k-1,i}$. So $X_i$ is dense in $\{x\in V_i:x>a_{k-1,i}\}$. Work over $b_{<k}$ and consider the sets $X_0,\ldots,X_n$ equipped with the induced orders. They are pairwise independent. Pick any $b_{k}\in X_0$ and define $a_{k,i}=m_i(b_{\leq k})$, $i\leq n$. Then again by $(A_{n+1})$, the set of realizations of $\tp(a_{k,i}:i\leq n)$ is dense in $\overline{X_0}\times \cdots \times \overline{X_n}$. It follows inductively that the resulting type $p_k:=\tp(a_{l,i}:l\leq k,i\leq n)$ satisfies $\boxtimes$.
\end{proof}

\begin{definition}
	Let $(V,\leq)$ be a parameter-definable linear order. For $k<\omega$, a \emph{sector} of $V^k$ is a subset of $V^k$ defined by a formula $\phi(x_0,\ldots,x_{k-1})$ which is a finite boolean combination of relations of the form $x_i = x_j$ and $x_i \leq x_j$, $i,j<k$.
	
	 If $V_0,\ldots,V_{n-1}$ are pairwise independent parameter-definable orders, a \emph{sector} of $V_0^{k_0}\times \cdots \times V_{n-1}^{k_{n-1}}$ is a finite union of products $D_0\times \ldots \times D_{n-1}$, where each $D_i$ is a sector of $V_i^{k_i}$.
\end{definition}

\begin{corollary}\label{cor:lem6+}
	Working over some set $A$, let $V_0,\ldots,V_{n-1}$ be pairwise independent minimal definable orders. Let $X\subseteq V_0^{k_0}\times \cdots \times V_{n-1}^{k_{n-1}}$ be $A$-definable. Then the topological closure of $X$ is a sector of $V_0^{k_0}\times \cdots \times V_{n-1}^{k_{n-1}}$.
\end{corollary}
\begin{proof}
	This follows at once from Proposition \ref{lem6} applied to the topological closure of $X$ (which is also $A$-definable) along with Corollary \ref{lem2_cor}.
\end{proof}

We say that a betweenness relation has topological rank 1 if one (or equivalently both) of its associated linear orders has topological rank 1.
\begin{cor}\label{cor:n betweenness}
	Let $V$ be a definable transitive set and let $B_1,\ldots,B_n$ be distinct 0-definable betweenness relations on $V$ of topological rank 1. Then for any subset $I\subseteq n$, we can find $a_I,b_I,c_I\in V$ such that $B_i(a_I,b_I,c_I)$ holds if and only if $i\in I$.
\end{cor}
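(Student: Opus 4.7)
The plan is to pass to a fixed pair of parameters, orient each $B_i$ canonically, and prove that the resulting linear orders become pairwise independent; the corollary then follows from the density statement $(A_n)$ in the proof of Proposition~\ref{lem6}.

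Fix two distinct elements $b,c\in V$. Over $\{b,c\}$ each $B_i$ has a canonical orientation, the linear order $\leq_i$ determined by requiring $b<_i c$; on a dense $\{b,c\}$-orbit $V'\subseteq V\setminus\{b,c\}$, each $(V',\leq_i)$ is minimal over $\{b,c\}$ by Lemma~\ref{lem:preservation under dense subsets}. A direct check gives, for $a\in V'$, that $B_i(a,b,c)$ holds iff $b$ is the $\leq_i$-middle of $\{a,b,c\}$, which in view of $b<_i c$ is equivalent to $a<_i b$. Therefore, to realize the pattern $I\subseteq\{1,\ldots,n\}$ it suffices to find $a\in V'$ that lies in the $\leq_i$-ray $\{x:x<_i b\}$ for every $i\in I$ and in $\{x:x>_i b\}$ for every $i\notin I$.

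The main step is the independence claim: $\leq_1,\ldots,\leq_n$ are pairwise independent over $\{b,c\}$. Suppose not; by Lemma~\ref{lem_ind}, for some $i\neq j$ there is a $\{b,c\}$-definable intertwining $g\colon V'\to\overline{V'}$ between $(V',\leq_i)$ and either $(V',\leq_j)$ or its reverse, and $g$ is unique by Lemma~\ref{lem:unique intertwining}. If $g$ is the identity, then $\leq_i$ and $\leq_j$ agree on $V'$; using density of $V'$ in $V$ together with the common normalization $b<_i c$ and $b<_j c$, this forces $\leq_i=\leq_j$ on $V$, hence $B_i=B_j$, contradicting distinctness. If $g\neq\mathrm{id}$, one picks an automorphism $\sigma$ over a mild enlargement of $\{b,c\}$ so that the conjugate $\sigma g\sigma^{-1}$ differs from $g$ on some interval; iterating the composition $g\circ(\sigma g\sigma^{-1})^{-1}$ as in the proof of Lemma~\ref{lem:no self-intertwining} produces an infinite strictly increasing chain in $\dcl^{eq}(\{b,c\}\cup\sigma\{b,c\})\cap\overline{V'}$, contradicting Lemma~\ref{lem:finite dcl}.

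Granting pairwise independence, property $(A_n)$ from the proof of Proposition~\ref{lem6}, applied with $V_0=\cdots=V_{n-1}=V'$ (each equipped with the corresponding $\leq_i$) and all $f_i$ the identity on $V'$, says that the diagonal $\{(x,\ldots,x):x\in V'\}$ is dense in $(V',\leq_1)\times\cdots\times(V',\leq_n)$. In particular, the intersection of any $n$ open $\leq_i$-rays from $b$ meets $V'$, which supplies the required $a$ and completes the proof. The main obstacle is the bootstrap iteration in the independence step, where one must choose $\sigma$ so that the resulting chain in $\dcl^{eq}$ both stays inside a fixed finite set and strictly grows; this mirrors the technique of Lemma~\ref{lem:no self-intertwining} but requires careful bookkeeping.
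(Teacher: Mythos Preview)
Your reduction to finding $a$ in prescribed $\leq_i$-half-lines from a fixed $b$ is fine, and so is the idea of invoking $(A_n)$ once independence is established. The problem is that the independence claim is false as stated, and the argument you give for the case $g\neq\mathrm{id}$ cannot work.

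Since each $B_i$ is $\emptyset$-definable, the pair $\{\leq_i,\geq_i\}$ is $\emptyset$-invariant, so each $\leq_i$ is definable over $\acl^{eq}(\emptyset)$; the choice of $b,c$ only selects which of the two orientations you call $\leq_i$. By Lemma~\ref{lem:unique intertwining} the intertwining $g$ between $(V',\leq_i)$ and $(V',\leq_j)$ is therefore already $\acl^{eq}(\emptyset)$-definable. Now take any $\sigma\in\aut(M)$. If $\sigma$ preserves both orientations then $\sigma g\sigma^{-1}$ is again an $\acl^{eq}(\emptyset)$-definable intertwining between the \emph{same} two orders, hence equals $g$ by uniqueness; if $\sigma$ flips an orientation, then $\sigma g\sigma^{-1}$ intertwines a different pair of orders and the composition $g\circ(\sigma g\sigma^{-1})^{-1}$ is not a self-map of one order. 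Either way you do not get two distinct intertwinings to iterate. Concretely, the structures $V^{(n)}$ described just before Proposition~\ref{prop:classification of intertwined orders} carry $n$ distinct topological-rank-$1$ betweenness relations whose associated linear orders are genuinely pairwise intertwined and \emph{not} independent; your argument would have ruled these out.

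The repair is to stop trying to prove independence and instead absorb the intertwined case. Over $\acl^{eq}(\emptyset)$ choose orientations $\leq_1,\ldots,\leq_n$ and partition $\{1,\ldots,n\}$ by the equivalence relation ``intertwined'' (using Corollary~\ref{cor:no inverse intertwining} one may first reverse some orders so that no two are intertwined with a reversal). For each class with representative $s$, the intertwining maps $\iota_i\colon(V,\leq_i)\to\overline{(V,\leq_s)}$ embed all members of the class as pairwise disjoint dense subsets of one minimal linear order $V_s^\sharp\subseteq\overline{(V,\leq_s)}$, exactly as in the construction preceding Proposition~\ref{prop:iso types of n orders}. Inside a single class, the condition ``$a<_i b$ for $i\in I$, $a>_i b$ for $i\notin I$'' translates into order constraints on the points $\iota_i(a),\iota_i(b)$ in $V_s^\sharp$, and Proposition~\ref{lem2} (applied in $V_s^\sharp$) shows every such pattern is realised. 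The orders $V_s^\sharp$ for different classes are pairwise independent, so Proposition~\ref{lem6} lets you combine the patterns across classes. This yields the required $a$ (with your fixed $b,c$) for any $I\subseteq n$.
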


We now extend Proposition \ref{prop:classification of intertwined orders} to a structure equipped with any $n$ minimal linear orders.

\begin{prop}\label{prop:iso types of n orders}
	Let $(M;\leq_1,\ldots,\leq_n)$ be countable, $\omega$-categorical, transitive over $\emptyset$. Assume that each $M_i:=(M,\leq_i)$ is a linear order of topological rank 1 and that no two of them are equal or reverse of each other. Then for each $i\neq j \leq n$, exactly one of the following holds:
	\begin{itemize}
		\item $\leq_i$ and $\leq_j$ are independent;
		\item $\leq_i$ is intertwined with $\leq_j$ and if $f_{ij}\colon M_i \to \overline {M_j}$ is the unique 0-definable increasing map, we have $f_{ij}(x)<_j x$ for all $x$;
		\item $\leq_i$ is intertwined with $\leq_j$ and we have $f_{ij}(x)>_j x$ for all $x$;
		\item $\leq_i$ is intertwined with the reverse of $\leq_j$ and if $f_{ij}\colon M_i \to \overline {M_j}$ is the unique 0-definable decreasing map, we have $f_{ij}(x)<_j x$ for all $x$;
		\item $\leq_i$ is intertwined with the reverse of $\leq_j$ and we have $f_{ij}(x)>_j x$ for all $x$.
	\end{itemize}
	Furthermore, the data of which of those cases holds for each pair $i\neq j$ completely determines the isomorphism type of $M$.
\end{prop}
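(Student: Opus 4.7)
For the first part, I would first note that transitivity of $M$ implies each $M_i:=(M,\leq_i)$ is weakly transitive over $\emptyset$, and combined with the topological rank~1 hypothesis and Lemma~\ref{lem0} (applied to $M$ as a complete type), each $M_i$ is minimal over $\emptyset$. If $M_i$ and $M_j$ are not independent, Lemma~\ref{lem_ind} yields a $\emptyset$-definable intertwining from $M_i$ to $M_j$ or to its reverse, which Lemma~\ref{lem:unique intertwining} shows is unique; this is the map $f_{ij}$. The set $\{x\in M:f_{ij}(x)<_j x\}$ is $\emptyset$-definable, hence by transitivity of $M$ either empty or all of $M$, and similarly for the relations $f_{ij}(x)>_j x$ and $f_{ij}(x)=x$. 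If $f_{ij}(x)=x$ were true for all $x$, then $\leq_i$ would equal $\leq_j$ or its reverse according as $f_{ij}$ is increasing or decreasing, both excluded by hypothesis. Hence for each pair, exactly one of the five cases applies.

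For the second part, I would expand the language to $L^+$ by adding, for each intertwined pair $(i,j)$, a binary predicate $R_{ij}$ interpreted by $R_{ij}(x,y)\leftrightarrow y<_j f_{ij}(x)$. Since $f_{ij}$ is $\emptyset$-definable, so is $R_{ij}$, and $L^+$ has the same definable sets on $M$ as $L$. The plan is to show that $M$ is ultrahomogeneous in $L^+$, so that a standard back-and-forth between two structures $M$ and $N$ satisfying the hypotheses with the same pattern yields an $L^+$-isomorphism. Partition $\{1,\ldots,n\}$ into intertwining-equivalence classes $I_1,\ldots,I_r$ (this is an equivalence relation since intertwinings compose). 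By Proposition~\ref{prop:classification of intertwined orders}, each cluster $(M;(\leq_i)_{i\in I_s})$ is isomorphic to the canonical structure $V^{(|I_s|)}$, the isomorphism and the choice of a canonical cluster order $\leq_{i_s}$ being completely determined by the pattern; the same holds in $N$, giving within-cluster isomorphisms between the corresponding clusters of $M$ and $N$.

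The key step is the one-element extension: given a finite partial $L^+$-isomorphism $\sigma:A\to B$ and $m\in M$, find $m'\in N$ with $\sigma\cup\{(m,m')\}$ still a partial $L^+$-isomorphism. The qf-$L^+$-type of $m$ over $A$, restricted to each cluster $I_s$, determines via the cluster isomorphism together with Proposition~\ref{lem2} inside the canonical order $\leq_{i_s}$ a non-empty open interval $U_s\subseteq(N,\leq_{i_s})$ consisting precisely of the elements realizing the target cluster type over $B$. Any element of $\bigcap_s U_s$ can serve as $m'$. Existence of such an element is provided by Proposition~\ref{lem6} applied over $B$ to the pairwise independent minimal orders $(N,\leq_{i_s})$, $s=1,\ldots,r$: the density statement $(A_r)$ appearing in its inductive proof, taken with the functions $f_s$ all equal to the identity of $N$, shows that the diagonal of $\overline{(N,\leq_{i_1})}\times\cdots\times\overline{(N,\leq_{i_r})}$ is dense in the product, so the open box $\prod_s U_s$ contains a diagonal point. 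The main obstacle is precisely this combined within-cluster (via Proposition~\ref{lem2} and the cluster classification) and across-cluster (via Proposition~\ref{lem6}) realization argument; one additionally has to check that Proposition~\ref{lem6} stays valid over the finite parameter set $B$, which it does since minimality is preserved after naming finitely many parameters and the inductive proof is insensitive to the choice of base.
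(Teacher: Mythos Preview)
Your overall strategy matches the paper's: establish the five cases via Lemma~\ref{lem_ind} and transitivity, partition the indices into intertwining classes, then run a back-and-forth. The first part is correct.

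There is however a genuine gap in the back-and-forth step. You assert that the set $U_s\subseteq N$ of realizations of the cluster-$I_s$ portion of the target type over $B$ is an open interval in the single representative order $\leq_{i_s}$. This is only correct when $|I_s|=1$. When the cluster contains several intertwined orders, the qf-$L^+$-type imposes constraints coming from \emph{each} $\leq_i$, $i\in I_s$, together with the predicates $R_{ij}$; the set $U_s$ is then an intersection of a $\leq_i$-interval for every $i\in I_s$, which is open in the product topology of the cluster but is generally not an interval, nor even open, in any single $\leq_{i_s}$. Your appeal to the statement $(A_r)$ for the representatives $\leq_{i_s}$ therefore does not suffice: it guarantees a diagonal point in any product of $\leq_{i_s}$-intervals, not in $\prod_s U_s$. (Incidentally, your side remark that minimality is preserved after naming $B$ is also false---weak transitivity fails once there are $B$-definable points---though this is not what is actually needed.)

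The paper's repair is to first reverse some orders so that all intertwinings within a cluster are increasing (your argument also omits this reduction), and then to replace each cluster not by a representative order on $M$ but by the single minimal order $V_t=\bigcup_{i\in I_t}\iota_i(M)\subseteq\overline{M_{i_t}}$. The map $x\mapsto(\iota_i(x))_{i\leq n}$ embeds $M$ into $\prod_t V_t^{|I_t|}$, and now Proposition~\ref{lem6} applied over $\emptyset$ together with Corollary~\ref{lem2_cor} shows that the image is dense in a product set determined precisely by the given pattern data. The target type over $B$ cuts out a genuine open box in this product, and density furnishes the required element for the back-and-forth.
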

\begin{proof}
	The argument is similar to that of Proposition \ref{prop:classification of intertwined orders}, which we present a little bit differently. First note that by Corollary \ref{cor:no inverse intertwining}, by replacing some orders $\leq_i$ with their reverses, we can assume that the last two cases never occur. Let then $E$ be the equivalence relation on $\{1,\ldots,n\}$ which holds for $i,j$ if $M_i$ and $M_j$ are intertwined. Let $s_1,\ldots,s_k$ be representatives of the $E$-classes and for each $i\leq n$, let $t(i)$ be such that $i \rel E s_{t(i)}$. Define also $\iota_i \colon  M_i \to \overline {M_{s_{t(i)}}}$ to be the unique increasing 0-definable map intertwining $M_i$ and $M_{s_{t(i)}}$.
	
	For $l\leq k$, define $V_l\subseteq \overline{M_{s_l}}$ as the union \[V_l = \bigcup_{i\rel{E} s_l} \iota_i (M_i),\] and let $\preceq_l$ be its canonical linear order. Then $(V_l,\preceq_l)$ is a minimal, 0-definable order. Define \[\Gamma = \{(\iota_1(x),\ldots,\iota_n(x)):x\in M\} \subseteq \prod_{i\leq n} V_{t(i)}.\]	
	Now by the previous proposition, $\Gamma$ is dense in a product $D_1\times \cdots \times D_k$ of closed subsets of the $V_i$'s. By Corollary \ref{lem2_cor}, $D_k$ is dense in a set defined by a boolean combination of inequalities on variables $x_i \leq x_j$. Those inequalities are determined by inequalities $\iota_i(x) \lessgtr_{s_{t(i)}} x$ that are true in $M$ and are part of the data that we are given. We conclude by a direct back-and-forth argument as in Proposition \ref{prop:unique n intertwined orders}.
\end{proof}

We will need the following corollary later on.

\begin{cor}\label{cor:n independent orders}
Let $(M;\leq_1,\ldots ,\leq_n,\dots)$ be as in the previous proposition with possibly additional structure. Then there is a finite set of parameters $\bar d$ and some $\bar d$-definable subset $X\subseteq M$ such that $X$ is transitive over $\bar d$ and the orders $\leq_1,\ldots,\leq_n$ are pairwise independent when restricted to $X$.
\end{cor}
\begin{proof}
We construct $X$ as a subset of an intersection of intervals for the various orders. We will use the notation $M_i = (M,\leq_i)$ as in the previous proposition.

For each $i\leq n$, we pick $a_i <_i b_i$ in $M$. We can make this choice in such a way that the intervals $a_i <_i x <_i b_i$ and $a_j <_j x <_j b_j$ are independent when equipped with $<_i$ and $<_j$ respectively: if $M_i$ and $M_j$ are independent, then any choice of points will do and if they are intertwined, pick two intervals so that their convex closures in the common completion $\overline{M_i } = \overline{M_j}$ are disjoint (similarly if they are intertwined up to reversal). Having done this, define let $\bar d = (a_i,b_i)_{i\leq n}$ and let $X_0$ be the set defined by the conjunction of the formulas $a_i <_i x <_i b_i$ for $i\leq n$. The set $X_0$ need not be transitive over $\bar d$, so let $X\subseteq X_0$ be any $\bar d$-definable infinite subset of $X_0$ which is transitive over $\bar d$. Then then $n$ orders are pairwise independent on $X$ as required.
\end{proof}

\subsection{Weakly minimal orders}\label{sec:weakly minimal}

We have defined intertwining only for weakly transitive orders. This was required to ensure that being intertwined is a symmetric relation. This will be too restrictive later on, especially because the property of being weakly transitive is not invariant under adding parameters to the base. We therefore need a more general notion that allows us to talk about orders being intertwined even if they are not weakly transitive. To keep the nice properties of this relation (in particular, symmetry and uniqueness of the intertwining), we instead assume that the orders are \emph{weakly minimal}, as defined below. To motivate this definition, note that if $(V,\leq)$ is $A$-definable and has topological rank 1, then $\dcl^{eq}(A) \cap \overline V$ is finite of size $n$ say. Enumerate its elements as $s_1 < \cdots < s_n$. We use the interval notation $(a,b) := \{ x\in V : a<x<b\}$ for $a,b\in \overline V$. Then the $n+1$ convex subsets $(-\infty,s_1)$, $(s_i,s_{i+1})$ and $(s_n,+\infty)$ are each $A$-definable and minimal over $A$ (and furthermore those are the \emph{only} $A$-definable infinite convex subsets of $V$ minimal over $A$ since for any $A$-definable convex subset $W$ of $V$ the infimum and supremum of $W$ are definable over $A$). Any two of those minimal convex subsets are either independent or intertwined over $A$.  The following definition simply says that the latter never happens.

\begin{defi}
Let $A$ be a set of parameters. An $A$-definable order $(V,\leq)$ is \emph{weakly minimal over $A$} if:

$\bullet$ it is densely ordered with neither smallest no largest element;

$\bullet$ it has topological rank 1;

$\bullet$ any two distinct $A$-definable minimal convex subsets of it are independent.
\end{defi}

\begin{lemme}
Let $(V,\leq)$ be an $A$-definable linear order and $B\supseteq A$ a larger set of parameters. Then $V$ is weakly minimal over $A$ if and only if it is weakly minimal over $B$.
\end{lemme}
\begin{proof}
Assume that $V$ is weakly minimal over $A$. Let $W_1,W_2 \subseteq V$ be two $B$-definable distinct convex subsets of $V$ that are minimal over $B$. Let $W^*_1, W^*_2$ the $A$-definable convex subsets of $V$ that are minimal over $A$ and contain $W_1$ and $W_2$ respectively. If $W^*_1 = W^*_2$, then $W_1, W_2$ are independent by Lemma \ref{lem:no self-intertwining}. If $ W^*_1 \neq  W^*_2$, then $W^*_1$ and $W^*_2$ are independent and hence so are $W_1$ and $W_2$ by definition of independence.

Conversely, assume that $V$ is not weakly minimal over $A$. Let $W^*_1,  W^*_2$ be two distinct $A$-definable convex subsets of $V$ that are minimal over $A$ and intertwined. Let $f:\overline {W^*_1} \to \overline {W^*_2}$ be the $A$-definable increasing bijection. Let $W_1$ be an infinite convex subset of $W^*_1$ which is definable and minimal over $B$. Let $W_2$ be equal to $f(\overline {W^*_1})\cap W^*_2$. Then $W_2$ is definable and minimal over $B$ and $W_1$ and $W_2$ are intertwined. Hence $V$ is not weakly minimal over $B$.
\end{proof}

It follows from the lemma that we can drop ``over $A$" in the definition of weakly minimal: given a parameter-definable order $(V,\leq)$, we can say whether it is weakly minimal or not without mentioning a set of parameters over which it is defined since the definition does not depend on the set of parameters actually chosen. (If $V$ is definable over both $A$ and $A'$, let $B=A\cup A'$ and apply the previous lemma to the pair $(A,B)$ and to the pair $(A',B)$.)

\begin{defi}
Let $V, W$ be two linear orders, definable over some $A$ and weakly minimal. We say that $V$ and $W$ are \emph{intertwined over $A$} if there is an $A$-definable increasing bijection $f:\overline V \to \overline W$.
\end{defi}

\begin{prop}
If $V$ and $W$ are weakly minimal $A$-definable orders which are intertwined over $A$, then there is a unique intertwining between $V$ and $W$.
\end{prop}
\begin{proof}
Assume that $V$ and $W$ are intertwined and let $f:\overline V \to \overline W$ be the $A$-definable increasing bijection. Enumerate $\dcl^{eq}(A)\cap \overline V$ as $s_1 < \cdots < s_n$ and enumerate $\dcl^{eq}(A) \cap \overline W$ as $t_1 < \cdots < t_m$. Formally set $s_0$ and $t_0$ to be equal to $-\infty$ and $s_{n+1}$, $t_{m+1}$ to be equal to $+\infty$. Let $i<n+1$. The subset $(s_i, s_{i+1})$ of $V$ is definable and minimal over $A$. Since $f$ is definable over $A$, both $f(s_i)$ and $f(s_{i+1})$ are in $\dcl^{eq}(A)\cap \overline W$ and the subset $(f(s_i),f(s_{i+1})$ is an $A$-definable convex subset of $W$. (Here, we have increased $f$ formally to map $\pm \infty$ to $\pm \infty$.) Furthermore, $(f(s_i),f(s_{i+1})$ is minimal over $A$: if not, there would be some $j<m+1$ such that $f(s_i) < t_j < f(s_{i+1})$, but then $f^{-1}(t_j)$ would be an element of $\dcl^{eq}(A)\cap \overline V$ lying strictly between $s_i$ and $s_{i+1}$. Therefore for some $j$, $f(s_i)=t_j$ and $f(s_{i+1})=t_{j+1}$. Since $f$ is increasing, this implies that $n=m$ and $f(s_i)=f(t_i)$ for all $i<n+1$. Now, for each $i<n+1$, there is by Lemma \ref{lem:unique intertwining}, a unique intertwining between $(s_i,s_{i+1})$ and $(t_i,t_{i+1})$. This shows that $f$ is completely determined and there is a unique intertwining between $V$ and $W$.
\end{proof}

It follows from the proposition that we can drop ``over $A$" in the definition of being intertwined: if $V$ and $W$ are definable both over $A$ and over $B$, then they are intertwined over $A$ if and only if they are intertwined over $B$ and the (unique) intertwining is definable over any set of parameters over which $V$ and $W$ are defined.

Note that if $V$ and $W$ are weakly minimal, but not intertwined, they are not necessarily independent: It could be that some proper convex subset of $V$ is intertwined with a convex subset of $W$, but that intertwining does not extend to the whole of $V$. For instance let $(V_1,\leq_1), (V_2,\leq_2)$ be two definable independent minimal orders, definable over $\emptyset$ and assume for simplicity that $V_1, V_2$ are two disjoint subsets of the model. Set $V$ be the linear order $V_1 + V_2$, that is $V$ has underlying set $V_1 \cup V_2$ equipped with the linear ordering obtained by making the elements of $V_1$ smaller than those of $V_2$ and keeping the existing orders on $V_1$ and $V_2$ respectively. Then $V$ is weakly minimal and is neither independent nor intertwined with $V_1$.

Proposition \ref{lem6} and Corollary \ref{cor:lem6+} go through for weakly minimal orders instead of minimal orders, except that we have to extend our notion of sector to allow for definable convex subsets.

\begin{definition}
	let $A\subseteq M^{eq}$ and let $(V,\leq)$ be an $A$-definable linear order. For $k<\omega$, an \emph{$A$-sector} of $V^k$ is a subset of $V^k$ defined by a formula $\phi(x_0,\ldots,x_{k-1})$ which is a finite boolean combination of relations of the form:
	
	\begin{itemize}
		\item $x_i = x_j$, for $i,j<k$;
		\item $x_i \leq x_j$, for $i,j<k$;
		\item $x_i = a$, for $i<k$ and $a\in \dcl^{eq}(A)\cap V$;
		\item $x_i \leq a$, for $i<k$ and $a\in \dcl^{eq}(A)\cap \overline V$.
	\end{itemize}
\end{definition}

In other words, an $A$-sector is a subset of $V^k$ which is quantifier-free definable from the order along with unary predicates for $A$-definable cuts of $V$.

\begin{definition}
	let $A\subseteq M^{eq}$ and let $V_0,\ldots,V_{m-1}$ be pairwise independent $A$-definable linear orders. For $n<\omega$, an \emph{$A$-sector} of $V_0^{k_0}\times \cdots \times V_{m-1}^{k_{m-1}}$ is a finite union of sets of the form $D_0\times \cdots \times D_{m-1}$, where each $D_i$ is an $A$-sector of $V_{i}^{k_i}$.
\end{definition}

\begin{prop} \label{lem6+} Let $V_0,\ldots,V_{n-1}$ be pairwise independent weakly minimal $A$-definable orders. Let $X\subseteq V_0^{k_0}\times \cdots \times V_{n-1}^{k_{n-1}}$ be an $A$-definable set. Then the closure of $X$ is an $A$-sector of $V_0^{k_0}\times \cdots \times V_{n-1}^{k_{n-1}}$.
\end{prop}
\begin{proof}
Each $V_i$ decomposes over $A$ as a finite union of points in $\dcl(A)\cap V_i$ and minimal convex subsets with end-cuts in $\dcl^{eq}(A)\cap \overline {V_i}$. Note that all of those are $A$-sectors of $V_i$. Any two of those minimal convex subsets are independent. The product $V_0^{k_0}\times \cdots \times V_{n-1}^{k_{n-1}}$ then decomposes over $A$ as a finite union of products of powers of $A$-definable minimal orders and $A$-definable points, such that any two distinct minimal orders are independent. It is then enough to prove the proposition for one such product, but that is given by Corollary \ref{cor:lem6+}.
\end{proof}

Let $V$ be a weakly minimal linear order. We write $W \wmc V$ to say that $W\subseteq V$ is a convex subset of $V$ which is weakly minimal. Since $V$ is weakly minimal, this is equivalent to asking that $W$ is a convex subset of $V$ that has no first nor last element.

\section{Circular orders}\label{sec:circular orders}

Most of the results above generalize to circular orders, though some extra arguments are required.

Let $(V,C)$ be a circular order. We will abuse notation by writing say $a<b<c<d$ to mean that $a,b,c,d$ are pairwise distinct and $(a,b,c,d)$ lie in this order on $V$: that is $C(a,b,c)\wedge C(b,c,d)\wedge C(c,d,a)\wedge C(d,a,b)$. So $a<b$ only means that $a\neq b$ and $a<b<c$ is equivalent to $a\neq b\neq c \wedge C(a,b,c)$. Hopefully, this will not lead to confusion. For any $a<b$ on $V$, the set defined by $a<x<b$ is called an \emph{open interval} of $V$. Any interval has a canonical linear order on it coming from the circular order on $V$. The notations are consistent in the sense that if $I\subseteq V$ is an open interval, and $c,d,e\in I$, then we have $c<d<e$ in the sense of the circular order if and only if we have $c<d<e$ in the sense of the induced linear order on $I$.

For $a\in V$, we let $V_{a\to}=V\setminus \{a\}$, equipped with the linear order inherited from $C$. We say that $V$ has \emph{topological rank 1} if it does not admit a parameter-definable convex equivalence relation with infinitely many infinite classes. Then $V$ has topological rank 1 if and only if some/any $V_{a\to}$ has topological rank 1.

Let $V$ be circularly ordered.  A subset $I\subseteq V$ is \emph{convex} if for any $a\neq b\in I$, one of the two intervals $a<x<b$ and $b<x<a$ is included in $I$. A convex set $I$ is \emph{bounded} if its complement is infinite. Note that if $V$ is dense, then any open interval is bounded. A bounded convex set $I\subseteq V$ has a well defined linear order induced by the circular order on $V$. If $I$ and $J$ are two bounded convex subsets of $V$ with no last element (in their induced linear orders), we say that $I$ and $J$ \emph{define the same cut} in $V$ if one is an end segment of the other. 

We define the completion $\overline V$ of $V$ as the set of definable bounded convex subsets of $V$ quotiented by the equivalence relation of defining the same cut. As for linear orders, this is naturally a countable union of interpretable sets (or rather a direct limit). In fact, given $a\in V$, $\overline V$ can be canonically identified with $\overline {V_{a\to}} \cup \{a\}$: the element $a\in V$ is identified with the class of an open interval $b<x<a$ and any cut of $V_{a\to}$ is a bounded convex subset of $V$ and is identified with its class in $\overline V$. As in the case of linear orders, $\overline V$ is naturally equipped with a circular order, and there is a canonical embedding of $V$ in $\overline V$ which sends $V$ to a dense subset of $\overline V$.

We say that the $A$-definable circular order $V$ is \emph{weakly transitive} over $A$ if it is densely ordered and no element in $\overline V$ is algebraic over $A$.

\begin{lemma}
	If, over some $A$, $(V,C)$ is definable and weakly transitive of topological rank 1, then any $A$-definable subset of $V$ is dense in $V$.
\end{lemma}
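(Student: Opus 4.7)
The plan is to argue by contradiction: suppose some nonempty $\emptyset$-definable $X\subseteq V$ is not dense, and use the complement to manufacture a convex equivalence relation whose infinite classes have $\emptyset$-algebraic endpoints in $\overline V$.

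Set $U = V\setminus \overline X$, where $\overline X$ denotes the topological closure of $X$ in $V$. Then $U$ is $\emptyset$-definable, open, nonempty (since $X$ is not dense) and a proper subset of $V$ (since $X$ is nonempty). Define a binary relation $E$ on $V$ by declaring $a\rel E b$ to hold iff $a=b$, or both $a,b\in U$ and one of the two open arcs of $V$ with endpoints $a,b$ is contained in $U$. A short case analysis on the cyclic position of three points shows that $E$ is transitive: if $a,b,c\in U$ and both $a\rel E b$ and $b\rel E c$ are witnessed by arcs in $U$, the union of these arcs (together with the common point $b$, which lies in $U$) contains one of the two arcs from $a$ to $c$. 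Hence $E$ is a $\emptyset$-definable equivalence relation whose classes are either singletons contained in $\overline X$, or the connected components of $U$; in particular, every class is convex in the circular sense.

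Since $V$ is densely ordered, every open arc of $V$ is infinite, so every non-singleton $E$-class is infinite. By the topological rank 1 hypothesis, $E$ has only finitely many infinite classes, so $U$ has only finitely many connected components $U_1,\ldots,U_n$, with $n\geq 1$. Each $U_i$ is a proper open convex subset of $V$ (because $U\neq V$); consequently, viewed inside $\overline V$, it has at least one and at most two endpoints, which are elements of $\overline V$. The set of all endpoints of all $U_i$'s is a nonempty finite $\emptyset$-definable subset of $\overline V$, so each such endpoint lies in $\acl^{eq}(\emptyset)\cap \overline V$. This contradicts the weak transitivity of $V$, which asserts that no element of $\overline V$ is algebraic over $\emptyset$.

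The only mildly delicate steps are checking transitivity of $E$ on the circle (avoiding confusion between the two arcs joining two points) and verifying that a proper open connected subset of $V$ really does determine endpoints in $\overline V$—including the degenerate case $U_i = V\setminus\{p\}$, in which $p\in V\subseteq \overline V$ is itself $\emptyset$-definable and already contradicts weak transitivity.
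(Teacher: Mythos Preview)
Your argument is correct and follows essentially the same route as the paper: both use topological rank 1 to decompose the (complement of the) closure of $X$ into finitely many convex pieces, then observe that the endpoints of these pieces are algebraic over $\emptyset$, contradicting weak transitivity. The paper compresses this into two sentences by invoking the circular analogue of Lemma~\ref{lem1} directly, whereas you spell out the underlying equivalence relation $E$ explicitly; your version is more self-contained but not a genuinely different approach.
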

\begin{proof}
	By topological rank 1, any closed $A$-definable subset of $V$ is a finite union of convex sets. The cuts defining these convex sets are algebraic over $A$, but there can be no such cut by weak transitivity.
\end{proof}

If $V$ and $W$ are two $A$-definable weakly transitive circular orders, we say that they are \emph{intertwined} over $A$ if there is an $A$-definable order-preserving injective map $f\colon V\to \overline W$. As for linear orders, this is an equivalence relation. It is no longer true that such a map has to be unique, however, we will see that there can be at most finitely many.

\begin{defi}
A \emph{self-intertwining} of a circular order $(V,C)$ is an intertwining map $f\colon V\to \overline V$ which is not the identity.
\end{defi}

Let $(V,C)$ be a $0$-definable circular order of topological rank 1 and fix some $a\in V$. Then we can write $V=F \cup V_1 \cup \cdots \cup V_n$, where $F=\dcl(a)\cap V$ and the $V_i$'s are convex subsets of $V$, definable and weakly transitive over $a$, with  $V_1<V_2<\cdots <V_n$. (In other words, the $V_i$'s are the infinite convex subsets of $V$ defined as the set of points between two consecutive elements of $\dcl^{eq}(a)\cap \overline V$. Note that $F$ only contains those elements of $\dcl^{eq}(a)\cap \overline V$ that actually lie in $V$.) The $V_i$'s are then minimal over $A$ and by Lemma \ref{lem:unique intertwining}, for any $i, j\leq n$, there is at most one intertwining map $f_{ij}\colon V_i \to \overline {V_j}$. If it exists, $f_{ij}$ has dense image.

Let now $f\colon V\to \overline V$ be a self-intertwining map defined over some set $A$. Let $W\subseteq V$ be a minimal $Aa$-definable infinite convex subset of $V$. Say that $W$ is a subset of $V_i$ and assume that $f$ sends $W$ to some $\overline{V_j}$. Composing by $f_{ij}^{-1}$, we get an intertwining between two subsets of $V_i$. By Corollary \ref{cor:no self-intertwining}, this must be the identity. It follows that $f$ coincides with $f_{ij}$ on $W$. By continuity of $f$, $f$ must coincide with $f_{ij}$ on the whole of $V_i$. So $f$ sends $V_i$ to some $\overline{V_j}$ via $f_{ij}$. Assume that for some $i$, $f$ sends $V_i$ to $\overline{V_{i+k}}$. Then as $f$ preserves the order, it must send $V_{i+1}$ to $\overline{V_{i+k+1}}$ (addition modulo $n$) and iteratively send any $V_{j}$ to $\overline{V_{j+k}}$. The number $k$ completely determines $f$, as does therefore the image of $a$. The possibilities for $k$ form a subgroup in $\mathbb Z/n \mathbb Z$. Hence the set of self-intertwinings along with the identity map, equipped with composition, is isomorphic to $\mathbb Z/\delta \mathbb Z$ for some integer $\delta$.

\begin{definition}
	A circular order $V$ is \emph{minimal} if it is weakly transitive, of topological rank 1 and admits no self-intertwining.
\end{definition}

\begin{lemma}\label{lem:def completeness}
	Let $V$ be a circular order and let $X_a$, $a\in D$, be a uniformly definable family of non-empty subsets of $V$ which is directed: for any $a,a'\in D$ there is $a''\in D$ such that $X_{a''}\subseteq X_{a}\cap X_{a'}$. Then there is some $c\in \overline V$ such that for any $a\in D$ and any neighborhood $I$ of $c$ in $V$, $I \cap X_a\neq \emptyset$.
\end{lemma}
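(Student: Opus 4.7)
The strategy is to realize, in $M$, a first-order formula over the finite parameter tuple $B$ defining the family $(X_a)_{a\in D}$ that expresses the existence of a cluster point $c$, using $\omega$-saturation of $M$ (which holds by $\omega$-categoricity) to reduce to producing such $c$ in a convenient elementary extension. I will construct the realization explicitly as a definable cut arising from the family.

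First I reduce to the case where every $X_a$ is contained in a fixed $X_{a_0}$, by replacing each $X_a$ with $X_a\cap X_{a_0}$ for a fixed $a_0\in D$; this preserves directedness and non-emptiness, for any $a$ directedness yields $a''$ with $X_{a''}\subseteq X_a\cap X_{a_0}\neq\emptyset$. If $\bigcap_{a\in D}X_a\neq\emptyset$, any element of the intersection serves as $c\in V\subseteq\overline V$, so I assume this intersection is empty; then $X_{a_0}\subsetneq V$ and I pick $p\in V\setminus X_{a_0}$ and pass to the linear order $V_{p\to}$, in which every $X_a$ lies. In $V_{p\to}$ I define the $B\cup\{p\}$-definable downward-closed set
\[
L := \{x\in V_{p\to}:\ \exists a\in D,\ \forall y\in X_a,\ x<y\}.
\]
One checks that $L$ is non-empty (strict lower bounds of $X_{a_0}$ are in $L$), proper (no element of $X_{a_0}$ lies in $L$), and has no maximum (using density of $V$: for $x\in L$ with witness $a$, any $x'\in(x,\inf X_a)$ is still in $L$), hence $L$ determines an element $c\in\overline V$ lying in the $B$-definable sort of cuts named by the displayed formula.

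The crux is verifying that $c$ is a cluster point of every $X_a$. Given $a\in D$ and an open interval $(d_1,d_2)$ of $V$ containing $c$ (equivalently $d_1\in L$ and $d_2\notin L$), pick $a_1$ with $d_1<X_{a_1}$ witnessing $d_1\in L$, and by directedness choose $a^*$ with $X_{a^*}\subseteq X_a\cap X_{a_1}$. From $d_2\notin L$ applied to $a^*$, pick $y\in X_{a^*}$ with $y\leq d_2$; since $y\in X_{a_1}$ we have $y>d_1$, and $y\in X_a$. If $y<d_2$ strictly, then $y\in(d_1,d_2)\cap X_a$ and we are done. If $y=d_2$, then for any $b\in D$ directedness gives $a'$ with $X_{a'}\subseteq X_b\cap X_{a^*}$, and $d_2\notin L$ applied to $a'$ forces $d_2\in X_{a'}\subseteq X_b$, yielding $d_2\in\bigcap_{b\in D}X_b$---contradicting our standing assumption that this intersection is empty.

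The main obstacle is precisely this boundary case $y=d_2$, resolved by the propagation argument above via a second application of directedness. A secondary technicality is the choice of base point $p$: the construction of $L$ needs $X_{a_0}$ bounded in $V_{p\to}$, which is ensured by picking $p$ in the interior of $V\setminus X_{a_0}$; if instead $X_{a_0}$ is dense in $V$ with empty-interior complement, one falls back on $\omega$-saturation of $M$ applied to the single first-order formula $\theta(x):=\forall a\in D\,\psi_a(x)$, where $\psi_a(x)$ asserts that every open interval of $V$ containing $x$ meets $X_a$: finite consistency of $\{\psi_a\}_{a\in D}$ follows from directedness (a common point of finitely many $X_{a_i}$'s witnesses every $\psi_{a_i}$), so one realizes $\theta$ in a $\emptyset$-definable sort of $\overline V$ inside a suitable elementary extension and transfers the existence back to $M$ by elementarity.
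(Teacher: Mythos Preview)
Your core idea coincides with the paper's: produce an explicit definable cut from the family. The paper fixes any point $d\in V$, works in $V_{d\to}$, and takes $c=\inf_{a\in D}\sup X_a$ (setting $c=d$ if this is $\pm\infty$); the verification is then a two-line directedness argument. Your $L$ amounts to the dual choice $c=\sup_{a\in D}\inf X_a$. Either works, but the paper's version needs no preliminary reduction, no boundedness hypothesis on $X_{a_0}$, and no boundary case analysis.

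A few steps in your write-up are incorrect as stated.

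\emph{Properness of $L$.} Your justification ``no element of $X_{a_0}$ lies in $L$'' is false: if some $X_a\subsetneq X_{a_0}$ lies entirely to the right of a point $x\in X_{a_0}$, then $x\in L$. The correct reason $L\neq V_{p\to}$ is that any upper bound of $X_{a_0}$ in $V_{p\to}$ lies outside $L$. (Also, $L$ can have a maximum when some $\inf X_a$ lies in $V$; then that maximum is your $c\in V\subseteq\overline V$.)

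\emph{The boundary case.} The claim ``$d_2\notin L$ applied to $a'$ forces $d_2\in X_{a'}$'' is not justified: it only yields some $y'\in X_{a'}$ with $y'\le d_2$. The repair is immediate once noticed: if $y'<d_2$ then $y'\in X_{a'}\subseteq X_{a^*}\subseteq X_a\cap X_{a_1}$ already gives $y'\in(d_1,d_2)\cap X_a$. So one should assume for contradiction that $(d_1,d_2)\cap X_a=\emptyset$ and then conclude $y'=d_2$ for every $b$.

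\emph{The compactness fallback.} This argument is genuinely wrong. Realizing the partial type $\{\psi_a:a\in D(M)\}$ in an elementary extension $N$ does not give a point satisfying $\theta(x)\equiv(\forall a\in D)\,\psi_a(x)$, since the quantifier now ranges over $D(N)$; and a realization in $N$ need not correspond to a \emph{definable} cut of $V$ in $M$, which is what membership in $\overline V$ requires. Fortunately the fallback is unnecessary: the problematic case is when \emph{every} $X_a$ has dense image in $V$ (otherwise choose $a_0$ with $V\setminus X_{a_0}$ having nonempty interior), and then any $c\in V$ already satisfies the conclusion.
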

\begin{proof}
	We fix a  point $d\in V$ and work in the linear order $V_{d\to}$. Let $c\in \overline V$ be equal to $\inf_{a\in D}(\sup X_a)$. (If $c=\pm \infty$, then set $c=d$.) Then $c$ has the required property.
\end{proof}

\begin{prop}\label{prop:one circular order}
	Working over some $A$, let $V$ be a minimal definable circular order. Then for any type $p(x_1,\ldots,x_n)\vdash x_1<\cdots <x_n$ over $A$, and any open intervals $I_1<\cdots <I_n$ of $V$, we can find $a_i\in I_i$ with $(a_1,\ldots,a_n)\models p$.
\end{prop}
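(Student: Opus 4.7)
The plan is to reduce to the linear-order case (Proposition~\ref{lem2}) by cutting the circle open at a suitable point $d\in V$ and working inside the induced linear order $V_{d\to}$. After possibly shrinking the $I_i$'s slightly (which only strengthens the conclusion), we may assume the open arc between $I_n$ and $I_1$ (not containing the other intervals) is a nonempty open subset of $V$. Pick any $d$ in this arc, so that $I_1<_d\cdots<_d I_n$ in $V_{d\to}$.

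Fix any realization $\bar a=(a_1,\ldots,a_n)\models p$, so the $a_i$ lie in circular order. By weak transitivity of $V$ over $A$, the $A$-definable set $\tp(d/A)$ is dense in $V$, so we may pick $d'\in V$ with $\tp(d'/A)=\tp(d/A)$ lying in the open arc between $a_n$ and $a_1$. Choosing $\sigma\in\aut(M)_A$ with $\sigma(d')=d$ and setting $\bar a':=\sigma(\bar a)$, we have $\bar a'\models p$ and $a'_1<_d\cdots<_d a'_n$ in $V_{d\to}$. Then $q:=\tp(\bar a'/Ad)\in S(Ad)$ extends $p$ and satisfies $q\vdash x_1<_d\cdots<_d x_n$, so applying Proposition~\ref{lem2} to $V_{d\to}$ with $q$ and the intervals $I_1<_d\cdots<_d I_n$ produces $\bar b\in I_1\times\cdots\times I_n$ realizing $q$, and hence $p$, as required.

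The main obstacle is justifying that $(V_{d\to},<_d)$ is minimal over $Ad$, which is needed to legitimize the application of Proposition~\ref{lem2}. Density, lack of endpoints, and topological rank~$1$ are inherited from $V$, so the delicate point is weak transitivity over $Ad$. This reduces to showing $\dcl^{eq}(Ad)\cap\overline V=\{d\}$: for any $Ad$-definable subset $X\subseteq V_{d\to}$, the closure $\overline X$ is a finite union of convex sets (by topological rank~$1$), whose endpoints in $\overline{V_{d\to}}$ must lie in $\dcl^{eq}(Ad)$. For this, given $e\in\dcl^{eq}(Ad)\cap\overline V$, the partial $A$-definable map sending $d$ to $e$ should, by a rigidity argument using topological rank~$1$ analogous to Lemma~\ref{lem_ind}, extend to a self-intertwining of $V$; by the minimality hypothesis this self-intertwining is the identity, forcing $e=d$.
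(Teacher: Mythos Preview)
Your overall strategy---cut the circle open at some point and invoke the linear result (Proposition~\ref{lem2})---is sound and is indeed what the paper does. The weak point is exactly the one you flag: showing that $V_{d\to}$ is weakly transitive over $Ad$, i.e.\ that $\dcl^{eq}(Ad)\cap\overline V=\{d\}$. Your proposed justification has a genuine gap. Lemma~\ref{lem_ind} runs in the opposite direction from what you need: it \emph{starts} from a local monotone map (an intertwining between subintervals) and uses rigidity to show its value at a point lies in $\dcl(Aa)$, then extends it globally. You are trying to go the other way: from an $A$-definable function $d\mapsto e$ you want to conclude it is monotone, hence a self-intertwining. Nothing in the hypotheses forces an arbitrary $A$-definable map $V\to\overline V$ to be order-preserving, and minimality only rules out order-preserving non-identity maps. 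So the analogy breaks down, and without a separate argument for monotonicity the reduction to a self-intertwining is unjustified.

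The paper does not attempt to prove your claim; it explicitly allows the possibility that $m(a)\neq a$, where $m(a)$ is the \emph{maximal} cut of $V_{a\to}$ definable over $a$. Its route is: first prove that no bounded interval is ``small'' for any given $2$-type $q$, by constructing a specific function $s(c)$ (the supremum of the small interval starting at $c$) whose monotonicity is immediate from its definition, so that $s$ is a self-intertwining---contradicting minimality. This ``no small interval'' lemma is then used twice: to place a point $a$ together with $m(a)$ inside an auxiliary small arc $I_0<I_1$, and to ensure that $p$ has a realization with increasing coordinates in the remaining arc $(m(a),a)$. That arc is weakly transitive over $a$ \emph{by construction} (maximality of $m(a)$), so Proposition~\ref{lem2} applies there directly. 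In short, the paper trades your unproved claim for a detour through the ``no small interval'' lemma, where the needed monotonicity is built into the definition of the function rather than argued after the fact.
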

\begin{proof}
For simplicity, assume $A=\emptyset$. Fix $a<b$ in $V$ and let $q(x,y)=\tp(a,b)$. Call a convex subset $I$ of $V$ \emph{small} if there are no $a'< b'$ in $I$ with $\tp(a',b')=q$ (where the order $<$ is the canonical one on $I$). Assume that there is some small interval. Then by weak transitivity, for any point $c$ of $V$, there is a small open interval containing $c$. For any $c\in V$, let $s(c)$ be the maximal cut in $V_{c\to}$ so that $(c,s(c))$ is small. We have \[c<d<s(c) \Longrightarrow c<d<s(c)\leq s(d).\] Note that if $c<d<s(c)=s(d)$, then $s(c)=s(e)$ for any $e$, $c<e<d$. Hence the preimage of a cut by $s$ is a convex set. If the preimage of some cut is infinite, then this is true for infinitely many cuts in $\overline V$ by weak transitivity. But then the relation $s(x)=s(y)$ is a convex equivalence relation with infinitely many infinite classes, contradicting topological rank 1. It follows that $s$ is injective. Hence $s\colon V\to \overline V$ is a self-intertwining, which contradicts minimality. We have established that no interval is small. 

Given $a\in V$, let $m(a)$ denote the maximal cut in $V_{a\to}$ definable over $a$ (and $m(a)=a$ if there is no such cut). Chose $a_*\leq m(a_*)<b_*$ in $V$ and set $q=\tp(a_*,b_*)$. Now pick $I_0<I_1<\cdots <I_n$ open intervals of $V$. By the previous paragraph, we can find some pair $(a,b)\models q$ such that $a,b \in I_0$. The interval $x>m(a)$ in $V_{a\to}$ is a linear order which is weakly transitive over $a$. Let $p_{1n}(x_1,x_n)$ be the restriction of $p$ to the variables $(x_1,x_n)$. Applying the previous paragraph to $p_{1n}$, we see that there is a realization of $p_{1n}$ in $\{x\in V_{a\to}:x>m(a)\}$. Such a realization extends to a realization of $p$ lying in that same interval. By Lemma \ref{lem2}, we can find $a_1\in I_1,\ldots, a_n\in I_n$ with $\tp(a_1,\ldots,a_n)=p$.
\end{proof}

\begin{lemma}\label{lem:no circular self-intertwining}
	Let $V$ be a minimal circular order and $I,J\subseteq V$ two disjoint open intervals, then $I$ and $J$ are independent (as linear orders).
\end{lemma}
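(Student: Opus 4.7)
The strategy is to adapt the proof of Lemma \ref{lem:no self-intertwining} to the circular setting, using Proposition \ref{prop:one circular order} in place of Proposition \ref{lem2} to produce the required interleaving.

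Suppose for contradiction that $I$ and $J$ are not independent: there exist a finite $B$, $B$-definable weakly transitive $X\subseteq I$ and $Y\subseteq J$, and a $B$-definable intertwining $f\colon X\to \overline Y$ (the main case is $f$ increasing; the decreasing case, corresponding to intertwining with the reverse of $Y$, is analogous). Since $V$ has topological rank $1$ and $X,Y$ are weakly transitive, their closures $\overline X, \overline Y$ in $\overline V$ are single convex sets, and $f$ extends to an increasing bijection $\tilde f\colon \overline X \to \overline Y$. Replacing $X$ by $X \cap (a_1, a_2)$ for any $a_1 < a_2$ in $X$ and restricting $f$ accordingly (enlarging $B$), we may assume $\overline X = [a_1, a_2]$ with endpoints $c_1 := a_1, c_2 := a_2 \in V$. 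Set $d_i := \tilde f(a_i)$; since $\overline Y \subseteq \overline J = [b_1,b_2]$ with $b_1,b_2\in V$ (as $J$ is an open arc of $V$), we have $d_1, d_2 \in V$. The four points $c_1, c_2, d_1, d_2$ are pairwise distinct (since $I$ and $\overline J$ are disjoint as $I,J$ are disjoint open arcs) and lie in $V$ in circular order $c_1 < c_2 < d_1 < d_2$.

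Let $p = \tp(c_1, c_2, d_1, d_2 / \emptyset)$. Subdivide the arc $(c_1,c_2)$ of $V$ into two consecutive open sub-arcs $A_1' < A_2'$, and consider the four consecutive open arcs $A_1', A_2', (c_2,d_1), (d_2,c_1)$ of $V$. Applying Proposition \ref{prop:one circular order} produces an automorphism $\sigma \in \aut(M)$ sending $(c_1, c_2, d_1, d_2)$ to some $(c_1', c_2', d_1', d_2')$ with $c_1' \in A_1'$, $c_2' \in A_2'$, $d_1' \in (c_2,d_1)$, $d_2' \in (d_2,c_1)$, yielding the interleaving
\[
c_1 < c_1' < c_2' < c_2 < d_1' < d_1 < d_2 < d_2'
\]
circularly. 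Set $g = \sigma \circ \tilde f \circ \sigma^{-1}\colon [c_1',c_2'] \to [d_1',d_2']$; then $[c_1',c_2'] \subsetneq \overline X$ while $[d_1',d_2'] \supsetneq \overline Y$. Picking $a \in V$ with $c_1 < a < c_1'$ (by density of $V$), we have $a \in \overline X \setminus \sigma(\overline X)$ while $\tilde f(a) \in \overline Y \subseteq \sigma(\overline Y)$, so $g^{-1}(\tilde f(a))$ is defined, lies in $\sigma(\overline X)$, and satisfies $a < g^{-1}(\tilde f(a))$. Iterating produces an infinite strictly increasing sequence
\[
a < g^{-1}(\tilde f(a)) < g^{-1}(\tilde f(g^{-1}(\tilde f(a)))) < \cdots
\]
inside $\dcl^{eq}(a\,B\,\sigma(B)) \cap \overline V$, contradicting Lemma \ref{lem:finite dcl}.

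The main obstacle is achieving the circular interleaving via Proposition \ref{prop:one circular order}: unlike the linear setup of Lemma \ref{lem:no self-intertwining}, the images $c_1', c_2'$ must both land in the single arc $(c_1,c_2)$, so we first subdivide that arc and then invoke the proposition with four carefully chosen consecutive arcs. Once the interleaving is in place, the iteration producing infinitely many elements in $\dcl^{eq}$ and the appeal to Lemma \ref{lem:finite dcl} go through exactly as in the linear case; the decreasing intertwining case is handled identically with evident sign modifications.
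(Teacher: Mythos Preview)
Your approach is essentially the same as the paper's, which also uses Proposition \ref{prop:one circular order} to produce an interleaved conjugate and then runs the iteration from Lemma \ref{lem:no self-intertwining}. There is one small gap: your claim that $d_1, d_2 \in V$ is not justified---the fact that $\overline Y$ sits inside an arc with endpoints in $V$ does not force $\tilde f(a_i)$ to land in $V$ rather than in $\overline V \setminus V$. This is easily repaired: either observe (as in Remark \ref{rem:lem2 in completion} for the linear case) that Proposition \ref{prop:one circular order} extends to types of tuples in $\overline V$ by replacing $V$ with a larger minimal definable subset of $\overline V$, or sidestep the issue by applying Proposition \ref{prop:one circular order} directly to the endpoints of $I$ and $J$, which lie in $V$ by hypothesis. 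The paper takes the latter route: it produces a conjugate $(I',J')$ of $(I,J)$ with $I' \subset I$, $J' \supset J$, and $I' \cap J' = \emptyset$, and then simply appeals to the proof of Lemma \ref{lem:no self-intertwining}.
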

\begin{proof}
	Assume that some two disjoint intervals $I,J$ of $V$ are intertwined (over some set of parameters). Then by the previous proposition, we can find $I'\subset I$ and $J'\supset J$ disjoint such that the pair $(I',J')$ is a conjugate of $(I,J)$. In particular $I'$ and $J'$ are intertwined and we conclude as in Lemma \ref{lem:no self-intertwining}.
\end{proof}

\begin{definition}
	Two circular orders $V$ and $W$ are \emph{independent} if any open interval of $V$ is independent (as a linear order) from any open interval of $W$.
\end{definition}

\begin{lemma}\label{lem:linear and circular are independent}
	Working over some $A$, let $V$ be a weakly transitive definable circular order of topological rank 1 and $W$ a weakly transitive definable \underline{linear} order of topological rank 1. Then an open interval of $V$ is independent with any open interval of $W$.
\end{lemma}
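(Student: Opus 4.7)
The plan is a proof by contradiction, exploiting the topological incompatibility between a circular order and a linear one. Suppose some open interval $I \subseteq V$ is not independent from some open interval $J \subseteq W$. Since $I$ is an interval of the weakly transitive circular $V$ of topological rank $1$, it is a minimal linear order over its endpoints; likewise for $J$. By Lemma~\ref{lem_ind} we then obtain a finite set of parameters $B$ and a $B$-definable increasing intertwining $f\colon X \to \overline Y$ with $X \subseteq I$, $Y \subseteq J$ both $B$-definable, weakly transitive, and of topological rank $1$ (reversing $J$ if needed so $f$ is increasing).

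The core idea is to use the circularity of $V$ to extend $f$ by conjugation until it wraps around $V$, producing an increasing map from a circular order into a linear one. Write $B_W := B \cap W$. For any $\sigma \in \aut(M)_{B_W}$, the conjugate $\sigma(f)\colon \sigma(X) \to \overline Y$ is another increasing intertwining with the \emph{same} target (since $Y$ is $B_W$-definable). Applied over $B \cup \sigma(B)$, Lemma~\ref{lem:unique intertwining} forces $f$ and $\sigma(f)$ to agree on any overlap $X \cap \sigma(X)$, so they glue. Taking the union over all such $\sigma$ produces a $B_W$-definable increasing partial map $\tilde f$ defined on $U := \bigcup_{\sigma \in \aut(M)_{B_W}} \sigma(X)$. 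Using weak transitivity of $V$ (over $A$) and topological rank $1$, $U$ is dense in the unique $B_W$-definable arc $C$ of $V$ containing $X$ (and $C = V$ when $\dcl^{eq}(B_W) \cap \overline V$ brings nothing new).

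When $C = V$, the contradiction is clean. Choose three points $x_1, x_2, x_3 \in U$ arranged in cyclic order on $V$ and such that each consecutive pair $(x_i, x_{i+1})$ (indices modulo $3$) lies in some single conjugate $\sigma_i(X)$; this is possible by density of $U$. On each such interval, $\sigma_i(f)$ (hence $\tilde f$) is strictly increasing with respect to the linear order induced by the cyclic order of $V$ restricted to $\sigma_i(X)$. This forces
\[
\tilde f(x_1) <_{\overline Y} \tilde f(x_2) <_{\overline Y} \tilde f(x_3) <_{\overline Y} \tilde f(x_1),
\]
impossible in the linear order $\overline Y$.

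The main obstacle is the residual case $C \subsetneq V$, when $B_W$ gives rise to non-trivial algebraic cuts in $\overline V$. I would address this by choosing $B$ minimally and exploiting any such cut as additional input: the existence of a $B_W$-definable point in $\overline V \setminus V$ permits choosing $\sigma \in \aut(M)_{B_W}$ that keeps $X \cap \sigma(X) \neq \emptyset$ while sending $Y$ to a \emph{disjoint} copy $\sigma(Y)$ inside $W$. Then $\sigma(f) \circ f^{-1}$ witnesses an intertwining between the disjoint convex subsets $f(X \cap \sigma(X)) \subseteq Y$ and $\sigma(f)(X \cap \sigma(X)) \subseteq \sigma(Y)$ of the minimal linear order $W$, contradicting Lemma~\ref{lem:no self-intertwining} applied to $W$.
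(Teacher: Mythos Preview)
Your overall strategy---produce from the local intertwining a globally defined, locally increasing map $V\to\overline W$ and derive a contradiction from circularity---is the right one, and is exactly what the paper does. But the mechanism you use to globalize the map is flawed.

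The decomposition $B_W:=B\cap W$ does not do what you want. The parameter set $B$ is just a finite subset of $M$ (or $M^{eq}$); there is no reason for its elements to lie in $V\cup W$, and even if they did, a $B$-definable subset $Y\subseteq W$ need not be $(B\cap W)$-definable. So your claim that $\sigma\in\aut(M)_{B_W}$ fixes $Y$ is unjustified, and without it the conjugates $\sigma(f)$ need not have the same target, which breaks the gluing step. Your residual case is then incoherent: you there rely on finding $\sigma\in\aut(M)_{B_W}$ with $\sigma(Y)$ disjoint from $Y$, contradicting the very assumption (``$Y$ is $B_W$-definable'') on which the main case rested.

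The paper's fix is cleaner and avoids any splitting of $B$. Fix $a\in I$ and observe that \emph{any} intertwining from an interval around $a$ into $\overline W$ must send $a$ to the same cut: if two such intertwinings disagreed at $a$, composing one with the inverse of the other would give an intertwining between two disjoint intervals of $W$, contradicting Lemma~\ref{lem:no self-intertwining} applied to $W$. Hence the image of $a$ lies in $\dcl(Aa)$, giving an $A$-definable, locally increasing $g\colon V\to\overline W$. Since $V$ is circular and $W$ linear, no such map exists. Note that the self-intertwining contradiction is on the \emph{linear} side $W$, not on $V$; this is what makes the argument go through without any control over which parameters lie where.
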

\begin{proof}
	Assume that $I\subseteq V$ and $J\subseteq W$ are two open intervals definable and weakly transitive over some $B$, which are intertwined. Let $a\in I$. If $I_0$ is an open interval containing $a$ intertwined with some interval $J_0$ of $W$, then by uniqueness of intertwinings (and Lemma \ref{lem:no self-intertwining}), the intertwining maps $I_0\to \overline W$ and $I \to \overline W$ must coincide on $I_0\cap I$. It follows that the element of $\overline W$ to which $a$ is mapped lies in $\dcl(Aa)$: say it is equal to $g(a)$ for some $A$-definable function $g$. Then $g\colon V\to \overline W$ is locally increasing, which is impossible.
\end{proof}

\begin{prop}\label{prop:n circular orders}
	Working over some $A$, let $V_0,\ldots,V_{n-1}$ be minimal definable circular orders, pairwise independent. Then any $A$-definable closed set $X\subseteq V_0^{k_0}\times \cdots \times V_{n-1}^{k_{n-1}}$ is a finite union of products of the form $D_0\times \ldots \times D_{n-1}$, where each $D_i$ is an $A$-definable closed subset of $V_i^{k_i}$.
\end{prop}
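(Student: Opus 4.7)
The plan is to reduce to the linear case, Proposition \ref{lem6}, by linearizing each circular order $V_i$ at a well-chosen base point. By $\omega$-categoricity, any $A$-definable closed set is a finite union of closures of complete $A$-types, so it suffices to treat $X = \overline{p(V)}$ for a fixed type $p \in S(A)$ on $\prod_i V_i^{k_i}$. Fix a realization $\bar d = (\bar d_0,\ldots,\bar d_{n-1}) \models p$ (assume $k_i \geq 1$; trivial factors are harmless), let $b_i$ be the first coordinate of $\bar d_i$, set $B = A \cup \{b_0,\ldots,b_{n-1}\}$, and let $V_i^\circ = V_i \setminus \{b_i\}$ carry the linear order induced by the circular order on $V_i$.

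By $\omega$-categoricity, $\dcl^{eq}(B) \cap \overline{V_i^\circ}$ is finite, so $V_i^\circ$ decomposes $B$-definably into finitely many open intervals separated by a $B$-definable finite set of points. Each such interval has topological rank 1 (inherited from $V_i$) and, being a minimal $B$-definable non-degenerate piece, is weakly transitive, hence minimal over $B$ as a linear order. I then claim that for distinct $i, j$ and any two such intervals $I \subseteq V_i^\circ$ and $J \subseteq V_j^\circ$, the orders $I$ and $J$ are independent over $B$: otherwise Lemma \ref{lem_ind} (perhaps after reversing $J$) would supply a $B$-definable intertwining $I \to \overline J$, and moving $b_i, b_j$ by an $A$-automorphism into generic position with respect to this intertwining would yield an $A$-definable intertwining between open intervals of $V_i$ and $V_j$, contradicting the assumed independence of the circular orders $V_i$ and $V_j$.

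With these facts in hand, partition $\prod_i V_i^{k_i}$ into finitely many $B$-definable cells, each cell specified coordinate by coordinate by either equality with some $b_i$ or membership in one of the intervals above. On each cell, $X$ restricts to a $B$-definable closed subset of a product of pairwise-independent minimal linear orders over $B$, so Proposition \ref{lem6} yields a finite-union-of-products decomposition on that cell. Reassembling across cells gives a $B$-definable decomposition of $X$ as a finite union of products $D_0^s \times \cdots \times D_{n-1}^s$ with each $D_i^s \subseteq V_i^{k_i}$ closed; $A$-definability of each factor is recovered by taking $D_i^s$ to be the closure of the projection to $V_i^{k_i}$ of a complete $A$-type contained in $p$, which is automatically $\aut(M)_A$-invariant.

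The main obstacle is the independence claim in the second paragraph: ensuring that the independence of the circular orders $V_i, V_j$ (originally formulated in terms of their open intervals) genuinely transfers to the $B$-definable pieces of $V_i^\circ, V_j^\circ$ after the base points have been named. The subtle point is ruling out an intertwining that becomes definable only after naming $b_i$ or $b_j$; the argument exploits the freedom to perturb the base points via $A$-automorphisms to pull such an intertwining back to the $A$-level and then invoke the hypothesis. Everything else is organizational: $\omega$-categoricity keeps all decompositions finite and Proposition \ref{lem6} does the product-decomposition work cell by cell.
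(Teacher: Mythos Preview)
Your identification of the ``main obstacle'' is misplaced. Independence of the $B$-definable intervals is essentially immediate: by definition, two circular orders $V_i$, $V_j$ are independent precisely when every open interval of $V_i$ is independent (as a linear order, over every parameter set) from every open interval of $V_j$; so intervals from distinct $V_i$'s are independent over $B$ with no further argument. Intervals from the same $V_i$ are independent by Lemma \ref{lem:no circular self-intertwining}. No perturbation of base points is needed.

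The real gap is in the step you call ``organizational''. Your cell-by-cell application of Proposition \ref{lem6} yields a $B$-definable decomposition $X = \bigcup_s \prod_i E_i^s$. Writing $D_i = \overline{\pi_i(p)}$, one has $D_i = \bigcup_s E_i^s$, but to conclude $X = \prod_i D_i$ you would need $\prod_i E_i^{s_i} \subseteq X$ for \emph{every} choice of indices $(s_i)_i$, not only constant ones. Your proposed fix---``take $D_i^s$ to be the closure of the projection of a complete $A$-type contained in $p$''---collapses to the single choice $D_i = \overline{\pi_i(p)}$ (since $p$ is already complete over $A$) and thus simply restates what is to be proved. Equivalently, you need that $p$ is dense in $\prod_i D_i$, and nothing in your argument establishes this; the freedom to move the $b_i$'s independently to avoid a given target point would itself require such density, so the argument is circular.

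This density is exactly the statement $(A_n)$ in the paper's proof, and establishing it is where the genuine work lies: the paper proves $(A_n)$ from $(B_{n-1})$ by a delicate argument tracking which intervals are ``good'' for a given point and using Lemma \ref{lem:def completeness} to find accumulation points. Only after $(A_n)$ is known can one choose cut points so that any prescribed box lies inside a single linearized cell, at which point Proposition \ref{lem6} finishes the job (this is the paper's $(A_n) \Rightarrow (B_n)$, which is close to your idea). Your linearize-and-reassemble strategy is the easy half; it does not circumvent the inductive density argument.
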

\begin{proof}
	Assume $A=\emptyset$. We show the following two statements by induction on $n$. Note that $(B_n)$ implies what we want since by $\omega$-categoricity, any definable set is a finite union of types.

	\begin{itemize}
		\item [($A_n$)] Let $p(\bar x_i:i<n)$ be a type in some product $V_0^{l_0}\times \cdots \times V_{n-1}^{l_{n-1}}$, then given any intervals $I_i\subseteq V_i$, we can find $(\bar a_i:i<n)\models p$ with $\bar a_i\in I_i$ for each $i<n$.
		\item [($B_n$)]  For any type $p$ over $\emptyset$  on $V_0^{k_0}\times \cdots \times V_{n-1}^{k_{n-1}}$, the closure $X$ of the set of realizations of $p$ is equal to the product of its projections to each factor $V_i^{k_i}$.
	\end{itemize}
	
	($B_n$): Assume we know ($A_n$) and we show that ($B_n$) follows.
	
	Let $X$ be given as in ($B_n$) and for $i<n$, let $D_i$ be the projection of $X$ to $V_i^{k_i}$. For each $i<n$, let $T_i\subseteq V_i$ be an open interval and set $T=T_0^{k_0}\times \cdots \times T_{n-1}^{k_{n-1}}$. Since we can choose $T$ to contain any given finite set, it is enough to show the result for $X\cap T$ instead of $X$.
	
	Let $\bar e$ be any tuple of parameters containing at least two points from each $V_i$, $i<n$. For each $i<n$, let $a_i,b_i \in \dcl(\bar e)\cap \overline {V_i}$ be such that the complement of the convex set $a_i\leq x\leq b_i$ in $V_i$ is infinite and weakly transitive over $\bar e$.
	By ($A_n$), we may choose $\bar e$ so that each convex set $a_i\leq x \leq b_i$ is disjoint from $T_i$. Then over $\bar e$, the $T_i$'s are intervals in some weakly transitive $\bar e$-definable linear orders, which are pairwise independent. Therefore by Proposition \ref{lem6}, the restriction of $X$ to $T$ is the product of its projections to each factor, as required.
	
	\smallskip
	($A_n$): Assume that we know ($B_{n-1}$) and we prove ($A_n$).
	
	Let $V=V_0$ and $W=\prod_{0<i<n} V_i$. Given a point $d\in \prod_{0<i<n} V_i$, a neighborhood of $d$ will mean a product $\prod_{0<i<n} J_i$, where each $J_i$ is an open interval containing $d_i$.
	
	Let $c\in V$. Say that a subset $J=\prod_{0<i<n} J_i \subseteq W$ is \emph{good for $c$} if for any open interval $I\subset V$ containing $c$, there is $(\bar b_i)_{i<n}\models p$, with $\bar b_0 \in I$  and $\bar b_i\in J_i$, $i>0$. We claim that there are bounded convex sets $J_i\subset V_i$, $i<n$ such that $\prod_{0<i<n} J_i$ is good for $c$. To see this, take for each $i<n$, $K_{i,1},\ldots,K_{i,t}$ disjoint open intervals of $V_i$, with $t>|\bar b_i|$ and set $J_{i,s}=V_i\setminus K_{i,s}$: a bounded convex subset of $V_i$. By Proposition \ref{prop:one circular order}, for any neighborhood $I$ of $c$, there is $(\bar b_i)_{i<n}\models p$ with $\bar b_0\in I$ and $\bar b_i\in V_i$, $0<i$. For each $0<i<n$, there must be some $s(i)$ such that no coordinate of $\bar b_i$ lies in $K_{i,s(i)}$. As the family of possible $I$ is directed downwards, there is a choice of $s(i)$ which works for all $I$. Let $J_i=J_{i,s(i)}$, $i<n$. Then the set $\prod_{0<i<n} J_i$ is good for $c$.
	
For any $J\subseteq W$ a product of bounded convex sets, let $X(J)\subseteq V$ be the set of elements $c\in V$ for which $J$ is good. Note that $X(J)$ is closed in the order topology and hence is a finite union of closed intervals. For $d\in W$, the family $\{X(J): J$ neighborhood of $d\}$ is directed. By Lemma \ref{lem:def completeness}, there is some $c\in \overline V$ which lies in the closures of each such $X(J)$.
We then have the following property: for any neighborhoods $I$ of $c$ and $J$ of $d$, there is $(\bar b_i)_{i<n}\models p$, with $\bar b_0\in I$ and $\bar b_i\in J_i$, $i>0$. Take a set of parameters $\bar e$ containing two points from each $V_i$ and such that neither $c$ nor $d$ lies in $\acl(\bar e)$. Then, over $\bar e$, there are intervals $J_i\subseteq V_i$ that are weakly transitive and with $c\in J_0$ and $d_i\in J_i$. By assumption, the $J_i$'s are pairwise independent. Therefore by Lemma \ref{lem6}, given any subintervals $J'_i\subseteq J_i$, we can find a realization of $p$ in $\prod_{i<n} J'_i$.

Given $d\in W$, let $Z(d)$ be the set of points $c\in V$ such that any neighborhood $d$ is good for $c$. By the previous paragraph, there is $d$ such that $Z(d)$ has non-empty interior. Then by Proposition \ref{prop:one circular order}, for any open interval $I_*$ of $V$, there is $d_* \in W$ such that $Z(d_*)\supseteq I_*$. Let $Z_*(I_*)$ denote the set of such points $d_*$. For $i<n$ let $\pi_i\colon \prod_{0<j<n} V_j \to V_i$ be the canonical projection. Fix $c\in V$ and for each $0<i<n$ consider the family $\{\pi_i(Z_*(I)) : I$ open interval disjoint from $c\}$. Since the map $Z_*$ is decreasing, that family is directed and by Lemma \ref{lem:def completeness} there is $e_i\in \overline {V_i}$ in the closure of all of its elements. Now ($B_{n-1}$) implies that the closure of $Z_*(I)$ can be written as a union of definable subsets of the form $D_1\times \ldots \times D_{n-1}$, where each $D_i$ is a closed definable subset of $V_i$. Restricting it, we can assume that it is equal to one such set.
We then have that $e=(e_1,\ldots,e_{n-1})$ is in the closure of each $Z_*(I)$, $I$ open interval disjoint from $c$.

We have thus obtained the following property: for any neighborhood $J$ of $e$ in $W$ and any open interval $I$ of $V$ not containing $c$, there are $\bar a\in I$ and $\bar b\in J$ such that $(\bar a,\bar b)\models p$. Since any open interval contains a subinterval not containing $c$, we can remove the requirement that $I$ does not contain $c$. By ($A_{n-1}$), the locus of $e$ is dense in $\overline W$: any product of open intervals in $W$ contains a conjugate of $e$. This shows that for any open $I\subseteq V$ and $J_i\subseteq V_i$, we can find $(\bar b_i)_{i<n}\models p$, with $\bar b_0\in I$ and $\bar b_i\in J_i$, $0<i$, as required.
\end{proof}

\begin{lemma}\label{lem:cutting n orders}
	Working over some $A$, let $V_0,\ldots,V_{n-1}$ be pairwise independent, minimal definable circular orders. Let $V_n,\ldots V_{m-1}$ be pairwise independent minimal definable linear orders. Let $p(\bar x_i:i<m)$ be a type over $A$ in some product $V_0^{l_0}\times \cdots \times V_{m-1}^{l_{m-1}}$, then given any open intervals $I_i\subseteq V_i$, $i<n$ and initial segments $I_i\subseteq V_i$, $n\leq i<m$, we can find $(\bar a_i:i<m)\models p$ with $\bar a_i\in I_i^{l_i}$ for each $i<m$.
\end{lemma}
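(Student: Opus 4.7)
The plan is to reduce the present statement to its purely linear analogue, Proposition \ref{lem6}, by employing the same ``cutting'' technique used in part $(B_n)$ of the proof of Proposition \ref{prop:n circular orders}: I will enlarge the base by a tuple $\bar e$ so that each circular order $V_i$, for $i<n$, becomes a minimal linear order after removing an $\bar e$-definable closed convex subset $[a_i,b_i]$, chosen disjoint from the target interval $I_i$.

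More concretely, I will pick $\bar e$ containing at least two points from each $V_i$ with $i<n$, together with cuts $a_i,b_i\in\dcl(\bar e)\cap\overline{V_i}$ such that the circular arc $V'_i := V_i\setminus[a_i,b_i]$ is a minimal linear order over $A\bar e$. Using Proposition \ref{prop:n circular orders} (or its $(A_n)$-type density substatement, applied to the product of the $V_i$ with $i<n$ and the auxiliary cut coordinates), $\bar e$ can be placed so that $[a_i,b_i]$ is disjoint from $I_i$ for each $i<n$; then $I_i$ is an open interval of $V'_i$. For $i\geq n$, set $V'_i := V_i$; since $V_i$ is dense without endpoints, the non-empty initial segment $I_i$ contains an open interval (take $(c,d)$ for any $c<d$ in $I_i$). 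Over $A\bar e$ the orders $V'_0,\ldots,V'_{m-1}$ are minimal linear orders; they are pairwise independent, since independence of the original $V_i$'s is preserved under extending the base (independence being stated against arbitrary extensions), and a minimal circular order is automatically independent of a minimal linear order by the lemma preceding Proposition \ref{prop:n circular orders}.

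I will additionally arrange $\bar e$ to be in ``generic position'' relative to some fixed realization $\bar a$ of $p$, so that the coordinates of $\bar a$ all lie in $\prod_i (V'_i)^{l_i}$; this is again secured by a density application of Proposition \ref{prop:n circular orders}. Let $p^* := \tp(\bar a / A\bar e)$, an extension of $p$. Proposition \ref{lem6} applied to $p^*$ gives that the closure of its realization set is a product $\prod_i D_i$ with $D_i\subseteq (V'_i)^{l_i}$ closed and $A\bar e$-definable; by Corollary \ref{lem2_cor} each $D_i$ is determined by an order type on its $l_i$ coordinates, and Proposition \ref{lem2} yields that $D_i\cap I_i^{l_i}$ has non-empty interior in $D_i$. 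Density of the realizations of $p^*$ in its closure then supplies a realization in $\prod_i I_i^{l_i}$, which is a fortiori a realization of $p$.

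The main obstacle will be the simultaneous choice of $\bar e$: the cuts $[a_i,b_i]$ must avoid each $I_i$, yield a weakly transitive $V'_i$ over $A\bar e$, and leave room for a realization of $p$ inside $\prod_i (V'_i)^{l_i}$. Each condition is individually achievable from the density statements we have, but combining them forces us to run the full $(A_n)+(B_n)$ induction engine of Proposition \ref{prop:n circular orders}, applied to the circular orders together with the auxiliary cut and ``generic position'' coordinates, in a single stroke.
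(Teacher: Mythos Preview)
Your reduction has a genuine gap. After you cut the circular orders using parameters $\bar e$, you assert that for $j\geq n$ the linear orders $V'_j = V_j$ remain minimal over $A\bar e$. Independence is indeed preserved under enlarging the base, but \emph{weak transitivity} is not: there is nothing preventing $\dcl^{eq}(A\bar e)\cap\overline{V_j}$ from acquiring new points, and then $V_j$ is no longer weakly transitive over $A\bar e$ and Proposition~\ref{lem6} does not apply. (Concretely, nothing in the hypotheses forbids an $A$-definable map from a circular $V_i$ into $\overline{V_j}$ whose graph is dense in the product, as in Lemma~\ref{lem4}; such a map is not an intertwining, so independence does not rule it out.) You could try to repair this by also shrinking each linear $V_j$ to a weakly transitive interval over $A\bar e$, but then you must arrange that this interval contains the given \emph{initial segment} $I_j$, and doing so simultaneously for all $j$ and together with the circular constraints is exactly the content of the lemma. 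Note also that the cutting manoeuvre you describe is precisely how Theorem~\ref{th:all order types} is proved, and that proof invokes the present lemma to place $\bar e$; so your route threatens to be circular.

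The paper avoids extending the base. It fixes the circular intervals $I_i$ and, for each linear index $t$, defines $c_t\in\overline{V_t}$ as the supremal cut below which no realization with the circular constraint can be pushed. If some $c_t\neq -\infty$, Proposition~\ref{prop:n circular orders} gives finitely many $A$-automorphic translates of $\prod_{i<n} I_i^{l_i}$ covering the whole circular product; the infimum of the corresponding translates of $c_t$ then bounds the $t$-part of \emph{every} realization of $p$ from below, contradicting weak transitivity of $V_t$ over $A$. Thus each $c_t=-\infty$, and one concludes via Proposition~\ref{lem6}. The point is that this covering argument uses only weak transitivity over the original base $A$, sidestepping the difficulty your approach runs into.
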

\begin{proof}
	We can assume that $A=\emptyset$ and that if $\bar a\models p$, then no two coordinates of the tuple $\bar a$ are equal. Let us first assume that all the orders are circular, that is $n=m$. Let $X\subseteq V_0^{l_0}\times \cdots \times V_{n-1}^{l_{n-1}}$ be the closure of the set of realizations of the type $p$. By Proposition \ref{prop:n circular orders}, $X= D_0\times \cdots \times D_{n-1}$, where $D_k \subseteq V_k^{l_k}$ is a closed 0-definable subset. Fix some intervals $I_i\subseteq V_i$, $i<n$. By Proposition \ref{prop:one circular order}, for each $k<n$, $D_k \cap I_k^{l_k}$ has non-empty interior. It follows that $p$ has a realization $(\bar a_i:i<n)$, where each $\bar a_k$ lies in $I_k$.

	Now for the general case, let $p_c(\bar x_i : i<n)$ and $p_l(\bar x_i : n\leq i<m)$ denote the restrictions of $p$ to the corresponding variables. Fix intervals $I_i\subseteq V_i$, $i<n$ in the circular orders. Then by the previous paragraph applied to the restriction of $p$ to the first $n$ tuples of variables, we can find $(\bar a_i:i<m)\models p$ with $\bar a_i\in I_i$ for each $i<n$ (find a realization of $p_c$, then extend it to a realization of $p$). 
	
	Let $B$ be a finite set of parameters over which the intervals $I_i$ are definable. For each $n\leq i<m$, let $J_i\subseteq V_i$ be the minimal $B$-definable initial segment of $V_i$. Hence $J_i$ is a minimal linear order over $B$. Restricting the $I_i$'s if necessary, we can assume that they are also minimal over $B$. If there is a realization $(\bar a_i :i<m)$ of $p$ such that each $\bar a_i\in I_i$ for $i<n$ and $\bar a_i \in J_i$ for $n\leq i<m$, then the result follows at once from Corollary \ref{cor:lem6+} applied to the linear orders $I_i$ and $J_i$.
	
	Assume now that there is no realization $(\bar a_i :i<m)$ of $p$ such that $\bar a_i\in I_i$ for $i<n$ and $\bar a_i \in J_i$ for $n\leq i<m$. Let $\tilde I =\prod_{i<n} I_i^{l_i}$. By Propositions \ref{prop:n circular orders} and \ref{prop:one circular order}, there are finitely many automorphisms $\sigma_1,\ldots,\sigma_k$ such that $\bigcup_{t\leq k} \sigma_t(\tilde I)$ covers $\prod_{i<n} V_i^{l_i}$. Let $J'_i = \bigcap_{t\leq k} \sigma_t(J_i)$, so that each $J'_i$ is a non-empty initial segment of $V_i$. Then we see that there is no realization $(\bar a_i:n\leq i<m)$ of $p_l$ with $\bar a_i \in J'_i$ for each $i$. This contradicts Corollary \ref{cor:lem6+} applied to the independent linear orders $V_i$, $n\leq i<m$.
	\end{proof}

\begin{thm}\label{th:all order types}
	Working over some $A$, let $V_0,\ldots,V_{n-1}$ be pairwise independent, minimal definable circular orders. Let $V_n,\ldots V_{m-1}$ be pairwise independent minimal definable linear orders. Then any $A$-definable closed subset $D\subseteq V_0^{k_0}\times \cdots \times V_{m-1}^{k_{m-1}}$ is a finite union of products of the form $D_0\times \cdots \times D_{m-1}$, where each $D_i$ is an $A$-definable closed subset of $V_i^{k_i}$.
\end{thm}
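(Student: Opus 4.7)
The plan is to follow the template of Proposition~\ref{prop:n circular orders}, using Lemma~\ref{lem:cutting n orders} as the cutting tool that simultaneously turns each circular factor into a linear arc while preserving realizations of a prescribed type, and then to appeal to Proposition~\ref{lem6} for the resulting purely linear picture. First, by $\omega$-categoricity any $A$-definable set is a finite union of closures of complete types over $A$, so it suffices to show: for any $p\in S(A)$ concentrating on $V_0^{k_0}\times \cdots \times V_{m-1}^{k_{m-1}}$, the closure $X$ of $p(M)$ is a product $D_0\times \cdots \times D_{m-1}$. As in the proof of Proposition~\ref{prop:n circular orders}, this reduces further to showing that $X\cap T$ is a finite union of products for every $T=T_0^{k_0}\times \cdots\times T_{m-1}^{k_{m-1}}$ where each $T_i$ is an open arc of $V_i$ if $i<n$ and an open interval of $V_i$ if $i\geq n$, since such $T$'s can be taken to contain any prescribed finite set and the local product descriptions glue as there.

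For the local step, I would apply Lemma~\ref{lem:cutting n orders} to a suitable auxiliary type forcing points into the complements of the $T_i$'s ($i<n$), to produce parameters $\bar e\supseteq A$ such that for each $i<n$ there are cuts $a_i, b_i\in \dcl^{eq}(\bar e)\cap \overline{V_i}$ bounding a bounded convex set $[a_i,b_i]$ disjoint from $T_i$, whose complement $W_i:=V_i\setminus [a_i,b_i]$ is weakly transitive over $\bar e$. I would arrange at the same time that $\bar e$ contains no elements of $V_i$ for $i\geq n$, so those orders remain weakly transitive and minimal over $\bar e$. The fact that Lemma~\ref{lem:cutting n orders} can achieve both constraints at once -- nontrivially cutting every circular factor away from its $T_i$ while not disturbing the linear factors -- is precisely why it was designed as a joint statement.

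By Lemma~\ref{lem:preservation under dense subsets} together with topological rank~$1$, each $W_i$ ($i<n$), equipped with the linear order inherited from the circular order on $V_i$, is a minimal linear order over $\bar e$. The independence assumption on the $V_i$'s, combined with the lemmas that precede Proposition~\ref{prop:n circular orders} (independence of disjoint open arcs of a circular order of topological rank~$1$, and of an open arc from a weakly transitive linear order), implies that the family $W_0,\ldots,W_{n-1},V_n,\ldots,V_{m-1}$ is pairwise independent over $\bar e$.

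Then $X \cap \bigl(W_0^{k_0}\times \cdots \times W_{n-1}^{k_{n-1}}\times V_n^{k_n}\times \cdots\times V_{m-1}^{k_{m-1}}\bigr)$ is $\bar e$-definable and closed in that product, so by Proposition~\ref{lem6} it is a finite union of products $E_0\times\cdots\times E_{m-1}$ of closed $\bar e$-definable sets. Since by construction $T$ lies inside the above product, intersecting yields $X\cap T$ as a finite union of products, as required. The main obstacle is the global patching: one must verify that the local product decompositions of $X\cap T$, obtained over varying $\bar e$'s, organize into a single $A$-definable product for $X$ itself; this is handled by the transitive action of $\aut(M/A)$ on the realizations of the single type $p$, exactly as in the corresponding final step of Proposition~\ref{prop:n circular orders}.
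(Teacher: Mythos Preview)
Your overall strategy matches the paper's: reduce to the closure of a complete type, localize to a box $T$, choose parameters $\bar e$ that cut each circular factor away from its $T_i$, and apply Proposition~\ref{lem6} to the resulting system of pairwise independent linear orders. The difference, and the gap, is in how you treat the linear factors $V_n,\ldots,V_{m-1}$.

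You propose to keep $\bar e$ entirely inside the circular orders and then assert that ``those orders remain weakly transitive and minimal over $\bar e$''. This is not justified: nothing in the hypotheses prevents a tuple $\bar e$ of circular points from defining cuts in $\overline{V_i}$ for $i\geq n$. Independence only rules out intertwinings between subintervals, not arbitrary $\bar e$-definable elements of $\overline{V_i}$. If such a cut exists and happens to lie above $T_i$, you cannot place $T_i$ inside a weakly transitive $\bar e$-definable interval, and the reduction to Proposition~\ref{lem6} breaks down. You also misread Lemma~\ref{lem:cutting n orders}: its ``joint'' content is not that one can cut the circular factors while \emph{leaving the linear factors undisturbed}; it is that one can simultaneously place the circular coordinates of a realization into prescribed open arcs \emph{and} the linear coordinates into prescribed \emph{initial segments}.

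The paper uses exactly this. It takes $\bar e$ to contain at least two points from \emph{every} $V_i$, including the linear ones. For each linear $V_i$ it lets $d_i\in\dcl^{eq}(\bar e)\cap\overline{V_i}$ be such that the end-segment $x>d_i$ is weakly transitive over $\bar e$ (taking $d_i$ maximal handles whatever cuts $\bar e$ may define). Lemma~\ref{lem:cutting n orders} is then applied to the type of $\bar e$ to move $\bar e$ so that for $i<n$ the closed arc $[a_i,b_i]$ misses $T_i$, \emph{and} for $i\geq n$ the cut $d_i$ lies below $T_i$ (this is the initial-segment clause). Now each $T_i$ sits inside a weakly transitive $\bar e$-definable linear order, the whole family is pairwise independent over $\bar e$, and Proposition~\ref{lem6} finishes. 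The fix to your argument is therefore not to avoid touching the linear factors, but to touch them deliberately and use the linear half of Lemma~\ref{lem:cutting n orders} to control where the resulting cuts land.
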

\begin{proof}
The proof is very similar to that of ($B_n$) in Proposition \ref{prop:n circular orders}, using Lemma \ref{lem:cutting n orders}. Assume $A=\emptyset$.

Let $D$ be given as in the statement and assume that it is the closure of a complete type. For $i<m$, let $D_i$ be the projection of $D$ to $V_i^{k_i}$. For each $i<m$, let $T_i\subseteq V_i$ be a bounded interval and set $T=T_0^{k_0}\times \cdots \times T_{m-1}^{k_{m-1}}$. It is enough to show the result for $D\cap T$ instead of $D$.
	
	Let $\bar e$ be any tuple of parameters containing at least two points from each $V_i$, $i<m$. For each $i<n$, let $a_i,b_i \in \dcl(\bar e)\cap \overline {V_i}$ be such that the complement of the interval $a_i\leq x\leq b_i$ in $V_i$ is infinite and weakly transitive over $\bar e$. For each $n\leq i<m$, let $d_i\in \dcl(\bar e)\cap \overline {V_i}$ such that the end-segment $x>d_i$ is weakly transitive over $\bar e$.

	By Lemma \ref{lem:cutting n orders}, we may choose $\bar e$ so that each interval $a_i\leq x \leq b_i$ is disjoint from $T_i$ for $i<n$, and for $n\leq i<m$, we have $d_i<T_i$. Then over $\bar e$, the $T_i$'s are intervals in some weakly transitive $\bar e$-definable linear orders, which are pairwise independent. Therefore by Proposition \ref{lem6}, the restriction of $X$ to $T$ is the product of its projections to each factor, as required.
\end{proof}

\begin{definition}
	Let $A\subseteq M^{eq}$ and let $(V,C)$ be an $A$-definable circular order. For $n<\omega$, an \emph{$A$-sector} of $V^n$ is a subset of $V^n$ defined by a formula $\phi(x_0,\ldots,x_{n-1})$ which is a finite boolean combination of relations of the form:
	
	\begin{itemize}
		\item $x_i = x_j$, for $i,j<n$;
		\item $C(x_i, x_j, x_k)$, for $i,j,k<n$;
		\item $C(a, x_i,x_j)$, for $i,j<n$ and $a\in \acl^{eq}(A)\cap \overline V$;
		\item $x_i \in W$, for $i<n$ and $W$ an $\acl^{eq}(A)$-definable convex subset of $V$.
	\end{itemize}
\end{definition}

\begin{definition}
	Let $A\subseteq M^{eq}$ and let $V_0,\ldots,V_{m-1}$ be pairwise independent $A$-definable linear or circular orders. For $n<\omega$, a \emph{sector} (resp. \emph{$A$-sector}) of $V_0^{k_0}\times \cdots \times V_{m-1}^{k_{m-1}}$ is a set of the form $D_0\times \cdots \times D_{m-1}$, where each $D_i$ is a sector (resp. $A$-sector) of $V_{i}^{k_i}$.
\end{definition}

\begin{cor}\label{cor:all order types with parameters}
	Let $V_0,\ldots,V_{n-1}$ be pairwise independent, minimal $0$-definable circular orders. Let $V_n,\ldots V_{m-1}$ be pairwise independent minimal $0$-definable linear orders. Let $X\subseteq V_0^{k_0}\times \cdots \times V_{m-1}^{k_{m-1}}$ be definable over some parameters $A$. Then the topological closure of $X$ is an $A$-sector of $V_0^{k_0}\times \cdots \times V_{m-1}^{k_{m-1}}$.
\end{cor}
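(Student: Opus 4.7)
The plan is to reduce Corollary~\ref{cor:all order types with parameters} to Theorem~\ref{th:all order types}. The obstruction is that over the parameters $A$, the ambient orders $V_i$ are only $0$-definable minimal, not necessarily minimal over $A$. I would cut each $V_i$ into finitely many $A$-definable arcs that \emph{are} minimal over $A$, apply Theorem~\ref{th:all order types} on each resulting product of arcs, and patch the conclusions together.

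Assume $A$ is finite. For each $i<m$, $\omega$-categoricity yields finitely many orbits of $\aut(M)_A$ on $V_i$. Each orbit $O$ has closure $\overline O\subseteq V_i$ which, by topological rank~$1$ and Lemma~\ref{lem1}, is a finite union of convex sets; let $\partial\overline O\subseteq\overline{V_i}$ be the finite set of endpoints of these components. Set $S_i:=\bigcup_O\partial\overline O$, a finite $\aut(M)_A$-invariant subset of $\overline{V_i}$. Then $S_i$ partitions $V_i$ into finitely many $A$-definable arcs $I_{i,1},\ldots,I_{i,n_i}$ together with isolated algebraic points $F_i:=S_i\cap V_i$. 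The key claim is that each arc $I=I_{i,j}$ is minimal over $A$. Topological rank~$1$ is inherited from $V_i$ via Lemma~\ref{lem:preservation under dense subsets}. For weak transitivity, any non-empty $A$-definable $Y\subseteq I$ is a union of $\aut(M)_A$-orbits; a finite orbit lies in $F_i$ and so cannot meet $I$, while an infinite orbit $O\subseteq I$ has some convex component of $\overline O$ meeting $I$ with endpoints in $S_i$, and since $S_i$ has no element in the interior of $I$ these endpoints must be the bounding cuts of $I$, whence the component equals $I$ and $O$ is dense in $I$. So $Y$ is dense. Pairwise independence of the arcs follows from independence of $V_i,V_j$ for $i\ne j$ for arcs in distinct factors, and from Lemma~\ref{lem:no circular self-intertwining} (circular case) or Lemma~\ref{lem:no self-intertwining} (linear case) for distinct arcs inside the same $V_i$.

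I would then partition $\prod_iV_i^{k_i}$ into finitely many cells $\prod_{(i,l)}W_{i,l}$, where each $W_{i,l}$ is either an arc $I_{i,j}$ or a singleton $\{p\}$ with $p\in F_i$. For each cell $C$, $D\cap C$ is $A$-definable closed, and after projecting out the singleton coordinates it sits inside a product of pairwise independent minimal linear orders over $A$. Theorem~\ref{th:all order types} applied over $A$ to this product expresses $D\cap C$ as a finite union of products of $A$-definable closed sets, one factor per coordinate. Re-grouping these coordinate-wise factors according to the index $i$ and taking the union over all cells yields $D$ as a finite union of products $D_0\times\cdots\times D_{m-1}$ with each $D_i$ an $A$-definable closed subset of $V_i^{k_i}$. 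The hard part of the plan is the weak transitivity of the arcs, which is precisely what forces the choice of $S_i$ as the union of all orbit-closure boundaries rather than merely the endpoints visible in the projections of $D$.
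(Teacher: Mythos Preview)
Your proposal is correct and follows essentially the same route as the paper's own proof: decompose each $V_i$ over $A$ into finitely many $A$-definable points and weakly transitive convex arcs, observe that the arcs inherit topological rank~$1$ and are pairwise independent (via Lemmas~\ref{lem:no self-intertwining} and~\ref{lem:no circular self-intertwining}), and then invoke Theorem~\ref{th:all order types} over $A$ on the resulting product of linear pieces. The paper's proof compresses all of this into three lines; you have spelled out the mechanism by which the arcs arise (as complementary intervals of the union of orbit-closure boundaries) and why weak transitivity holds on them, which is exactly the content the paper leaves implicit.
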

\begin{proof}
	Each $V_i$ breaks over $A$ into finitely many $A$-definable points and $A$-definable convex subsets, each weakly transitive over $A$. Any two such convex subsets are independent by Lemmas \ref{lem:no self-intertwining}, \ref{lem:no circular self-intertwining} and \ref{lem:linear and circular are independent}. Applying Theorem \ref{th:all order types} to the closure of $X$, it is enough to prove the statement for one linear or circular order. The case of a linear order is Corollary \ref{lem2_cor} and the circular case follows similarly from Proposition \ref{prop:one circular order}.
\end{proof}


\begin{cor}\label{cor:op dimension in orders}
	Let $V_0,\ldots, V_{m-1}$ be pairwise independent, minimal 0-definable linear or circular orders. Let $\bar a$ be a finite tuple of $d$ pairwise distinct elements of $\bigcup_{i<m} \overline{V_i}$. Then $\dlr(\bar a)\geq d$.
\end{cor}
\begin{proof}
	Let $p(\bar x) = \tp(\bar a/\emptyset)$. Then by the previous corollary, $p(\bar x)$ is dense in a sector $S$ of $\prod_{i<m} V_i$. Since $p(\bar x)$ implies that all coordinates of $\bar x$ are distinct, the sector $S$ must have non-empty interior: it contains a product $\prod_{i<m} I_i$, where $I_i \subseteq V_i$ is a bounded open interval of $V_i$. Those intervals are definable over some set $A$.
	
	We can now construct an ird-pattern of length $d$ as follows: write $\bar a =(a_\alpha:\alpha<d)$. For each $\alpha<d$ let $i(\alpha)<m$ be such that $a_\alpha\in V_{i(\alpha)}$. Pick a sequence of points $(b_{\alpha,k} :k<\omega)$ of elements of $I_{i(\alpha)}$, increasing along the order on $I_{i(\alpha)}$. Let $\phi_\alpha(\bar x;b_{\alpha,k})$ be a formula wih extra parameters from $A$ saying that the $\alpha$-th element of $\bar x$ lies in the interval $I_i$ above the point $b_{\alpha,k}$. Then by density, for any $\eta:d\to \omega$, we can find a tuple $\bar a_\eta\models p$ such that for any $\alpha<d$ and $k<\omega$, we have \[ \phi_\alpha(a_\eta;b_{\alpha,k}) \iff \eta(\alpha)<k.\]
	
	This shows that $\dlr(p)\geq d$.
\end{proof}

With the same argument as for Proposition \ref{prop:iso types of n orders}, we can show the following classification result.

\begin{cor}\label{cor:iso types with all order types}
	Let $(M;C_1,\ldots,C_m,\leq_1,\ldots,\leq_n)$ be countable, $\omega$-categorical, transitive, equipped with $m$ circular orders and $n$ linear orders, each minimal. Write $M_i=(M,\leq_i)$. Then the isomorphism type of $M$ is completely determined up to automorphism by the following information:
	\begin{itemize}
		\item For any $i,j\leq m$, whether $C_i$ and $C_j$ are equal, equal up to reversal, intertwined, intertwined up to reversal, or independent.
		\item For any $i,j\leq n$, whether $\leq_i$ and $\leq_j$ are equal, equal up to reversal, intertwined, intertwined up to reversal, or independent.
		\item For any $i<j\leq n$ such that $\leq_i$ and $\leq_j$ are intertwined (possibly up to reversal) but not equal, if $f_{ij}\colon M_i \to \overline {M_j}$ is the intertwining map, whether we have $f_{ij}(x)<_j x$ or $x<_j f_{ij}(x)$ for some/any $x\in M$.
	\end{itemize}
\end{cor}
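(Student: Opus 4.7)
The plan is to adapt the proof of Proposition \ref{prop:iso types of n orders} to the present mixed setting of $m$ circular and $n$ linear orders, substituting Corollary \ref{cor:all order types with parameters} for Proposition \ref{lem6} as the density/product engine. The target is to encode $M$ as a diagonal embedding into a product of representatives for the intertwining classes of orders, and then to show that the closure of this image, together with the finitely many inequalities that cut it out, is entirely read off from the combinatorial data in the statement.

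First, normalize: using Corollary \ref{cor:no inverse intertwining}, replace some $\leq_j$ by their reverses so that any two intertwined linear orders are intertwined via an \emph{increasing} $\emptyset$-definable map; orient each circular intertwining class consistently as well (this choice is auxiliary, since the data is only recorded up to reversal). Let $E$ and $E'$ be the ``intertwined'' equivalence relations on the circular and linear orders respectively, pick representatives $\{C_{s_t}\}_{t\le k}$ and $\{\leq_{s'_t}\}_{t\le k'}$, and for each $i,j$ let $\iota_i\colon M_i\to\overline{M_{s_{t(i)}}}$ and $\iota'_j\colon M_j\to \overline{M_{s'_{t'(j)}}}$ be the unique $\emptyset$-definable intertwining maps to the chosen representatives (uniqueness comes from Lemma \ref{lem:unique intertwining} in the linear case and from minimality, which excludes self-intertwinings, in the circular case). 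For each $t\le k$, set $V_t=\bigcup_{i\,E\,s_t}\iota_i(M_i)\subseteq\overline{M_{s_t}}$, a minimal $\emptyset$-definable circular order by the circular analogue of Lemma \ref{lem:slice of completion}; define the minimal linear orders $V'_t$ analogously. By construction, the $V_t$ and $V'_t$ are pairwise independent across different intertwining classes.

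Consider now the diagonal image
\[\Gamma=\{(\iota_i(x),\iota'_j(x))_{i\le m,\,j\le n}:x\in M\}\subseteq \prod_{i\le m}V_{t(i)}\times\prod_{j\le n}V'_{t'(j)}.\]
By Corollary \ref{cor:all order types with parameters}, the closure of $\Gamma$ is a finite union of products of closed $\emptyset$-definable subsets of the individual factors. Within a single class, the coordinate projection of this closure is a boolean combination of order inequalities between the various $\iota_i(x)$ (respectively $\iota'_j(x)$), by Corollary \ref{lem2_cor} in the linear classes and by its consequence inside an open interval for the circular classes. These inequalities are exactly what is encoded in the data: for a pair of intertwined linear orders $\leq_j,\leq_{j'}$, the comparison between $\iota'_j(x)$ and $\iota'_{j'}(x)$ translates into the quantity ``$f_{j'j}(x)\lessgtr_j x$'' listed in the statement, while equality and equality-up-to-reversal are read off directly from the first two items. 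Thus the topological closure of $\Gamma$, which determines the quantifier-free type of a generic $x\in M$ in the expanded structure built on $\bigsqcup V_t\sqcup\bigsqcup V'_t$, is entirely recoverable from the listed combinatorial data.

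Finally, a back-and-forth argument of the style used in Proposition \ref{prop:unique n intertwined orders} shows that two countable $\omega$-categorical structures realizing the same data are isomorphic: at each step, a new point $a\in M$ is matched by choosing a point in $M'$ whose diagonal image has the same relative position with respect to the images of the already-matched parameters, which is possible by density of $\Gamma$ in its product closure. The main obstacle I expect is verifying that the intra-class inequalities cutting out the closure are truly recoverable from the listed data: within a linear intertwining class, the third item gives this explicitly, but within a circular intertwining class one has to argue that minimality plus transitivity, combined with the coherent orientation of intertwinings chosen at the normalization step, leave only an overall-reversal ambiguity — precisely the ambiguity absorbed by ``intertwined up to reversal'' — and do not require an additional circular analogue of the third item. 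Making this bookkeeping tight, and checking that the back-and-forth then goes through without hidden extra invariants, is the technical heart of the argument.
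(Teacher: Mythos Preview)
Your proposal is correct and follows essentially the same approach as the paper. In fact the paper's own proof is a single sentence --- ``With the same argument as for Proposition \ref{prop:iso types of n orders}'' --- and your proposal is precisely the natural adaptation of that argument to the mixed circular/linear setting: group the orders into intertwining classes, pick representatives, form the diagonal $\Gamma$ in the product of the $V_t$'s and $V'_t$'s, invoke Theorem \ref{th:all order types} (you cite Corollary \ref{cor:all order types with parameters}, which also works with $A=\emptyset$) in place of Proposition \ref{lem6}, and finish with back-and-forth. Your caution about the intra-class data for circular intertwining classes is well placed, but the paper does not elaborate on this point either; it is treating the corollary as a routine extension of Proposition \ref{prop:iso types of n orders}.
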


\section{Local equivalence relations and local formulas}\label{sec:local relations}

We now aim at describing a certain kind of definable sets on products of minimal orders, which we call \emph{local}. This will only be used at the end of the analysis to show the finiteness result of Theorem \ref{th:main1}. We advise the reader to skip this section at first and come back to it when it is called for.

\smallskip
We start by giving examples of local definable sets.

\begin{ex}\label{ex:finite covers}
All structures are assumed to be countable.
	\begin{enumerate}
		\item Let $(V,\leq)$ be a dense linear order without endpoints and let $E$ be an equivalence relation on $V$ with finitely many classes, each of which is dense co-dense. In the structure $(V;\leq,E)$, the order $(V,\leq)$ is weakly transitive and rank 1. The isomorphism type of this structure is determined by the number of classes. One could further expand this structure by adding any structure on the finite quotient $V/E$. We will see that those are the only weakly transitive, rank 1 and op-dimension 1 expansions of a linear order.
			
		\item Let $(V,C)$ be a dense circular order. We may similarly expand it by adding an equivalence relation $E$ with finitely many classes, each of which is dense co-dense. Again, the isomorphism type of the expansion is determined by the number of classes and one can expand the resulting structure by putting any structure on the quotient $V/E$.
		\item Take $(V,C)$ a dense (countable) circular order. Let $\pi\colon W\to V$ be a connected $k$-fold cover of $V$: that is $W$ is itself a circular order, the map $\pi$ is locally an isomorphism and is $k$-to-one. Up to isomorphism, there is a unique such structure. Now let $s\colon V\to W$ be a section of $\pi$ which is generic in the sense that on any small interval of $V$, $s$ takes values in the $k$ sheets of the cover above that interval. Again, those conditions determine the isomorphism type of $(W,V;\pi,s)$.
		
			The induced structure on $V$ can be described in various ways. If $k>1$, let $R(x,y)$ be the binary relation which holds for two points $a,b$ if $\pi$ is injective on the interval $s(a)<x<s(b)$. Note that the circular order on $V$ is definable from $R$ and in fact the whole structure is bi-interpretable with $(V;R)$. Those structures $(V;R)$ are sometimes named $S(k)$ in the literature. We will call them \emph{finite covers} of $V$ (in general a finite cover of a structure $M$ is a structure $N$ equipped with a finite-to-one projection map onto $M$).
			
			Another way to encode the structure on $V$ which will be more natural to us is as a \emph{local equivalence relation}. Define a 4-ary predicate \begin{eqnarray*}E(s,t;x,y) \equiv (s<x=y<t)\vee (s<x<y<t \wedge R(x,y)) \vee \\ \vee(s<y<x<t \wedge R(y,x)).\end{eqnarray*} Then for any $a\neq b$, the relation $E(a,b;x,y)$ is an equivalence relation on the interval $a<x<b$. It is in this form that those structures will appear in our analysis.
		\item We can combine examples (2) and (3). Fix some integers $(k_1,\ldots,k_m)$. Let $(V,C)$ be a dense circular order, equipped with an equivalence relation $E$ with $m$ dense co-dense classes. On the $i$-th class, we have a $k_i$-fold cover coded by a local equivalence relation $E_i$ as in (3). The isomorphism type of the structure $(V;C,E_1,\ldots,E_m)$ is determined  by the tuple $(k_1,\ldots,k_m)$.
		
		As we will see eventually, those are, up to bi-definability, the only minimal, rank 1, op-dimension 1, expansions of circular orders.
		
		\item Let $(V,C)$ be a dense circular order equipped with two equivalence relations $E$ and $F$ such that $F$ has two dense classes, each $E$-class consists of exactly one element from each, and the structure is generic such. Let $M$ be the quotient of $V$ by $E$. Then $M$ satisfies $(\star)$ and is a proper expansion of the last structure in Example \ref{ex:basic} (obtained from $M$ by forgetting about $F$). We then have an equivalence relation with two classes on the set $W_*$ of pairs $(a,b)\in V^2$, with $a\rel{E} b$ (given by the $F$-class of the first coordinate for instance). This is another example of a local equivalence relation. In this case it is a \emph{bona fide} equivalence relation, although not on the structure $M$ itself, but on the finite cover $W_*$.
	\end{enumerate}	
\end{ex}

Let $(V^*_k:k<m_*)$ be a finite family of $0$-definable minimal linear and circular orders so that any two are independent. Let $\bar c=(c_i)_{i<n_*}$ enumerate a relatively algebraically closed subset of $\bigcup V^*_k$ such that $\bar c \in \acl(c_i)$ for each $i$. For $i<n_*$, let $k(i)<m_*$ be such that $c_i\in V^*_{k(i)}$ and set $V_i=V^*_{k(i)}$. Reordering $\bar c$ if necessary, assume that for some $n_c\leq n_*$, $V_i$ is circular for $i<n_c$ and linear otherwise. Let $p_0 = \tp(\bar c)$ and let $W_*\subseteq \prod_{i<n_*} V_i$ be the set of realizations of $p_0$.

Note that as $p_0$ is a complete type, if $S\subseteq \prod_{i<n_*} V_i$ is a sector, then either $W_* \subseteq S$ or $W_* \cap S = \emptyset$.

For each $i<n_*$, let $W_i \subseteq V_i$ be the projection of $W_*$ on $V_i$: it is a dense subset of $V_i$ by minimality and is transitive since $p_0$ is a complete type. If $i\neq j$ and $V_i=V_j$ are linear, then $W_i\neq W_j$ since algebraic closure must be trivial on $W_i$ by Lemma \ref{lem0}. However, if $V_i=V_j$ is circular, then we could have either $W_i\neq W_j$ or $W_i=W_j$.


By the $L_0$-structure, we mean the structure having one sort for each $V^*_k$ equipped with its linear or circular order and a unary predicate for $W_*$ as a subset of $\prod_{i<n_*} V^*_{k(i)}$.
%
%
%

\subsection{Small cells, paths and simple connectedness}

A bounded interval of a linear or circular order is an interval of the form $a<x<b$, with $a< b$.

A \emph{small cell} of $W_*$ is a non-empty set of the form $W_* \cap \prod_{i<n_*} I_i$ such that each $I_i\subseteq V_i$ is a bounded interval, and such that for any $i\neq j$ such that $V_i = V_j$, $I_i$ and  $I_j$ are disjoint. Note that any sector of $\prod_{i<n_*} V_i$ intersecting $\prod_{i<n_*} I_i$ actually contains $\prod_{i<n_*} I_i$ since the disjointness condition ensures that order relations between the variables are the same for any two points in that product. It follows from Corollary \ref{cor:all order types with parameters} applied to $A=\emptyset$ that the intersection $W_* \cap \prod_{i<n_*} I_i$ is dense in $\prod_{i<n_*} I_i$ (as it is assumed to be non-empty).

A \emph{minimal small cell} of $W_*$ over $\bar a$ is a small cell $W_* \cap \prod_{i<n_*} I_i$ such that each $I_i$ is definable and minimal over $\bar a$.

In what follows, we equip $W_*$ with the induced topology coming from the product of the order topologies on each $V_i$.

\begin{lemma}\label{lem:small cells are everywhere}
  Let $X\subseteq W_*$ be a non-empty definable open set and let $C_{\bar a}\subseteq W_*$ be a small cell defined by a formula $\phi(\bar x;\bar a)$. Then there is $\bar a' \equiv \bar a$, such that the small cell $C_{\bar a'}$ defined by $\phi(\bar x;\bar a')$ is included in $X$.
\end{lemma}
\begin{proof}
  Let $d\in X$. Then as all elements of $W_*$ have the same type, there is $\bar b\equiv \bar a$ such that $d\in C_{\bar b}$. By Corollary \ref{cor:all order types with parameters}, the set of realizations of $\tp(\bar a)=\tp(\bar b)$ is dense in a sector, hence we can move the endpoints of the intervals defining $C_{\bar b}$ freely, as long as the order type of those endpoints is preserved. Therefore we can find $\bar b'\equiv \bar b$ such that $C_{\bar b'}$ contains $d$ and is small enough to be included in $X$.
\end{proof}

\begin{lemma}\label{lem:small cells cover a small cell}
  Let $C_{\bar a}$ and $D_{\bar b}$ be two small cells, defined by possibly different formulas. Then there is $\bar a' \equiv \bar a$ such that $C_{\bar a'} \supseteq D_{\bar b}$.
\end{lemma}
\begin{proof}
  By the previous lemma, there is $\bar b' \equiv \bar b$ such that $C_{\bar a} \supseteq D_{\bar b'}$. Now take $\bar a'$ such that $\tp(\bar a' \bar b) = \tp(\bar a \bar b')$.
\end{proof}

In the following definition, we slightly abuse terminology: when we write $E_{\bar a}$, we are assuming given not just the definable set $E_{\bar a}$, but also the formula $\phi(x,y;\bar z)$ so that $\phi(x,y;\bar a)$ defines $E_{\bar a}$. (In fact, the definition does not actually depend on the formula chosen, but we will not need this.)

\begin{definition}
	Let $C_{\bar a}$ be a minimal small cell defined over $\bar a$ and let $E_{\bar a}$ be an $\bar a$-definable equivalence relation on $C_{\bar a}$. We say that $E_{\bar a}$ is a \emph{local equivalence relation} if for any $\bar a'\equiv \bar a$ such that $C_{\bar a'}\subseteq C_{\bar a}$, $E_{\bar a'}$ and $E_{\bar a}$ coincide on $C_{\bar a'}$. 
\end{definition}

\begin{lemma}\label{lem:local equivalence rel are local}
	Let $E_{\bar a}$ be a local equivalence relation defined on the minimal small cell $C_{\bar a}$ and take $\bar a'\equiv \bar a$. Let $D_{\bar c} \subseteq C_{\bar a}\cap C_{\bar a'}$ be a small cell defined possibly by a different formula. Then $E_{\bar a}$ and $E_{\bar a'}$ coincide on $D_{\bar c}$.
\end{lemma}
\begin{proof}
	Let $\bar d$ be a finite tuple of points from $D_{\bar c}$. By Lemma \ref{lem:small cells are everywhere}, there is $\bar a''\equiv \bar a$ such that $C_{\bar a''}\subseteq D_{\bar c}$. Note that a minimal small cell is by definition a product of pairwise independent minimal linear orders. It then follows from Corollary \ref{cor:lem6+} that we can find a realization of $\tp(\bar d/\bar c)$ inside any open subset of $D_{\bar c}$ and in particular we can find such a realization $\bar d'$ inside $C_{\bar a''}$. Let $\bar a_*$ be such that $\bar d' \bar a'' \equiv_{\bar c} \bar d \bar a_*$. Then $C_{\bar a_*}$ is included in $D_{\bar c}$ and contains $\bar d$. Hence for any finite subset of $D_{\bar c}$ we have found a small cell included in $D_{\bar c}$ and containing that finite set. The result therefore follows by the definition of a local equivalence relation.
\end{proof}

Observe that a non-empty intersection of two small cells need not be a small cell: the intersection of two intervals in a circular order may be two disjoint intervals. This is where the topological complexity comes in and is the reason why a local equivalence relation does not always give a \emph{bona fide} equivalence relation on the whole of $W_*$.

Fix a local equivalence relation $E_{\bar a}$ and let $\mathcal E$ be the family $\{E_{\bar a'}:\bar a'\equiv \bar a\}$. We will also refer to $\mathcal E$ as a local equivalence relation. For any small cell $C$, we can find by Lemma \ref{lem:small cells cover a small cell} $E\in \mathcal E$ whose domain contains $C$. Then by the previous lemma, $E|_C$ does not depend on the choice of $E\in \mathcal E$. We will denote that equivalence relation by $\mathcal E(C)$ and its set of classes by $C/\mathcal E$.

We say that the local equivalence relation $E_{\bar a}$ (or $\mathcal E$) is \emph{finite} if $E_{\bar a}$ has finitely many classes on its domain $C_{\bar a}$.
%
%
%
%

\begin{lemma}\label{lem:density}
	Let $\mathcal E$ be a finite local equivalence relation. For any small cell $C$, any $\mathcal E(C)$-class is dense in $C$. Furthermore, $\mathcal E(C)$ has finitely many classes and that number does not depend on $C$.
\end{lemma}
\begin{proof}
	It follows at once from Corollary \ref{cor:lem6+} that any $E_{\bar a}$-class is dense in the minimal small cell $C_{\bar a}$. Since any small cell $C$ is included in a conjugate of $C_{\bar a}$, the result follows.
\end{proof}

In what follows, $\mathcal E$ is a finite local equivalence relation.

If $C_0, C_1$ are small cells such that $C_0\cap C_1$ is also a small cell, then we have a natural bijection $f\colon C_0/\mathcal E \to C_1/\mathcal E$ given by identifying both $C_0/\mathcal E$ and $C_1/\mathcal E$ with $C_0\cap C_1/\mathcal E$.

\begin{definition}\label{def:path}
	A \emph{path} is a family $\mathfrak p=(C_i)_{i<n}$ such that each $C_i$ is a small cell and each $C_i\cap C_{i+1}$ is a small cell.
\end{definition}

Given a path $\mathfrak p=(C_i)_{i<n}$, we can define a map $f_{\mathfrak p}\colon C_0/\mathcal E \to C_{n-1}/\mathcal E$ given by composing the natural bijections $f_i\colon C_i/\mathcal E \to C_{i+1}/\mathcal E$ defined above.

\begin{definition}\label{def:refines}
	Say that a path $\mathfrak p'=(C'_i)_{i<n'}$ refines a path $\mathfrak p=(C_i)_{i<n}$ if there exist indices \[0=i_0<\cdots <i_{n-1}<i_n = n'\] such that $i_k\leq i < i_{k+1}$ implies $C'_i\subseteq C_k$.
\end{definition}


\begin{prop}\label{prop:homotopy}
	\begin{enumerate}
		\item If a path $\mathfrak p=(C_i)_{i<n}$ satisfies that all the $C_i$'s lie in some given small cell $C$, then $f_{\mathfrak p}\colon C_0/\mathcal E\to C_{n-1}/\mathcal E$ is given by the identification of $C_0/\mathcal E$ and $C_{n-1}/\mathcal E$ to $C/\mathcal E$.
		\item If a path $\mathfrak p'$ refines $\mathfrak p$, then $f_{\mathfrak p'}$ is equal to $f_{\mathfrak p}$, modulo the canonical identifications of the domain and range given by inclusion maps.

	\end{enumerate}
\end{prop}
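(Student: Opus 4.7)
The plan is to exploit the locality of $\mathcal E$ and the density of equivalence classes inside any small cell in order to upgrade the ``natural bijection'' $C_0/\mathcal E \to C_1/\mathcal E$ (for $C_0 \cap C_1$ a small cell) to a functorial gadget with respect to inclusions. The key observation is this: whenever $C' \subseteq C$ are small cells, the locality lemma gives $\mathcal E(C') = \mathcal E(C)|_{C'}$, and since every $\mathcal E(C)$-class is dense in $C$ (by the density lemma proved just above), each such class meets $C'$. Hence one obtains a canonical bijection $\iota_{C',C}\colon C'/\mathcal E \to C/\mathcal E$ sending each $\mathcal E(C')$-class to the $\mathcal E(C)$-class containing it. Moreover, whenever $C'' \subseteq C' \subseteq C$ are small cells, one has $\iota_{C',C} \circ \iota_{C'',C'} = \iota_{C'',C}$, so these maps form a compatible system.

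For part (1), note that when $C_0, C_1 \subseteq C$ have small-cell intersection, the natural bijection $f\colon C_0/\mathcal E \to C_1/\mathcal E$ defined through $C_0 \cap C_1$ is by construction $\iota_{C_0 \cap C_1, C_1} \circ \iota_{C_0 \cap C_1, C_0}^{-1}$, so by the functoriality above $\iota_{C_1, C} \circ f = \iota_{C_0, C}$. In other words, modulo the identifications $\iota_{C_i, C}$, each bridging bijection along a path sitting inside $C$ is the identity on $C/\mathcal E$. Composing along $\mathfrak p$ yields the claim.

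For part (2), partition the indices of $\mathfrak p' = (C'_i)_{i<n'}$ into the blocks $[i_k, i_{k+1})$ from the refinement data, and split the composition $f_{\mathfrak p'}$ accordingly. Within a block the sub-path lies entirely in $C_k$, so by part (1) the corresponding partial composition is the identity on $C_k/\mathcal E$ under the identifications $\iota_{C'_i, C_k}$. At a transition step $i_{k+1}-1 \to i_{k+1}$, the intersection $C'_{i_{k+1}-1} \cap C'_{i_{k+1}}$ is a small cell (because $\mathfrak p'$ is a path) lying inside $C_k \cap C_{k+1}$; by the functoriality of the $\iota$ system, the bridging bijection at this step agrees with $g_k\colon C_k/\mathcal E \to C_{k+1}/\mathcal E$ modulo the identifications $\iota_{C'_{i_{k+1}-1}, C_k}$ and $\iota_{C'_{i_{k+1}}, C_{k+1}}$. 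Concatenating these descriptions over all $k$ gives $f_{\mathfrak p'} = f_{\mathfrak p}$ modulo the endpoint identifications $\iota_{C'_0, C_0}$ and $\iota_{C'_{n'-1}, C_{n-1}}$, which is the stated conclusion.

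There is no real obstacle here; the argument is essentially a diagram chase. The one point to check carefully is that the bridging bijections commute with the inclusion identifications $\iota$, and at each level of the nesting this reduces to exactly the two basic facts that $\mathcal E$ is local and that $\mathcal E$-classes are dense in every small cell.
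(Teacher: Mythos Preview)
Your proof is correct and follows essentially the same approach as the paper's own proof, which is extremely terse (part (1) is dismissed as ``immediate by induction on $n$'' and part (2) is a brief sketch). Your explicit introduction of the functorial system $\iota_{C',C}$ makes the underlying bookkeeping transparent, but the content is the same: the paper's argument for (2) also splits the refined path into blocks, invokes (1) within each block, and at a transition uses $C'_{i_{k+1}-1}\cap C'_{i_{k+1}}\subseteq C_k\cap C_{k+1}$ to match the transition map with the one coming from $\mathfrak p$.
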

\begin{proof}
	The proof of (1) is immediate by induction on $n$.

To prove (2), let $0=i_0<\cdots <i_{n-1}<i_n = n'$ be as in Definition \ref{def:refines}. The map from $C'_0/\mathcal E$ to $C'_{i_1-1}/\mathcal E$ obtained following $\mathfrak p'$ is given by the identification of both to $C_0/\mathcal E$. Then since $C'_{i_1-1}\cap C'_{i_1}\subseteq C_0\cap C_1$, the map  $C'_{i_1-1}/\mathcal E \to C'_{i_1}/\mathcal E$ is the same---up to canonical identification of domain and range---as the one $C_0/\mathcal E \to C_1/\mathcal E$. Going on in this way proves the result.
\end{proof}

If $X\subseteq W_*$ then a \emph{path in $X$} is a path $\mathfrak p= (C_i:i<n)$, where each $C_i$ is included in $X$.

\begin{definition}
\begin{enumerate}
	\item An open definable set $X\subseteq W_*$ is \emph{path-connected} if for any two points $a,b\in X$, there is a path $\mathfrak p=(C_i:i<n)$ in $X$ with $a\in C_0$ and $b\in C_{n-1}$.
	\item An open set $X\subseteq W_*$ is \emph{simply connected} if it is path-connected and for any two paths $\mathfrak p=(C_i:i<n)$ and $\mathfrak p'=(C'_i:i<n')$ in $X$ with $C_0=C'_0$, $C_{n-1}=C'_{n'-1}$, the maps $f_{\mathfrak p}$ and $f_{\mathfrak p'}$ are equal.
	\end{enumerate}
\end{definition}


Let $X\subseteq W_*$ be a simply connected open set. Let $a,b\in X$ and take a path $\mathfrak p$ in $X$ from some small cell $C_a$ containing $a$ to a small cell $C_b$ containing $b$. This induces a bijection $f_{\mathfrak p}\colon C_a/\mathcal E \to C_b/\mathcal E$. Say that $a$ and $b$ are $\mathcal E(X)$-related if $f_{\mathfrak p}$ maps the $\mathcal E(C_a)$ class of $a$ to the $\mathcal E(C_b)$-class of $b$. This notion does not depend on the choice of $\mathfrak p$ by definition. It also does not depend on the choice of $C_a$ and $C_b$, since if we make a different choice, say $C'_a$ and $C'_b$, related by a path $\mathfrak p'$, then we can find $C''_a\subseteq C_a \cap C'_a$ and $C''_b\subseteq C_b\cap C'_b$ and any map $f_{\mathfrak p''}\colon C''_a/\mathcal E \to C''_b/\mathcal E$ coming from a path must coincide (modulo canonical identifications) with $f_{\mathfrak p}$ and $f_{\mathfrak p'}$.

 We therefore see that $\mathcal E(X)$ is an equivalence relation on $X$. Furthermore, it follows by construction that if $Y\subseteq X$ are both simply connected, then $\mathcal E(X)$ and $\mathcal E(Y)$ coincide on $Y$. Also if $C$ is a small cell, then by Proposition \ref{prop:homotopy} (1), this definition of $\mathcal E(C)$ coincides with the previous one. Finally, note that if $X$ is definable, then so is $\mathcal E(X)$ since it is automorphism-invariant.
 
\begin{lemma}
\begin{enumerate}
	\item If $X$ is simply connected, then any $\mathcal E(X)$-class is dense in X.
	\item If $X$ and $Y$ are simply connected, then the equivalence relations $\mathcal E(X)$ and $\mathcal E(Y)$ have the same number of classes.
\end{enumerate}
\end{lemma}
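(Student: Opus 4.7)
For part (1), the plan is to use the fact that small cells form a neighbourhood basis for the order topology on $W_*$: at any point of $W_*$ the coordinates are pairwise distinct (since $\bar c\in\acl(c_i)$ for each $i$), so any basic open neighbourhood contains a product of pairwise disjoint intervals satisfying the ordering constraints imposed by $p_0$. It therefore suffices to show that the $\mathcal E(X)$-class of $a\in X$ meets every small cell $C\subseteq X$. Given such a $C$, I pick any $b\in C$ and apply path-connectedness of $X$ to produce a path $\mathfrak p=(C_0,\ldots,C_{n-1})$ with $a\in C_0$ and $b\in C_{n-1}$. Since $b$ lies in the open set $C\cap C_{n-1}$, I can pick a small cell $C'\subseteq C\cap C_{n-1}$ containing $b$; then $(C_0,\ldots,C_{n-1},C',C)$ is still a path (the two new consecutive intersections both equal $C'$). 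The induced map $f_{\mathfrak p'}\colon C_0/\mathcal E\to C/\mathcal E$ sends the class of $a$ to a class of $\mathcal E(C)$, which by the previous lemma is dense in $C$ and so non-empty; any element of this class is $\mathcal E(X)$-equivalent to $a$ by the very definition of $\mathcal E(X)$.

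For part (2), the key observation will be that $\mathcal E(X)|_C=\mathcal E(C)$ for any small cell $C\subseteq X$: simple connectedness of $X$ allows me to compute $\mathcal E(X)$-equivalence of two points of $C$ using the trivial path $(C)$, which yields exactly $\mathcal E(C)$. The induced map $C/\mathcal E(C)\to X/\mathcal E(X)$ is therefore well-defined and injective; part (1) makes it surjective. Hence $|X/\mathcal E(X)|=|C/\mathcal E(C)|$, and likewise $|Y/\mathcal E(Y)|=|C'/\mathcal E(C')|$ for small cells $C\subseteq X$, $C'\subseteq Y$. It remains to show this common cardinal does not depend on the small cell chosen. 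By Theorem \ref{th:all order types}, the type of the tuple of interval endpoints defining a small cell is determined by its combinatorial ordering pattern, which is fixed by the definition of a small cell. Hence any two small cells are $\aut(M)$-conjugate, and since the family $\mathcal E=\{E_{\bar a'}:\bar a'\equiv\bar a\}$ is by construction $\aut(M)$-invariant, such a conjugation identifies $\mathcal E$ on one with $\mathcal E$ on the other, yielding the required bijection between class sets.

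The step that will require the most care is the conjugacy argument in part (2): one must verify that all small cells genuinely belong to a single $\aut(M)$-orbit, so that the family $\mathcal E$---which a priori is only defined on small cells of the form $C_{\bar a'}$ with $\bar a'\equiv\bar a$---admits a uniform comparison across all small cells. This relies on Theorem \ref{th:all order types} applied to the tuple of endpoints together with the observation that the disjointness and ordering constraints force a unique combinatorial type. Once this uniform structure is in place, both parts reduce to short path-theoretic manipulations together with the density lemma for small cells.
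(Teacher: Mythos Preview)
Your argument for part (1) is correct and is essentially the paper's proof spelled out in more detail.

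For part (2), the overall strategy is right --- reduce to comparing $|C/\mathcal E|$ for small cells $C\subseteq X$ and $C'\subseteq Y$ --- but the conjugacy claim you flag as the delicate step does not hold as stated. Two things go wrong. First, the ordering pattern of the endpoint tuple is \emph{not} uniquely determined by the definition of a small cell: for consecutive intervals $I_i<I_j$ in the same $V^*_k$ one may have the right endpoint of $I_i$ equal to, or strictly less than, the left endpoint of $I_j$, already giving distinct order types. Second, and more fundamentally, Theorem~\ref{th:all order types} describes closed $A$-definable sets, not types: it tells you the closure of a type is determined by order relations, not that the type itself is. The $V^*_k$ are definable in the ambient structure $M$, which may (and in the cases of interest does) carry structure beyond the orders --- that is precisely what Section~\ref{sec:local relations} is set up to analyse --- so two endpoint tuples with the same order pattern need not be $\aut(M)$-conjugate.

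The repair is immediate and avoids conjugacy of arbitrary small cells. By the setup preceding the lemma, every small cell $C$ is contained in some $C_{\bar a'}$ with $\bar a'\equiv\bar a$, and $\mathcal E(C)$ is by definition the restriction of $E_{\bar a'}$ to $C$. Since every $E_{\bar a'}$-class is dense in $C_{\bar a'}$, the inclusion $C\hookrightarrow C_{\bar a'}$ induces a bijection $C/\mathcal E\to C_{\bar a'}/\mathcal E$; and $|C_{\bar a'}/\mathcal E|=|C_{\bar a}/\mathcal E|$ since these two \emph{are} $\aut(M)$-conjugate by construction. Hence $|C/\mathcal E|$ is constant over all small cells, which is exactly what the paper's one-line proof of (2) is invoking with the phrase ``some/any small cell $C$''.
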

\begin{proof}
	1. Let $X$ be simply connected and let $C_0,C_1\subseteq X$ be small cells. Then there is a path $\mathfrak p$ in $X$ from some $C'_0\subseteq C_0$ to some $C'_1 \subseteq C_1$. This path induces a bijection $f_{\mathfrak p}\colon C'_0/\mathcal E \to C'_1/\mathcal E$ which in turns induces a bijection $C_0/\mathcal E \to C_1/\mathcal E$ via the canonical identifications induced by the inclusion maps. Therefore $C_0$ and $C_1$ intersect the same $\mathcal E(X)$-classes, hence every class is dense in $X$.
	
	2. By Lemma \ref{lem:density}, any two cells have the same number of $\mathcal E$-classes. Furthermore, each of $\mathcal E(X)$ and $\mathcal E(Y)$ has the same number of classes as $\mathcal E(C)$ for some/any small cell $C$ contained in them, since every class is dense. 
\end{proof}

\begin{lemma}\label{lem:criterion for simply connected}
	Let $X$ be an open subset of $W_*$. Assume that we have a family $\mathcal F$ of definable (over parameters) open subsets of $X$ such that:
	
	1. for any finite collection $\{C_1,\ldots, C_k\}$ of small cells included in $X$, there is a finite set $F\subseteq \mathcal F$ whose union contains all the $C_i$'s;
	
	2. for any non-empty finite set $F\subseteq \mathcal F$ the intersection of all the sets in $F$ is non-empty and simply connected.
	
	Then $X$ is simply connected.
\end{lemma}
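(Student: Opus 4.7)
The plan is to check the two clauses in the definition of simple connectedness: path-connectedness and independence of $f_{\mathfrak p}$ on the path. Throughout I will use that small cells form a basis for the topology on $W_*$ (since each point of $W_*$ has pairwise distinct coordinates by construction of $p_0$) and that every small cell is itself simply connected by Proposition~\ref{prop:homotopy}(1).

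\emph{Path-connectedness.} Given $a,b\in X$, pick small cells $C_a\ni a$ and $C_b\ni b$, and apply condition~(1) to $\{C_a,C_b\}$ to get a finite $F\subseteq\mathcal F$ with $C_a\cup C_b\subseteq\bigcup F$. Then $a\in U_a$ and $b\in U_b$ for some $U_a,U_b\in F$, and $U_a\cap U_b$ is non-empty and simply connected by~(2); in particular it contains a small cell $D$. Since $U_a$ and $U_b$ are (simply connected, hence) path-connected, one builds a path inside $U_a$ from a small sub-cell of $U_a$ containing $a$ to $D$, and one inside $U_b$ from $D$ to a small sub-cell of $U_b$ containing $b$; concatenating produces a path in $X$ joining small cells that contain $a$ and $b$.

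\emph{Setup for uniqueness.} Let $\mathfrak p=(C_i)_{i<n}$ and $\mathfrak p'=(C'_j)_{j<n'}$ be two paths in $X$ with common endpoints $C_0=C'_0$, $C_{n-1}=C'_{n'-1}$. By~(1) choose a finite $F\subseteq\mathcal F$ whose union contains every cell of $\mathfrak p$ and $\mathfrak p'$. I then refine each path so that every cell of the refinement is contained in a single element of $F$: within each $C_i$ I replace $C_i$ by a finite sequence of small sub-cells, each contained in some $U\in F$, such that consecutive cells intersect in a small cell and the first/last meet $C_{i-1}$/$C_{i+1}$ in small cells. Since $C_i$ itself is a small cell, hence simply connected, this refinement does not alter $f_{\mathfrak p}$ or $f_{\mathfrak p'}$ by Proposition~\ref{prop:homotopy}.

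\emph{Uniqueness via a common reference cell.} By~(2) the set $\bigcap F$ is non-empty and simply connected, so it contains a small cell $D$. For any small sub-cell $C$ of some $U\in F$, simple connectedness of $U$ provides a canonical bijection $\alpha_{U,C}\colon C/\mathcal E\to D/\mathcal E$ induced by $\mathcal E(U)$. If $C\subseteq U\cap U'$ for two elements $U,U'\in F$, then $U\cap U'$ is simply connected by~(2) and, using that $\mathcal E(Z)$ restricted to any simply connected subset $Y\subseteq Z$ equals $\mathcal E(Y)$, one has $\alpha_{U,C}=\alpha_{U',C}$; call this common bijection $\alpha_C$. Now fix a refined path $\mathfrak q=(C_i)_{i<n}$ with $C_i\subseteq U_i\in F$. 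At each transition $C_i\cap C_{i+1}\subseteq U_i\cap U_{i+1}$, so the canonical identification of $C_i/\mathcal E$ with $C_{i+1}/\mathcal E$ through $C_i\cap C_{i+1}$ is compatible with both $\alpha_{C_i}$ and $\alpha_{C_{i+1}}$; an easy induction on $n$ yields
\[f_{\mathfrak q}\;=\;\alpha_{C_{n-1}}^{-1}\circ\alpha_{C_0}.\]
The right-hand side depends only on $C_0$ and $C_{n-1}$ (and the auxiliary data $F,D$), so the same formula applies to the refinement of $\mathfrak p'$ and yields the same value, giving $f_{\mathfrak p}=f_{\mathfrak p'}$.

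\emph{Main obstacle.} The most delicate step is the refinement used in the second paragraph: breaking each $C_i$ into a chain of sub-cells each contained in a single element of $F$, while preserving the intersections with the neighboring cells of the path. This is a combinatorial/topological task carried out using that small cells form a basis, that $C_i\cap U$ is open for each $U\in F$, and that any two small sub-cells of a common small cell can be joined by a finite chain of overlapping small sub-cells, which follows from Lemma~\ref{lem:small cells are everywhere} and Theorem~\ref{th:all order types}. Once this is in place, all remaining work is bookkeeping with canonical identifications, funnelled through the reference cell $D\subseteq\bigcap F$.
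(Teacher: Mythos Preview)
Your proof is correct and follows essentially the same approach as the paper's: refine both paths so that each cell lies in some member of the chosen finite $F\subseteq\mathcal F$, then use the simply connected intersection $\bigcap F$ (you take a small cell $D$ inside it, the paper uses $F_\infty=\bigcap F$ directly) as a common reference through which all transition maps factor. You are more explicit than the paper about the refinement step and about the well-definedness of the $\alpha_C$ maps, but the underlying idea is identical.
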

\begin{proof}
	To see that $X$ is connected, let $a,b\in X$. We can find two sets $X_a,X_b\in \mathcal F$ that contain $a$ and $b$ respectively. By assumption $X_a\cap X_b$ is non-empty. Pick a point $c$ in it. Then since both $X_a$ and $X_b$ are connected, there are paths from $a$ to $c$ and from $c$ to $b$, which we can compose to obtain a path from $a$ to $b$.

	Let $\mathfrak p=(C_i:i<n)$ and $\mathfrak p'=(C'_i:i<n')$ be two paths with $C_0=C'_0$, $C_{n-1}=C'_{n'-1}$. Let $F$ be the finite set promised by condition 1 for the family $\{C_0,\ldots,C_{n-1},C'_0,\ldots,C'_{n'-1}\}$. Refining the two paths, we may assume that each $C_i$ and $C'_i$ lies in a unique member of the family. Let $F_\infty$ be the intersection of all the sets in $F$. By hypothesis $F_\infty$ is simply connected, so $\mathcal E(F_\infty)$ is well defined. Then we see that the transition maps from $C_i/\mathcal E\to C_{i+1}/\mathcal E$ coincide with the identification of both domain and range with $F_\infty/\mathcal E$, and same for the primed family. Hence the two maps $f_{\mathfrak p}$ and $f_{\mathfrak p'}$ are also defined in this way and therefore coincide.
\end{proof}

\begin{lemma}
	Assume that all the orders $V_i$ are linear and that the map $k$ is injective: no two coordinates of a $\bar c\in W_*$ lie in the same order. Then $W_*$ is simply connected.
\end{lemma}
\begin{proof}
	Any finite union of small cells of $W_*$ is included in one small cell (any finite union of bounded intervals of a linear order is included in one bounded interval and the same holds for products). Hence Proposition \ref{prop:homotopy} (1) directly implies that $W_*$ is simply connected.
\end{proof}

\begin{lemma}\label{lem:linear orders are simply connected}
	Assume that all the orders $V_i$ are linear. Then $W_*$ is simply connected.
\end{lemma}
\begin{proof}
	We prove the result by induction on the number of pairs $(i,j)$ for which $V_i=V_j$. If there is no such pair, then the previous lemma applies.

	Assume now that say $V_0=V_1= \ldots =V_{k-1}$ and $V_i \neq V_0$ for $i\geq k$. Without loss of generality, assume that $p_0(\bar x)\vdash x_0 < \cdots <x_{k-1}$. Consider the family $\mathcal F$ of non-empty sets of the form \[W_*\cap J_0 \times \prod_{0<i<k} J_1\times \prod_{k\leq i} V_i,\] where $J_0$ is an initial segment of $V_0$ and $J_1$ the complementary end segment. Any finite intersection of those sets is a non-empty set of the form \[W_*\cap K_0 \times \prod_{0<i<k} K_1 \times \prod_{k\leq i} V_i,\] where $K_0$ is an initial segment and $K_1$ some end segment of $V_0$. In such a set, the first coordinate lives in the linear order $K_0$, and all the others are in orders independent from it. By induction, that set is simply connected and we conclude by Lemma \ref{lem:criterion for simply connected}.
\end{proof}

\begin{prop}\label{prop:linear are simply connected}
	For each $i<n_*$, let $I_i\subseteq V_i$ be an open bounded interval of $V_i$, if $V_i$ is circular, or either an interval or the whole of $V_i$ if $V_i$ is linear. Then $X:= W_* \cap \prod_{i<n_*} I_i$ is empty or simply connected.
\end{prop}
\begin{proof}
	This follows at once from Lemma \ref{lem:linear orders are simply connected} applied to $W_* \cap \prod_{i<n_*} I_i$ instead of $W_*$.
\end{proof}

\subsection{Classification of finite local equivalence relations}

Let $\mathcal E$ be a finite local equivalence relation. Fix an $L_0$-formula $\psi(x;\bar y)$ and an $L_0$-type $q(\bar t)$ such that for any $\bar a\models q$, $C_{\bar a} := \psi(M;\bar a)$ is a small cell. Define the relation $E(\bar t;\bar x,\bar y)$ which holds for $\bar x,\bar y\in W_*$ and $\bar t \models q$ if $\bar x, \bar y$ are in $C_{\bar t}$ and are $\mathcal E(C_{\bar t})$-equivalent. Let $L_{\mathcal E}$ be the language $L_0\cup \{E\}$ and our goal now is to describe the possibilities for the isomorphism type of the expansion of the $L_0$ structure to $L_{\mathcal E}$.

\smallskip
Let $\mathcal{C}$ be the set of indices $k<m_*$ for which $V^*_k$ is circular.

For each $k\in \mathcal C$, let three distinct points $\alpha_k<\beta_k<\gamma_k \in V^*_k$ be given. Define three intervals $C_{k,0}:=\alpha_k<x<\gamma_k$, $C_{k,1}:=\beta_k<x<\alpha_k$ and $C_{k,2}:=\gamma_k<x<\beta_k$ of $V^*_k$.
The indices $0,1,2$ in $C_{k,0},...$ are considered as elements of the cyclic group $\mathbb Z_3$. Let also $A=\{\alpha_k,\beta_k,\gamma_k:k\in \mathcal C\}$.

Note that any two of $C_{k,0}, C_{k,1}, C_{k,2}$ intersect in a non-empty bounded interval of $V^*_k$.

Recall that $n_c\leq n_*$ was defined so that for $i<n_*$, $V_i$ is circular if and only if $i<n_c$. Given a tuple $\bar t=(t_k:k<n_c)$ of elements of $\mathbb Z_3$, let \[C_{\bar t}= W_* \cap \prod_{i<n_c} C_{k(i),t_k} \times \prod_{n_c\leq i<n_*} V_i.\]

A \emph{big cell} of $W_*$ is a set of the form $C_{\bar t}$, with $\bar t\in \mathbb Z_3 ^{n_c}$ as above. By Lemma \ref{lem:linear orders are simply connected}, each big cell is simply connected. Furthermore, the intersection \[C(\bar t,\bar s):= C_{\bar t}\cap C_{\bar s}\] of two big cells is a non-empty product of intervals and linear orders and hence is also simply connected. It follows that $\mathcal E(C_{\bar t})$ is a well defined equivalence relation on each big cell $C_{\bar t}$ and $\mathcal E(C(\bar t,\bar s))$ is a well defined equivalence relation on each $C(\bar t,\bar s)$. The latter induces a bijection between $C_{\bar t}/\mathcal E$ and $C_{\bar s}/\mathcal E$, which we will denote by $f_{\bar t,\bar s}$.

Let $M$ and $M'$ be two $L_{\mathcal E}$ structures with isomorphic $L_0$-reducts. Fix an isomorphism $\sigma:M \to M'$ between the $L_0$-reducts. Let $\alpha_k,\beta_k,\gamma_k$ in $M$ be points defining big cells and let $\alpha'_k,\beta'_k,\gamma'_k$ be their images under $\sigma$. Assume that the number of $\mathcal E$-classes are the same in $M$ and $M'$ and that for each $\bar t\in \mathbb Z_3 ^ {n_c}$, we have an identification of the classes in $C_{\bar t}$ and $C'_{\bar t}$ so that the maps $f_{\bar t,\bar s}$, for $\bar t, \bar s \in \mathbb Z_3 ^ {n_c}$ are the same in $M$ and $M'$ (modulo this identification). Then $M$ and $M'$ are isomorphic as $L_{\mathcal E}$-structures. Indeed, we can construct an isomorphism by a straightforward back-and-forth: $\mathcal E$-classes on each $C_{\bar t}$ are dense subsets and the identification ensures that the local equivalence relations coincide.


\subsection{Local formulas}

Say that two small cells $C_0, C_1$ of $W_*$ are \emph{strongly disjoint} if for any $i,j<n_*$ so that $V_i=V_j$, the projections $\pi_i(C_0)$ and $\pi_j(C_1)$ to $V_i$ and $V_j$ are disjoint.

\begin{definition}
  A (parameter-)definable set $R(x_1,\ldots,x_k)\subseteq W_*^k$ is \emph{local} if there is a finite local equivalence relation $\mathcal E_R$ on $W_*$ such that given strongly disjoint small cells $C_1,\ldots,C_k$ and two tuples $(a_1,\ldots,a_k),(a'_1,\ldots,a'_k)\in C_1\times \cdots \times C_k$, \[\bigwedge (a_i,a'_i)\in \mathcal E_R(C_i) \Longrightarrow (R(a_1,\ldots,a_k) \leftrightarrow R(a'_1,\ldots,a'_k)).\]
\end{definition}

We say that a formula is \emph{local} if it defines a local definable set.

\begin{rem}
  There is a slight clash of terminology with \emph{local equivalence relation}. A local equivalence relation is not the same thing as a local formula defining an equivalence relation, but we will never consider such objects so hopefully this should not lead to  confusion.
\end{rem}

\begin{prop}\label{prop:local relations}
	Let $R(x_1,\ldots,x_k)$ be a local definable set. Let $\bar a=(a_1,\ldots,a_k), \bar b=(b_1,\ldots,b_k)\in W_*^k$ be two tuples of pairwise distinct elements. Assume that $\bar a$ and $\bar b$ have the same $L_0$-type and that for each $i\leq k$, there is a big cell $C$ of $W_*$ containing both $a_i$ and $b_i$ with $(a_i,b_i)\in \mathcal E_R(C)$. Then we have 
	\[R(a_1,\ldots,a_k) \leftrightarrow R(b_1,\ldots,b_k).\]
\end{prop}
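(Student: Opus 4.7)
The plan is to bridge $\bar a$ and $\bar b$ through a sequence of intermediate tuples obtained by modifying one coordinate at a time along a path of small cells inside the big cell witnessing the $\mathcal E_R$-equivalence of the corresponding entries; each elementary step can then be compared using the local property of $R$.

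For each $i$, the big cell $B_i$ is simply connected (Lemma \ref{lem:simply connected products}), so the equivalence $(a_i,b_i)\in\mathcal E_R(B_i)$ is witnessed by a chain of small cells $C_0,\dots,C_{n_i-1}\subseteq B_i$ and points $a_i=g_{i,0},g_{i,1},\dots,g_{i,n_i}=b_i$ with $g_{i,l},g_{i,l+1}\in C_l$ and $(g_{i,l},g_{i,l+1})\in\mathcal E_R(C_l)$; here density of $\mathcal E_R$-classes in small cells lets us realize the intermediate $g_{i,l}$ in any prescribed small cell of $B_i$ within the required class. Using the freedom to shrink small cells inside $B_i$ so as to avoid any prescribed finite set of positions in each underlying order, we refine this path so that every intermediate $g_{i,l}$ has all its coordinates distinct from the coordinates of every $a_j$ and $b_j$ with $j\neq i$, and the small cells $C_l$ themselves have projections disjoint from those coordinates.

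We now iterate. Starting from $\bar a$, for $i=1,\dots,k$ in turn, replace the $i$-th coordinate successively through $g_{i,1},\dots,g_{i,n_i-1},b_i$. At each small step the two tuples differ only in the $i$-th coordinate, the two values lie in a common small cell $C_l$ and are $\mathcal E_R(C_l)$-equivalent, and the other positions can be enclosed in small cells whose projections are pairwise disjoint and disjoint from those of $C_l$ (by the refinement above). The resulting configuration is strongly disjoint, so the local property of $R$ yields that the $R$-value is unchanged at each step; composing all these equalities gives $R(\bar a)=R(\bar b)$.

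The principal obstacle is when the $L_0$-type forces shared projections via a non-identity $\sigma\in G$, with $a_j=\sigma(a_i)$ and hence $b_j=\sigma(b_i)$: at positions fixed by $\sigma$, the tuples $a_i$ and $a_j$ share a coordinate in the same order, so no strongly disjoint small cells can contain them simultaneously, and any deformation of $a_i$ must be accompanied by the $\sigma$-deformation of $a_j$. We handle this by treating each $G$-orbit of coordinates as a single unit to be moved synchronously: the orbit representative is moved along the path while the other members are moved by the corresponding elements of $G$, and the local property is applied after restricting to the reduced configuration of orbit representatives; the refinement of the path ensures that the $G$-orbit of each intermediate point remains disjoint from the $G$-orbits of the other fixed coordinates.
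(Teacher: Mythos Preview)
Your approach is essentially the same as the paper's: both connect $\bar a$ to $\bar b$ by moving the coordinates one at a time along paths of small cells inside the witnessing big cells, applying the defining property of a local relation at each step. The paper's proof is explicitly labelled a sketch and provides less detail than yours; in particular it does not mention the $G$-orbit complication, so your attempt to handle it (by moving orbits synchronously) already goes beyond what the paper records.
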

\begin{proof}(Sketch)
	For any two $k$-tuples $\bar c$ and $\bar d$ of elements of $W_*$, write $\bar c\to \bar d$ if for each $i\leq k$, there is a big cell $C_i$ of $W_*$ and a small cell $C_i'\subseteq C_i$ that contains $c_i$ and $d_i$ and such that $(c_i,d_i)\in \mathcal E_R(C'_i)$ and the $C'_i$'s are strongly disjoint. To prove the proposition, it is sufficient to find a sequence $\bar a=\bar a^ 0 \to \bar a ^ 1 \to \cdots \to \bar a ^m =\bar b$.	The fact that the $L_0$-types of $\bar a$ and $\bar b$ are the same implies that the relative order of the elements in the tuple are the same. Thus we can always find such a path from $\bar a$ to $\bar b$ by moving the points one by one.
%
%
%
\end{proof}

It follows that a local definable set $R$ is definable over the parameters $A$ used to define the big cells along with parameters defining the equivalence relations $\mathcal E_R$ on each big cell and a name for each $\mathcal E_R$-equivalence class inside each big cell. Also, for a fixed $L_{\mathcal E}$-structure, there are only finitely many local definable sets of each arity.

\begin{prop}\label{prop:finitely many local}
  For a given $L_0$-structures and some $n<\omega$ there are finitely many possibilities for the $L_{\mathcal E}$-structure, where $\mathcal E$ has $n$ classes (in some/any small cell). Furthermore, for a given $L_{\mathcal E}$-structure, there are finitely many local definable sets of a given arity.
\end{prop}

\subsection{Monodromy}

The previous discussion gives us all we need to prove the main theorem of this paper. However, it is natural to push the analysis a little bit further and show that the data contained in the set of maps $f_{\bar t,\bar s}$ can be encoded by an action of the fundamental group of the space on a finite set. We explain this here. This subsection will not be used in the rest of the paper.

\begin{lemma}\label{lem:simply connected products}
	For each $i<n_*$, let $I_i\subseteq V_i$ be either an open bounded interval of $V_i$ or the whole of $V_i$. Assume that for each $k<m_*$ such that $V^*_k$ is circular, there is exactly one value of $i$ for which $V_i=V^*_k$ and $I_i\neq V_i$. Then $X:=W_*\cap \prod_{i<n_*} I_i$ is empty or simply connected.
\end{lemma}
\begin{proof}
	We first explain what this corresponds to in a standard topological framework. Let $\tilde V^*_k$, $k<m_*$, be 1-dimensional manifolds, which are thus homeomorphic to either $\mathbb R$ or the circle $\mathbb S_1$. Let $\tilde V_i$, $i<n_*$ be each equal to one of the $\tilde V^*_k$ and let $\tilde U\subseteq \prod_{i<n_*} \tilde V_i$ be the set of tuples with distinct coordinates. Let $\tilde W_*$ be a connected component of $\tilde U$. Choose open intervals $\tilde I_i \subseteq \tilde V_i$ satisfying the same condition as in the statement of the lemma. Then the set $\tilde X= \tilde W_* \cap \prod_{i<n_*} \tilde I_i$ is simply connected. In fact this space is contractible. This is not hard to see: First, we can assume that $m_*=1$, since the space decomposes as a product of spaces each involving one $\tilde V^*_k$ and a product of contractible spaces is contractible. Let us assume for example that $\tilde V^*_0$ is circular. At least one coordinate, say $i=0$ is constrained inside a proper interval $\tilde I_0$. Fix any element $\bar a\in \tilde X$. Then we can send any other element $\bar a'$ to $\bar a$, by sending $a'_0$ to $a_0$ via a shortest path (and moving the other coordinates with it so that no two cross). We then move only the other coordinates in the circle minus $\{a_0\}$, and this reduces to the linear case which is clear.
	
	Now, we just have to translate this topological intuition into an argument in our context. The reader who is already convinced will not lose anything by skipping the rest of this proof. Assume that $X$ is not empty. As above, we can assume that $m_*=1$: all points live in the same order $V^*_0$, since coordinates in different $V^*_k$ are completely independent of each other. If $n_*=1$, then this follows from Proposition \ref{prop:homotopy} (1): any finite set of bounded intervals is included in one bounded interval, so any two paths are included in one common bounded interval and thus define the same functions $f_{\mathfrak p}$.
	
	Assume that $V^*_0$ is linear, and we prove the result by induction on $n_*$. Without loss $p_0(\bar x)\vdash x_0 < \cdots <x_{n_*-1}$. Consider the family $\mathcal F$ of non-empty sets of the form $X\cap J_0 \times \prod_{i<n_*} J_1$, where $J_0$ is an initial segment of $V^*_0$ and $J_1$ the complementary end segment. Any finite intersection of those sets is a non-empty set of the form $X\cap L_0 \times \prod_{i<n_*} L_1$, where $L_0$ is an initial segment and $L_1$ some end segment of $V^*_0$. In such a set, the first coordinate lives in the linear order $L_0$ and the others in $L_1$ which is independent from it. By induction, that set is simply connected and we conclude by Lemma \ref{lem:criterion for simply connected}.
	
	Assume next that $V^*_0$ is circular. Without loss, $I_0$ is a proper interval and $I_i=V_i$ for $i>0$. We may also assume that $p_0(\bar x)\vdash x_0<x_1<\cdots <x_{n_*-1}$. Fix some $I_* \subset I_0$ a proper subinterval that has no endpoint in common with $I_0$ and let $J_*$ be the complement of $I_*$. Define $F$ to be $W_* \cap I_* \times \prod_{0<i<n_*} J_* \subseteq X$. By the linear case, $F$ is simply connected.

	
	Identify $\{0,\ldots,n_*-1\}$ with $\mathbb Z/n_*\mathbb Z$. Let $\mathcal S$ be the set of pairs $(t,k)\in \mathbb Z/n_*\mathbb Z^2$ such that the sequence $(t,t+1,\ldots,t+k)$ contains 0. For $(t,k)\in \mathcal S$, let $G_{t,k} \subseteq X$ be the set of tuples $\bar a\in X$ for which $a_{t},\ldots,a_{t+k}$ lie in $I_0$ in that order and no other $a_i$ is in $I_0$. Again using the linear case, any such set is simply connected. Note also that two distinct $G_{t,k}$ are disjoint. For $(t,k)\in \mathcal S$, $G_{t,k}\cap F$ has the form $\prod_{i<n_*} I_i$, where the $I_i$'s are intervals, any two of which are either equal or disjoint. From the linear case, it follows that $G_{t,k}\cap F$ is simply connected. Enumerate the elements of $\mathcal S$ arbitrarily as $s_1,\ldots,s_v$. For $r\leq v$, let $F_r = F \cup \bigcup_{i<r} G_{s_i}$. By induction using the remarks above and Lemma \ref{lem:criterion for simply connected} with the two element family $\{F_{r-1}, G_{s_{r}}\}$, we see that each $F_r$ is simply connected. Since $F_v=X$, we are done.	
\end{proof}

%

Let $\bar t\in \mathbb Z_3^{\mathcal C}$ and take $\bar \epsilon_0,\bar \epsilon_1\in \mathbb Z_3^{\mathcal C}$ having each exactly one non-zero coordinate, with $\bar \epsilon_0 \neq \pm \bar \epsilon_1$.
Then the big cells $C_{\bar t}$, $C_{\bar t +\bar \epsilon_0}$, $C_{\bar t+\bar \epsilon_1}$, $C_{\bar t+\bar \epsilon_0+\bar \epsilon_1}$ are included in a common simply connected set. It follows that we have the commutation relation:

\begin{itemize}
	\item [$(\square)$] $f_{\bar t+\bar \epsilon_0,\bar t+\bar \epsilon_0+\bar \epsilon_1} \circ f_{\bar t,\bar t+\bar \epsilon_0} = f_{\bar t+\bar \epsilon_1,\bar t+\bar \epsilon_0+\bar \epsilon_1} \circ f_{\bar t,\bar t+\bar \epsilon_1}.$
\end{itemize}

Denote by $\bar 0\in \mathbb Z_3^{\mathcal C}$ the tuple all of whose coordinates are 0 and let $X=C_{\bar 0}/\mathcal E$. We may identify each $C_{\bar t}/\mathcal E$ with $X$ by following a path of bijections between $C_{\bar 0}$ and $C_{\bar t}$ that never \emph{wraps around}. More formally, order $\mathbb Z_3$ by identifying it with $\{0,1,2\}$. If $C_{\bar t_0}, \ldots, C_{\bar t_n}$ and $C_{\bar s_0},\ldots ,C_{\bar s_n}$ are two sequences of cells with
\[ \bar t_0\leq \bar t_1 \leq \ldots \leq \bar t_n \text{, }\bar s_0\leq \bar s_1 \leq \ldots \leq \bar s_n\text{, and }\bar t_0=\bar s_0, \bar t_n=\bar s_n\]
and both
\[ f_{\bar t_{n-1},\bar t_n}\circ \cdots \circ f_{\bar t_0,\bar t_1} \text{ and } f_{\bar s_{n-1},\bar s_n}\circ \cdots \circ f_{\bar s_0,\bar s_1}\]
well defined, then those two compositions are equal by iterations of $(\square)$. We identify $C_{\bar t}/\mathcal E$ with $X=C_{\bar 0}/\mathcal E$ by following any sequence of adjacent big cells from $C_{\bar 0}$ to $C_{\bar t}$ as above.

For any $i\in \mathcal C$, let $\bar \epsilon_i\in \mathbb Z_3^{\mathcal C}$ be the element with coordinates 0 everywhere except for $2$ at the $i$-th place. Now to describe $\mathcal E$, it is enough to describe the maps $f_{\bar t,\bar t+\bar \epsilon_i}$ when the $i$-th coordinate of $\bar t$ is equal to 0. (All other maps $f_{\bar t,\bar s}$ are the identity on $X$ by our identification.) In fact, we can further simplify by noticing that such an $f_{\bar t,\bar t+\bar \epsilon_i}$ is equal to $f_{\bar 0,\bar \epsilon_i}$: let $g$ be a composition of maps $f_{\bar t,\bar s}$, which do not wrap around (that is change a coordinate from 2 to 0 or vise-versa), such that $t_i=s_i=2$ so that the $i$-th coordinate is not changed and $g\circ f_{\bar t,\bar t+\bar \epsilon_i}$ maps $C_{\bar t}/\mathcal E$ to $C_{\bar \epsilon_i}/\mathcal E$. Let $h$ be the same composition as $g$, but with all $i$-th coordinate being equal to 0 instead of 2. Then $h$ sends $C_{\bar t}/\mathcal E$ to $C_{\bar 0}/\mathcal E$ and $f_{\bar 0,\bar \epsilon_i}\circ h$ also sends $C_{\bar t}/\mathcal E$ to $C_{\bar \epsilon_i}/\mathcal E$. As neither $g$ nor $h$ wraps around, $g$ and $h$ induce the identity map on $X$. Furthermore, by successive applications of $(\square)$, one sees that \[g\circ f_{\bar t,\bar t+\bar \epsilon_i} = f_{\bar 0,\bar \epsilon_i}\circ h.\]
Hence, seen as maps from $X$ to $X$, we have $f_{\bar t,\bar t+\bar \epsilon_i}=f_{\bar 0,\bar \epsilon_i}$

For each index $i$, set $h_i=f_{\bar 0,\bar \epsilon_i}$, seen as a map from $X$ to $X$. Using $(\square)$ and following the standard argument that the fundamental group of a torus is $\mathbb Z^2$, one obtains that $h_i$ and $h_j$ commute for all $i,j$. (Deform the path corresponding to $h_i\circ h_j$ to that corresponding to $h_j \circ h_i$ by successive applications of $(\square)$.)

We have thus associated to the local equivalence relation $\mathcal E$ a family of pairwise commuting maps $h_i\colon X\to X$, or equivalently, an action of $\mathbb Z^{\mathcal C}$ on $X$. We will call this the \emph{monodromy action} of $\mathcal E$. Given a decomposition of $W_*$ into big cells, this action is well defined only up to conjugation by a permutation of $X$. Furthermore, it follows from the analysis above that another choice of big cells would lead to the same family of maps up to conjugation. The monodromy action determines the $L_{\mathcal E}$-structure up to isomorphism (given the $L_0$-reduct) since we can from it reconstruct a set of maps $f_{\bar t,\bar s}$.

\section{Classification of rank 1 structures}\label{sec:classification}

In this section, we prove our main theorems. In Subsection \ref{sec:no binary} we prove some important technical statements that follow from having a ranked structure: first a kind of \emph{geometric triviality} property saying that any binary function into a minimal order is essentially unary, and second we show that minimal linear orders can intersect (up to intertwining) only in very restricted ways. For instance it is impossible for two minimal orders to have proper initial segments that are intertwined, while the remaining final segments are independent. (Think of two branches in a tree for instance: they start out equal and then diverge. This situation will be ruled out by showing that if it happens then we can increase this pair of orders to a whole tree and trees cannot exist in ranked structures).

In Subsection \ref{sec:gluing} we show how starting with a definable family of minimal linear orders, we can glue them together to construct one (or in fact finitely many) linear or circular orders that are algebraic over $\emptyset$, hence canonical. Intuitively, we start with an order in the family and extend it as much as possible by adjoining other orders in the family that have a convex subset in common. This can never lead to orders \emph{diverging} by the result mentioned above. Hence either we can keep going obtaining a longer and longer linear order, or the construction wraps up on itself, and we obtain a circle. This picture is complicated by the fact that the orders might not actually intersect, but be intertwined. We solve this by first \emph{thickening} every order in our family by adding to it all (convex subsets of) orders that are intertwined with it. Having done that, we do not have to worry about intertwinings any more and the rest of the argument is completely elementary, although rather tedious.

Subsection \ref{sec:analysis} is in some sense the core of the paper. Here we start with an $\omega$-categorical rank 1 primitive unstable NIP structure $M$ and construct a canonical family $W$ composed of finitely many linear and circular orders. Two orders in $W$ are either in order-reversing bijection or independent. Those orders are obtained by applying the procedure described above to all definable families of minimal linear orders. We show that this does indeed lead to only finitely many orders by rank arguments. The structure $W$ is definable in $M^{eq}$ and admits a definable finite-to-one map onto $M$. From now on, we switch our focus from $M$ to $W$ and see $M$ as a quotient of $W$. It remains to classify the possible structures on $W$. In Subsection \ref{sec:skeletal} we introduce the basic structure on $W$ that is given by its construction which we call the \emph{skeletal structure}. In Subsection \ref{sec:stable structure} we show that any additional structure on $W$ must come from local definable sets. Finally, in Subsection \ref{sec:homogeneous} we use this analysis to prove our main theorems.

\subsection{Preliminary statements}\label{sec:no binary}

The following proposition shows that certain binary functions are essentially unary.

\begin{prop}\label{prop:no binary functions} Assume that $M$ has finite rank and is NIP. Let $a,b$ be two finite tuples and set $p(x,y)=\tp(a,b)$. Assume that either $a \ind b$ or $\rk(a)=1$. Let also $V$ be a 0-definable linear or circular order of topological rank 1 and let $f\colon p(M)\to \overline V$ be a 0-definable function. Then  $f(a,b)\in \acl^{eq}(a)\cup \acl^{eq}(b)$.
\end{prop}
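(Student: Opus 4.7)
The proof is by contradiction, so I would assume $c := f(a,b) \notin \acl(a) \cup \acl(b)$ and produce a formula with the independence property, contradicting the NIP hypothesis on $M$. First I would reduce to the linear case: if $V$ is circular, pick $\alpha \in V \setminus \acl(ab)$, work in the linear order $V_{\alpha \to}$, and observe that topological rank 1 and the image of $f$ survive this restriction by Lemma~\ref{lem:preservation under dense subsets}. Hence we may treat $\overline V$ as a linear order. The whole strategy is then to exploit the order structure on $\overline V$ together with the binary dependence of $f$ to shatter.

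The plan is to extract a long indiscernible sequence $(a_i, b_i)_{i<\omega}$ of realizations of $p$, taken to be $\ind$-independent (Morley) in the case $a \ind b$, and chosen using the rank-1 structure on $\tp(a)$ in the second case. Put $c_{ij} := f(a_i, b_j) \in \overline V$. Since $c \notin \acl(a)$, the set $\{f(a, b') : b' \equiv_a b\}$ is infinite, so by indiscernibility the diagonal $c_{ii}$ consists of distinct values, and Ramsey lets me assume it is strictly monotone in $\overline V$; symmetrically $c \notin \acl(b)$ controls the other variable. Topological rank 1 and Proposition~\ref{lem2} then give that the orbit of $c$ inside $\overline V$ is dense in a union of convex pieces, so the $c_{ij}$'s live densely in large subsets of $\overline V$.

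To witness IP, I consider the formula
\[
\phi(y,z;x) \;\equiv\; f(y,z) < x,
\]
with parameter $x \in \overline V$ and variables in $p(M)$. Using the density from step two, for each subset $I \subseteq n$ I can place a parameter $x_I \in \overline V$ at a cut that separates $\{c_{ii} : i \in I\}$ from $\{c_{ii} : i \notin I\}$; the freedom to do this comes exactly from the fact that closed definable subsets of $\overline V$ are finite unions of convex sets while the $c_{ij}$'s, being realizations of a fixed type in a product of minimal orders, can be placed generically by Proposition~\ref{lem2} (or the circular analog). This yields an $n$-shatter pattern for $\phi$, hence contradicts NIP.

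The main obstacle is controlling the off-diagonal entries $c_{ij}$ for $i \neq j$: indiscernibility along the diagonal is immediate, but producing shattering requires that $c_{ij}$ genuinely interleaves with $c_{ii}$ in the order rather than coinciding with or being algebraic over one of them. This is exactly where the disjunctive hypothesis is used. In the case $a \ind b$, symmetry of $\ind$ and the usual transfer of types along Morley sequences allow us to assert $f(a_i, b_j)$ has the same type as $f(a_0, b_1)$ for $i < j$, and one checks this cannot equal $c_{ii}$ without forcing $c \in \acl(a)$ or $c \in \acl(b)$. In the case $\rk(a) = 1$, any coincidence $c_{ij} = c_{i'j'}$ would force, by additivity of rank and $c \in \overline V$ of rank $\leq 1$, a non-trivial algebraic dependence pushing $c$ back into $\acl(a) \cup \acl(b)$. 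Handling these cases uniformly and extracting the precise shatter pattern is the delicate step; everything else is bookkeeping with indiscernibles and the density lemmas of Sections \ref{sec:linear orders} and \ref{sec:circular orders}.
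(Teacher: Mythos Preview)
Your shattering scheme has a genuine gap. You take an indiscernible sequence $(a_i,b_i)_{i<\omega}$ and set $c_{ii}=f(a_i,b_i)$; since $(a_i,b_i,c_{ii})_i$ is itself indiscernible, the $c_{ii}$'s form a monotone (or constant) sequence in $\overline V$. A single cut $x_I$ can then only separate an \emph{initial segment} of the $c_{ii}$'s from its complement, so the formula $f(y,z)<x$ with parameter $x\in\overline V$ witnesses at best the order property, not IP. No appeal to density via Proposition~\ref{lem2} rescues this, because the $c_{ii}$'s are already fixed once the sequence is chosen; that proposition lets you realise a type in prescribed intervals, not rearrange a given indiscernible sequence. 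The off-diagonal worry you flag is a symptom rather than the disease: the diagonal scheme itself cannot shatter.

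The paper organises the two variables asymmetrically. It fixes $b$ and takes $a_1,\dots,a_n$ rank-independent realisations of $p(x,b)$, setting $c_i=f(a_i,b)$. A short rank computation (using $a\ind b$, to which the hypothesis $\rk(a)=1$ immediately reduces) shows each $c_i\notin\acl(a_1,\dots,a_n)$, so the locus $Z_i$ of $c_i$ over $\bar a=(a_1,\dots,a_n)$ is infinite, with convex closure in $\overline V$; the $\overline{Z_i}$'s are pairwise equal or disjoint. Now one varies a single $b'\equiv_{\bar a}b$: Proposition~\ref{lem2} (or Proposition~\ref{lem6} when the $\overline{Z_i}$'s are disjoint) lets the tuple $(f(a_1,b'),\dots,f(a_n,b'))$ realise any prescribed pattern of inequalities against $(c_1,\dots,c_n)$. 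Hence the formula $f(x,y)<z$ has IP with the pairs $(a_i,c_i)$ on one side and $b'$ on the other. The asymmetry---many $a$'s against one moving $b$---is exactly what creates the freedom to shatter; your symmetric diagonal setup lacks it.
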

\begin{proof} If $a\in \acl(b)$, there is nothing to show. If $\rk(a)=1$ and $a\notin \acl(b)$, then $\rk(a/b)\geq 1=\rk(a)$, so by Lemma \ref{lem:independence_acl} $a\ind b$. Hence we can assume $a\ind b$. Let $a_1,\ldots,a_n \in M$ be realizations of $p(x,b)$ so that for each $k$, $\rk(a_i/ba_1\ldots a_{i-1})= \rk(a) (=\rk(a_i))$ (this exists by Lemma \ref{lem:exist maximal rank}). For $i\leq n$, set $c_i=f(a_i,b)$. If $c_i$ is algebraic either over $b$ or over $a_i$, then since $\tp(a_ib) = \tp(ab)$, it follows that $f(a,b)$ is algebraic either over $b$ or over $a$ and we are done. Assume that this is not the case.

\smallskip
\underline{Claim:} $c_i \notin \acl^{eq}(ba_1\ldots a_{i-1})$.

\smallskip
\emph{Proof:} Assume that $c_i \in \acl^{eq}(ba_1\ldots a_{i-1})$. Then $c_i \in \acl^{eq}(ba_i) \cap \acl^{eq}(ba_1\ldots a_{i-1})$. Since $\rk(a_i/ba_1\ldots a_{i-1}) = \rk(a_i) = \rk(a_i/b)$, by Lemma \ref{lem:independence_acl}, we have \[ a_i \ind_b a_1\ldots a_{i-1},\]

and Lemma \ref{lem:independence_acl} implies $c_i \in \acl^{eq}(b)$. Contradiction.

\smallskip
As a consequence of the claim, the $c_i$'s are pairwise distinct (since for $j<i$, $c_j \in \acl^{eq}(ba_1\ldots a_{i-1})$).

Set $\bar a=(a_1,\ldots,a_n)$. By Proposition \ref{prop:basic rank} (5), \[\rk(\bar ab) = \rk(b) + \rk(a_1 / b) + \cdots + \rk(a_n/ba_1\ldots a_{n-1}) = \rk(b) + n \rk(a).\]

Hence for any $i$,  \begin{align*}\rk(\bar a/a_i b) = \rk(\bar ab) - \rk(a_i b) &= \rk(b) + n\rk(a) - (\rk(a_i/b) + \rk(b))\\ &= (n-1) \rk(a).\end{align*}
We also have $\rk(\bar a/a_i) = \rk(\bar a) -\rk(a_i) =  (n-1) \rk(a)$ so that we have $\bar a \ind_{a_i} b$. As $c_i \in \acl^{eq}(a_i b) \setminus \acl^{eq}(a_i)$ we deduce from Lemma \ref{lem:independence_acl} again that $c_i \notin \acl^{eq}(\bar a)$.


Let $Z^0_i$ be the subset of $V$ defined by $\tp(c_i/\acl^{eq}(\bar a))$ and let $Z_i$ be the topological closure of $Z^0_i$ in $V$. Note that $Z^0_i$, and hence $Z_i$, is infinite as $c_i$ is not algebraic over $\bar a$. Then by Lemma \ref{lem1} and the remark after it, $Z_i$ is a convex subset of $V$. The set $Z_i$ with the induced order is minimal over $\bar a$ as it has topological rank 1 and is the closure of a transitive set. For $i,j\leq n$, the subsets $Z_i$ and $Z_j$ are either equal or disjoint (since each is the closure of a complete type over $\bar a$). If they are disjoint, then they are independent by Lemma \ref{lem:no self-intertwining}. Taking a larger value of $n$ and restricting to a subtuple of $(c_1,\ldots,c_n)$, we may assume that they are either all equal or pairwise disjoint.

Assume that the $Z_i$'s are pairwise disjoint and that $V$ is linear. Let $X$ be the topological closure of the set of realizations of $q:=\tp(c_1,\ldots,c_n/\bar a)$ in $Z_1 \times \cdots \times Z_n$. By Proposition \ref{lem6}, $X$ is a finite union of products of closed subsets of each $V_i$. Since $X$ is $\bar a$-definable and each $Z_i$ is minimal over $\bar a$, the only non-empty $\bar a$-definable closed subset of $Z_i$ is $Z_i$ itself, hence it must be that $X = \prod_{i\leq n} Z_i$. It follows that for any subset $I\subseteq \omega$, we can find $(c'_1,\ldots,c'_n)\models q$ with \[c'_i < c_i \iff i\in I.\] Take $b'$ so that \[\tp(c'_1,\ldots,c'_n,b'/\bar a)=\tp(c_1,\ldots,c_n,b/\bar a).\] We then have \[f(a_i,b')<c_i \iff i\in I.\] As $n$ was arbitrary, the formula \[\phi(xx' ;y) \equiv f(x,y) < x'\] has the independence property. The argument in the circular case is similar using Proposition \ref{prop:n circular orders} instead of Proposition \ref{lem6}.

Assume finally that the $Z_i$'s are all equal to some $Z$. In the linear case, the argument is exactly the same as in the previous paragraph, using Proposition \ref{lem2} to find the $c'_i$'s. In the circular case we use Proposition \ref{prop:one circular order} instead.
\end{proof}

Recall from the beginning of Section \ref{sec:intertwinings} that we say that $\overline V$ is definable if there is a finite set of formulas $\Phi$ such that every cut of $V$ can be defined by an instance of some $\phi(x;y)\in \Phi$.

\begin{cor}\label{cor:trivial geometry}
Assume that $M$ has rank 1 and is NIP. Let $(V,\leq)$ be a minimal 0-definable linear order. Let $\overline V(a)$ denote $\acl^{eq}(a)\cap \overline V=\dcl^{eq}(a)\cap \overline V$\footnote{Algebraic closure and definable closure are equal on a linear order.}. Then:

\begin{enumerate}
\item for any $a_0,\ldots, a_{n-1}\in M$, we have $\overline V(a_0,\ldots,a_{n-1})=\bigcup_{i<n} \overline V(a_i)$;

\item $\overline V$ is definable, minimal and has rank 1.

\end{enumerate}
\end{cor}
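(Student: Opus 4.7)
The plan is to prove (1) by induction on $n$ using Proposition \ref{prop:no binary functions}, then deduce (2) from (1) via $\omega$-categoricity and the density of $V$ in $\overline V$. For (1), the case $n = 1$ is immediate. For the inductive step let $c \in \overline V \cap \acl(a_0,\ldots,a_{n-1})$; since $\overline V$ is linearly ordered, the finitely many $\bar a$-conjugates of $c$ can be enumerated in increasing order, so $c \in \dcl(\bar a)$ and there is an $\emptyset$-definable function $f$ on the locus of $\tp(\bar a)$ with $c = f(a_0,(a_1,\ldots,a_{n-1}))$. Since $\rk(M) = 1$, either $\rk(a_0) = 1$, or $a_0 \in \acl(\emptyset)$ (and then $a_0 \ind (a_1,\ldots,a_{n-1})$ trivially); in either case the hypotheses of Proposition \ref{prop:no binary functions} are met with $a = a_0$ and $b = (a_1,\ldots,a_{n-1})$, yielding $c \in \overline V(a_0) \cup \overline V(a_1,\ldots,a_{n-1})$. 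Applying the induction hypothesis to the second summand completes the step.

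For the definability claim in (2), Lemma \ref{lem:finite dcl} gives $|\overline V(a)| < \infty$ for each $a \in M$, and by $\omega$-categoricity there are only finitely many $1$-types $p_1,\ldots,p_m$ over $\emptyset$; for each $p_i$ let $k_i = |\overline V(a)|$ for $a \models p_i$. Since $\overline V$ is linearly ordered, the map $f_{i,j}$ sending $a \models p_i$ to the $j$-th smallest element of $\overline V(a)$ is $\emptyset$-definable for each $j \leq k_i$. Every element of $\overline V$ is definable over some finite tuple of $M$, hence by (1) over a single element of $M$; therefore $\overline V = \bigcup_{i,j} f_{i,j}(p_i(M))$ is a finite union of images of $\emptyset$-definable functions, so $\overline V$ is $\emptyset$-definable in $M^{eq}$.

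Once $\overline V$ is $\emptyset$-definable, minimality follows from Lemma \ref{lem:preservation under dense subsets}: $V$ is a $\emptyset$-definable dense subset of $\overline V$ and $V$ is minimal, so $\overline V$ inherits topological rank $1$ and weak transitivity. For $\rk(\overline V) = 1$, pick any $c \in \overline V$; by (1), $c \in \acl(a)$ for some singleton $a \in M$, so $\rk(c) \leq \rk(a) \leq \rk(M) = 1$ by Proposition \ref{prop:basic rank}. Since $\rk(\overline V) = \max\{\rk(c) : c \in \overline V\}$ by point (4) of the same proposition, we conclude $\rk(\overline V) \leq 1$, with equality since $\overline V$ is infinite. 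The main non-routine step is (1); given this, the three assertions of (2) follow essentially formally, the only subtlety being the definability of $\overline V$, which requires part (1) in order to reduce from arbitrary tuples to singletons and thereby exploit the uniform bound on $|\overline V(a)|$ supplied by $\omega$-categoricity.
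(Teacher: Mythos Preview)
Your proof is correct and follows essentially the same approach as the paper: part (1) is proved by induction on $n$ via Proposition \ref{prop:no binary functions}, and part (2) is deduced from (1) using Lemma \ref{lem:finite dcl} and $\omega$-categoricity to reduce $\overline V$ to finitely many definable pieces coming from singletons. You fill in more detail than the paper does---in particular the explicit enumeration functions $f_{i,j}$ for definability, the appeal to Lemma \ref{lem:preservation under dense subsets} for minimality, and the verification of the rank hypothesis in Proposition \ref{prop:no binary functions}---but the skeleton is identical.
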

\begin{proof}
(1) Let $c\in \overline V(a_0,\ldots,a_{n-1})$ and set $p=\tp(a_0,a_1\ldots a_{n-1})$. Then by definition of $ \overline V(a_0,\ldots,a_{n-1})$, there is some 0-definable function $f$ defined on realizations of $p$, such that $f(a_0,a_1\ldots a_{n-1})=c$. By Proposition \ref{prop:no binary functions}, $c\in \dcl^{eq}(a_0) \cup \dcl^{eq}(a_1\ldots a_{n-1})$. We conclude by induction on $n$.

  (2) Let $a$ be a singleton, then $\overline V(a)$ is finite, by Lemma \ref{lem:finite dcl}. Hence there is a finite set of formulas $\Phi$ such that every element $c$ of $\overline V(a)$ is definable by $\phi(x;a)$ for some $\phi(x;y)\in \Phi$. Since $M$ has rank 1 and $c$ is definable over $a$, $\rk(c)\leq \rk(a) \leq 1$. By (1), every element of $\overline V$ is of this form, hence $\overline V$ is definable and has rank 1. Finally, $\overline V$ is minimal since $V\subseteq \overline V$ is minimal and dense inside it (Lemma \ref{lem:preservation under dense subsets} (3)).
\end{proof}

In what follows, we will consider a definable family $(V_a,\leq_a)$, $a\in D$ of linear orders, by which we mean that $D$ is a definable set and there are formulas $\phi(x;t)$ and $\psi(x,y;t)$ such that for any $a\in D$, the formula $\psi(x,y;a)$ defines a linear order denoted $\leq_a$ on $V_a := \phi(M;a)$.

Recall from Section \ref{sec:weakly minimal} that $I \wmc V$ means that $I$ is a weakly minimal definable convex subset of $V$.

\begin{prop}\label{prop:unique continuation of orders} Let $D$ be a 0-definable set and let $(V_u,\leq_u)$, $u\in D$, be a definable family of linearly ordered sets, with $V_u$ minimal over $u$. Assume that $D$ is ranked and let $a,b\in D$. Let $I\wmc V_a$ be intertwined with some $J\wmc V_b$. Let $h:I \to \overline{J}$ be the intertwining map and take $t\in I$. Then the following two statements hold:
\begin{itemize}
\item Either $\{x\in V_a : x<_{a} t\}$ is intertwined with a convex subset of $V_b$ or $\{x\in V_b : x<_b h(t)\}$ is intertwined with a convex subset of $V_a$.

\item Either $\{x\in V_a : x>_{a} t\}$ is intertwined with a convex subset of $V_b$ or $\{x\in V_b : x>_b h(t)\}$ is intertwined with a convex subset of $V_a$.
\end{itemize}
\end{prop}
\begin{proof}
By reversing the orders, it is enough to prove the second statement. We will drop the indices in the linear orders $\leq_a,\leq_b,...$ when they are implied by the context.

For any $c,d\in D$ and $u\in \overline{V_c}$, consider the definable set $C[c,d,u]\subseteq V_{c}$ defined as:
\[ v\in C[c,d,u] \text{ if } \{x\in V_c : u < x < v\}\text{ is intertwined with some }W\wmc V_d.\]

If $C[c,d,u]$ is non-empty then it is an initial segment of $\{x\in V_c:u<x\}$. In that case define \[f_{c,u}(d) = \sup C[c,d,u] \in \overline{V_c}\cup\{+\infty\}.\] If $C[c,d,u]$ is empty, then $f_{c,u}(d)$ is undefined.

\usetikzlibrary{backgrounds}
\vspace*{5pt}
\begin{tikzpicture}[scale = 1]
	\draw[dotted] (1,1) -- (1.7,1) ;
	\draw (1.7,1)  -- (10,1) node [above] {$V_c$} ;
	\draw  (5.5,1.1) node [above] {$f_{c,u}(d)$} -- (5.5,0.9);
	\draw (1.7,1.1) node [above]{$u$} -- (1.7,0.9); 
	
	\draw[dotted] (1,0.8) -- (1.7,0.8) ;
	\draw (1.7,0.8) -- (5.5,0.8) -- (10,0) node [above] {$V_d$};

\end{tikzpicture}

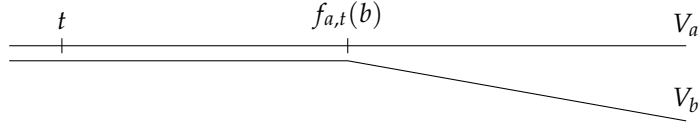
\captionof{figure}{Definition of $f_{c,u}(d)$. Intertwined intervals are represented as parallel lines.}

\smallskip
\underline{Claim A}: If $f_{c,u}(d)$ is defined and different from $+\infty$, then $f_{c,u}(d)\in \dcl^{eq}(cd)$.

\emph{Proof}: Working over $cd$, the two linear orders $V_c$ and $V_d$ are weakly minimal and can be each written as a disjoint union of $cd$-definable points and $cd$-definable minimal convex subsets (as explained at the beginning of Section \ref{sec:weakly minimal}). Any two of those $cd$-definable minimal orders are either intertwined or independent. Hence $f_{c,u}(d)$, if defined and different from $+\infty$, is the supremum of one of those minimal orders. Therefore $f_{c,u}(d)\in \dcl^{eq}(cd)$.

\smallskip
Note also:
\[\odot \qquad \text{ If }u<u'<f_{c,u}(d)\text{, then }f_{c,u}(d)=f_{c,u'}(d).\]

\underline{Claim B}: Let $c,d,e\in D$ and $u\in V_c$ such that $C[c,d,u]\subseteq V_c$ is non-empty and bounded above. Let $I \wmc V_d$ be intertwined with $C[c,d,u]$. Take $v,w\in V_d$ so that \[v< \inf I < \sup I < w.\]
Assume that $\{x\in V_d : v < x< w\}$ is intertwined with a convex subset of $V_e$. Then $C[c,e,u] = C[c,d,u]$.

\smallskip
\emph{Proof}: Write $C = C[c,d,u]$, hence $C$ is intertwined with $I \wmc V_d$. By assumption $C[d,e,v] \supset I$. Let $J\wmc V_e$ be intertwined with $C[d,e,v]$. By transitivity of intertwining, $C$ is also intertwined with a convex subset $J_0$ of $J$. Hence $C\subseteq C[c,e,u]$ by definition of $C[c,e,u]$. Assume that equality does not hold. Then for some $u' > f_{c,u}(d)$ in $V_c$ the subset $C' := \{x\in V_c : u <x <u'\}$ is intertwined with a convex subset $J'$ of $V_e$. We have $J_0 \subseteq J'$ and $J'$ extends $J_0$ to the right. Taking a smaller $u'$ if necessary, we may assume that $J' \subseteq J$. But then by transitivity of intertwining, $C'$ is intertwined with a convex subset of $V_d$. Therefore $f_{c,u}(d) \geq u'$; contradiction.

\vspace*{5pt}
\begin{tikzpicture}[scale = 1]
	\draw[dotted] (1,4) -- (2.3,4);
	\draw (2.3,4)  -- (10,4) node [above] {$V_{c}$} ;
	\draw[dotted] (1,3.8) -- (1.8,3.8);
	\draw (1.8,3.8) -- (4,3.8) -- (10,2.3) node [above] {$V_{d}$} ;
	\draw[dotted] (1,3.6) -- (1.8,3.6);
	\draw (1.8,3.6) -- (4,3.6) -- (6,3.1) -- (10, 1) node [above] {$V_{e}$} ;

	\draw (4,4.1) node [above] {$f_{c,u}(d)$} -- (4,3.9);
	
	\draw (2.3,4.1) node [above]{$u$} -- (2.3,3.9); 
	\draw (1.8,3.9) node [above]{$v$} -- (1.8,3.7); 
	\draw (6, 3.2) -- (6, 3.4) node [above] {$w$};
\end{tikzpicture}
\captionof{figure}{Claim B.}

\medskip
Coming back to the original $a,b$ and $t$ given by the statement of the proposition, the set $C[a,b,t]$ contains all elements of $I$ that are greater than $t$, hence it is non-empty. If $f_{a,t}(b) = +\infty$, then $\{x\in V_a : x>t\}$ is intertwined with a convex subset of $V_b$ and we are done. Assume this is not the case, so $f_{a,t}(b)\in \overline{V_a}$. Similarly, if $f_{b,h(t)}(a)=+\infty$, then we are done, so we may assume that $f_{b,h(t)}(a) \in \overline{V_b}$.

By Claim A, $f_{a,t}(b)$ is equal to one of the finitely many $ab$-definable elements of $\overline{V_a}$. List those elements as $s_1 < \ldots < s_n$ and say that $f_{a,t}(b)=s_k$. It follows that the convex subset $s_{k-1}<x<s_k$ of $V_a$ is intertwined with a convex subset of $V_b$ (if $k=1$, set $s_{k-1}=-\infty$). By minimality of $V_a$ over $a$ and Proposition \ref{lem2} (along with Remark \ref{rem:lem2 in completion}), for any $u\in V_a$, $u > t$, we can find $b' \equiv_{a} b$ such that the $k$-th $ab'$-definable element of $\overline{V_a}$ is greater than $u$ and, if $k>1$, the $k-1$-th is smaller than $t$. Since $\tp(b'/a)=\tp(b/a)$, the convex subset of $V_a$ between $t$ and $u$ is intertwined with a convex subset of $V_{b'}$. Hence $f_{a,t}(b') > u$. This shows that the image of $f_{a,t}$ is dense in the final segment $x>t$ of $V_a$.

We now construct inductively $(b_i:i<\omega)$ in $D$ and points $(u_i:i<\omega)$, $u_i\in V_{b_i}$. We start by setting $(b_{-1},u_{-1})=(a,t)$. Set also $b_0 = b$ and take $u_0\in J$ to be smaller than $h(t)$. Note that $f_{b_0,u_0}(b_{-1}) > b_0$ since $b_0$ lies in $J$ which is intertwined with an interval of $V_{b_{-1}}$. We will use the notation $V_k := V_{b_k}$.

Having constructed $(b_k,u_k)$, let $(u'_k,w_k)\in V_{k}\times \overline{V_{k}}$ be such that \[ (1) \qquad u'_k < u_k < f_{b_k,u_k}(b_{k-1}) < w_k\] and \[ \tp(u'_k,w_k,b_k) = \tp(u_k,f_{b_k,u_k}(b_{k-1}),b_k).\]
This is possible by Lemma \ref{lem2}. Next, let $b_{k+1}$ be such that \[ \tp(u'_k,w_k,b_k, b_{k+1}) = \tp(u_k,f_{b_k,u_k}(b_{k-1}),b_k, b_{k-1}).\]

We then have \[ f_{b_k,u'_{k}}(b_{k+1}) = w_k,\]

and then by (1) and $\odot$, \[ f_{b_k,u_k}(b_{k+1}) = w_k > f_{b_k,u_k}(b_{k-1}).\]

The interval $\{x\in V_{k} : u'_k < x < w_k\}$ is intertwined with a convex subset $J_{k+1}$ of $V_{k+1}$ via a function $h_k$. Pick $u_{k+1} \in J_{k+1}$ smaller than $h(u_k)$.

\vspace*{5pt}
\begin{tikzpicture}[scale = 1]
	\draw[dotted] (1,4) -- (2.9,4);
	\draw (2.9,4)  -- (10,4) node [above] {$V_{b_0}$} ;
	\draw[dotted] (1,3.8) -- (2.5,3.8);
	\draw (2.5,3.8) -- (4,3.8) -- (10,2.3) node [above] {$V_{b_1}$} ;
	\draw[dotted] (1,3.6) -- (2.1,3.6);
	\draw (2.1,3.6) -- (4,3.6) -- (6,3.1) -- (10, 1) node [above] {$V_{b_2}$} ;
	\draw[dotted] (1,3.4) -- (1.7,3.4);
	\draw (1.7,3.4) -- (4,3.4) -- (6,2.9) -- (8,1.85) -- (10, 0) node [above] {$V_{b_3}$} ;

	\draw (4,4.1) node [above] {$w_0$} -- (4,3.9);
	\draw (6,3.4) node [above] {$w_1$} -- (6,3.2);
	\draw (8.1,2.1) node [above] {$w_2$} -- (8.1,1.9);
	
	\draw (2.9,4.1) node [above]{$u_0$} -- (2.9,3.9); 
	\draw (2.5,3.9) node [above]{$u_1$} -- (2.5,3.7); 
	\draw (2.1,3.7) node [above]{$u_2$} -- (2.1,3.5); 
	\draw (1.7,3.5) node [above]{$u_3$} -- (1.7,3.3); 

\end{tikzpicture}
\captionof{figure}{Construction of $(b_k,u_k)$.}

\smallskip
\underline{Claim C}: For $l<k+1$, we have $f_{b_l,u_l}(b_{k+1}) = w_l$.

\smallskip
\emph{Proof}: We show this by decreasing induction on $l$. We already know this for $l=k$. Assume we know it for $l+1$. By construction, $C[b_l,b_{l+1},u_l]$ has supremum $w_l$ and is intertwined with some $J$ in $V_{l+1}$ such that $u_{l+1} < \inf J < \sup J < w_{l+1}$. Furthermore, by induction hypothesis the interval $\{x\in V_{l+1} : u_{l+1} < x < w_{l+1}\}$ is intertwined with a convex subset of $V_{k+1}$. Hence by Claim B, $C[b_l,b_{k+1}, u_l] = C[b_l,b_{l+1},u_l]$ hence $f_{b_l,u_l}(b_{k+1}) = f_{b_l,u_l}(b_{l+1}) = w_l$.

\medskip
For $c\in D$ and $u\in V_c$, define the equivalence relation $E_{c,u}(x,y)$ on $D$ by $f_{c,u}(x)=f_{c,u}(y)$. From Claim C, we deduce that for $k,k'>l$, $b_k$ and $b_{k'}$ are $E_{b_l,u_l}$-equivalent. In particular the $E_{b_l,u_l}$-class of $b_{l+1}$ is infinite. At each stage $l$ of the construction, we have infinitely many choices for $w_l$ (since the only condition on it is that it is larger than $f_{b_k,u_k}(b_{k-1})$), hence for the $E_{b_l,u_l}$-class of $b_{l+1}$. We can make a choice that is not algebraic over all the elements considered so far. Let $D_1$ be the $E_{b_0,u_0}$-class of $b_1$. As that class is not algebraic over $b_0u_0$, we have $\rk(D_1)<\rk(D)$ by Lemma \ref{lem:equivalence relation rank}. Next, $D_1$ is split into infinitely many $E_{b_1,u_1}$-classes. The $E_{b_1,u_1}$-class of $b_2$, say $D_2$, is not algebraic over $b_0u_0b_1u_1$, hence $\rk(D_2)<\rk(D_1)$. Continuing in this way, we obtain an infinite sequence of definable sets of decreasing ranks, which is absurd.
\end{proof}

\subsection{Gluing definable orders}\label{sec:gluing}

Let $(V_a,\leq_a)$, $a\in D$, be a $0$-definable family of linearly ordered sets, with $V_a$ minimal over $a$. Assume that $D$ is ranked. Our goal in this section is to glue the orders $V_a$ together as much as possible along definable intertwinings between subintervals so as to construct a 0-definable family of pairwise independent orders.

Here is a useful example the reader might want to keep in mind while reading this section.

\begin{ex}\label{ex:gluing}
Let the structure $M$ be equipped with a linear order $\leq$, an equivalence relation $E_0$ and no additional structure (a similar construction would work for a circular order). There are infinitely many $E_0$-classes and each one is dense. Let $D$ be the set of pairs $(a_0,a_1)\in M^2$ such that $a_0<a_1$. For $\bar a = (a_0,a_1)\in D$, define \[V_{\bar a}=\{x\in M \colon a_0 <x <a_1 \wedge x\rel{E_0}a_0\}.\] The set $V_{\bar a}$ equipped with the induced order is minimal over $\bar a$. It is a dense subset of a convex subset of the original order $(M,\leq)$. In this situation, the construction presented in this section will essentially reconstruct the order $(M,\leq)$ from its pieces $(V_{\bar a})_{\bar a\in D}$.

For a slightly more complicated example which shows the need for the equivalence relation $E$ in the theorem below, let $N$ be a structure equipped with an equivalence relation $E$ and such that each $E$-class is a copy of the structure $M$ above and there are no extra relations between the classes. Here the family $(V_{a})_{a\in D}$ given by the theorem is the union of the families constructed as above in each class. The theorem will recover $E$ and the linear order on each $E$-class.
\end{ex}

\begin{thm}\label{th:gluing}
	Let $D$ be a 0-definable subset of $M^{eq}$ which is ranked and let $(V_a,\leq_a)_{a\in D}$ be a definable family of linearly ordered sets, with $V_a$ minimal over $a$. Then there is a 0-definable set $W$ in $M^{eq}$ and a 0-definable equivalence relation $E$ on $W$ such that:
	
	\begin{itemize}
		\item for $e\in W/E$, let $W[e]\subseteq W$ be the $E$-class corresponding to $e$, then $W[e]$ admits either a linear or circular $e$-definable order;
		\item for $e\in W/E$, $W[e]$ equipped with that order is minimal over $e$;
		\item for any $e\neq e' \in W/E$, the orders $W[e]$ and $W[e']$ are either independent or in definable order-reversing bijection;
		\item for any $a\in D$, there is (a necessarily unique) $e\in W/E$ such that $V_a$ admits an order-preserving injection into $W[e]$.
	\end{itemize}
	\end{thm}


\smallskip
Note that the conclusion becomes stronger if we replace the given family $(V_a,\leq_a)_{a\in D}$ by a larger family $(V'_a,\leq_a)_{a\in D'}$ for some $D' \supseteq D$, with $V'_a = V_a$ for $a\in D$. Having noticed this, we start by increasing the family so that the following property holds:

\begin{itemize}

\item[$(\triangle)$] For any $a\in D$, there is $\tilde a\in D$ such that $V_a$ is in definable order-reversing bijection with $V_{\tilde a}$.
\end{itemize}


To achieve this, we replace $D$ by $D' = D \times \{*_1,*_2\}$, where $*_1, *_2$ are two elements of $\dcl^{eq}(\emptyset)$\footnote{Whereas $\dcl(\emptyset)$ can very well be empty, $\dcl^{eq}(\emptyset)$ is always infinite as it contains the quotient of each $M^k$ by the equivalence relation with a unique class.
} and let $V_{(a,*_1)}$ be $V_a$ and $V_{(a,*_2)}$ be the reverse of $V_a$ (which is also minimal over $a$, hence over $(a,*_2)$).

\medskip
The next stage of the construction involves \emph{thickening} the $V_a$'s by replacing each one by a large enough definable subset of $\overline{V_a}$, which will be called $W_a$. For instance in Example \ref{ex:gluing}, $W_{\bar a}$ would be the convex hull of $V_{\bar a}$ inside $M$.

To this end, we first define an equivalence relation $\sim$ on pairs $(a,t)$, $a\in D, t\in {V_a}$. The intuition is that if $a,b\in D$, $t\in V_a$ and $u\in V_b$, then $(a,t)$ and $(b,u)$ are equivalent if they should be glued together. In Example \ref{ex:gluing}, we will have $(\bar a,t)\sim (\bar b,u)$ if and only if $t=u$.

Given $a,b\in D$, $t\in V_a$ and $u\in V_b$, we say that $(a,t)\sim (b,u)$ if there exist parameter-definable subsets $t\in U_a\wmc V_a$ and $u\in U_b \wmc V_b$ which are intertwined and such that the unique intertwining $f:\overline{U_a} \to \overline{U_b}$ sends $t$ to $u$.

\begin{lemme}\label{lem:beginning of proof}
The relation $\sim$ is an equivalence relation on the set of pairs $(a,t)$, $a\in D, t\in V_a$.
\end{lemme}
\begin{proof}
Reflexivity and symmetry are clear. Let us show transitivity. Let $a,b,c\in D$, $t\in V_a$, $u\in V_b$, $w\in V_c$ and assume $(a,t)\sim (b,u)$ and $(b,u)\sim (c,w)$. Let $A$ be a set of parameters large enough to define all the relevant convex subsets of $V_a$, $V_b$ and $V_c$. Working over $A$, there are $U_a\wmc V_a$, $U_b \wmc V_b$ containing $t$ and $u$ respectively which are intertwined by $f:\overline{U_a} \to \overline{U_b}$, with $f(t)=u$. Similarly, there are $U'_b \wmc V_b$ and $U_c \wmc V_c$ intertwined by $g: \overline{U'_b}\to \overline{U_c}$ with $g(u)=w$. Let $U''_b = U_b \cap U'_b$. Then $U''_b$ is an $A$-definable convex subset of $V_b$ which is weakly minimal and contains $u$. Let $U''_c$ be the convex subset of $V_c$ intertwined with $U''_b$ via $g$, that is:
$$U''_c = g(\overline{U''_b})\cap V_c.$$

And define in the same way:
$$U''_a = f^{-1}(\overline {U''_b})\cap V_a.$$

Then $U''_a$ and $U''_c$ are weakly minimal and both are intertwined with $U''_b$. By transitivity of intertwining, $U''_a$ and $U''_c$ are intertwined: in fact the intertwining map is $(g|_{U''_b})\circ (f|_{U''_a})$. That maps sends $t$ to $w$, hence we have $(a,t)\sim (c,w)$.
\end{proof}

Note that for $t,u\in V_a$ distinct, we have $(a,t)\nsim (a,u)$ since by Lemma \ref{lem:no self-intertwining} there can be no intertwining between two disjoint convex subsets of $V_a$.

Let $[a,t]_{\sim}$ denote the $\sim$-class of $(a,t)$ and let $W$ be the set of $\sim$-classes. For $a\in D$, define $W_a$ as: \begin{align*}W_a &= \{[b,t]_{\sim}\colon \text{some weakly minimal convex subset of }V_{b} \text{ containing }t\\ & \qquad \text{ is intertwined with a weakly minimal convex subset of }V_a\}.\end{align*}

In this definition, the weakly minimal convex subsets of $V_a$ and $V_b$ are allowed to be defined over any set of parameters. In fact, if they exist, they can always be taken to be defined over $ab$, but we will not use that fact.

\medskip
Some observations:
\begin{itemize}
\item[$\boxplus_0$] $W_a$ is well defined: the definition does not depend on the representative of $[b,t]_{\sim}$.
\end{itemize}

To see this, assume that $(b,t)\sim (c,u)$ and that some weakly minimal convex subset $t\in U_b \subseteq V_b$ is intertwined with some $U_a\wmc V_a$. By definition of $\sim$, there are weakly minimal subsets $t\in U'_b \wmc V_b$, $u\in U'_c \wmc V_c$ which are intertwined via $f:\overline{U'_b}\to \overline{U'_c}$ sending $t$ to $u$. Define $U''_b = U_b \cap U'_b$ and let $U''_c$ be the convex subset of $U'_c$ corresponding to $U''_b$, that is $U''_c = f(\overline{U''_b})\cap V_c$. Then $U''_c$ is weakly minimal, contains $u$ and is intertwined with a convex subset of $V_a$.

Note that $W_a$ is invariant under automorphisms fixing $a$ and hence since $M$ is $\omega$-categorical, it is definable over $a$.

\begin{itemize}
\item[$\boxplus_1$] For $a\in D$ and $t\in V_a$, $[a,t]_{\sim} \in W_a$.
\end{itemize}
Indeed, in the definition of $W_a$ we can take $V_a$ itself as weakly minimal convex subset of $V_a$. Note that $W_a$ is in general larger than $\{[a,t]_{\sim} : t\in V_a\}$ since we are not assuming in the definition that the intertwining sends $t$ to a point in $V_a$ (it could be sent to a point in $\overline{V_a}\setminus V_a$).

\begin{itemize}
\item[$\boxplus_2$] There is an $a$-definable injective map $\iota_a \colon W_a \to \overline{V_a}$.
\end{itemize} 

To construct this map, let $[b,t]_{\sim} = [c,u]_{\sim} \in W_a$. By definition of $\sim$, there are $t\in U_b \wmc V_b$ and $u\in U_c \wmc V_c$ which are intertwined by some $f:\overline{U_b} \to \overline{U_c}$ sending $t$ to $u$. By definition of $W_a$ and $\boxplus_0$, up to replacing $U_b$ and $U_c$ by smaller neighborhoods if necessary, $U_b$ and $U_c$ are each intertwined with convex subsets of $V_a$, via maps $g:\overline{U_b} \to \overline{T_b}$ and $h:\overline{U_c} \to \overline{T_c}$, where $T_b$ and $T_c$ are weakly minimal convex subsets of $V_a$. Then $T_b$ and $T_c$ are intertwined by $h\circ f \circ g^{-1}$. By Corollary \ref{cor:no self-intertwining}, we must have $T_b = T_c$ and $h\circ f \circ g^{-1}$ is the identity map. Since $f$ maps $t$ to $u$, $g(t) = h(u)$. We then define $\iota_a ([b,t]_{\sim}) = g(t)\in \overline{V_a}$. We have proved that $\iota_a$ is well defined. It is $a$-definable since it is invariant under automorphisms fixing $a$ and we are working in an $\omega$-categorical structure.

To see that $\iota_a$ is injective, assume that $\iota_a([b,t]_{\sim})= \iota_a([d,v]_{\sim})$. Then there are $t\in U_b \wmc V_b$ and $v\in U_d \wmc V_d$ intertwined with convex subsets of $V_a$ via maps $f: \overline{U_b} \to \overline{V_a}$ and $g:\overline{U_d} \to \overline{V_a}$ with $f(t)=g(v)$. Restricting $U_b$ and $U_d$, we may assume that $f$ and $g$ have the same image $\overline{U_a}$, where $U_a\wmc V_a$. But then $g^{-1}\circ f$ intertwines $U_b$ and $U_d$ and sends $t$ to $v$. Hence $[b,t]_{\sim} = [d,v]_{\sim}$.

\medskip
It follows from $\boxplus_2$ and $\boxplus_1$ that $W_a$ is in definable bijection with an $a$-definable dense subset of $\overline {V_a}$. We equip $W_a$ with the order $\leq_a$ inherited from this bijection. The linear order $(W_a,\leq_a)$ is then minimal over $a$ (by Lemma \ref{lem:preservation under dense subsets} applied to a suitable definable subset of $\overline{V_a}$).

\medskip
Recall that $W$ is the set of $\sim$ classes and that each $W_a$ is a subset of $W$.

\begin{itemize}
\item[$\boxplus_3$] Let $a,b\in D$ and let $U_a \wmc W_a$ and $U_b \wmc W_b$. If $U_a$ and $U_b$ are intertwined, then $U_a = U_b$ and the orders induced on that set by $W_a$ and $W_b$ coincide.
\end{itemize} 

Let $[c,t]_{\sim} \in U_a$. Then by definition of $W_a$, there is $t\in U_c \wmc V_c$ such that $U_c$ is intertwined with a convex subset $U'_a\subseteq \overline{V_a}$. By construction, $\iota_a$ sends $[c,t]_{\sim} \in U_a$ to a point in $U'_a$. Hence $U_c$ is intertwined with a convex subset of $U_a$. By transitivity of intertwining, $U_c$ is intertwined with a convex subset of $U_b$ (hence with a convex subset of $V_b$). So $[c,t]_{\sim} \in U_b$ by construction. Let $h$ be the intertwining map from $U_a$ to $\overline{U_b}$. There are convex neighborhoods of $[c,t]_{\sim}$ and $h([c,t]_{\sim})$ in $\overline{U_b}$ that are intertwined with neighborhoods of $t$ in $U_c$ (by definition of $[c,t]_{\sim}$). Hence some convex neighborhoods of  $[c,t]_{\sim}$ and $h([c,t]_{\sim})$ in $\overline{U_b}$ are intertwined. By Corollary \ref{cor:no self-intertwining} applied to $W_a$, that intertwining must be the identity and thus $h$ sends $[c,t]_{\sim}$ to itself.

By symmetry of the roles of $a$ and $b$, this shows that $U_a = U_b$ and the intertwining map is the identity.


\medskip
We know that for every $x\in W$, there is a (non-unique) subset $x\in W_a \subseteq W$ which is linearly ordered. We can think of $W$ as a kind of ``{1-dimensional manifold}" and $W_a$ as a neighborhood of $x$ ``{homeomorphic}" to a line. We have to check now that the $W_a$'s do indeed function like neighborhoods, in particular, that their intersections are well behaved and that the orders coincide on them. This will be achieved by Lemma \ref{lem:good position}. We will then be able to mimic the classical proof that a 1-dimension manifold is a disjoint union of lines and circles.

\begin{defi}
Let $V_0,V_1 \subseteq W$ be two parameter-definable subsets equipped with parameter-definable linear orders $\leq_0$ and $\leq_1$ respectively. We say that $(V_0,\leq_0)$ and $(V_1,\leq_1)$ are \emph{in good position} (or that the pair $(V_0,V_1)$ is in good position) if the following two conditions hold:
\begin{itemize}
\item for any $t\in V_0 \cap V_1$, there is $\eta\in \{0,1\}$ such that $\{x\in V_\eta : x\leq_{\eta} t\}$ is a convex subset of $V_{1-\eta}$ and the two orders $\leq_0$ and $\leq_{1}$ coincide on it.

\item for any $t\in V_0 \cap V_1$, there is $\eta\in \{0,1\}$ such that $\{x\in V_\eta : x\geq_{\eta} t\}$ is a convex subset of $V_{1-\eta}$ and the two orders $\leq_0$ and $\leq_{1}$ coincide on it.
\end{itemize}
\end{defi}

\begin{lemme}\label{good position cases}
Two parameter-definable $V_0,V_1\subseteq W$ equipped with linear orders as in the previous definition are in good position if and only if one of the following holds:

\begin{enumerate}
	\item $V_0\cap V_1=\emptyset$;
	\item for some $\eta\in \{0,1\}$, $V_\eta$ is a convex subset of $V_{1-\eta}$ and the two orders coincide on $V_\eta$;
	\item for some $\eta\in \{0,1\}$, $V_0 \cap V_1$ is an initial segment of $V_\eta$ and a final segment of $V_{1-\eta}$ and the two orders coincide on it;
	\item $V_0\cap V_1$ can be written as a disjoint union $I \sqcup J$, where $I$ is an initial segment of $V_0$ and a final segment of $V_1$, $J$ is a final segment of $V_0$ and an initial segment of $V_1$ and the two orders coincide on each of $I$ and $J$.
\end{enumerate}
\end{lemme}
\begin{proof}
It is clear that any one of those conditions imply that $V_0$ and $V_1$ are in good position.

Assume $V_0$ and $V_1$ are in good position. If $V_0 \cap V_1$ is empty, then item (1) holds. Assume it is not and let $t\in V_0\cap V_1$. Without loss of generality, the initial segment $\{x\in V_0: x\leq t\}$ is a convex subset of $V_1$. Let $I\subseteq V_0$ be the maximal initial segment of $V_0$ which is a convex subset of $V_1$ and on which the two orders $\leq_0$ and $\leq_1$ coincide. This is a definable subset of $V_0$ which contains $t$. If $I=V_0$, the item (2) holds and we are done. If $\{x\in V_0 : x>_0 t\}$ is a convex subset of $V_1$ with the same induced order, then the whole of $V_0$ is such and $I=V_0$. This has been ruled out so it must be that $\{x\in V_1 : x>_1 t\}$ is a convex subset of $V_0$ with the same induced order. This set is therefore equal to $\{x\in I : x>_0 t\}$ by definition of $I$, and we have that $I$ is an initial segment of $V_0$ and a final segment of $V_1$. If $I=V_0 \cap V_1$, then item (3) holds and we are done. If this is not the case, we show that (4) holds. To this end, take $s\in V_0 \cap V_1 \setminus I$. Then $\{x \in V_1 : x\geq_1 s\}$ contains $I$ as a proper subset, hence cannot be a convex subset of $V_0$ with the same induced order since $I$ is an initial subset of $V_0$. Therefore it must be that $\{x\in V_0 : x\geq_0 s\}$ is a convex subset of $V_1$ with the same induced order. Define $J$ as the maximal final segment of $V_0$ which is a convex subset of $V_1$ and on which the two orders coincide. By the same reasoning as above, $J$ is an initial segment of $V_1$.

It remains to show that $V_0\cap V_1 = I \cup J$. If not, let $u\in V_0\cap V_1 \setminus (I\cup J)$. Then by the same reasoning as for $s$, we cannot have that $\{x \in V_1 : x\geq_1 u\}$ is a convex subset of $V_0$ (since $I$ is an initial segment of $V_0$ which is properly contained in it), but by the same argument with $J$ instead of $I$, $\{x \in V_0 : x\geq_0 u\}$ cannot be a convex subset of $V_1$. This contradicts the definition of good position.
\end{proof}

Note that cases (2) and (3) are not mutually exclusive: if for instance $V_0$ is a proper initial segment of $V_1$, then they both hold.

If item (3) above holds, we will say that $V_\eta$ is a \emph{simple continuation} of $V_{1-\eta}$. In this case there is a natural definable linear order on $V_0\cup V_1$ which coincides with $\leq_0$ and $\leq_1$ on $V_0$ and $V_1$ respectively and for which every point of $V_\eta\setminus V_{1-\eta}$ is above every point of $V_{1-\eta}$.

If item (4) holds, we will say that $V_0$ and $V_1$ are in \emph{circular position}. Note that in this case, there is a natural definable circular order $C$ on $V_0\cup V_1$ for which $V_0$ and $V_1$ are convex subsets and $\leq_0$ and $\leq_1$ are the linear orders induced by $C$ (this completely characterizes $C$).

At this point we strongly encourage readers to take a moment to convince themselves that from a family of linear orders pairwise in good position, one can glue those orders together and obtain a family of pairwise disjoint linear and circular orders. To achieve this, start say from a linear order $V$ from the family. If some $W$ in the family intersects $V$ non-trivially, add it to it to obtain a longer linear order, or a circular order if case (4) holds. Keep going in this way adding all orders that have non-empty intersection with what you have so far. From the definition of good position, one can see that at any stage of this process, we have either a linear or a circular order (we prove this is details in what follows).

\begin{lemme}\label{lem:circular position 2}
  Let $V_0,V_1,V_2\subseteq W$ be parameter-definable and linearly ordered. Assume that any two of them are in good position and that $(V_0,V_1)$ is in circular position. Then either $V_2$ is disjoint from $V_0\cup V_1$, or it is a convex subset of $V_0\cup V_1$ when the latter is equipped with its canonical circular order.
\end{lemme}
\begin{proof}
  This is rather straightforward by going through all the cases. The reader is  encouraged to make drawings to follow the arguments. Assume that $V_2$ is not disjoint from $V_0\cup V_1$.  Write $V_0\cap V_1 = I \cup J$ as in the definition of circular position, so that $I$ is an initial segment of $V_0$ and $J$ a final segment of it.

  Consider first the case where $V_2$ and $V_0$ are in circular position. Then we can write $V_0\cup V_2 = I' \cup J'$ as in the definition of circular position, so that $I'$ is an initial segment of $V_0$ and $J'$ a final segment of it. As $I$ and $I'$ are two initial segments of $V_0$, one is included in the other. Assume that $I \subseteq I'$; the other case is similar. Write $I' = I \cup K$. It follows that $J = K \cup J'$. Then $V_0 = I \cup K \cup J'$, $V_1 = K \cup J' \cup I$ and $V_2 = J' \cup I \cup K$, where we are writing each union in the order in which the sets appear in the linear order in question. Then $V_1, V_2$ are in circular position as witnessed by $K$ and $J'\cup I$ and $V_2$ is a convex subset of the circular order $V_0 \cup V_1$ (in fact it covers it entirely). Note that in this case the three pairs $(V_0, V_1)$, $(V_1, V_2)$ and $(V_2, V_0)$ are in circular position and define the same circular order on the union.

  The case where $V_2$ and $V_1$ are in circular position is similar, exchanging the roles of $V_0$ and $V_1$ in the arguments above.

  Assume that $V_0$ is included in $V_2$ and is a convex subset of it. Then we have $I<J$ in $V_2$, since this holds in $V_0$. However, we know that $J<I$ holds in $V_1$. Given that $V_1$ and $V_2$ are in good position, this can only happen if $V_1$ and $V_2$ are in circular position, since in all other cases in the definition of good position, the intersection of the two orders is ordered in the same way in both orders. We have already covered this case.

  Assume that $V_2$ is included in $V_0$ and is a convex subset of it. If $V_2$ is included in either $I$ or $J$, then $V_2$ is a convex subset of the circular order $V_0 \cup V_1$ as required (since $I$ and $J$ are such). Otherwise, since $V_2$ is convex in $V_0$, it contains a final segment $I'$ of $I$ and an initial segment $J'$ of $J$. We then have $I' < J'$ in $V_2$, but $J' < I'$ in $V_1$. We conclude as in the previous case.

  The cases where one of $V_1$ or $V_2$ is included in the other as a convex subset are similar by exchanging the roles of $V_0$ and $V_1$ in the two paragraphs above.

  Assume that $V_2$ is a simple continuation of $V_0$. Let $J' = V_0 \cap V_2$, then $J'$ is a final segment of $V_0$ and an initial segment of $V_2$. As $J$ and $J'$ are final segments of $V_0$, one is included in the other. Assume that $J\subsetneq J'$. Write $J' = K \cup J$. Hence $K\subseteq I$. We then have $K<J$ in $V_2$, but $J<K$ in $V_1$ and we conclude as we have done twice before. If $J' = J$, then it is an initial segment of both $V_2$ and $V_1$. As $V_1$ and $V_2$ are in good position, this can only happen if one is included in the other and those cases have been covered. Assume next that $J' \subsetneq J$ and write $J = K\cup J'$. Then $K\cup J'$ is an initial segment of $V_1$ and $J'$ is an initial segment of $V_2$. It thus cannot be the case that $V_1$ is a simple continuation of $V_2$. Given that we have already covered the cases where those two sets are in circular position, or one is included in the other, we can assume that $V_2$ is a simple continuation of $V_1$. So $V_2\cap V_1$ is an end segment of $V_1$. As this intersection contains $J'$ and $J'<I$ in $V_1$, it also contains $I$. But then $I\subseteq V_0 \cap V_2$ and we have $I<J'$ in $V_0$ and $J'<I$ in $V_2$, contradiction to $V_2$ being a simple continuation of $V_0$.

  The case when $V_0$ is a simple continuation of $V_2$ is handled in the same way. With it, we have covered all cases.
\end{proof}

For further reference, we note the following which follows from the proof above.

\begin{lemme}\label{lem:circular position}
Let $V_0,V_1,V_2\subseteq W$ be parameter-definable and linearly ordered. Assume that any two of them are in good position, that $(V_0,V_1)$ is in circular position and that $V_1\subseteq V_2$, with the two orders coinciding on $V_1$. Then $V_0$ and $V_2$ are in circular position and $V_0 \cup V_1 = V_0 \cup V_2$.
\end{lemme}



\begin{lemme}\label{lem:good position}
Let $a,b\in D$. Then $W_a$ and $W_b$ are in good position.
\end{lemme}
\begin{proof}
  This follows from Proposition \ref{prop:unique continuation of orders} along with $\boxplus_3$: If $x \in W_a \cap W_b$, say $x = [c,t]_{\sim}$, then taking a small enough convex subset $t\in K \subseteq V_c$,  there are two convex subsets $x\in I \subseteq W_a$ and $x\in J \subseteq W_b$ which are each intertwined with $K$. Hence $I$ and $J$ are intertwined by transitivity of intertwining. We now apply Proposition \ref{prop:unique continuation of orders}. By $\boxplus_3$, we can replace in the conclusion ``intertwined'' by ``equal'', which precisely gives us that $W_a$ and $W_b$ are in good position.
\end{proof}

We now move to the construction of the $E$-classes given by the theorem. We start by defining inductively families $(W^k_t)_{t\in D_k}$ of linear orders obtained by gluing finitely many of the $W_a$'s together.

Start by letting $D_1 = D$ and $W^1_a = W_a$ for all $a\in D$.

Having defined $(W^k_a)_{a\in D_k}$, let  $D_{k+1} = \{ (a,b) : W^k_b $ is a simple continuation of $W^k_a\}$ and for $(a,b)\in D_{k+1}$, $W^{k+1}_{(a,b)} = W^{k}_a \cup W^k_b$. This set is equipped with its natural linear order extending those of $W^k_a$ and $W^k_b$.

\begin{lemme}\label{lem:exists reverse order}
  For any $k$ and any $a \in D_k$, there is $\tilde a\in D_k$ such that $W^k_{\tilde a}$ is in order-reversing bijection with $W^k_a$.
\end{lemme}
\begin{proof}
  We prove the result by induction on $k$. For $k=1$, take $a\in D$. Let $\tilde a$ be given by property $(\triangle)$. Then $\boxplus_3$ implies that $W_a$ and $W_{\tilde a}$ are in order-reversing bijection. Assume we know the result for $k$ and let $d=(a,b) \in D_{k+1}$. Take $\tilde a, \tilde b$ given by the induction hypothesis. Since $W^k_b$ is a simple continuation of $W^k_a$, $W^k_{\tilde a}$ is a simple continuation of $W^k_{\tilde b}$ and we can take $\tilde d = (\tilde b, \tilde a)$.
\end{proof}

\begin{lemme}\label{lem:good position induction}
For any $k$, if $(a,b), (c,d)\in D_{k+1}$, then $W^{k}_{(a,b)}$ and $W^{k}_{(c,d)}$ are in good position.
\end{lemme}
\begin{proof} We prove the result by induction on $k$. For $k=1$, it follows from Lemma \ref{lem:good position}.

  Assume that $s\in W^{k+1}_{(a,b)}\cap W^{k+1}_{(c,d)}$. We will only prove the first half of the definition of good position since the second half is proved in exactly the same way reversing the orders. Up to exchanging the roles of $(a,b)$ and $(c,d)$, one of the three following cases applies:

\begin{itemize}
\item[(a)] $s\in W^k_a \cap W^k_c$.

By induction, $W^k_a$ and $W^k_c$ are in good position. Without loss of generality, the initial segment $I$ defined by $x\leq s$ in $W^k_a$ is a convex subset of $W^k_c$. But $I$ is also the initial segment defined by $x\leq s$ in $W^{k+1}_{(a,b)}$ and hence is a convex subset of $W^{k+1}_{(c,d)}$.

\item[(b)] $s\in W^k_a \cap W^k_d$.

If the initial segment defined by $x\leq s$ in $W^k_a$ is a convex subset of $W^k_d$, we are done. Otherwise, as $W^k_a$ and $W^k_d$ are in good position, it must be that the initial segment $x\leq s$ in $W^k_d$ is a convex subset of $W^k_a$. But this initial segment contains a point $t$ in $W^k_c$. The interval $[t,s]$ is the same in $W^k_a$ and in $W^k_d$, hence also in $W^k_{(a,b)}$ and in $W^k_{(c,d)}$ and we conclude by applying Case (a) to $t$ instead of $s$.

\item[(c)] $s\in W^k_b \cap W^k_d$.

We do a similar reduction as in the previous case. Assume without loss of generality that the initial segment $x\leq s$ in $W^k_b$ is a convex subset of $W^k_d$. That subset contains a point $t$ in $W^k_a$ and the interval $[t,s]$ is the same in $W^k_{(a,b)}$ and in $W^k_{(c,d)}$. We then conclude by applying case $(b)$ to $t$ instead of $s$.
\end{itemize}
\end{proof}

Note that $W^k_a = W^{k+1}_{(a,a)}$ for any $a\in D_k$.


For $s,t\in W$, write $s\to_k t$ if there is $a\in D_k$ such that $s,t\in W^k_a$ and satisfy $s<t$ there. This relation is irreflexive. By the previous observation, if $s\to_k t$ holds, then so does $s\to_{k+1} t$. By $\omega$-categoricity, there are only finitely many types of elements of $W^2$, hence for only finitely many $k$ do there exist $s$ and $t$ such that $s\to_{k+1} t$ holds, but we do not have $s\to_{k} t$. Therefore there is $n_*<\omega$ such that for any $s,t\in W$, if $s\to_k t$ holds for some $k<\omega$, then it holds for $k=n_*$. We will write $\to$ for $\to_{n_*}$. Define also $D' = D_{n_*}$ and for $a\in D'$, $W'_a = W^{n_*}_a$.

Let \[R_+(s) = \{t\in W : s\to t\},\] \[R_- (s)=  \{t\in W : t\to s\}\] and \[R(s)=R_+(s)\cup R_- (s) \cup \{s\}.\] Note that $s\in R(t)$ if and only if $t\in R(s)$. Define also \[R^2(s) = \{t\in W: (\exists u\in W) u\in R(s) \wedge t\in R(u)\}\] and note that $R^2(s) \subseteq R(s)$ as witnessed by taking $u=s$.

Say that $s\in W$ is of \emph{circular type} if there exists $a,b \in D'$ such that $s\in W'_a$ and such that $W'_a$ and $W'_b$ are in circular position. If $s\in W$ is not of circular type, say that it is of \emph{linear type}.

\begin{lemme}\label{lem:circular type}
If $s\in W$ is of circular type as witnessed by $W'_a$ and $W'_b$, then $R^2(s) = W'_a \cup W'_b$. If $t\in R^2(s)$, then $t$ is of circular type and $R^2(t)=R^2(s)$.
\end{lemme}
\begin{proof}
We first prove that $R^2(s) = W'_a \cup W'_b$. If $t\in W'_a$, $t<s$ in $W'_a$, then $t \to s$ holds by definition, hence $t\in R(s)$, and similarly if $s<t$ or $t=s$. Hence $W'_a \subseteq R(s) \subseteq R^2(s)$. Now take $u \in W'_a \cap W'_b$, then $W'_b \subseteq R(u)$ and $u \in R(s)$, so $W'_b \subseteq R^2(s)$. Thus $W'_a \cup  W'_b \subseteq R^2(s)$.

We then show $R(s)\subseteq W'_a \cup W'_b$. Take $t\in R(s)$ and take $c\in D'$ be such that $s$ and $t$ lie in $W'_c$ (this exists by definition whether $s\to t$ or $t\to s$ holds). By Lemma \ref{lem:circular position 2}, $W'_c$ is a convex subset of the circular order $W'_a \cup W'_b$. It follows that $t \in W'_a \cup W'_b$ as required.


Now if $t\in R(s)$, then one of the pairs $(W'_a,W'_b)$ or $(W'_b,W'_a)$ witnesses that $t$ is of circular type and by the previous paragraph $R(t)\subseteq W'_a \cup W'_b$. It follows that $R^2(s)\subseteq W'_a \cup W'_b$, hence $R^2(s)=W'_a \cup W'_b = R^2(t)$. Finally, if $t\in R^2(s)$, there is $u\in R(s)$ such that $t\in R(u)$. But then $u$ is of circular type, then so is $t$ and $R^2(s)= R^2(u) = R^2(t)$.
\end{proof}

\begin{lemme}\label{lem:linear type}
Assume that $s$ is of linear type. Then the relation $\to$ defines a strict linear order on $R(s)$. For any $t\in R(s)$, $t$ is of linear type and $R(t)=R^2(t)=R(s)=R^2(s)$.
\end{lemme}
\begin{proof}
Let $s\to t\to u$. We wish to show $s\to u$. Let $a\in D'$ be such that $s<t$ holds in $W'_a$ and let $b\in D'$ such that $t<u$ holds in $W'_b$. Then $t\in W'_a \cap W'_b$. As $s$ is not of circular type, $W'_a$ and $W'_b$ are not in circular position. Hence either one is included in the other as a convex subset, or one is a simple continuation of the other. Let us consider all cases.

\begin{itemize}
\item[(a)] $W'_a \subseteq W'_b$.

We then have $s<t$ in $W'_b$, hence $s<u$ holds in $W'_b$ and $s\to u$.

\item[(b)] $W'_b\subseteq W'_a$.

In this case, we have $t<u$ in $W'_a$ hence $s<u$ in $W'_a$ and again $s\to u$ holds.

\item[(c)] $W'_b$ is a simple continuation of $W'_a$.

Under this assumption, $W^{k+1}_{(a,b)}$ is well defined and $s<t<u$ holds there. Hence $s\to_{k+1} u$ and by hypothesis, we also have $s\to u$.

\item[(d)] $W'_a$ is a simple continuation of $W'_b$.

It must be that $u\in W'_a$ and $s<t<u$ holds in $W'_a$ so again $s\to u$ is true.
\end{itemize}
Let now $t\in R(s)$ and assume that $t\to u \to v$. Since $t\in R(s)$ and $s$ is not of circular type, also $t$ is not of circular type. Hence the previous result applies to $t$ and we deduce $t\to v$. Therefore $\to$ is transitive on $R(s)$. Since it is irreflexive by construction, $\to$ defines a strict linear order in $R(s)$. It remains to show that $R(t)=R(s)$. Without loss of generality, we have $s\to t$. Take $u\in R(t)$. If $t\to u$, then $s\to u$ and $u\in R(s)$. Assume $u\to t$. Let $b\in D'$ be such that $u<t$ holds in $W'_b$. Then $W'_a$ and $W'_b$ are in good position (by Lemma \ref{lem:good position induction}) and contain $t$. Assume that the initial segment $x\leq t$ of $W'_b$ is a convex subset of $W'_a$. Then $u\in W'_a$, so $u\in R(s)$. Otherwise, the initial segment $x\leq t$ of $W'_a$ is included in $W'_b$, which gives us $s\in W'_b$ and again $u\in R(s)$. We have shown $R(t)\subseteq R(s)$. A similar argument gives us $R(s)\subseteq R(t)$ so $R(s)=R(t)$. Since $t$ was an arbitrary element of $R(s)$, this implies that $R^2(s)=R(s)$ and then also $R^2(t)=R(t)$.
\end{proof}

We can now finish the proof of Theorem \ref{th:gluing}.

Let $E = R^2$. By lemmas \ref{lem:circular type} and \ref{lem:linear type}, $E$ is an equivalence relation on $W$. It is also 0-definable. Furthermore, in each $E$-class, either all elements are of linear type, or all elements are of circular type. Let $E_0$ be the quotient $W/E$, seen as a 0-definable subset of $M^{eq}$. Fix some $e\in E_0$ and let $W[e]\subseteq W$ be the $E$-class corresponding to $e$ (we will also say \emph{coded by} $e$). We will say that $e$, or $W[e]$, is of linear/circular type if all elements in $W[e]$ are such.

Assume first that $e$ is of circular type and let $s\in W[e]$. By Lemma \ref{lem:circular type}, we can write $W[e] = W'_a\cup W'_b$, where $a,b\in D'$ are such that $s\in W'_a$ and $(W'_a,W'_b)$ is in circular position. The union $W'_a \cup W'_b$ admits a canonical circular order which induces each of the linear orders on $W'_a$ and $W'_b$. If now $c,d\in D'$ are such that $W'_c$ and $W'_d$ are in circular position and $W[e] = W'_c \cup W'_d$, then the circular order on $W[e]$ induced by this decomposition is the same as the previous one. This follows from Lemma \ref{lem:circular position 2} which implies that both $W'_c$ and $W'_d$ are convex with respect to the circular order on $W'_a \cup W'_b$ induced by $W'_a$ and $W'_b$; and conversely $W'_a$ and $W'_b$ are convex with respect to the circular order given by $W'_c$ and $W'_d$. Hence those circular orders coincide. It follows that the circular order on $W[e]$ given by any such decomposition is definable over $e$.

Assume now that $e$ is of linear type. Then by Lemma \ref{lem:linear type} $\to$ defines a strict linear order on $W[e]$ which, on each $W'_a$ included in $W[e]$, coincides with its canonical order.

To summarize: we have an equivalence relation $E$ on $W$; every class $e$ of $E$ is equipped with either an $e$-definable linear order, or an $e$-definable circular order. Each $W'_a$, $a\in D'$ is a convex subset of one of those orders. This is also the case for each $W_a$, $a\in D$. Thus an element $V_a$, $a\in D$ of the original family is intertwined with a convex subset of one of those classes. This shows the first and last points of Theorem \ref{th:gluing}. It remains to show the other two, namely that the classes are minimal and any two are independent or in order-reversing bijection.

If $e\in W/E$ is circular, write $W[e] = W'_a \cup W'_b$ as above. Then $W[e]$ equipped with the circular order defined above has topological rank 1 since it is the union of two convex subsets $W'_a$ and $W'_b$ which each have topological rank 1. Assume that there is a cut $c\in \overline{W[e]}$ that is $e$-definable. By construction, the circular order $W[e]$ is covered by elements of the family $(W_a)_{a\in D}$. Hence there is $a\in D$ such that $W_a$ is a convex subset of $W[e]$ and $c$ falls inside $W_a$, so can be naturally identified with an element of $\overline{W_a}$. However $e\in W/E$ is the unique class containing $W_a$, so $e\in \dcl^{eq}(a)$ and $c\in \dcl^{eq}(a)$. This contradicts the fact that $W_a$ is minimal over $a$.

The argument is similar for $e\in W/E$ linear. If $s<t\in W[e]$, then $s\to t$, hence the interval $s<x<t$ is included in $W'_a$ for some $a\in D'$. Assume that there is a parameter-definable convex equivalence relation on $W[e]$ with infinitely many infinite classes. By $\omega$-categoricity, there is $n<\omega$ such that every interval of $W[e]$ intersects either infinitely many infinite classes, or at most $n$ many. Therefore, there is an interval of $W[e]$ which contains infinitely many classes. But then some $W'_a$, $a\in D'$ would contain infinitely many infinite classes which is impossible as it is weakly minimal. The argument that $W[e]$ is minimal is now the same as in the circular case: an $e$-definable cut of $W[e]$ would induce an $a$-definable cut of $W_a$ for some $a\in D$ with $W_a\subseteq W[e]$. This proves the second point in Theorem \ref{th:gluing}.

Finally, if $e,e' \in W/E$, $e\neq e'$, then $W[e]$ and $W[e']$ are disjoint, hence by $\boxplus_3$ there can be no intertwining between a convex subset of $W[e]$ and a convex subset of $W[e']$.

\smallskip
\underline{Claim:} Assume that there is an intertwining between a convex subset $C$ of $W[e]$ and the reverse of a convex subset $C'$ of $W[e']$. Then there is a definable order-reversing bijection between $W[e]$ and $W[e']$.

\smallskip
\emph{Proof}: For $a\in D'$, let $\tilde a\in D'$ be such that $W'_{\tilde a}$ is in definable order-reversing bijection with $W'_a$, as given by Lemma \ref{lem:exists reverse order}.

Restricting $C$ if necessary, $C$ is a convex subset of some $W'_a$, $a\in D'$. Assume first that $e$ is of circular type. Then $W[e] = W'_{a}\cup W'_{b}$ for some $a,b\in D'$ with $W'_a, W'_b$ is circular position. Restricting $C$ further, we may assume that it is included in either $W'_a$ or $W'_b$, say it is the former. It follows that $C'$ is in order-preserving bijection with a convex subset of $W'_{\tilde a}$, but then $C'\subseteq W'_{\tilde a}$. Therefore $W'_{\tilde a}\subseteq W[e']$. Now $W'_{\tilde a}$ and $W'_{\tilde b}$ are also in circular position so $W[e'] = W'_{\tilde a} \cup W'_{\tilde b}$ and there is a definable order-reversing bijection between $W[e]$ and $W[e']$.

The case where $e$ is linear is treated in a similar way. As shown above, two paragraphs before the claim, every interval of $W[e]$ is included in some $W'_a$, $a\in D'$. If $C\subseteq W'_a$, then $C' \subseteq W'_{\tilde a}$. It follows that $W'_{\tilde a}\subseteq W[e']$. Now if $I$ is an arbitrary interval of $W[e]$, let $J$ be an interval containing both $I$ and $C$ and say that $J$ is included in $W'_b$. Then $C'$ is a convex subset of $W'_{\tilde b}$ and as above $W'_{\tilde b}$ is a convex subset of $W[e']$. It follows in particular that $I$ admits a (necessarily unique) order-reversing bijection with a convex subset of $W[e']$ (compose the order-reversing bijection into $W'_{\tilde b}$ with the embedding of this order into $W[e']$). By gluing together those bijection for larger and larger $I$ (which are compatible by uniqueness), we obtain a definable order-reversing bijection between $W[e]$ and $W[e']$.

\smallskip
It follows form the claim and the paragraph before it that any two $E$-classes are either independent or in order-reversing bijection. This therefore settles the third point of Theorem \ref{th:gluing} and finishes the proof.

\subsection{Analysis of a rank 1 structure}\label{sec:analysis}

In this section we assume:

\smallskip
$(\star)$ $M$ is an $\omega$-categorical, rank 1, primitive, unstable NIP structure.

\smallskip
Under those assumptions, we will construct an interpretable set $W$ equipped with a finite-to-one surjective map $W\to M$. The set $W$ will be a disjoint union of finitely many linear and circular orders.

\smallskip
To begin, note that since $M$ is primitive, it is transitive, that is has a unique 1-type. It follows that no element is algebraic over $\emptyset$ and $\rk(a) = 1$ for every $a\in M$. The relation $x\in \acl(y)$ is an equivalence relation on elements of $M$. This follows from the fact the $\rk$ defines a pregeometry on rank 1 structures as explained in Section \ref{sec:rank}, but can also be seen directly: transitivity holds in any structure and symmetry comes from the equivalence: \[x \in \acl(y) \iff \rk(xy) = 1 \iff \rk(xy)<2.\]
As $M$ is assumed to be primitive, $\acl(a) =\{a\}$ for every singleton $a\in M$.

Assume that $\dlr(M)\geq n$, then Fact \ref{fact:linear orders from op-dimension} provides us with a finite tuple of parameters $d$ (named $A$ there) and a $d$-definable subset $X_d$ transitive over $d$ along with $d$-definable linear quasi-orders $\leq_{d,1},\ldots,\leq_{d,n}$ on $X_d$. By definitions of quasi-orders ther are $d$-definable equivalence relations $E_{d,1},\ldots,E_{d,n}$ on $X_d$ such that each $\leq_{d,i}$ induces a linear order (still denoted $\leq_{d,i}$) on the quotient $V_{d,i} := X_d/E_{d,i}$. Note that each $E_{d,i}$ has infinitely many classes by the universality property stated at the end of Fact \ref{fact:linear orders from op-dimension}. Since $M$ has rank 1 and $X_d$ is transitive over $d$, all $E_{d,i}$-classes are finite. The quotients $(V_{d,i},\leq_{d,i})$ also have rank 1 and are transitive, hence minimal over $d$.

Let $E_0(x,y)$ be the equivalence relation on $X_d$ defined by $\acl(dx)=\acl(dy)$. Let $a\in X_d$ and for $i\leq n$, let $a_i$ be the projection of $a$ on $V_{d,i}$. Since $V_{d,i}$ is minimal over $d$, we have $\acl^{eq}(a_i d)\cap V_{d,i} = \{a_i\}$. Hence $\acl^{eq}(ad)\cap V_{d,i}  = \{a_i\}$. Now all elements $E_0$-equivalent to $a$ are in $\acl(ad)$. Therefore they must all project to $a_i$ in $V_{d,i}$. Therefore $E_0$ is finer than $E_{d,i}$. Conversely, if $a$ and $b$ are $E_{d,i}$-equivalent, then $a\in \acl(db)$ and $b\in \acl(da)$, hence $\acl(da)=\acl(db)$ and $a,b$ are $E_0$-equivalent. We thus see that $E_{d,i}= E_0$ for all $i$.

Define $V_d:=X_d/E_0$ and let $\pi_d \colon X_d \to V_d$ be the canonical projection. Then $V_d$ is equipped with $n$ minimal $d$-definable linear orders. No two of those orders are equal, or reverse of each other (as ensured by Fact \ref{fact:linear orders from op-dimension}). We apply Corollary \ref{cor:n independent orders} to $V_d$ equipped with its induced structure and deduce that there exists some finite tuple of parameters $d'$ and a $d'$-definable subset $V'_{d'} \subseteq V_d$ such that $V'_{d'}$ is infinite and transitive over $d'$ and any two orders $\leq_i$ and $\leq_j$, $i\neq j$ are independent when restricted to $V'_{d'}$. Replacing $d$ by $d'$ and $X_{d}$ by $\pi_d^{-1}(V'_{d'})$, we can assume that the $n$ orders are pairwise independent on $V_d$.

To place ourselves in the context of the previous section, pick arbitrarily $n$ distinct elements $c_1,\ldots, c_n$ in $\dcl^{eq}(\emptyset)$. Let \[D = \{(d',c_i) : \tp(d')=\tp(d), i\leq n\}.\]

We will write an element of $D$ as $(d',i)$ instead of $(d',c_i)$. For $(d',i)\in D$, we have a $(d',i)$-interpretable set $V_{d',i}$ defined from the formula defining $V_{d,i}$ by replacing the parameter $d$ by $d'$ and that set is equipped with a minimal order $\leq_{d',i}$ obtained in the same way. Hence we have a uniformly definable family $(V_a)_{a\in D}$ of minimal linear orders. By Theorem \ref{th:gluing}, we can glue those orders together and obtain an interpretable set $W$ equipped with an equivalence relation $E$ satisfying the conclusion of that theorem. We will also make use of the equivalence relation $\sim$ defined before Lemma \ref{lem:beginning of proof}, along with the notation $[a,t]_\sim$ for the $\sim$-class of $(a,t)$, for $a\in D$ and $t\in V_a$.

\medskip
\underline{Claim 1}: Let $e\in W/E$. Take any $a\in D$, say $a=(d,i)$, and $t\in X_d$ so that $[a,\pi_d(t)]_{\sim}$ belongs to $W[e]$. Then $[a,\pi_d(t)]_{\sim}$ is algebraic over $(e,t)$.

\emph{Proof}: Working over $e$, let $p(x,y) = \tp(a,t/e)$. Let $f$ be the $e$-definable function on $p$ which maps a pair $(a', t')\models p$ to $[a',\pi_{d'}(t)]_{\sim}$ (where $a'=(d',i')$). Proposition \ref{prop:no binary functions} gives $f(a,t)\in \acl^{eq}(e,t) \cup \acl^{eq}(e,a)$. Now we have $e\in \acl^{eq}(a)$ since $e$ is the class of $V_{d,i}$ and is therefore definable over $d$. As $X_d$ is transitive over $d$, hence over $a$, $t$ is not algebraic over $a$.

Assume that $f(a,t)\in \acl^{eq}(e,a)$. Note that $t$ is algebraic over $(a,\pi_d(t))$ since $\pi_d$ has finite fibers. The canonical map from $V_{d,i} = \pi_d(X_d)$ to $W$ is injective and maps $\pi_d(t)$ to $[a,\pi_d(t)]_{\sim}$. Hence $t$ is algebraic over $(a,[a,\pi_d(t)]_{\sim})$. Our assumption then implies that $t\in \acl(e,a)$. Next, we have $e\in \acl^{eq}(a)$ since $e$ is the class to which $V_{d,i}$ maps in $W$ and is therefore definable over $d$. So $t\in \acl(a)$, but this is absurd as $X_d$ is transitive over $a$ and infinite. So we must have $f(a,t)\in \acl^{eq}(e,t)$.

\medskip
\underline{Claim 2}: Let $a=(d,i)$ in $D$ and $t\in X_d$, then $t$ is algebraic over $[a,\pi_d(t)]_{\sim}$.

\emph{Proof}: Let $u = [a,\pi_d(t)]_{\sim}$ and let $e\in W/E$ be the $E$-class of $u$. By the previous claim, $u\in \acl^{eq}(e,t)$. Also $u\notin \acl^{eq}(e)$ as $W[e]$ is minimal over $e$, hence also $t\notin \acl^{eq}(e)$. It follows that $\rk(u,e,t) = \rk(e,t) = \rk(e)+1$. And $\rk(u,e)>\rk(e)$ so $\rk(u,e)=\rk(e)+1 =\rk(u,e,t)$. Therefore by Proposition \ref{prop:basic rank}(7), $t\in \acl^{eq}(u,e)$ which equals $\acl^{eq}(u)$ since $e$ is algebraic over $u$ (indeed $e$ is definable over $u$ as it is the $E$-class of $u$).

\medskip
\underline{Claim 3}: There are finitely many $E$-classes.

\emph{Proof}: Let $a=(d,i)$ in $D$ and $t\in X_d$. Write $u = [a,\pi_d(t)]_{\sim}$. By Claim 1, $u$ is algebraic over $(e,t)$. Let $M(e)\subseteq M$ be the set of realizations of $\tp(t/e)$. Since $t$ is not algebraic over $e$ (as $u$ is not), $M(e)$ is infinite.

Assume that there are infinitely many $E$-classes. As $M$ has rank 1, there is an infinite subset $A\subseteq W/E$ such that the intersection $\bigcap_{e\in A} M(e)$ is infinite. Let $A_0 = \{e_1,\ldots,e_n\}$ be a finite subset of $A$ such that $W[e_i]$ and $W[e_j]$ are independent for $i\neq j$. Let $M(A_0) = \bigcap_{e\in A_0} M(e)$. Assume first that all the orders $W[e]$, $e\in A_0$ are linear. In this case, $u\in \dcl(e,t)$. Let $f_e$ be the $e$-definable function defined on $M(e)$ and sending $t$ to $u$ and let \[ X = \{(f_{e_1}(t),\ldots,f_{e_n}(t)) : t\in M(A_0)\}.\]

Then $X$ is an infinite $A_0$-definable subset of the product of the independent orders $\prod_{i\neq n} W[e_i]$. Furthermore, since $t$ is algebraic over $f_{e_i}(t)$ by Claim 2, each projection of $X$ to $W[e_i]$ is finite-to-one. Let $t\in M(A_0)$ be non-algebraic over $A_0$. Then also each $f_{e_i}(t)\in W[e_i]$ is non-algebraic over $A_0$. For each $i$, let $W_i \subseteq W[e_i]$ be an $A_0$-definable convex subset containing $f_{e_i}(t)$ and minimal over $A_0$. Let $X' = X\cap \prod_{i\neq n} W_i$. Then $X'$ is $A_0$-definable and non-empty. By Proposition \ref{lem6}, $X'$ is dense in a boolean combination of closed definable subsets of each $W_i$. Since $W_i$ is minimal over $A_0$, $X'$ is dense in $\prod_{i\neq n}W_i$. It follows that for any $I \subseteq \{1,\ldots,n\}$, we can find $t,t'\in M(A_0)$ such that for $i<n$: \[f_{e_i}(t) \leq_i f_{e_i}(t') \iff i\in I,\] where $\leq_i$ is the order on $W[e_i]$.

Let $\phi(xx';z)$, where $x,x'$ range over $M$ and $z$ ranges over $W/E$ and which expresses the fact that $f_z(x) \leq f_z(x')$ in $W[z]$. Then we see that we have at least $2^n$ $\phi$-types over $A_0$. As $n$ can be taken arbitrarily large, this shows that $\phi$ has the independence property and hence contradicts the NIP assumption.

The argument in the case where the $W[e]$'s are circular is similar. The main difference is that $u$ need not be definable over $(e,t)$ so we no longer have the function $f_e$. Instead define $f_e$ as the function sending some $t$ to the finite set of conjugates of $u$ over $e$. If this set has one element, then the proof goes through, using the circular order at the end instead of the linear order. If $f_e(t)$ has more than one element, then we can for instance replace the formula $\phi$ by the formula $\phi'(xx';z)$ expressing that the elements of $f_e(x)$ lie in a convex subset of $W[e]$ that does not contain any element in $f_e(x')$.

\medskip
\underline{Claim 4}: There is a $0$-definable map $\pi\colon W \to M$ with finite fibers which maps each $E$-class surjectively on $M$.

\emph{Proof}:   It follows from claims 1 and 2, that if $t\in X_d$ and $a=(d,i)$, then $([a,\pi_d(t)]_{\sim},e)$ is inter-algebraic with $t$. Since $e\in \acl^{eq}(\emptyset)$ by Claim 3, we have that $[a,\pi_d(t)]_{\sim}$ and $t$ are inter-algebraic. For $t\in M$, we have seen that $\acl(t) = \{t\}$. We deduce that $(a,\pi_d(t))\sim (b,\pi_d(u))$ implies $t=u$. Thus we have a 0-definable map $\pi\colon  W\to M$ sending each $(a,\pi_d(t))$ to $t$. As $M$ is primitive, each $E$-class maps surjectively onto $M$. Furthermore any $x\in W$ is algebraic over $\pi(x)$ by Claim 1, hence the map $\pi$ has finite fibers.

\medskip
In the proof of the claim, we saw that each map $\pi_d$ is injective. Therefore the equivalence relation $E_0$ defined at the beginning of this subsection is trivial on each $X_d$ and $V_d=X_d$. The orders $V_a$, $a\in D$ therefore have as universe definable subsets of $M$ itself.

Given a point $a\in M$, define $W(a)=\pi^{-1}(a)$. Note that we also have $W(a)=\acl^{eq}(a)\cap W$ as $\acl^{eq}(a)\cap M = M$. For any $V\subseteq W$, define also $V(a)=\pi^{-1}(a)\cap V=\acl^{eq}(a)\cap V$.
 
\medskip
Recall that the $E$-classes come in pairs that are in order-reversing bijection. Let $F\subseteq W/E$ be a set containing exactly one class from each such pair, so that $|F| = \frac 1 2 |W/E|$ and the classes coded by elements of $F$ are pairwise independent. For $V= W[e]$ an $E$-class, the cardinality of $V(t)$ does not depend on the choice of $t\in M$ by primitivity of $M$ (consider the 0-definable equivalence relation on $M$ for which $t,t'\in M$ are equivalent if $|V(t)|=|V(t')|$ for each $E$-class $V$). Let $n(e)$ be the cardinality of $V(t)$ for $t\in M$ and $V=W[e]$.

\medskip
\underline{Claim 5}: With notations as above, for each $e\in F$ choose $n(e)$ many disjoint open intervals of $W[e]$. Then there is $t\in M$ such that each interval selected contains a point of $\pi^{-1}(t)$.

\emph{Proof}: Enumerate $F = \{e_0,\ldots,e_{m-1}\}$ and for $i<m$,  let $V_i=W[e_i]$ and $k_i = n(e_i)$. Then the $V_i$'s are pairwise independent and minimal over$ F$.

If $m=1$, then the result follows at once from either Proposition \ref{lem2} or Proposition \ref{prop:one circular order}, taking $p$ to be the type of a tuple enumerating $V_0(t)$ for some $t\in M$. The general case follows from Theorem \ref{th:all order types}: Let $D \subseteq V_0 ^{k_0} \times \cdots \times V_{m-1}^{k_{m-1}}$ be the set of tuples that are an enumeration of $\pi^{-1}(t) \cap (V_0\cup \ldots \cup V_{m-1})$ for some $t$. Then $D$ is $F$-definable. Let $D_0\subseteq D$ be transitive over $F$, then by Theorem \ref{th:all order types} its closure is a product of its projection on each $V_i^{k_i}$. As in the case $m=1$, we can find in each such projection a tuple which has one point in each of the selected intervals of $V_i$. The result follows.

\medskip
\underline{Claim 6}: The op-dimension of $M$ is at least equal to $\frac 1 2 |\pi^{-1}(t)|$ for some/any $t\in M$.

\emph{Proof}: 
Fix a set $F$ as above and let $W_F  = \bigcup_{e\in F} W[e]$. Let $t\in M$ and let $m= |\pi^{-1}(t)\cap F| = \frac 1 2 |\pi^{-1}(t)|$. Let $\bar a$ enumerate $\pi^{-1}(t)\cap F$. Then as $\bar a \subseteq \acl^{eq}(t)$, Fact \ref{fact:opdim} gives $\dlr(\bar a)\leq \dlr(\bar a t) = \dlr(t)$. Now $W_F$ is a union of pairwise independent orders. By Corollary \ref{cor:op dimension in orders}, $\dlr(\bar a) \geq m$. It follows that $M$ has op-dimension at least $m$.



\medskip
\underline{Claim 7}: For $t\in M$, the fiber $\pi^{-1}(t)$ has size at least $2n$, where $n$ is the same $n$ as at the beginning of this subsection, namely the number of independent orders found on the the set $X_d$.

\emph{Proof}: The set $X_d$ is transitive over $d$ and by construction is equipped with $n$ many independent $d$-definable orders $\leq_1,\ldots,\leq_n$. For each $i\leq n$, the order $(X_d,\leq_i)$ admits a unique definable increasing embedding into one of the $E$-classes. The same is true of the reverse orders $(X_d,\geq_i)$. As those orders are independent, the convex hull of the images of those $2n$ embeddings are disjoint. It follows that a point $t\in X_d$ is sent by those embeddings to $2n$ different points, each of which lies in $\pi^{-1}(t)$.

\begin{lemme}\label{lem:trivial acl}
	Algebraic closure is trivial on $M$ in the sense that $\acl(A)= A$ for each $A\subseteq M$.
\end{lemme}
\begin{proof}
	We already know by the first paragraphs of Section \ref{sec:analysis} that $\acl(a)=a$ for any single element $a\in M$. Let $V$ be any $E$-class of $W$. Then $V$ is definable over $\acl^{eq}(\emptyset)$. By Proposition \ref{prop:no binary functions}, for any $A=\{a_1,\ldots,a_n\}\subseteq M$, we have $\acl^{eq}(A)\cap V = \bigcup_{i\leq n} \acl^{eq}(a_i)\cap V$. Any point in $M$ is inter-algebraic with at least one point in $V$. It follows that $\acl^{eq}(A)\cap M = \bigcup_{i\leq n} \acl^{eq}(a_i)\cap M$, that is $\acl(A) = \bigcup_{i\leq n} \acl(a_i)=A$, since $\acl(a)=a$ for every $a\in M$.
\end{proof}

\begin{corollary}\label{cor:stable reduct}
	The only stable reduct of $M$ is the trivial reduct to pure equality.
\end{corollary}
\begin{proof}
	By Proposition \ref{prop:stable main}, any stable reduct of $M$ is strongly minimal. By the previous lemma, algebraic closure is trivial on such a reduct (since the algebraic closure of a set cannot increase in a reduct). The only strongly minimal set with trivial algebraic closure is pure equality: in a strongly minimal set any two $n$-tuples of algebraically independent points have the same type, hence if algebraic closure is trivial, any two $n$-tuples of distinct points have the same type.
\end{proof}

\begin{lemme}\label{lem:three points}
There are three points $a,b,c\in M$ such that:
\begin{itemize}
\item there is a set $W_{\text{or}}\subseteq W$, definable over $abc$, which is a union of $E$-classes and contains exactly one class in each pair of classes in order-reversing bijection;
\item for each $e\in W/E$, $\dcl^{eq}(abce)$ intersects $W[e]$ in at least 3 points;
\item there is a formula $\theta(x,y;u)$ with parameters in $abc$ such that for each $e\in W/E$, $\theta(x,y;e)$ defines a linear order on $W[e]$.
\end{itemize}
\end{lemme}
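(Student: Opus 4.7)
The plan is to choose three pairwise distinct points $a,b,c \in M$---automatically pairwise independent, since $\acl$ is trivial on $M$ by primitivity and rank~$1$ (noted at the start of this section)---in suitably generic position, and verify each of the three items in turn. The key structural facts are that there are only finitely many $E$-classes (Claim~6), each $\acl^{eq}(\emptyset)$-definable with a minimal order by Theorem~\ref{th:gluing}, and that $\pi\colon W \to M$ has finite fibers (Claim~7), so $V(x) := \pi^{-1}(x) \cap V$ lies in $\acl^{eq}(x)$ for every $E$-class $V$ and every $x \in M$. Since $\mathrm{Aut}(M)$ acts on the finite set $W/E$ as a quotient group, generic $a,b,c$ can be arranged so that $\mathrm{stab}(abc)$ fixes every $E$-class set-wise, making each class $\dcl^{eq}(abc)$-definable.

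For item~1, for each pair $\{V,V^*\}$ of $E$-classes related by a canonical order-reversing bijection $f\colon V\to V^*$, the tuples of $\pi$-preimages of $a,b,c$ in $V$ are ordered oppositely to those in $V^*$. For generic $a,b,c$ the relative order of these preimages is non-symmetric (using Proposition~\ref{lem2} or \ref{prop:one circular order} to place the preimages freely), so we can single out one class per pair via a fixed asymmetric rule---for instance, selecting the class in which the smallest element of $V(a)$ precedes the smallest element of $V(b)$. Taking the union of the selected classes produces $W_{\mathrm{or}}$, definable over $abc$. For item~2, each class $V$ with its chosen orientation is now $\dcl^{eq}(abc)$-definable, and the three sets $V(a), V(b), V(c)$ are non-empty, pairwise disjoint, and individually marked by projection to $a,b,c$. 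The linear or circular order on $V$ (normalized in the circular case by a point of $V(a)$ used as basepoint, itself $\dcl^{eq}(abc)$-definable after orientation) then picks out one canonical element from each fiber, yielding at least three elements in $\dcl^{eq}(abc)\cap V$.

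For item~3, given $t \in D$, Theorem~\ref{th:gluing} furnishes a unique $E$-class $V$ with a $t$-definable order-preserving injection $\iota\colon V_t \hookrightarrow V$. Pick any $a_0 \in V(a) \subseteq \acl^{eq}(a)$ as basepoint and define a section $\sigma\colon M \to V$ as follows: if $V$ is linear, let $\sigma(x)$ be the unique preimage of $x$ for which $\pi$ is injective on the interval between $a_0$ and $\sigma(x)$; if $V$ is circular, let $\sigma(x)$ be the first preimage of $x$ encountered when traversing $V$ from $a_0$ in the positive direction. Pulling back the order on $V$ via $\sigma$ gives a linear or circular order on $M$, definable over $\{a_0\} \subseteq \acl^{eq}(a)$. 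The main obstacle is to verify this order actually extends $\leq_t$ on $V_t$. The key observation is that $\iota(V_t)$ lies in a fundamental domain of $\pi\colon V\to M$: then, locally, $\sigma\circ \iota^{-1}$ is an order-preserving self-map of $\iota(V_t) \subseteq V$ commuting with $\pi$, and uniqueness of intertwinings (Lemma~\ref{lem:unique intertwining}) forces it to be the identity. A small case analysis on the position of $a$ relative to $V_t$ in the circular case---potentially replacing $a$ with an element outside $V_t$---ensures that $\iota(V_t)$ indeed fits in a single fundamental domain anchored at $a_0$.
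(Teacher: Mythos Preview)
Your treatment of items 1 and 2 is in the right spirit and close to the paper's argument, though your selection rule for $W_{\mathrm{or}}$ (``the smallest element of $V(a)$ precedes the smallest element of $V(b)$'') presupposes a linearization of circular classes, which is circular reasoning. The paper avoids this by first choosing $a,b,c$ so that for every class $V$ the fibers satisfy $V(a)<V(b)<V(c)$ or the reverse \emph{as separated tuples} (possible by Theorem~\ref{th:all order types}), and then simply declaring $W_{\mathrm{or}}$ to be the union of those $V$ with $V(a)<V(b)<V(c)$. Once the three fibers are separated in this way, individual elements of each fiber are picked out by position, giving item~2 directly.

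Item 3 has a genuine gap. You build a section $\sigma$ from scratch and then try to argue it agrees with $\iota$ on $V_t$, invoking fundamental domains and uniqueness of intertwinings. There are three problems. First, the composite $\sigma\circ\iota^{-1}$ is not a self-map of $\iota(V_t)$: for $y\in\iota(V_t)$ you only know $\sigma(\iota^{-1}(y))\in V(\iota^{-1}(y))$, not that it lies back in $\iota(V_t)$, so Lemma~\ref{lem:unique intertwining} does not apply. Second, the claim that $\iota(V_t)$ sits in a single ``fundamental domain anchored at $a_0$'' is exactly what you need to prove and is not justified. Third, and fatally, your suggested fix of ``potentially replacing $a$ with an element outside $V_t$'' is not permitted: the point $a$ is chosen once at the outset, and item~3 must hold for \emph{every} $t\in D$ with that fixed $a$.

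The paper's route is much shorter because it observes that $\iota$ is \emph{already} a section of $\pi$: recall from the discussion after Claim~7 that each $V_t$ is literally a subset of $M$ and the map $V_t\to V$ is $\pi$-compatible by construction. So there is no need to manufacture a section and then check agreement. One simply notes that every element of each fiber $V(d)$ lies in $\dcl^{eq}(W(a),d)\subseteq\dcl^{eq}(\acl^{eq}(a),d)$, and extends the given section $\iota$ on $V_t$ to an $\acl^{eq}(a)$-definable section $f$ on all of $M$; the pullback of the order on $V$ via $f$ then extends $\leq_t$ automatically.
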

\begin{proof}
Choose three points $a,b,c\in M$ so that for every class $V$, we have either $V(a)<V(b)<V(c)$ or $V(c)<V(b)<V(a)$ (meaning that those inequalities holds for any choice of one element in each tuple). This is possible by Theorem \ref{th:all order types}. If $V$ and $V'$ are two classes with an order-reversing definable bijection, then for exactly one of $V$ or $V'$ do we have $V(a)<V(b)<V(c)$. Take $W_{\text{or}}\subseteq W$ to be the union of classes $V$ for which $V(a)<V(b)<V(c)$.

If $V$ is a linear class of code $e$, then $\dcl^{eq}(abce) \cap V = \acl^{eq}(abce)\cap V$, so $\dcl^{eq}(abce)\cap V$ has at least 3 elements. Let now $V$ be a circular class in $W$ of code $e\in W/E$. If we follow the circular order from any point in $V(a)$ to any point in $V(c)$, we encounter the points in $V(b)$ in the same order regardless of which points of $V(a)$ and $V(c)$ we choose. Therefore all the points of $V(b)$ are in $\dcl^{eq}(abce)$. The same is true for $V(a)$ and $V(c)$ by circularly permuting the roles of  $a,b,c$.
\end{proof}

\begin{prop}\label{prop:bounding op-dimension}
The structure $M$ has finite op-dimension, bounded by the number of 4-types of elements of $M$.
\end{prop}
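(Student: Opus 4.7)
The strategy is to show that a large op-dimension produces many pairwise independent minimal orders on $M$, which in turn forces many distinct $4$-types. Suppose $\dlr(M)\geq n$. Applying Fact~\ref{fact:linear orders from op-dimension} together with the rank~$1$ observations at the start of this subsection (the equivalences $E_{d,i}$ are trivial since $\rk(M)=1$), we obtain, over a finite parameter tuple $d$, a $d$-transitive set $X_d$ carrying $n$ minimal linear orders. Using Lemma~\ref{lem_ind} together with Proposition~\ref{lem6} to eliminate intertwinings by restricting over additional parameters, we may assume these $n$ orders are pairwise independent.

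Running these through the gluing construction of Theorem~\ref{th:gluing}, pairwise independent orders cannot be intertwined (even up to reversal), and hence lie in distinct pairs of $E$-classes of $W$. Therefore $F=W/E$ contains at least $n$ pairwise independent classes (after choosing one representative from each reversal pair). By Lemma~\ref{lem:three points}, after naming three points $a,b,c\in M$, each of these classes extends to an $\acl^{eq}(abc)$-definable linear or circular order on $M$ itself, yielding $n$ pairwise independent orders on $M$ definable over a finite parameter set $B\subseteq\acl^{eq}(abc)$.

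By Corollary~\ref{cor:all order types with parameters}, the independence of these orders ensures that every combination of their relative orderings on a $4$-tuple of distinct elements of $M$ is realized. This produces at least $24^n$ distinct $4$-types of ordered $4$-tuples of distinct elements over $B$. Since each $4$-type over $\emptyset$ has only finitely many extensions to a $4$-type over $B$, with the refinement factor controlled only by $|B|$ and $\omega$-categoricity, $|S_4(\emptyset)|$ must grow with $n$, giving a finite bound on $\dlr(M)$ in terms of the number of $4$-types.

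\textbf{Main obstacle.} The delicate step is the last one: controlling the refinement ratio between $4$-types over $\emptyset$ and $4$-types over $B$ uniformly in $n$. This requires that the parameters $a,b,c$ together with the coding of $F$ stay within bounded orbits of $\aut(M)$ as op-dimension varies, which uses the finiteness of $F$ (Claim~$6$) and the fact that $a,b,c$ may be drawn from fixed types. Once this bookkeeping is in place, the rest is a straightforward assembly of the gluing construction and the order-independence results of Sections~\ref{sec:linear orders} and~\ref{sec:circular orders}.
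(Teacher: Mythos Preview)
Your plan tracks the paper's argument through the production of $n$ pairwise independent orders on $M$ over parameters, but then departs at the crucial step and leaves the precise bound unproved.

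The paper names only \emph{one} point $a_*\in M$. Lemma~\ref{lem:three points}(3) (together with transitivity of $M$) already extends each of the $n$ orders to an $\acl^{eq}(a_*)$-definable circular order $C_i$; the other two points $b,c$ of that lemma are not needed here. The paper then passes to the associated \emph{separation relations} and closes under conjugation over $a_*$, obtaining a family $D_1,\ldots,D_m$ with $m\ge n$ that is permuted by $\aut(M)_{a_*}$. The key device is the integer
\[
\kappa(b,c,d)\;=\;\bigl|\{\,i\le m : D_i(a_*,b,c,d)\,\}\bigr|,
\]
which is invariant under $\aut(M)_{a_*}$ because that group permutes the $D_i$'s, and hence by transitivity depends only on $\tp(a_*,b,c,d/\emptyset)$. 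Corollary~\ref{cor:n betweenness} shows every value $0\le \kappa\le m$ is realised, giving $m+1\ge n+1$ distinct $4$-types over~$\emptyset$ directly.

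Your route instead produces $\ge 24^n$ (or $\ge 6^n$, depending on linear versus circular) $4$-types over $B\subseteq\acl^{eq}(abc)$ and then attempts to descend via a refinement ratio. That ratio is finite by $\omega$-categoricity, but it is controlled by the number of $7$-types (types of $(\bar d,a,b,c)$), not by the number of $4$-types. So your argument does establish that $\dlr(M)$ is finite, with a bound in terms of $|S_7(\emptyset)|$, but not the stated bound by $|S_4(\emptyset)|$. The paragraph you flag as the ``main obstacle'' does not close this: the phrase ``as op-dimension varies'' is confused (op-dimension is a fixed invariant of $M$), and bounding $|B|$ or noting that $a,b,c$ lie in a fixed $3$-type still does not express the refinement factor in terms of $4$-types alone. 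What is missing is precisely the counting trick above, which replaces the full order information by a single integer-valued, $\emptyset$-definable invariant on $4$-tuples.
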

\begin{proof}
Assume that $\dlr(M)\geq n$. Then applying the construction in this subsection starting with that value of $n$, we obtain a set $W$ equivalence relation $E$ and projection $\pi$ such that $\pi^{-1}(t)$ has size at least $2n$ for each $t\in M$ (by Claim 7). It is therefore enough to show that the number of 4-types of elements of $M$ is at least $\frac 1 2 |\pi^{-1}(t)|$.

Let $a,b,c\in M$, $W_{\text{or}}\subseteq W$ and $\theta(x,y;u)$ be as in the previous lemma. Recall the notation $n(e)$ which denotes the number of points in $\pi^{-1}(t)\cap W[e]$. Let $F= W_{\text{or}}/E$. Let $\Sigma$ be the set of functions $\sigma \colon F \to \omega$ satisfying $\sigma(e)\leq n(e)$ for each $e\in F$. For each $\sigma \in \Sigma$, we can find by Claim 5 a point $d_{\sigma}\in M$ such that $\pi^{-1}(d_{\sigma})$ has exactly $\sigma(e)$ points in $W[e]$ that are larger than all the points in $\pi^{-1}(b)$ according to the linear order $\theta(x,y;e)$. (By construction, no point in $\pi^{-1}(b)$ is an extreme point according to that order.) The elements of $F$ are not necessarily definable over $abc$, hence it could be that $d_{\sigma}$ and $d_{\sigma'}$ have the same type over $abc$ for $\sigma \neq \sigma'$. However, if those two points do have the same type, there must be a permutation $\iota$ of $F$ such that $\sigma' = \sigma \circ \iota$, since this is something we can express with a first order formula over $abc$. Say that $\sigma$ and $\sigma'$ are conjugate if $\sigma'=\sigma\circ \iota$ for some permutation $\iota$.

\smallskip
\underline{Claim}: There are at least $\sum_{e\in F} n(e)$ conjugation classes of elements of $\Sigma$.

\emph{Proof}: We prove this by induction on $\sum_{e\in F} n(e)$. First assume that $n(e)=1$ for each $e\in F$, then there are $|F|+1$ many conjugation classes: a conjugation class is given by how many 0s are in the image and this could be any number between 0 and $|F|$. Now assume for some $e_0\in F$ we have $n(e_0)>1$. By induction, there are at least $(\sum_{e\in F} n(e))-1$ conjugation classes of elements $\sigma$ for which $\sigma(e_0) < n(e_0)$. Let $\sigma_0\in \Sigma$ so that the maximal number of values of $e\in F$ are sent to $n(e_0)$. Then $\sigma_0$ cannot be conjugated to a $\sigma$ for which $\sigma(e_0)<n(e_0)$, hence we have at least one extra class.

\smallskip
We conclude that there are at least $\sum_{e\in F} n(e)$ many 1-types over $abc$. As  $\sum_{e\in F} n(e) = \frac 1 2 |\pi^{-1}(t)|$, there are at least $ \frac 1 2 |\pi^{-1}(t)|$ many 4-types of elements of $M$ as required.
\end{proof}

\subsection{The skeletal structure}\label{sec:skeletal}

We know by Proposition \ref{prop:bounding op-dimension} that the op-dimension of $M$ is finite. Let $n=\dlr(M)$. We can then apply the construction of the previous subsection starting with $n$ many independent orders on the set $X_d$ and hence obtain a set $W$, equivalence relation $E$ and projection $\pi:W \to M$ such that $\pi^{-1}(t)$ has exactly $2n$ elements (by Claim 7 it has at least $2n$ elements, and at most $2n$ by Claim 6). To summarize the situation:

\begin{itemize}
\item We have a 0-interpretable set $W$ equipped with a 0-definable equivalence relation $E$ and a 0-definable surjective map $\pi:W\to M$.
\item There are finitely many $E$-classes. 
\item For each $e\in W/E$, the class $W[e]\subseteq W$ coded by $e$ is equipped with either an $e$-definable linear order or an $e$-definable circular order and as such is minimal over $e$.
\item For any $e\in W/E$, there is a unique $\tilde e$ such that there is an $e$-definable order-reversing bijection between $W[e]$ and $W[\tilde e]$.
\item For any $e\in W/E$ and any $e'\in W/E \setminus \{e,\tilde e\}$, the classes $W[e]$ and $W[e']$ are independent.
\item For any $e\in W/E$ there is $0<n(e)<\omega$ such that for any $t\in M$, the set $\pi^{-1}(t) \cap W[e]$ has size $n(e)$.
\item For any $t\in M$, $|\pi^{-1}(t)| = 2 \dlr(M)$.
\end{itemize}

We think of $W$ as a structure in its own right, when equipped with its full induced structure inherited from $M$ (so that the notion of a 0-definable set is the same whether we think of $W$ as a structure or as a definable subset of $M^{eq}$). The original structure $M$ is a quotient of $W$ and we can shift our focus from $M$ to $W$: if we classify the possibilities for $W$ (up to bi-definability), we will also have classified the possibilities for $M$.

A few words about the finite quotient $W/E$. This set is 0-definable in $M^{eq}$ and hence the automorphism group $\aut(M)$ acts on it. We are already aware of some relations on it that have to be preserved by this action (equivalently, that are 0-definable):

-- the unary relation naming the classes that are linear (the complement being the circular classes);

-- for each $k<n$, the unary relation naming the classes which contain exactly $k$ points from $\pi^{-1}(a)$ for some/any $a\in M$;

-- the equivalence relation on $W/E$ with classes of size 2 composed of $E$-classes in order-reversing bijection.

It could be that this is the only structure on that set. At the other extreme, it could also be that $W/E$ is rigid: $\aut(M)$ acts trivially on it, equivalently $W/E\subseteq \dcl^{eq}(\emptyset)$. Any action between those two extremes is also possible. In the definition of the skeletal structure below, we simply include all the structure on $W/E$ without analyzing it further.

\medskip

Consider the reduct of $W$ to the language consisting of:

\begin{itemize}
	\item the equivalence relation $E$;
	\item for every $d\leq |W/E|$ and every 0-definable subset of $(W/E)^d$, a predicate naming the pullback of that set to $W^d$;
	\item a binary relation $\leq (x,y)$ which holds of a tuple $(x,y)$ if and only if $x,y$ are in the same linear $E$-class and $x$ is less than $y$ in that class;
	\item a ternary relation $C(x,y,z)$ which holds of a tuple $(x,y,z)$ if and only if $x,y,z$ are in the same circular $E$-class and are circularly ordered in this order;
	\item a relation $F(x,y)$ which holds if $x$ is in some $E$-class $e$, $y$ is in the $E$-class $\tilde e$ in order-reversing bijection with $e$ and that bijection sends $x$ to $y$;
	\item an equivalence relation $E_{\pi}$ whose classes are the fibers of $\pi$.
\end{itemize}

We will call this structure the \emph{skeletal structure} on $W$ and denote the reduct to it by $W_{Sk}$. Note that the actual language of $W_{Sk}$ is not precisely defined because of the second bullet point above and can depend on $M$. By an isomorphism between two skeletal structures $W_{Sk}$ and $W'_{Sk}$ we mean an isomorphism up to possibly renaming the predicates obtained from that second bullet point.


	
	
	Let $M$ and $M'$ be two structures satisfying $(\star)$. Construct $W$ and $W'$ as above for each of them, along with equivalence relations $E$ and $E'$. For $e\in W/E$ we use again the notation $n(e)$ to denote the number of elements in $\pi^{-1}(t)\cap W[e]$ for some/any $t\in M$, and define similarly $n(e')$ for $e'\in W'/E'$.

\begin{lemme}
With notations as above, assume that we have a bijection $f: W/E \to W'/E'$ which sends circular classes to circular classes, sends linear classes to linear classes, preserving pairs in order-reversing bijection, such that $n(e)=n(f(e))$ for all $e\in W/E$ and such that 0-definable subsets of $(W'/E')^d$ are precisely the images of 0-definable subsets of $(W/E)^d$. Then there is an isomorphism $g$ from $W_{Sk}$ to $W'_{Sk}$ whose image in the quotient by $E$ is $f$.
\end{lemme}
\begin{proof}
This is a straightforward back-and-forth argument using Claim 5: Assume that we have a partial isomorphism $g_0 : W_0\subseteq W \to W'$, where $W_0$ is finite and $g_0$ respects $f$ in the sense that it sends an element of a class $e$ to an element of $f(e)$. Pick an element $a\in W$ and let $a_*$ denote an enumeration of the $E_\pi$-class of $a$. We want to extend $f$ so that its domain contains $a$. For that, it is enough to find some $b_*$ enumerating an $E_\pi$-class in $W'$ and satisfying certain inequalities between the coordinates and elements of $g_0(W_0)$. Let $d=|a_*|$. We can select $d$ many open intervals $I_1,\ldots,I_d$ of the various $E$-classes in such a way that any $b_*$ enumerating an $E_\pi$ class and having its $i$-th coordinate in $I_i$ will satisfy the required inequalities. By Claim 5, we can always find a $b_*$ satisfying those constraints.
\end{proof}


\begin{prop}\label{prop:finitely many skeletal}
	For a given number $n$, there are, up to isomorphism, finitely many possible skeletal structures $W_{Sk}$ associated to structures $M$ with at most $n$ 4-types.
\end{prop}
\begin{proof}
	Given a number $n$ of 4-types, there are finitely many possibilities for the op-dimension of $M$ by Proposition \ref{prop:bounding op-dimension}, hence also finitely many possibilities for the size of a fiber of $\pi$ by Claim 6. The quotient $W/E$ has size at most that of a fiber of $\pi$, hence its size is bounded in terms of $n$ (in fact it is bounded by $2n$). Having fixed the size of $W/E$, there are finitely many possibilities for the choice of which classes are linear or circular, and for each class $V$, how many elements each fiber of $\pi$ has in $V$. Given this data, there are finitely many possibilities for the remaining structure on $W/E$. By the previous lemma, this completely determines the skeletal structure.
\end{proof}

\subsection{The additional local structure}\label{sec:stable structure}

In this subsection, we complete the description of $W$ by showing that the structure not accounted for by the skeletal structure  comes from finite local equivalence relations. In particular, it will follow that if all classes are linear, then there is no additional structure.

Let $F\subseteq W/E$ be a set containing exactly one point in every pair of classes in definable order-reversing bijection. Then any two different classes of $F$ are independent (for instance by the fourth and fifth bullets at the start of Section \ref{sec:skeletal}). From now on, we work over $F$. Take some $m\in M$ and let $m_*$ enumerate the intersection of $\pi^{-1}(m)$ with the classes in $F$.

\medskip
Throughout this subsection, we work over $\acl^{eq}(\emptyset)$, which in particular contains $W/E$ and hence the set $F$. Let $W_*$ be the set in $M^{eq}$ defined by $\tp(m_*/\acl^{eq}(\emptyset))$.  We are now in the context of Section \ref{sec:local relations} and we use the terminology from there.

\begin{lemme}
  There is a finest finite local equivalence relation on $W_*$.
\end{lemme}
\begin{proof}
   Let $\mathcal E$ be a local equivalence relation and let $C_{\bar t}$ be a big cell of $W_*$ defined as in Section \ref{sec:local relations}. Then the relation $\mathcal E(C_{\bar t})$ is definable form $\bar t$ ($\mathcal E(C_{\bar t})$ is invariant under automorphisms fixing $\bar t$, since no other parameters were used in its definition, and hence definable over $\bar t$ by $\omega$-categoricity). By $\omega$-categoricity, there is a finest $\bar t$-definable equivalence relation on $C_{\bar t}$ with finitely many classes. Since $W_*$ is partitioned in finitely many big cells and any local equivalence relation is determined by its restrictions to each of those, there are only finitely many local equivalence relations on $W_*$. Their intersection is the finest one.
\end{proof}

\begin{prop}\label{prop:finitely many classes}
  Let $\mathcal E$ be the finest finite local equivalence relation on $W_*$, whose existence is guaranteed by the previous lemma. Then the number of classes of $\mathcal E$ is bounded by the number of 4-types of elements of $M$.	
\end{prop}
\begin{proof}
  First note that the structure $M$ is still primitive over $\acl^{eq}(\emptyset)$: If say $E(x,y;a)$ is an equivalence relation relation with finitely many classes defined over $a\in\acl^{eq}(\emptyset)$, then the intersection $\bigcap E(x,y;a')$, where $a'$ ranges over the finitely many conjugates of $a$ under $\aut(M)$ is a 0-definable equivalence relation with finitely many classes.

  Take $x\in M$ and let $X\subseteq W_*$ be the union of the $\mathcal E$-classes that one can reach by starting with a point in $\acl^{eq}(x)\cap W_*$ and following a path of small cells. This set $X$ is definable over $\acl^{eq}(\emptyset)x$ and given another $x'\in M$, the corresponding $X'$ is either equal to $X$ or disjoint from it. By primitivity of $M$, they have to be equal and we have $X = W_*$.

  Let $a,b,c\in M$ be given by Lemma \ref{lem:three points}. Then the set $F$ we used to define $W_*$ is definable over $abc$ (it is $W_{or}/E$ in Lemma \ref{lem:three points}). Furthermore, each class $V$ has three points $\alpha_V,\beta_V,\gamma_V$ definable over $abc$ and hence admits a linear order definable over $abc$ (for linear classes it is clear and for circular ones, use any one of the points as the minimal element for instance). It follows that $\acl^{eq}(Fabc)\cap W_* = \dcl^{eq}(abc)\cap W_*$ since algebraic and definable closures coincide on linear orders.
		
	
Define big cells $C_{\bar t}$ as in Section \ref{sec:local relations} using $\alpha_V,\beta_V,\gamma_V$. Let $e$ be any $\mathcal E(C_{\bar t})$-class. Then any class in $\mathcal E(C_{\bar s})$ that one can reach from $e$ following a sequence of transition maps $f_{\bar t',\bar s'}$ is definable from $e$ (along with $abc$). By the previous two paragraphs, every class is reachable by such a sequence of transition maps starting with some point in $\dcl^{eq}(abc)$. Hence each class in each big cell $C_{\bar t}$ is definable over $abc$.

Let $n$ be the number of $\mathcal E$-classes in some/any big cell. Two points in $W_*$ that lie in the same big cell but in different $\mathcal E$-classes have different types over $abc$, since each $\mathcal E$-class is definable over $abc$. It follows that $n$ is bounded by the number of types of elements of $W_*$ over $abc$.
\end{proof}

Let $\phi(\bar x;y)=\phi(x_1,\ldots,x_k,y)$ be a formula over $\acl^{eq}(\emptyset)$, where $y$, as well as each $x_i$ ranges over $W_*$. Fix $\bar a\in W_*^{k}$ and $b\in W_*\setminus \acl^{eq}(\bar a)$. Let $U\subseteq W_*^{k}$ be a product of small cells containing $\bar a$ and $V\subseteq W_*$ a small cell containing $b$. Assume that $U$ and $V$ are small enough so that $V$ is strongly disjoint from any small cell appearing in the product defining $U$.

\smallskip
\underline{Claim}: The formula $\phi_{UV}(\bar x;y)\equiv\phi(\bar x;y)\wedge \bar x\in U \wedge y\in V$ is stable.

\emph{Proof}: Assume not, then we can find sequences $(\bar a_i)_{i<\omega}$ in $U$  and $(b_i)_{i<\omega}$ in $V$ such that $\phi(\bar a_i;b_j)$ holds if and only if $i> j$. For every $j,n<\omega$, the set of realizations of $\tp(b_j/\bar a_{<n})$ is dense in an $\bar a_{<n}$-sector by Corollary \ref{cor:all order types with parameters}. Since that sector has a point in $V$ and $U$ and $V$ are strongly disjoint, it must contain $V$ entirely. Hence the set of realizations of $\tp(b_j/\bar a_{<n})$ is dense in $V$. It follows that the set of realizations of the full type $\tp(b_j/\bar a_{<\omega})$ is dense in $V$. 

For each coordinate $i$ of $W_*$, let the formulas $(\zeta_{i}(y;\bar d_{i,k}):k<\omega)$ define an increasing sequence of intervals on the $i$-th coordinate of $y\in W_*$ (where the tuples $\bar d_{i,k}$ are parameters from $W_*$). Recall that we let $n_*$ denote the number of coordinates of $W_*$. Then the family \[(\zeta_{i}(y;\bar d_{i,k}):k<\omega, i<n_*)\] forms an ird-pattern of size $n_*$ inside $V$. Add to it the line $(\phi(y;\bar a_i):i<\omega)$. This gives an ird-pattern of size $n_*+1$: given $\eta \colon n_*+1 \to \omega$ take $b_\eta$ to be a realization of $\tp(b_{\eta(n_*)}/\bar a_{<\omega})$ such that for all $i<n_*$ and $k<\omega$, we have
\[ \models \zeta_{i}(b_{\eta};\bar d_{i,k}) \iff \eta(i)<k.  \]
This is possible by density of $\tp(b_j/\bar a_{<\omega})$. We conclude that $\dlr(W_*)\geq n_*+1$. Now any element of $W_*$ is inter-algebraic with an element of $M$. Thus by Fact \ref{fact:opdim}, $\dlr(W_*) = \dlr(M) = n_*$: contradiction.

\medskip

%
%
\underline{Claim}: The formula $\phi(x_1,\ldots,x_k,y)$ is local.

\emph{Proof}:
Let $\bar c_U$ (resp. $\bar c_V$) be the tuple of end-points of the intervals in each $E$-class defining $U$ (resp. $V$) and set $\bar c=\bar c_U\bar c_V$.

Let $E_{UV}$ be the equivalence relation on $V$ defined over $\bar c$ by: \[b \rel{E_{UV}} b' \iff (\forall \bar a'\in U)(\phi(\bar a',b)\leftrightarrow \phi(\bar a',b')).\]

Note that for the tuples that we consider $\phi$ is the same thing as $\phi_{UV}$. We claim that $E_{UV}$ has finitely many classes. To see this first note that if $b\in V$ and $\bar a \in U$, then we have $b\ind_{\bar c} \bar c \bar a$. This follows form Lemma \ref{lem:independence_acl}: by construction of $U$ and $V$, $b$ cannot be algebraic over $\bar a\bar c$, hence $\rk(b/\bar a \bar c)\geq 1=\rk(b/\bar c)$.

Now, for $b\in V$, there are finitely many possibilities for $\tp(b/\bar c)$. Fix such a type $p=\tp(b/\bar c)$. By Fact \ref{fact:finitely many non-forking extensions} and the previous paragraph, the set
\[\{\tp_{\phi_{UV}}(b/U) : b\models p\}\] is finite. Hence, there are finitely many possibilities for $\tp_{\phi_{UV}}(b/U)$, $b\in V$.

We now show that $E_{UV}$ actually only depends on $V$ and not on $U$. This will follow from similar argument as in Section \ref{sec:local relations}. To simplify notation, we write e.g. $U\equiv_V U'$ to mean $\bar c_{U} \equiv_{\bar c_V} \bar c_{U'}$, where $U'$ is defines from $c_{\bar U'}$ in the same way that $U$ is defined from $c_{\bar U}$. If $U\equiv_{V} U'$ and $U'\subseteq U$, then $E_{UV}$ and $E_{U'V}$ coincide, since they must have the same number of classes. Next, if $U \equiv_V U'$, are such that $U\cap U' \neq \emptyset$, then there is $U''\subseteq U\cap U'$ such that $U'' \equiv_V U$ and we conclude that $E_{UV}$ and $E_{U'V}$ coincide. Finally, any $U'\equiv_V U$ can be linked to $U$ by a finite chain $U'=U_0,\ldots,U_m=U$, with $U_i\equiv_V U$, $U_i \cap U_{i+1} \neq \emptyset$.

It follows that the relation $E_{UV}$ is definable over $\bar c_V$ and depends only on $V$ and $\tp(U/V)$. We can therefore write it as $E_{UV} = E_{\bar c_V}$. If $V'\subseteq V$, then $E_{\bar c_V}$ and $E_{\bar c_{V'}}$ coincide on $V'$, hence $E_{\bar c_V}$ is a local equivalence relation (by definition of local equivalence relations). This relation depends only on $\tp(\bar a,b/\acl^{eq}(\emptyset))$.

Now, do the same starting with any type of tuple $(\bar a,b)$ and any permutation of the variables of $\phi$. Let $\mathcal E_\phi$ be the intersection of all the local equivalence relations obtained. Then $\mathcal E_\phi$ is a finite local equivalence relation definable over $\acl^{eq}(\emptyset)$. Take now strongly disjoint small cells $C_1,\ldots,C_{k+1}$ and two tuples $(a_1,\ldots ,a_{k+1}), (a'_1,\ldots, a'_{k+1}) \in C_1 \times \cdots C_{k+1}$ such that $(a_i, a'_i) \in \mathcal E_\phi(C_i)$ for all $i\leq k+1$. Then by construction of $\mathcal E_\phi$, we can replace the $a_i$'s by the $a'_i$'s one by one in the formula $\phi(a_1,\ldots,a_{k+1})$ without changing its truth value. Therefore by definition $\phi$ is a local formula.

\subsection{Proof of the main theorems}\label{sec:homogeneous}

We keep the same notation $M, W,\ldots$ as in the previous section. Fix a finite set $A\subseteq M$ so that all elements of $W/E$ are definable over $A$ and each $E$-class has at least three points definable over $A$. Let $F\subseteq W/E$ be a set containing exactly one point from each pair of classes in order reversing bijection and let $W_F\subseteq W$ be the union of the classes in $F$. Then, over $A$, $W_F$ is definable and $W$ is just two copies of $W_F$ (up to a definable bijection), so we can forget about $W$ and focus on $W_F$ instead.

Having named $A$, the induced structure on $W_F$ is extremely simple: Each $E$-class splits into finitely many $A$-definable points and $A$-definable minimal convex subsets. Any such convex subset $C$ is equipped with a definable linear order induced by the linear or circular order of the $E$-class to which it belongs. For each such $C$, let $\mathcal E(C)$ be the finest $A$-definable equivalence relation on $C$. By minimality, it has finitely many classes, each of which is dense. Furthermore, by the argument in the proof of Proposition \ref{prop:finitely many classes}, each one of these classes is definable over $A$. Finally, if $R(x_1,\ldots,x_m)$ is a local definable set on $W_F$, then if $(a_1,\ldots,a_m), (b_1,\ldots,b_m)$ are such that for each $i\leq m$, $a_i$ and $b_i$ lie in the same minimal convex set $C_i$ and are $\mathcal E(C_i)$-equivalent, then we have \[R(a_1,\ldots,a_n)\iff R(b_1,\ldots,b_m).\]

Consider the language $L^ 0_A$ on $W_F$ consisting of:

\begin{itemize}
	\item a constant to name each $A$-definable element of $W_F$;
	\item for each minimal $A$-definable convex subset $C$ of an $E$-class of $W_F$ and each $\mathcal E(C)$-class $e$, a unary predicate $P_e(x)$ naming that class;
	\item for each such $C$, a binary predicate  $\leq_C$ naming the linear order on $C$.
 \end{itemize}
 
 Since we know that apart from the $E$-classes and the orders, any additional structure on $W_F$ is given by local formulas, we see that $W_F$ admits elimination of quantifiers in $L^0_A$.

It remains to transfer that structure to $M$ itself. For any $a\in M$, $\acl^{eq}(a)\cap W_F$ has size $n$ equal to the op-dimension of $M$. Furthermore, every element of it lies in $\dcl^{eq}(Aa)$ (indeed, every such element is the $k$-th element of $\acl^{eq}(a)$ in some $A$-definable convex set $C$, for some $k$ and $C$, and this can be expressed over $Aa$). For each $a\in A$, fix an enumeration $\acl^{eq}(a)\cap W_F = \{a_1,\ldots,a_n\}$. We can further fix such an enumeration so that if $a\equiv_A b$, then $(a_1,\ldots,a_n)\equiv_{A} (b_1,\ldots,b_n)$. 

Consider now the two-sorted structure with sorts $M$ and $W_F$ and the language $L^ 1_A$ consisting of:

\begin{itemize}
	\item the language $L^ 0_A$ on the sort $W_F$;
	\item the projection $\pi\colon W \to M$;
	\item for each $i\leq m$, a function symbol $f_i$ from $M$ to $W_F$ interpreted as $f_i(a) = a_i$, where $a_i$ is as above.
\end{itemize}

\begin{prop}\label{prop:qe for two-sorted structure}
	The two-sorted structure $(M,W)$ admits elimination of quantifiers in $L^ 1_A$. Furthermore the induced structure on $M$ is inter-definable with the original structure on $M$ with elements of $A$ named.
\end{prop}
\begin{proof}
	First notice that the skeletal structure on $W$ as defined in Section \ref{sec:skeletal} is quantifier-free definable from the $L^1_A$-structure: the equivalence relation $E$ is in our language; every $E$-class is a union of sets named by predicates $P_C$, hence any pre-image of a subset of $(W/E)^d$ is $L^1_A$-quantifier-free definable; the orders on $E$-classes can be defined from the orders $\leq_C$; the order-reversing bijections are in the language and finally the equivalence relation $E_{\pi}$ is quantifier-free definable from $\pi$. By Section \ref{sec:stable structure}, the additional structure on $W$ is given by local formulas. Let $R(x_1,\ldots,x_n)$ be such a relation. Let $C_1,\ldots,C_n$ be $E$-classes. Then by Proposition \ref{prop:local relations} and the fact that linear orders are simply connected, $R\cap C_1\times \cdots \times C_n$ is a union of products of the form $X_1\times \cdots X_n$, where $X_i \subseteq C_i$ is a $\mathcal E(C_i)$-class. All those classes are quantifier-free definable in $L^1_A$, hence so is $R$. This shows that the original structure on $W$ after naming $A$ is a reduct of the $L^1_A$-structure. Since the $L^1_A$-structure is itself a reduct of that structure, they are inter-definable. Hence the same is true for the structure on $M$ since it is interpretable as the quotient of $W$ by $E_{\pi}$.
	
	Now quantifier-elimination is shown by a straightforward back-and-forth argument using Claim 5 and the density of each $\mathcal E(C)$-class.
\end{proof}

We can now easily obtain a language on the one-sorted structure $M$ in which we have quantifier elimination: for each unary relation $R(x)$ in $L^1_A$ and each $i\leq m$, let $R_i(x)$ be a unary relation over $M$ interpreted in $M$ as \[M\models R_i(x) \iff (M,W) \models R(f_i(x)).\]

Introduce similarly a binary relation $R_{i,j}(x,y)$ for each binary relation $R(x,y)\in L^1_A$ and each $i,j\leq m$. This gives a binary language in which $M$ admits elimination of quantifiers.

%
%
%

\begin{lemma}\label{lem:finite homogeneity of reducts}
	Let $M$ be an $\omega$-categorical structure. Assume that for some integer $r$, for any set $A\subseteq M$ of size $r$, the expansion of $M$ naming every $\acl^{eq}(A)$-definable set is homogeneous in a finite relational language of arity at most $m$. Then $M$ is finitely homogenizable.
\end{lemma}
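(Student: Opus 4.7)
The plan is to use $\omega$-categoricity to obtain a single uniform bound $k^\ast$ valid for every $M_A$ with $|A|=r$, to identify $\aut(M_A)$ with the pointwise stabilizer $G_A$ of $A$ in $\aut(M)$, and then to verify the finite homogenizability criterion of the preliminaries for $M$ with the constant $K=k^\ast+r$.

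First, by $\omega$-categoricity there are only finitely many $\aut(M)$-orbits of $r$-element subsets of $M$; pick one representative $A_1,\ldots,A_t$ in each. Each $M_{A_i}$ is finitely homogenizable by hypothesis, and the characterization recalled in the preliminaries supplies $k_i<\omega$ such that two types in $M_{A_i}$ are equal iff they agree on every set of $k_i$ variables. Let $k^\ast=\max_i k_i$. Since any $A\subseteq M$ of size $r$ is $\aut(M)$-conjugate to some $A_i$, the structures $M_A$ and $M_{A_i}$ are isomorphic, so the bound $k^\ast$ works uniformly for every such $A$.

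Second, I would note that $\aut(M_A)=G_A$ for any $A\subseteq M$ of size $r$. For each $a\in A$ the singleton $\{a\}$ is $\acl^{eq}(A)$-definable (since $a\in\dcl(A)\subseteq\acl^{eq}(A)$), so it is among the predicates named in $M_A$; hence every automorphism of $M_A$ fixes $A$ pointwise. Conversely every element of $G_A$ preserves every $\acl^{eq}(A)$-definable set. Combined with $\omega$-categoricity, this yields that two tuples $\bar d,\bar d'$ have the same $M_A$-type iff $\tp^M(\bar d/A)=\tp^M(\bar d'/A)$.

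Third, set $K=k^\ast+r$ and let $\bar b_1,\bar b_2$ be $n$-tuples in $M$ sharing their restrictions to every set of $K$ variables. Fix $I_0\subseteq\{1,\ldots,n\}$ of size $r$. Since $r\le K$, we have $\tp(\bar b_1\restriction I_0)=\tp(\bar b_2\restriction I_0)$, so after replacing $\bar b_1$ by its image under a suitable element of $\aut(M)$ (which preserves the $K$-type hypothesis) we may assume $\bar b_1\restriction I_0=\bar b_2\restriction I_0=:\bar a$; let $A$ be its set of entries. It now suffices to show $\tp^M(\bar b_1/A)=\tp^M(\bar b_2/A)$, which by step two is the same as $\bar b_1$ and $\bar b_2$ having equal $M_A$-types. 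By the bound $k^\ast$ it is enough to verify this on every $k^\ast$-subset of variables; by step two again, the condition on a subset $I$ of size $k^\ast$ reads $\tp^M(\bar b_1\restriction(I\cup I_0))=\tp^M(\bar b_2\restriction(I\cup I_0))$, which holds since $|I\cup I_0|\le K$.

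The only conceptually delicate point is the identification $\aut(M_A)=G_A$ in step two, which depends on the observation that the named predicates of $M_A$ include the singletons $\{a\}$ for $a\in A$. Once this is in place, the rest is clean combinatorial bookkeeping made possible by the uniform bound coming from $\omega$-categoricity.
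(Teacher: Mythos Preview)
Your argument has a genuine gap in step two: the claimed identification $\aut(M_A)=G_A$ is false in general. You correctly observe that $\aut(M_A)\subseteq G_A$, since the singletons $\{a\}$ for $a\in A$ are among the named predicates. But the converse fails: an element $\sigma\in G_A$ fixes $A$ pointwise, hence fixes $\acl^{eq}(A)$ \emph{setwise}, but it may permute the elements of $\acl^{eq}(A)$ nontrivially. In that case $\sigma$ permutes the collection of $\acl^{eq}(A)$-definable sets rather than fixing each one, so $\sigma$ need not be an automorphism of $M_A$. In fact $\aut(M_A)$ is the pointwise stabilizer of $\acl^{eq}(A)$, which sits inside $G_A$ with finite index equal to $|\aut(\acl^{eq}(A)/A)|$.

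This breaks step three at the second use of step two. There you need the implication ``equal $M$-types over $A$ on $I$ $\Rightarrow$ equal $M_A$-types on $I$'', and that is precisely the direction that can fail: two tuples can be $G_A$-conjugate without being $\aut(M_A)$-conjugate. The paper's proof handles exactly this issue. It picks a finite set $L_q$ of $\acl^{eq}(\bar a)$-definable formulas giving quantifier elimination for $M_A$, closed under the action of $\aut(\acl^{eq}(\bar a)/\bar a)$, and then observes that for any finite $C$ the equivalence relation ``$\phi$ and $\phi'$ are conjugate over $\bar a$ and agree on all tuples from $C$'' on $L_q$ is already determined by a bounded subset $C_*\subseteq C$. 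Adding $C_*$ to the base rigidifies the ambiguity coming from $\aut(\acl^{eq}(\bar a)/\bar a)$, after which your style of argument goes through. Your bound $K=k^\ast+r$ thus needs an additional additive term of size roughly $N(q)=l^2m$ to account for $C_*$; without it the proof does not close.
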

\begin{proof}
	We need to show that for some integer $k$, any $n$-type $p(x_1,\ldots,x_n)$ is implied by the conjunction of its restrictions to sets of $k$ variables. Fix an $r$-type $q$ and $\bar a\models q$. Let $L_{\bar a}=\{\phi_1(\bar x_1),\ldots,\phi_l(\bar x_l)\}$ be a set of formulas with parameters in $\acl^{eq}(\bar a)$ such that $M$ has quantifier elimination in a language with a predicate for each of those formulas. Assume that $L_{\bar a}$ is closed under $\aut(\acl^{eq}(\bar a))_{\bar a}$ (automorphisms fixing $\bar a$ pointwise) and that the maximal arity of those formulas is $m$. For any finite set $C\subset M$, define an equivalence relation $E^{\bar a}_C$ on $L_{\bar a}$ by saying that two formulas $\phi(\bar x)$ and $\phi'(\bar x)$ are $E^{\bar a}_C$-equivalent if they are conjugated over $\bar a$ and for any tuple $\bar c$ of elements of $C$, we have \[M\models \phi(\bar c)\leftrightarrow \phi'(\bar c).\]
	If a pair $(\phi,\phi')$ is not in $E^{\bar a}_C$, then there is a subset $C_0\subseteq C$ of size at most $m$ such that $(\phi,\phi')$ is not in $E^{\bar a}_{C_0}$. It follows that for any $C$, there is $C_*\subseteq C$ of size at most $l^2 m$ such that $E^{\bar a}_C=E^{\bar a}_{C_*}$. Since the numbers $l$ and $m$ depend only on $q=\tp(\bar a)$, we can define $N(q)=l^2 m$.
	
	Let $p=\tp(a_1,\ldots,a_n)$ be any type in finitely many variables and we want to show that for some $k$ not depending on $p$, $p$ is implied by its restrictions to subsets of $k$ variables. We can assume that all the $a_i$'s are distinct. Set $\bar a=(a_1,\ldots,a_r)$ and $q=\tp(\bar a)$. Let $C=\{a_1,\ldots,a_n\}$ and take $C_*\subseteq \{a_1,\ldots,a_n\}$ of size at most $N(q)$ so that $E^{\bar a}_{C_*}=E^{\bar a}_{C}$. By construction of $E^{\bar a}_C$, for any subtuple $\bar d$ of $(a_1,\ldots,a_n)$, the type $\tp(\bar d/\bar aC_*)$ implies the quantifier-free $L_{\bar a}$-type of $\bar d$. Since $L_{\bar a}$ is composed of relations of arity at most $m$ the quantifier-free $L_{\bar a}$-type of $(a_1,\ldots,a_n)$ is implied by the conjunction of  the quantifier-free $L_{\bar a}$-types of $(a_{i_1},\ldots,a_{i_m})$ for $i_1,\ldots,i_m\leq n$. This quantifier-free $L_{\bar a}$-type is itself implied by $\tp(a_{i_1},\ldots,a_{i_m}/\bar aC_*)$. Therefore the full type of the tuple $(a_1,\ldots,a_n)$ is implied by the conjunction of $\tp(a_{i_1},\ldots,a_{i_m},\bar a,\bar c_*)$ for $i_1,\ldots,i_m\leq n$, where $\bar c_*$ enumerates $C_*$. Thus $k:=m+r+\max_q N(q)$ has the required property.
\end{proof}

\begin{question}
	In the previous lemma, can we replace ``for any set $A\subseteq M$" by ``for some set $A\subseteq M$"?
\end{question}

\begin{lemma}\label{lem:finitely many reducts}
	Let $M$ be an $\omega$-categorical structure and fix some $r<\omega$. Then there are only finitely many reducts of $M$ which are homogeneous in a relational language of arity at most $r$.
\end{lemma}
\begin{proof}
	A reduct $M'$ of $M$ that is homogeneous in a relational language of arity at most $r$ is entirely determined by its $r$-types, or equivalently the orbits of $\aut(M')$ acting on $M^r$. Any such orbit is a finite union of orbits of $\aut(M)$ acting on $M^r$. Hence there are only finitely many possibilities.
\end{proof}

We can now prove our main theorem.

\begin{thm}\label{th:main}
	Let $M$ satisfy $(\star)$ and assume that $M$ has $n$ 4-types. Then $M$ is distal and is inter-definable with a structure $M_0$ in a finite relational language which is homogeneous and finitely axiomatizable. After naming a finite set of points, $M$ admits elimination of quantifiers in a finite binary language. Furthermore, for a given $n$, there are finitely many possibilities for $M_0$.
\end{thm}
\begin{proof}
	We have already seen that after naming a finite set $A$ as above $M$ admits quantifier elimination in a finite binary language. It is easy to see that the structure $W$ in $L^1_A$ satisfies the definition of distality given after definition \ref{def:distal} with $k=4m$ (the quantifier-free type of an element $a$ over a finite set $B=\dcl(B)$ given by the restriction of that type to $B_0\subseteq B$ composed of the points in $B$ closest to each point in $\dcl(a)$). Distality and non-distality are preserved under naming constants and are preserved by bi-interpretations, hence $M$ is distal.

	All $W/E$-classes are definable over $\acl^{eq}(\emptyset)$ and for any set $A\subseteq M$ of size 3, there are at least 3 $\acl^{eq}(A)$-definable elements in each $E$-class. We can then build the languages $L^0_{\acl^{eq}(A)}$ and $L^1_{\acl^{eq}(A)}$ in the same way that we built $L^ 0_A$ and $L^1_A$, replacing everywhere $A$ by $\acl^{eq}(A)$. We obtain quantifier elimination in $L^1_A$ and hence the expansion of $M$ obtained by naming all $\acl^{eq}(A)$-definable sets is finitely homogeneous. By Lemma \ref{lem:finite homogeneity of reducts}, $M$ itself is finitely homogenizable.
	
	Assume that $M$ is equipped with such a finite relational language $L$ for which it is homogeneous. We have seen that after naming some appropriate finite set of points $A$, $M$ becomes homogeneous in a binary language for which it is finitely axiomatizable. It follows that $M$ is finitely axiomatizable in the language $L(A)$ equal to $L$ augmented by a finite set of constants to name the elements of $A$. Then by quantifying on $A$, we see that $M$ is finitely axiomatizable in $L$.
	
	It remains to prove that there are only finitely many such $M$ having $n$ 4-types. By Proposition \ref{prop:finitely many skeletal}, there are finitely many possibilities for the skeletal structure. Then by Proposition \ref{prop:finitely many classes}, there are finitely many possibilities for the number of $\mathcal E$-classes. By Proposition \ref{prop:finitely many local}, there are finitely many possibilities for $\mathcal E$ itself (given the skeletal structure). Hence there are finitely many possibilities for the $L_A$-structure described above. Since $M$ is a reduct of that structure, we conclude by Lemma \ref{lem:finitely many reducts}.
\end{proof}

\subsection{Reducts}

Using the classification, one can relatively easily describe the reducts of a given structure satisfying $(\star)$, at least when no local relations are present. (We have avoided giving an explicit classification of the possible local relations. This should be doable, but might be a bit tricky especially if non-trivial automorphisms of $W/E$ are also present.)

Let $M$ be such a structure and let $W$ be the finite cover associated to it. Let $M'$ be a reduct of $M$. If $M'$ is stable, then by Corollary \ref{cor:stable reduct} it is pure equality. If $M'$ is unstable, then we can construct a finite cover $W'$ of it as above (with the properties listed at the beginning of Section \ref{sec:skeletal}). Then $W'$ is interpretable in $M$.

In what follows, we work in $M$, over $\acl^{eq}(\emptyset)$. Let $2m=|W'/E'|$. Let $F\subseteq W/E$ be as usual a set containing exactly one class from each pair in order-reversing bijection and let $W_F\subseteq W$ be the union of the $E$-classes in $F$. Define similarly $F'$ and $W'_{F'}$. Now $\acl^{eq}(a)$ contains $n$ elements in $W_F$ and say $m$ elements in $W'_{F'}$. We have $\dlr(a) = n$. Hence by Corollary \ref{cor:op dimension in orders} each $E'$-class $V'$ in $W'_{F'}$ is intertwined with an $E$-class $V$ in $W_F$ and furthermore that intertwining must send points of $\acl^{eq}(a)\cap V'$ to points in $\acl^{eq}(a)\cap V$. It follows that $V'$ is in increasing bijection with a (necessarily dense) subset of $V$. In particular, if $E$-classes are transitive over $\acl^{eq}(\emptyset)$, then the $E'$-classes are naturally a subset of the $E$-classes. We note however that the structure on $W'$ induced from $M'$ could be  proper reduct of the one induced from $M$. For instance, classes that are linear seen in $M$ could become circular in $M'$.

For a given $W$, one can then by inspection determine all the possibilities for $W'$. Instead of attempting to write a general statement, we will examine two special cases: the case where $M=(M;\leq_1,\ldots,\leq_n)$ is equipped with $n$ independent linear orders and the case where $W$ has just two circular orders in order-reversing bijection, each extending to a unique strong type over $\emptyset$.

\smallskip
Assume that $M=(M;\leq_1,\ldots,\leq_n)$ is the Fra\"iss\'e limit of sets equipped with $n$ linear orders and define $W$ and $E$ as usual. Then $W$ is composed of $2n$ linear orders pairwise in order-reversing bijection and otherwise independent, and the fibers of the projection $\pi\colon W\to M$ pick out exactly one element per linear order.
By what we have explained, the $E'$-classes are a subset of the $E$-classes. The induced structure on $W'$ from $M'$ is a reduct of the structure induced from $M$: some classes that are linear in the $M$-induced structure might be circular in the $M'$-induced structure. Also the automorphism group $\aut(M')$ might induce more automorphisms of $W'/E'$ as $\aut(M)$ does (indeed $W'/E'$ being a subset of $W/E$ is fixed pointwise by $\aut(M)$).

To summarize, one can associate to each reduct of $M$ a triple $(V_l,V_c,G)$ where $V_l,V_c$ are two disjoint subsets of $\{1,\ldots,n\}$ of cardinalities $m_l$ and $m_c$ respectively, and $G$ is a subgroup of the  wreath product $\mathbb Z_2 \wr (\mathfrak S_{m_l} \times \mathfrak S_{m_c})$. The subsets $V_l$, $V_c$ indicate respectively which of the $n$ pairs of orders in $W$ are kept as linear orders in $W'$ and which are kept, but become circular in $W'$. The subgroup $G$ is the group of automorphism on the quotient $W'/E'$. The reducts of $M$ are completely classified by such triples and every triple corresponds to a reduct (the triple where $m_l=m_c=0$ corresponding to the trivial reduct to pure equality).

For instance for $n=2$, we have $3^ 2=9$ choices for the pair $(V_l,V_r)$. If either of the two sets has cardinality 2, then we get 10 possibilities for $G$ (the group $\mathbb Z_2 \wr \mathfrak S_2$ is isomorphic to the dihedral group $D_8$ and has 10 subgroups). If the two sets have cardinality 1, we get 5 possibilities for $G$ corresponding to subgroups of $\mathbb Z_2 \times \mathbb Z_2$, if one set has cardinality 1 and the other 0, we have two possibilities for $G$ and finally, if both sets are empty, we have one possibility for $G$. Summing it all up, we obtain 10*2+5*2+2*4+1=39 reducts. We thus recover the result of Linman and Pinsker \cite{Linman-Pinsker}.

\medskip
Let us now turn to the second example. Assume that $W$ has two $E$-classes, which are circular, in order-reversing bijection, conjugated by an automorphism, and the fibers of the projection $\pi$ contain exactly $n$ points per class. The associated $M$ can be obtained by taking the Fra\" iss\'e limit of separations relations with an equivalence relation $F$ having classes of cardinality $n$ and quotienting by $F$.

Let $M'$ be an unstable reduct of $M$ and $W'$ its associated finite cover, which we again think of as interpreted in $M$. Let $V$ be any one of the two $E$-classes of $W$. Every $E'$-class is in definable bijection with $V$. Since the map $\pi'\colon W' \to M'$ is also interpretable in $M$, fibers of $\pi'$ have to contain at least $n$ points from each $E'$-class (otherwise there would be in $W$ an $\acl^{eq}(\emptyset)$-definable equivalence relation on $V$ with classes of size $<n$, which is not the case). Hence as above, since algebraic closure cannot be larger in $M'$ as it is in $M$, $W'$ has two $E'$-classes in order reversing bijection and $\pi'$ is $n$-to-one on each of them. But then we see that $W'$ is isomorphic to $W$ and there can be no additional automorphisms on the set of classes. So $M'$ is equal to $M$.

This shows that $M$ has no proper non-trivial reduct. This gives a new example of an infinite family of $\omega$-categorical structures with no proper reduct, or equivalently of maximal closed (oligomorphic) permutation groups. (See e.g., \cite{Bodirsky2013} or \cite{maxClosed} for more about maximal closed permutation groups.)

\section{Binary structures and multi-orders}\label{sec:permutations}

We say that a structure $M$ is \emph{binary} if it eliminates quantifiers in a finite binary relational language.

\begin{lemma}
Let $M$ be a binary structure. Then $M$ has finite rank.
\end{lemma}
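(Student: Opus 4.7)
The plan is to bound $\rk(M)$ in terms of the (finite) number $N = |S_2(\emptyset)|$ of $2$-types of $M$, which is finite by $\omega$-categoricity and the finiteness of the binary relational language. I would prove $\rk(M) < \infty$ by showing that any chain witnessing high rank must terminate after boundedly many steps, and the bound is forced by binarity.

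The approach is by contradiction: suppose $\rk(M) \geq n$ for arbitrarily large $n$. Unfolding the inductive definition of rank (Definition 2.2), this produces, in a saturated extension, a descending chain
\[ M \supseteq X_1 \supseteq X_2 \supseteq \cdots \supseteq X_n, \]
where each $X_i$ belongs to a uniformly definable $k_i$-inconsistent family $\mathcal F_i = (X_{i,t} : t \in E_i)$ of infinite subsets of $X_{i-1}$, parametrized (in $M^{eq}$) by some $t_i \in E_i$. Picking $a \in X_n$, and letting $\bar c_i$ be a real-tuple coding $t_i$ through an $\emptyset$-definable finite-to-one map, we obtain a tuple $\bar c = \bar c_1 \hat{~} \cdots \hat{~} \bar c_n$ and an element $a$ with $a \in X_i$ defined from $\bar c_{\leq i}$ for each $i$. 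The $k_i$-inconsistency means only finitely many members of $\mathcal F_i$ contain $a$, so the formulas defining the $X_i$'s really refine $\tp(a/\bar c_{<i})$ at each step.

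Now binarity enters. Since $M$ has quantifier elimination in a binary relational language, $\tp(a/\bar c)$ is determined by the collection of $2$-types $\{\tp(a,d) : d \in \bar c\}$, and more generally, each refinement $\tp(a/\bar c_{\leq i})$ over $\tp(a/\bar c_{<i})$ must introduce a new $2$-type $\tp(a,d)$ with $d$ drawn from the fresh tuple $\bar c_i$ — otherwise binarity would force $a$ to be indistinguishable from an element realizing the same data over $\bar c_{<i}$, contradicting the strict refinement. Since there are only $N$ possible $2$-types, the chain length is bounded by a function of $N$, giving the desired contradiction. The main obstacle will be handling the passage from the imaginary parameters $t_i$ to their real codings $\bar c_i$ without losing the refinement information; this is the step where one must be careful that $\emptyset$-definable finite-to-one surjections onto $E_i$ (which exist by $\omega$-categoricity) preserve enough of the structure for binarity to apply to the combined real tuple.
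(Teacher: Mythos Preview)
Your argument contains a genuine gap at the key step. You claim that each strict refinement of $\tp(a/\bar c_{<i})$ to $\tp(a/\bar c_{\leq i})$ forces some $d \in \bar c_i$ with $\tp(a,d)$ a $2$-type not already realized by any $d' \in \bar c_{<i}$. This is false. Binarity tells you that $\tp(a/\bar c_{\leq i})$ is determined by the map $d \mapsto \tp(a,d)$ for $d$ ranging over the coordinates of $\bar c_{\leq i}$ (together with $\tp(\bar c_{\leq i})$). A proper refinement only means this map takes a value on some $d \in \bar c_i$ distinguishing $a$ from some other $a'$ with the same type over $\bar c_{<i}$; it does not mean that value is a $2$-type unseen over $\bar c_{<i}$. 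With just two $2$-types $p,q$ one can already have $2^n$ distinct types over an $n$-tuple, one for each pattern of $p$'s and $q$'s, so the chain length is in no way bounded by $|S_2(\emptyset)|$ as you suggest. The obstacle you flag about coding imaginaries by real tuples is real but secondary; the argument fails before you get there.

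The paper's proof uses binarity differently. Rather than tracking $2$-types of a fixed point against a growing parameter tuple, it arranges the descending chain so that each $D_n$ is \emph{transitive} over its parameter set $c(n)$, and considers the binary relation $\phi_n(x,y)$ on $D_n$ asserting that $x,y$ lie in a common member of the $n$-th inconsistent family. Transitivity plus binarity forces $\phi_n$ to coincide on $D_n$ with some $\emptyset$-definable binary relation $\psi_n$. One then exhibits, for each $n$, a pair $a,b\in D_n$ with $\neg\psi_n(a,b)$ but $\psi_m(a,b)$ for all $m<n$, so the $\psi_n$ are pairwise distinct. Since there are only finitely many $\emptyset$-definable binary relations, this bounds the rank. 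The essential move is to manufacture $\emptyset$-definable binary relations out of the rank witnesses, not to count $2$-types of a single point against an expanding parameter set.
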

\begin{proof}
Assume not and fix some integer $N$ large enough. Then as $\rk(M)>N$, we can build:

\begin{itemize}
	\item an increasing sequence of finite tuples $(c(n):n<N)$;
	\item for each $n<N$, a $c(n)$-definable set $D_n$, transitive over $c(n)$;
	\item an infinite $c(n)$-definable set of parameters $E_n$, transitive over $c(n)$;
	\item a $c(n)$-uniformly definable family $(X_t:t\in E_n)$ of infinite subsets of $D_n$ which is $k(n)$-inconsistent for some $k(n)<\omega$, such that
 if $n<N-1$, then for some $t\in E_n$, $D_{n+1}\subseteq X_t$.
\end{itemize}

Why is this possible? If we drop the transitivity assumptions on $D_n$ and $E_n$, then this is precisely the definition of rank, where we take $D_n$ to be equal to some $X_t$ built at stage $n-1$ (and $D_0=M$). We can enforce the transitivity hypotheses by first replacing $D_n$ by a transitive $c(n)$-definable subset of it: since there are only finitely many such subsets, it must be true for one of them, say $D'_n$ that the family $(X_t\cap D'_n:t\in E_n)$ has infinitely many infinite sets. Also, there are finitely many $c(n)$-definable transitive subsets of $E_n$. Again, there must be one of them, say $E'_n$ for which $(X_t\cap D'_n:t\in E_n)$ has infinitely many infinite sets. Then replace $E_n$ by $E'_n$ and $(X_t:t\in E_n)$ by $(X_t\cap D'_n:t\in E'_n)$.

\smallskip
\underline{Claim}: For each $n$, there are $x,y\in D_n$ such that for no $t\in E_n$ do we have both $x\in X_t$ and $y\in X_t$.

\emph{Proof}: For any $x\in D_n$ there is a finite tuple $(t_1,\ldots,t_k)$ of elements of $E_n$ such that $x$ is in each $X_{t_i}$ and in no other $X_t$. Since $E_n$ is transitive over $c(n)$, no element of $E_n$ is algebraic over $c(n)$ and we can find a tuple $(t'_1,\ldots,t'_k)\equiv (t_1,\ldots,t_k)$ with $t'_i\neq t_j$ for all $i,j\leq k$ (this follows from Neumann's separation lemma, see \cite[Lemma 1.4(ii)]{FiniteCovers}). Now take $y$ so that $(y,t'_1,\ldots,t'_k)\equiv (x,t_1,\ldots, t_k)$.


\smallskip
For each $n$, let $\phi_n(x;y)$ be the formula expressing that for some $t\in E_n$, we have $x,y\in X_t$. This formula is definable over $c(n)$. We now need to get rid of the parameters in $\phi(x;y)$ and this where we use of the assumption that $M$ is binary. By that assumption, we can write the formula $\phi_n(x;y)$ as a boolean combination of atomic formulas, each of which involves only two elements from the tuple $c(n)$ of parameters and the variables $x,y$. Any atomic formula involving two parameters is uniformly true or false, hence can be removed. Since all elements of $D_n$ have the same type over $c(n)$, any atomic formula involving $c(n)$ and one of the variables $x$ or $y$ also has a constant truth value on $D_n$. Hence we can remove such terms from $\phi_n(x;y)$. In this way, we obtain a formula $\psi_n(x,y)$ which is equivalent to $\phi_n(x;y)$ when evaluated on elements from $D_n$ and has no parameters.

For every $n$, there are $a,b\in D_{n}$ with $\neg \psi_n(a;b)$. However we must have $\psi_m(a;b)$ for all $m<n$. Hence the formulas $\psi_n(x;y)$ define distinct relations. Taking $N$ large enough, this is a contradiction: there are up to logical equivalence only finitely many quantifier-free formulas one can construct from a finite relational language.
\end{proof}

\begin{question}
Let $M$ be a primitive binary structure. Must $M$ have rank 1?
\end{question}

We say that $(M,\leq)$ is \emph{topologically primitive}, where $\leq$ is a linear order, if it does not admit a $0$-definable convex non-trivial equivalence relation.

\begin{lemma}
Let $(M,\leq,\ldots)$ be a ranked $\omega$-categorical structure, where $\leq$ is a linear order on $M$. Assume that $(M,\leq)$ is topologically primitive. Then $(M,\leq)$ has topological rank 1.
\end{lemma}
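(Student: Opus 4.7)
Suppose, for contradiction, that $(M,\leq)$ does not have topological rank $1$.  Then there is a convex equivalence relation $E$ on $M$ with infinitely many infinite classes, defined by a formula $\phi(x,y;b)$ over some finite tuple $b$.  By $\omega$-categoricity, the condition on $t$ that $\phi(x,y;t)$ defines a convex equivalence relation with infinitely many infinite classes is $\aut(M)$-invariant and hence $\emptyset$-definable; let $D$ be the $\emptyset$-definable set of such $t$, and for $t\in D$ write $E_t$ for the corresponding relation.  Put
\[ a\,\overline E\,a' \iff (\forall t\in D)\,\phi(a,a';t). \]
Then $\overline E$ is $\aut(M)$-invariant (hence $\emptyset$-definable by $\omega$-categoricity), it is an equivalence relation (intersection of such), and it is convex, since each $\overline E$-class is an intersection of convex classes.

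By the topological primitivity of $(M,\leq)$, $\overline E$ must be either equality on $M$ or $M\times M$.  If $\overline E=M\times M$, then every $E_t$, in particular $E_b$, has a single class, contradicting the choice of $E_b$.  So we are in the case where $\overline E$ is the equality relation: for every $a\neq a'$ in $M$ there exists $b'\equiv b$ with $E_{b'}$ separating $a$ from $a'$.

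The plan for deriving a contradiction from this is to produce an arbitrarily long strictly descending chain of convex equivalence relations with infinitely many infinite classes, and conclude $\rk(M)=\infty$ via repeated application of Proposition \ref{prop:basic rank}(6).  Fix an $E_b$-class $C_0$ and a complete type $p(x)$ over $b$ realized infinitely often in $C_0$ (which exists by $\omega$-categoricity and the infinitude of $C_0$).  Proceeding by induction on $k$, pick conjugates $b_1,b_2,\ldots$ of $b$ so that each $E_{b_{k+1}}$ splits some infinite class of the previous refinement $E_b\wedge E_{b_1}\wedge\cdots\wedge E_{b_k}$ into two infinite convex pieces; existence of a separator is supplied by the equality $\overline E = \,=\,$ applied to pairs of realizations of $p$, and the infinitude of the two halves by choosing $b_{k+1}$ rank-generic over the previously chosen parameters.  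Iterating the rank additivity of Proposition \ref{prop:basic rank}(6) then forces $\rk(M)\geq k+1$ for every $k$, contradicting $\rk(M)<\infty$.

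The main obstacle is the induction step: ensuring that the new refinement splits an infinite class into \emph{two infinite} pieces rather than producing finite fragments, so that rank actually accumulates.  Bare equality of $\overline E$ with the diagonal only produces some separator, without any control over the sizes of the resulting pieces; the genericity/rank-independence argument sketched above is what converts a generic separation into one that preserves infinitude of classes, and this is the place where $\omega$-categoricity and the ranked hypothesis must be exploited most carefully.
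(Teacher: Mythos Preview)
Your overall strategy matches the paper's: define a $\emptyset$-definable convex equivalence relation, use topological primitivity to force it to be trivial, and from triviality deduce unbounded rank. The gap is in the last step, and it stems from your choice of invariant relation.

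Your $\overline E$ is the \emph{intersection} of all $E_t$, so its triviality only tells you that any two distinct points are separated by \emph{some} conjugate. As you yourself note, this does not obviously let you split an infinite convex class into two infinite pieces: the separating $E_{b'}$ might place its cut right next to one of the two chosen points, leaving a finite fragment. Your proposed remedy---choose $b_{k+1}$ rank-generic over the earlier parameters---is asserted but not argued, and I do not see why rank-independence of $b_{k+1}$ from $b,b_1,\ldots,b_k,a,a'$ should control where the convex classes of $E_{b_{k+1}}$ fall relative to the class $C$ you are trying to split. The last paragraph of your write-up is really an acknowledgement that the hard step has not been carried out.

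The paper avoids this difficulty by using a sharper $\emptyset$-definable relation:
\[
R(x,y)\;\Longleftrightarrow\;\text{for every }\bar b\equiv\bar a,\text{ there are only finitely many }E_{\bar b}\text{-classes between }x\text{ and }y.
\]
This $R$ is convex (finitely many classes between $x$ and $y$ plus finitely many between $y$ and $z$ gives finitely many between $x$ and $z$), and it is visibly not all of $M\times M$ since the quotient $M/E_{\bar a}$ is infinite and, by $\omega$-categoricity, not discrete. Topological primitivity then forces $R$ to be equality, which says: \emph{every} open interval contains infinitely many $E_{\bar b}$-classes for some conjugate $\bar b$. Now the rank argument is straightforward---pick an infinite $E_{\bar b}$-class, find a conjugate with infinitely many classes inside it, and iterate---with no need to worry about finite fragments, because all but at most two of the $E_{\bar b}$-classes meeting a convex interval lie entirely inside it. Replacing your $\overline E$ by this $R$ is the missing idea.
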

\begin{proof}
Assume that over parameters $\bar a$, there is some definable convex equivalence relation $E_{\bar a}$ with infinitely many  classes. By $\omega$-categoricity, the order induced by $\leq$ on the quotient $M/E_{\bar a}$ is not discrete. Thus there are $c<d$ in $M$ such that there are infinitely many $E_{\bar a}$-classes between $c$ and $d$. The relation $R(x,y)$ saying that for every $\bar b\equiv \bar a$, there are finitely many $E_{\bar b}$ classes between $x$ and $y$ is a $0$-definable equivalence relation with convex classes. As $M$ is topologically primitive, $R$ is trivial: its classes are singletons. It follows that for every open interval $I$, we can find some $\bar b\equiv \bar a$ such that $E_{\bar b}$ has infinitely many classes in $I$. This easily implies that $M$ has unbounded rank.
\end{proof}



\begin{thm}\label{th:primitive case}
Let $(M,\leq_1,\ldots,\leq_n)$ be a homogeneous multi-order such that no two orders $\leq_i$ and $\leq_j$ are equal or opposite of each other. Assume that each $(M,\leq_i)$ is topologically primitive, then $M$ is the Fra\"iss\'e limit of finite sets equipped with $n$ orders.
\end{thm}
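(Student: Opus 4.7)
The plan is to show that $M$ consists of $n$ pairwise independent minimal linear orders, whence Corollary \ref{cor:iso types with all order types} identifies $M$ uniquely --- as the Fra\"iss\'e limit of finite $n$-orders --- from its pairwise data.

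First, since $M$ is homogeneous in the binary relational language $\{\leq_1,\ldots,\leq_n\}$, it is binary and has quantifier elimination in this language. The first lemma of this section gives that $M$ has finite rank, and combined with topological primitivity of each $(M,\leq_i)$, the second lemma yields topological rank $1$ for each order. Homogeneity provides a single $1$-type over $\emptyset$, so each $(M,\leq_i)$ is dense without endpoints (by Lemma \ref{lem0}) and weakly transitive over $\emptyset$; hence each $(M,\leq_i)$ is minimal.

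Next I would establish pairwise independence of the orders. Because no two of them are equal or reverse of each other, for each pair $i\neq j$ all four sign combinations of $(\pm_i,\pm_j)$ must be realized on non-equal pairs (otherwise an inclusion of total orders would collapse to equality or reverse). Suppose toward contradiction that some pair $\leq_i,\leq_j$ were not independent; by Lemma \ref{lem_ind} there would exist a $\emptyset$-definable intertwining $f\colon (M,\leq_i)\to \overline{(M,\leq_j)}$ (the intertwining-with-reverse case is symmetric and rules out $\leq_i=\leq_j^{\op}$). The cut $f(x)\in\dcl(x)\cap\overline{(M,\leq_j)}$ corresponds to a set $\{z:\phi(z,x)\}$ that is downward-closed in $\leq_j$ for every $x$, where $\phi$ is, by QE, a boolean combination of the atomic formulas $\leq_k(z,x)$. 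A short case analysis on which 2-types of $(z,x)$ support $\phi$ --- using the pairwise realization of sign combinations to produce, below a given $z$ in $\leq_j$, realizations of any prescribed 2-type with $z'<_j x$ or with $x<_j z'<_j z$ --- shows that the only $\phi$ whose fiber at every $x$ is a proper $\leq_j$-cut is $\phi(z,x)\equiv z<_j x$. Hence $f$ is the identity embedding, and being an intertwining between $\leq_i$ and $\leq_j$ forces $\leq_i\subseteq\leq_j$ and thus $\leq_i=\leq_j$, contradicting the hypothesis.

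Finally, with each $(M,\leq_i)$ minimal and the orders pairwise independent, Corollary \ref{cor:iso types with all order types} determines the isomorphism type of $M$ from its pairwise data (``all pairs independent''). The Fra\"iss\'e limit of finite sets equipped with $n$ linear orders is a countable, $\omega$-categorical, transitive structure whose $n$ orders are minimal and pairwise independent; by the uniqueness in that corollary, $M$ is isomorphic to it. The principal difficulty is the pairwise independence step, where the QE case analysis of $\emptyset$-definable cuts in $\overline{(M,\leq_j)}$ must be carried out carefully; once this is established, the classification from Section \ref{sec:circular orders} delivers the identification with the Fra\"iss\'e limit directly.
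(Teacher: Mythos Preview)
Your overall strategy is sound and you correctly establish minimality of each $(M,\leq_i)$ via the two lemmas of this section together with Lemma~\ref{lem0}. The difficulty is in your independence step, where there is a genuine gap.

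You argue that any $\emptyset$-definable intertwining $f\colon(M,\leq_i)\to\overline{(M,\leq_j)}$ gives a cut defined by a formula $\phi(z,x)$ that is, by quantifier elimination, a union $S$ of sign vectors, and you claim a ``short case analysis'' forces $S=\{\sigma:\sigma_j=-\}$. The justification you offer---``pairwise realization of sign combinations to produce, below a given $z$ in $\leq_j$, realizations of any prescribed $2$-type''---is a statement about $3$-types, not $2$-types. Knowing that all four sign patterns for each pair $(i,j)$ occur says nothing about which triples $(z',z,x)$ are realized. Concretely, to rule out $S=\{(-,-)\}$ (in the two-order case) you need a triple with $z'<_j z<_j x$, $z<_i x$, and $x<_i z'$; but the \emph{non}-realization of this $3$-type is exactly the statement that $\{z:z<_i x\wedge z<_j x\}$ is $\leq_j$-downward closed, i.e.\ that $S=\{(-,-)\}$ \emph{does} define a cut. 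So the argument is circular as written: the $3$-type density you need is equivalent to what you are trying to prove, and it can fail precisely in the intertwined structures you are trying to exclude.

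The paper's route avoids this circularity by applying Proposition~\ref{prop:iso types of n orders} \emph{before} ruling out intertwinings: that proposition already classifies all transitive structures with $n$ topological-rank-$1$ orders (homogeneous or not) by their pairwise intertwining data, and in particular gives an explicit description of each intertwined case (via Proposition~\ref{prop:unique n intertwined orders}). In those explicit structures one sees directly that the intertwining relation $R_{ij}(z,x)\equiv z<_{\overline{M_j}}f_{ij}(x)$ properly refines some sign vector, so it is $\emptyset$-definable but not a boolean combination of the order atoms---hence such structures are not homogeneous in $\{\leq_1,\dots,\leq_n\}$. You already have the relevant classification available (Proposition~\ref{prop:iso types of n orders}, or its generalization Corollary~\ref{cor:iso types with all order types}); invoking it earlier, rather than only at the end, closes the gap cleanly.
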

\begin{proof}
The assumptions along with the previous lemmas imply that each order $(M,\leq_i)$ has topological rank 1 and is a complete type over $\emptyset$. Proposition \ref{prop:iso types of n orders} describes the possibilities. The only homogeneous structures in the list are the ones with no intertwining (other than equalities between orders), since the intertwining relations $R_{ij}$ are not quantifier-free definable from the orders.
\end{proof}

The classification of imprimitive homogeneous multi-orders is carried out in \cite{perm_imprimitive}, making further use of  techniques from this paper.

More generally, a primitive set equipped with $n$ orders definable in a binary structure satisfies the hypotheses of Proposition \ref{prop:iso types of n orders}. This might help in classifying other classes of ordered homogeneous structures.

\ignore{
\subsection{Reducts of primitive multi-orders}

The list of reducts of $(\mathbb Q,\leq)$ can be obtained from Cameron's paper \cite{Cameron:linear} as we recalled in the introduction. Linman and Pinsker \cite{Linman-Pinsker} classified the reducts of the random permutation $(M;\leq_1,\leq_2)$. We extend this classification to the case of the Fra\" iss\'e limit of $n$ orders. Note that this paper never used the classification of reducts of DLO, and thus gives a model-theoretic proof of it. It would be interesting to find similar proofs for other homogeneous structures such as the random graph.

Let $(M;\leq_1,\ldots,\leq_n)$ be the Fra\" iss\'e limit of $n$ orders. We first describe a family of reducts of $M$ and will then prove that those are the only ones. Let $W_0$ be the finite cover associated to $M$: $W_0$ is partitioned into $2n$ definable sets, each in bijection with $M$, one for each order $\leq_i$ and its reverse. It also has an equivalence relation $E_\pi$ giving the fibers of the projection to $M$. We can take reducts of $W_0$ as follows:

\begin{itemize}
	\item On each pair $V,V'$ of a linear order and its reverse, we can either make the order circular or remove the ordering all together, leaving no structure.
	\item Having chosen which orders are linear and which are circular, we can add automorphism of the partition: any permutation which preserves the pairs and sends linear orders to linear orders, circular orders to circular orders, can appear.
\end{itemize}

One therefore can associate to each reduct a triple $(V_l,V_c,G)$ where $V_l,V_c$ are two disjoint subsets of $\{1,\ldots,n\}$ of cardinalities $m_l$ and $m_c$ respectively, and $G$ is a subgroup of the  wreath product $\mathbb Z_2 \wr (\mathfrak S_{m_l} \times \mathfrak S_{m_c})$. Conversely, any such triple corresponds to a reduct.

For instance for $n=2$, we have $2^3=8$ choices for the pair $(V_l,V_r)$. If either of the two sets has cardinality 2, then we get 10 possibilities for $G$ (the group $\mathbb Z_2 \wr \mathfrak S_2$ is isomorphic to the dihedral group $D_8$ and has 10 subgroups). If the two sets have cardinality 1, we get 5 possibilities for $G$ corresponding to subgroups of $\mathbb Z_2 \times \mathbb Z_2$ and finally if one set has cardinality 1 and the other 0, we have two possibilities for $G$. Summing it all up, we obtain 49 reducts, recovering the result of Linman and Pinsker.

\begin{thm}
	All the reducts of the generic structures $(M;\leq_1,\ldots,\leq_n)$ are obtained by the construction above.
\end{thm}
\begin{proof}
	(Sketch) 1. No intertwining is definable in $M$.
	
	2. No $k$-cover of a circular order, $k>1$ is definable in $M$.
	
	3. The only circular orders definable in $M$ are those coming from the $\leq_i$.
	
	4. The finite cover $W$ must then be a union of finitely many minimal transitive orders and the projection $\pi$ picks out exactly one point in each other. Thus the possibilities are as listed above.
\end{proof}

\subsection{The imprimitive case}

We now move to the general case.
Say that a structure $M$ is order-like if for any complete type $p(x,y)$ in two variables, we have $p(x,y)\wedge p(y,z)\to p(x,z)$. Note that a homogeneous permutation is order-like.

\begin{lemma}
	Let $(V;\leq,\cdots)$ be order-like, transitive, binary. Let $E$ be the coarsest non-trivial convex 0-definable equivalence relation. Then given $a\in V$, for any $a$-definable cut $c$ of $(V,\leq)$, we have $\inf(a/E)\leq c \leq \sup(a/E)$.
\end{lemma}

\begin{proof}
	We write $a\ll b$ for $a/E < b/E$, equivalently $a<b$ and $a/E \neq b/E$. If $c$ is a cut, then $a\ll c$ means that $c$ is greater than the supremum of the $E$-class of $a$.
	
	Assume that there is a cut $c$ definable from $a$, with $a\ll c$. Let $c(a)$ be the minimal such cut. Consider the cut $c_*:=\sup\{c(a'):a'Ea\}$. Note that $c_*$ depends only on the $E$-class of $a$, so we can write $c_*=f_*(a/E)$ for some function $f_*$. Assume $c_*$ is not $+\infty$ and let $g_*(a/E)$ be the image of $f_*(a/E)$ in the quotient $V/E$ (adopting any convention if $f_*(a/E)$ cuts within an $E$-class). Then $g_*$ is a function from $(V/E,\leq)$ to its Dedekind completion with $x<g_*(x)$. As $V/E$ is topologically primitive and $V$ is binary, $V/E$ has topological rank 1. Therefore by Proposition \ref{lem2}, the graph of $g_*$ must be dense in $\{(x,y):x<y\}$. We can then find $b\in V$ such that $a\ll b\ll c(a) \ll c(b)$. Then as $c(b)$ is the minimal cut definable from $b$ above $b/E$, there is $d\in V$ such that $\tp(a,b)=\tp(b,d)$ and $a\ll b\ll c(a) \ll d \ll c(b)$. So $\tp(a,d)\neq \tp(a,b)$ and this is a contradiction to $V$ being order-like.
	
	If $c_*$ is $+\infty$, then we can also find $b$ as above, just by definition of $c_*$.
\end{proof}

\begin{corollary}\label{cor:dense class}
	Let $(V;\leq,\cdots)$ be order-like transitive, binary. Let $E$ be the coarsest convex definable equivalence relation. Let $F$ be any other equivalence relation not refining $E$. Then any $F$-class intersects a dense set of $E$-classes.
\end{corollary}
\begin{proof}
	By assumption, an $F$-class intersects more than one $E$-class. By the lemma, an $F$-class cannot define any cut, hence as $V/E$ is topologically primitive, it must intersect a dense set of classes.
\end{proof}

\begin{lemma}\label{lem_two relations}
	Let $(V;E,F,\cdots)$ be transitive, order-like, where $E$ and $F$ are definable equivalence relations. Let $a,a',b\in V$ such that $aEa'$ and $aFb$, then there is $b'\in V$ with $bEb'$ and $a'Fb'$.
\end{lemma}
\begin{proof}
	Let $e_1$ be the $E$-class of $a$ and $e_2$ the $E$-class of $b$. Take $e_3$ so that $\tp(b,e_3)=\tp(a',e_2)$. Next take $c\in e_2$ such that $\tp(c,e_3)=\tp(a,e_2)$ (this is possible as $\tp(e_1,e_2)=\tp(e_2,e_3)$). Finally let $d\in e_3$ be such that $\tp(a,c)=\tp(c,d)$. Then as $V$ is order-like, $\tp(a,d)=\tp(a,c)$. Let $d'$ be such that $\tp(a,d,d')=\tp(a,c,b)$. Then we have $aFb$ and $dEd'$. So $d'\in e_3$ and $bFd'$. Since $\tp(b,e_3)=\tp(a',e_2)$, there is $b'\in e_2$ such that $a'Fb'$, as required.
\end{proof}

Now let $(V;E,F,\cdots)$ as above. The lemma implies that given two $E$-classes $e_1$ and $e_2$, either $e_1$ and $e_2$ intersect the same $F$-classes, or they intersect disjoint sets of $F$-classes. Let $G$ be the equivalence relation on $V$ which holds for $(a,b)$ if the $E$-class of $a$ and the $E$-class of $b$ intersect the same $F$-classes. Then $G$ is definable and is coarser than both $E$ and $F$. In fact $G=E\vee F$ in the lattice of definable equivalence relations.
In particular, if $E$ and $F$ are maximal distinct non-trivial definable equivalence relations, then they are cross-cutting.

\begin{cor}\label{cor:nice lattice}
Let $(M;\leq_1,\ldots,\leq_n)$ be homogeneous. Let $E,F,G$ be 0-definable equivalence relations such that neither $F$ nor $G$ refines $E$. Assume that $E$ is $\leq_1$-convex and maximal such. Then $F\wedge G$ does not refine $E$.
\end{cor}
\begin{proof}
Assume that $F\wedge G$ refines $E$. Let $a\in M$ and let $p(x,y)$ be the type of $(a/F,a/G)$. We then have a function $f\colon  p(M/F,M/G) \to M/E$, given by $f(a/F,a/G)=a/E$. This is well-defined as $F\wedge G \leq E$. By Proposition \ref{prop:no binary functions}, $a/E \in \dcl(a/F) \cup \dcl(a/G)$. Assume say $a/E \in \dcl(a/F)$, then the $F$-class of $a$ is included in an $E$-class (by transitivity of $M$), so $F$ refines $E$.
\end{proof}

\medskip
We now can begin the analysis of a homogeneous permutation. So let $(M;\leq_1,\ldots,\leq_n)$ be homogeneous. For each $i$, let $E_i$ denote the maximal $\leq_i$-convex definable equivalence relation and let $V_i$ denote the quotient $M/E_i$ equipped with order $\leq_i$. By maximality of $E_i$, each $V_i$ is topologically transitive, hence as $M$ is binary, has topological rank 1. Given two indices $i,j$, one of the following occurs:

\begin{enumerate}
\item $E_i$ and $E_j$ are equal and this induces a monotonous bijection between $V_i$ and $V_j$;
\item $E_i$ and $E_j$ are equal and this induces a generic bijection between $V_i$ and $V_j$;
\item $E_i$ strictly refines $E_j$: then each $E_j$-class splits into infinitely many $E_i$-classes and those are densely ordered by $\leq_i$ (by transitivity). Furthermore, by maximality of $E_i$, the image of those classes is dense co-dense in $V_i$.
\item $E_j$ strictly refines $E_i$: as above.
\item $E_i$ and $E_j$ are incomparable. We will see that this implies that they are cross-cutting.
\end{enumerate}

\underline{Claim}: Any definable equivalence relation $F$ refines some $E_i$.

\emph{Proof}: Let $W_1,\ldots, W_k$ be representatives of the $V_i$'s up to definable monotonous bijections. Then the $W_i$'s are pairwise independent topological rank 1 ordered sets. If $F$ does not refine some $E_i$, then by Corollary \ref{cor:dense class}, each $F$-class projects densely on each $W_i$. By Proposition \ref{lem6}, each $F$-class is dense in the product $\prod W_i$. For $x\in M$, let $x_i$ denote the projection to $W_i$. Then by quantifier elimination $\bigwedge x_i<y_i$ implies a complete type on $(x,y)$. But this is consistent both with $F(x,y)$ and $\neg F(x,y)$; contradiction.

\medskip

\underline{Claim}: Any definable equivalence relation $F$ is an intersection of convex equivalence relations. (A relation is called convex if it is convex for some $\leq_i$.)

\emph{Proof}: We show the result by induction on the number of 2-types. Let $F$ be any definable equivalence relation. Then by the previous claim, $F$ refines some $E_i$, say $E_1$, which we can assume to be maximal. Let $C$ be an $E_1$-class. Then the structure restricted to $C$ is also a homogeneous permutation. By induction, $F|_C$ is an intersection of convex equivalence relations. Therefore so is $F$ (note that for any $i$, an $\leq_i$-convex equivalence relation $G$ on $C$ is the restriction to $C$ of an $\leq_i$-convex equivalence relation on $M$).

\medskip

%
%
%

\underline{Claim}: If $E_{i_1},\ldots,E_{i_k}$ are pairwise incomparable, then they are cross-cutting.

\emph{Proof}: We show it by induction on $k$. For $k=2$, this follows from the previous claim and Lemma \ref{lem_two relations}. Assume we know it for $k-1$ and consider an $E_{i_1}$-class $C$. Then on $C$, $E_{i_2},\ldots,E_{i_k}$ are still maximal convex for their respective orders, and pairwise incomparable by Corollary \ref{cor:nice lattice}. By induction, they are cross-cutting in $C$. Also by the case $k=2$, every $E_{i_l}$-class, $l>1$ intersects $C$. Hence $E_{i_1},\ldots,E_{i_k}$ are cross-cutting.

\medskip 
At this point, it is convenient to change notation a little bit. Let $\mathcal C$ be the family of all convex non-trivial equivalence relations. Enumerate its maximal elements as $F_1,\ldots,F_k$. So each $F_i$ is one of the $E_i$'s, but not all $E_i$'s need appear, and if $E_i=E_j$, it appears only once as an $F_i$. The $F_i$'s are cross-cutting and are precisely the maximal (non-trivial) elements in the lattice of definable equivalence relations. Assume that $F_1$ is $\leq_1$-convex (and possibly also convex for other orders). Let $X$ be an $F_1$-class. Enumerate as $G_1,\ldots, G_l$ the maximal convex equivalence relations in $X$, seen as a homogeneous permutation in its own right. We know that the relations $F_1\wedge F_i$, $i> 1$ are pairwise incomparable or in other words, $F_i|_X$, $i>1$, are pairwise incomparable. Assume that say $F_2|_X$ is not maximal convex in $X$, that is there is a non-trivial convex relation $G$ on $X$ which is strictly above $F_2|_X$. Although $G$ is defined only on $X$, we can extend it to $M$ by transitivity: it then becomes an equivalence relation refining $F_1$. The relation $G\vee F_2$ is trivial as $F_2$ is maximal and $G$ cannot be below $F_2$. Hence $G$ and $F_2$ are cross-cutting. So any $F_2$-class intersects all $G$-classes. This remains true when everything is restricted to $X$. Hence we cannot have $F_2|_X$ strictly below $G$.

So we have shown that the relations $F_i|_X$, $i>1$, remain maximal convex and pairwise incomparable: they appear therefore in the list $G_1,\ldots,G_l$. The $G_i$'s are cross-cutting and we can iterate the analysis. This shows that the reduct of $M$ to the definable convex equivalence relations is homogeneous. (Say we have some system of equations $R_i(x,a_i)$ so solve, where the $R_i$'s are convex equivalence relation. We may assume that all definable equivalence relations appear and that the system is not explicitly inconsistent, taking into account the relations which refine each other. First restrict the system to its maximal elements. As the maximal relations are cross-cutting, that system can be solved by say $a$. Next pick a maximal relation $F$ and restrict to the $F$-class of $a$, removing the condition involving $F$. Then again the system of maximal relations that remain can be solved. Iterating, we solve the whole system.)

}

\section{The general NIP case}\label{sec:conjectures}

We hope to be eventually able to classify all finitely homogeneous NIP structures, and possibly even all $\omega$-categorical structures having polynomially many types over finite sets.

\begin{conjecture}
	Let $M$ be finitely homogeneous and NIP, then:
	
	\begin{enumerate}
		\item The automorphism group $\aut(M)$ acts oligomorphically on the space of types $S_1(M)$.
		\item The structure $M$ is interpretable in a distal, finitely homogeneous structure.
		\item There is $M'$ bi-interpretable with $M$ whose theory is quasi-finitely axiomatizable.
		\item If $M$ is not distal, then its theory is not finitely axiomatizable.
	\end{enumerate}
\end{conjecture}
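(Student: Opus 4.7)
The plan is to reduce all four statements to the rank 1 primitive classification via a coordinatization theorem, following the blueprint of the smoothly approximable case. Under the working hypothesis (explicit in the introduction) that finitely homogeneous NIP structures are rosy, the first task is to prove that any such $M$ admits a coordinatization by rank 1 sets: a finite tower of $0$-definable equivalence relations $M = E_0 \supseteq E_1 \supseteq \cdots \supseteq E_r = \mathrm{id}$ such that each quotient $E_i / E_{i+1}$ is a uniformly definable family of primitive rank 1 sets. Theorem \ref{th:list of properties} would then describe each fiber explicitly as a finite cover of a disjoint union of linear and circular orders.

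For part (1), fix such a coordinatization. A type in $S_1(M)$ is determined by its sequence of induced types on each rank 1 fiber, each of which lives in a structure that is distal over finitely many parameters. In each rank 1 coordinate the types over $M$ are controlled by finitely many cuts in linear or circular orders (by the analysis of Sections \ref{sec:linear orders}--\ref{sec:circular orders}), and an automorphism of $M$ permutes cuts among themselves; a counting argument across the coordinates then gives oligomorphicity of the action on $S_1(M)$.

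For parts (2) and (3), I would proceed by induction on the rank $r$. The base case $r=1$ is handled by Theorem \ref{th:list of properties} (parts 2 and 3 give quantifier elimination in a binary language and distality after naming finitely many parameters) together with Proposition \ref{prop:fin homogeneous} (finite axiomatizability). The inductive step requires a gluing lemma: if the base $M/E_1$ and each fiber of $E_1$ are interpretable in distal, finitely homogeneous, finitely axiomatized structures, then so is $M$. This rests on showing that the \emph{additional} structure linking fibers to base comes from local equivalence relations in the sense of Section \ref{sec:local relations}, which are themselves compatible with distality and admit finitely many possibilities by the argument of Theorem \ref{th:main}.

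For part (4), I would use a Shelah-style argument: non-distality in an NIP structure is witnessed by an indiscernible sequence with a non-singular cut, and under $\omega$-categoricity this yields a uniformly definable family of equivalence relations on longer and longer initial segments that cannot be controlled by any finite fragment of the theory; translated into the finitely homogeneous setting via Fact \ref{fait:sauer-shelah}, this produces infinitely many inequivalent extension axioms, contradicting finite axiomatizability. The main obstacle throughout is the coordinatization step itself: while the $\omega$-stable analogue relies on CFSG via the Kantor--Liebeck--Macpherson theorem, in the NIP setting the primitive pieces carry order structure, and controlling how two rank 1 pieces can glue into a rank 2 primitive set---analogously to how Proposition \ref{prop:iso types of n orders} and Corollary \ref{cor:iso types with all order types} classify combinations of minimal orders---is the principal difficulty and the subject of the announced subsequent work with Onshuus.
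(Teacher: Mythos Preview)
The statement you are addressing is a \emph{conjecture}, not a theorem: the paper does not prove it and does not claim to. Section~\ref{sec:conjectures} presents it as an open problem, notes that the stable case is known by the cited work of Cherlin--Harrington--Lachlan, Lachlan, and Hrushovski, and offers Theorem~\ref{th:distal is fin axiom} (finite axiomatizability for distal finitely homogeneous structures) as evidence toward part~(3). There is no proof in the paper for you to compare against.

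Your proposal is not a proof but a program, and you have correctly identified its central gap: the coordinatization step is itself conjectural. Two further points. First, you invoke rosiness as a ``working hypothesis explicit in the introduction,'' but the introduction does not assume or conjecture that every finitely homogeneous NIP structure is rosy; rosiness is an \emph{additional} hypothesis the paper imposes precisely because it cannot yet handle the general (e.g.\ tree-like) case. The conjecture as stated makes no rosiness assumption, so your very first reduction is unjustified. Second, even granting rosiness and a coordinatization by primitive rank~1 pieces, your inductive ``gluing lemma'' for parts~(2) and~(3) is doing essentially all the work and is nowhere near established: the paper's analysis of local relations in Section~\ref{sec:local relations} takes place entirely inside a single rank~1 structure and says nothing about how rank~1 fibers sit over a higher-rank base. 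Your sketch for part~(4) is likewise a heuristic rather than an argument---the passage from a non-distal indiscernible sequence to ``infinitely many inequivalent extension axioms'' is exactly the hard step, and nothing in the paper supplies it.
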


Points (2) and (3) each imply that there are only countably many such structures (for point (2), this follows from Theorem \ref{th:distal is fin axiom} below). If $M$ is stable finitely homogeneous, then it is $\omega$-stable and the conjecture is known to be true: (1) by \cite[Theorem 6.2]{CHL}, (2) by \cite{Lachlan:coord}, (3) by \cite{Hr_totallycategorical} and (4) by \cite[Corollary 7.4]{CHL}.

Note that we cannot expect an analogue of Theorem \ref{th:main}: For $k<\omega$, let $M_k$ be the Fra\" iss\'e limit of finite trees with $\leq k$ branching at each node. Then for $k\geq 4$, the structures $M_k$ all have the same $4$-types.

The previous conjecture was stated for the finitely homogeneous case, but we could have stated it also for $\omega$-categorical  structures with polynomially many types over finite sets, or finite dp-rank, which is \emph{a priori} weaker. (For a definition of dp-rank, see e.g. \cite[Chapter 4]{NIPbook}.) However, even the stable case is then unkown.

\begin{question}
	Let $M$ be $\omega$-categorical, stable of finite dp-rank. Is $M$ $\omega$-stable?
\end{question}

One intuition we have on NIP structures is that they are somehow combinations of stable and distal ones. At the very least, we expect that reasonable statements that hold true for stable and distal structures are true for all NIP structures. If $M$ is finitely homogeneous and stable, then we know that it is quasi-finitely axiomatizable. Somewhat surprisingly, the distal case can be proved directly rather easily: see Theorem \ref{th:distal is fin axiom} below. We consider this as strong evidence towards this part of the conjecture. It is possible that the other parts could also be proved directly for distal structures, without having any kind of classification, but we have not managed to do so.

\begin{thm}\label{th:distal is fin axiom}
	Let $M$ be homogeneous in a finite relational language $L$ and distal. Then the theory of $M$ is finitely axiomatizable.
\end{thm}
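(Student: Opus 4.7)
Let $k$ be the uniform distality bound noted in the paragraph preceding the theorem: for every finite $A\subseteq M$ and singleton $a\in M$, some $A_0\subseteq A$ with $|A_0|\leq k$ satisfies $\tp(a/A_0)\vdash \tp(a/A)$. Let $r$ be the maximal arity of $L$ and put $s=k+r$. The plan is to exhibit a finite set of $L$-sentences $\Sigma\subseteq \mathrm{Th}(M)$ such that every countable model of $\Sigma$ is isomorphic to $M$; since $M$ is $\omega$-categorical, this suffices.

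Take $\Sigma$ to consist of: (i) for each isomorphism type $\tau$ of $L$-structure on at most $s+1$ points not embedding in $M$, the sentence forbidding $\tau$; (ii) for each pair $\tau\subsetneq \tau^+$ of $L$-structure types with $|\tau^+|=|\tau|+1\leq s+1$ both embedding in $M$, the one-point extension axiom ``every copy of $\tau$ extends to a copy of $\tau^+$''; (iii) an axiom of infinity. Since $L$ is finite relational, only finitely many isomorphism types on at most $s+1$ points exist, so $\Sigma$ is finite.

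Let $N$ be a countable model of $\Sigma$. I would build an isomorphism $M\cong N$ by a standard back-and-forth on partial $L$-isomorphisms $f\colon A\to A'$ with $A\subseteq M$, $A'\subseteq N$ finite. For the forth step at $a\in M$, choose $A_0\subseteq A$ of size $\leq k$ from distality with $\tp(a/A_0)\vdash \tp(a/A)$, and apply axiom (ii) to produce $a'\in N$ such that $f(A_0)\cup\{a'\}$ realizes the $L$-isomorphism type of $A_0\cup\{a\}$. To verify that $f\cup\{(a,a')\}$ is itself a partial iso, fix $\bar c\subseteq A$ of size $\leq r-1$ and compare $D:=A_0\cup\bar c\cup\{a\}\subseteq M$ with $D':=f(A_0\cup\bar c)\cup\{a'\}\subseteq N$. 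Both sets have size at most $s+1$ and lie in the age of $M$ (the latter by axiom (i)); both restrict identically on $A_0\cup\{a\}$ (resp.\ $f(A_0)\cup\{a'\}$) and on $A_0\cup\bar c$ (resp.\ $f(A_0\cup\bar c)$); and the distality relation $\tp(a/A_0)\vdash \tp(a/A_0\bar c)$, combined with homogeneity, says that among $L$-types in the age of $M$ extending these two restrictions there is exactly one. Hence $D$ and $D'$ have the same $L$-isomorphism type, which gives the required agreement on $\{a\}\cup\bar c$ versus $\{a'\}\cup f(\bar c)$.

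The back step is symmetric, but it depends on the following ``age closure'' statement: every finite substructure of $N$, not merely those of size $\leq s+1$, lies in the age of $M$. Given this, matching $n\in N$ by some $m\in M$ reduces to realizing a prescribed 1-extension of $A$ inside $M$, which is available by Fra\"iss\'e extension. The main obstacle I foresee is precisely this age-closure lemma; the natural strategy is induction on $|B|$ for $B\subseteq N$. In the inductive step, given $B\setminus\{b\}$ already embedded in $M$ as $A$, one must realize over $A$ a prescribed 1-extension type $\sigma$ all of whose $\leq s$-restrictions are known to be realized in $M$. Among the finitely many 1-types $p_1,\ldots,p_m$ of elements of $M$ over $A$ (each equipped, by distality, with a determining subset $A_0^i\subseteq A$ of size $\leq k$), one aims to show that some $p_i$ satisfies $\sigma|_{A_0^i\cup\{*\}}=p_i|_{A_0^i}$; the forth-step uniqueness argument then forces $\sigma$ to coincide globally with the $L$-structure realized by $p_i$. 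Extracting the existence of such an $i$ from purely local hypotheses on $\sigma$ is the delicate combinatorial point I expect to spend most of the effort on.
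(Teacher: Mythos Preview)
Your overall architecture matches the paper's: a finite axiomatization by small forbidden configurations plus one-point extension axioms, followed by a back-and-forth. Your forth step is correct and is essentially the paper's argument (the paper packages the uniqueness statement as a separate ``Claim 0'', but the content is the same).

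The genuine gap is exactly where you flag it: the age-closure lemma. Your proposed attack---find some $p_i$ with $\sigma|_{A_0^i}=p_i|_{A_0^i}$---is not carried out, and I do not see how to complete it from the hypotheses you have available. Knowing that $\sigma|_{A_0^i}$ is realized in $M$ only tells you it equals $p_j|_{A_0^i}$ for \emph{some} $j$, and there is no mechanism forcing $j=i$; chasing $j$'s determining set can cycle without contradiction. More seriously, your bound $s+1=k+r+1$ on forbidden configurations may simply be too small to pin down the age of $M$: the paper needs universal axioms in $n_0=kr+k+r+1$ variables, and its argument genuinely uses configurations of size up to $kr+1$.

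The missing idea is this. To show $B\cup\{d\}$ (with $|B|=n$) lies in the age of $M$, do \emph{not} remove one point; instead pick $r$ points $b_0,\ldots,b_{r-1}\in B$ and set $B_i=B\setminus\{b_i\}$. Each $B_i\cup\{d\}$ has size $n$, hence embeds in $M$ by induction, and distality there gives a determining set $B'_i\subseteq B_i$ of size $\leq k$. Now $B_r:=\bigcup_i B'_i$ has size $\leq kr$, so $B_r\cup\{d\}$ is small enough to embed in $M$ by the base case; extend that embedding to one of $B$ using induction and homogeneity. The point of removing $r$ points rather than one is that any $r$-tuple from $B\cup\{d\}$ either avoids $d$ (so lies in $B$) or avoids some $b_i$ (so lies in $B_i\cup\{d\}$), and both cases are already controlled. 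This is why the arity enters multiplicatively and why the forbidden-configuration bound must be pushed up to roughly $kr$.
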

\begin{proof}
	
Let $r$ be the maximal arity of a relation in $L$. By distality, there is $k$ such that for any finite set $A\subseteq M$ and element $a\in M$, there is $A_0\subseteq A$ of size $\leq k$ with $\tp(a/A_0)\vdash \tp(a/A)$. Let $n_0=kr+k+r+1$. Consider the theory $T_*$ composed of:
	\begin{enumerate}
		\item all formulas of the form $(\forall \bar x) \phi(\bar x)$, with $|\bar x|\leq n_0$ and $\phi$ quantifier-free that are true in $M$;
		\item all formulas of the form $(\forall \bar x)( \theta (\bar x) \to (\exists y) \phi(\bar x,y))$ with $|\bar x|\leq k$, $|y|=1$ and $\theta,\phi$ quantifier-free that are true in $M$.
	\end{enumerate}	
	Up to logical equivalence, $T_*$ contains finitely many formulas. Since $M$ is a model of $T_*$, that theory is consistent. Let $N$ be any countable model of it and we will show that $N$ is isomorphic to $M$.
	
	\smallskip
	\underline{Claim 0}: Let \[\Upsilon \equiv (\forall x,\bar y,\bar z)( \theta(x,\bar y)\wedge \psi(\bar y,\bar z) \to \phi(x,\bar z)),\] with $|x|=1$, $|\bar y|\leq k$ and where each of $\theta,\psi,\phi$ is quantifier-free and describes a complete type. Then if $M$ satisfies $\Upsilon$, so does $N$.
	
	\emph{Proof}: Since the arity of $L$ is bounded by $r$, $\phi( x,\bar z)$ is a conjunction of formulas of the form $\phi'(x,\bar z')$, where $\bar z'\subseteq \bar z$ is a subtuple of size $\leq r$. For each such formula, we have \[M\models (\forall x,\bar y,\bar z')(\theta(x,\bar y)\wedge \psi'(\bar y,\bar z')\to \phi'(x,\bar z'))\] where $\psi'(\bar y,\bar z')$ is a complete quantifier-free formula implied by $\psi(\bar y,\bar z)$ with variables $(\bar y,\bar z')$. This formula is in $T_*$, since it is universal and has less than $n_0$ variables, so $N$ also satisfies it.
	
	\smallskip
	\underline{Claim 1}: $N$ satisfies the universal theory of $M$: for any finite set $B\subseteq N$, there is $B'\subseteq M$ which is isomorphic to it.
	
	\emph{Proof}:  We prove the result by induction on the cardinality of $B$. For $|B|\leq n_0$, this follows from the construction of $T_*$. Assume that we know the result for some $n\geq n_0$ and are given a finite subset $B\subseteq N$ of size $n$ and an additional point $d\in N$. We want to find an isomorphic copy of $B\cup \{d\}$ in $M$. Pick any $r$ distinct elements $b_0,\ldots,b_{r-1}$ in $B$. For $i< r$, set $B_i = B\setminus \{b_i\}$. The set $B_i\cup \{d\}$ has an isomorphic copy in $M$. It follows by distality of $M$ that there is $B'_i\subseteq B_i$ of size $\leq k$ such that \[(\triangle) \qquad M\models \tp(d,B'_i) \wedge \tp(B'_i,B_i) \to \tp(d,B_i).\] By Claim 0, $N$ also satisfies that formula. Let $B_r=\bigcup_{i<r} B'_i$. By the case $n=kr+1<n_0$, the set $B_r\cup \{d\}$ is isomorphic to some $A_r\cup \{c\}$ in $M$. By homogeneity of $M$ and induction, we can find $A\supseteq A_r$ such that $\tp(A_r,A)=\tp(B_r,B)$. For $i<r$, define $A_i$ is the image of $B_i$ under this isomorphism. By $(\triangle)$, which holds both in $M$ and in $N$, we have $\tp(d,B_i)=\tp(c,A_i)$ for each $A$. Since the arity of the language is at most $r$ and any $r$ elements from $Bd$ either lie in $B$ or in some $B_id$, we conclude that $Bd$ and $Ac$ are isomorphic. This finishes the induction.
	
	 \smallskip

	We now show by back-and-forth that $N$ is isomorphic to $M$. Assume we have a partial isomorphism $f$ from a finite subset $A\subseteq M$ to $N$. Let $c\in M$. By distality, there is $A_0\subseteq A$ of size $\leq k$ such that $\tp(c/A_0)\vdash \tp(c/A)$. Let $B_0$ be the image of $A_0$ in $B$. By assumption on $T_*$, there is $d\in N$ such that $\tp(d,B_0)=\tp(c,A_0)$. By Claim 0, we have $\tp(d,B)=\tp(c,A)$, hence we can extend the partial isomorphism $f$ by setting $f(c)=d$. The back direction follows at once from Claim 1 and homogeneity of $M$.
\end{proof}

\bibliography{tout.bib}

\end{document}